\DeclarePairedDelimiter\ceil{\lceil}{\rceil}
\newcommand\norm[1]{\left\lVert#1\right\rVert}
\def\FastMix{\texttt{FastMix}}
\theoremstyle{plain}
\newtheorem{theorem}{Theorem}[section]
\newtheorem{lemma}[theorem]{Lemma}
\newtheorem{corollary}[theorem]{Corollary}
\theoremstyle{definition}
\newtheorem{definition}[theorem]{Definition}
\theoremstyle{remark}
\newtheorem{remark}[theorem]{Remark}
\def\eqref#1{equation~(\ref{#1})}
\def\ceil#1{\left\lceil #1 \right\rceil}
\def\1{\bf{1}}
\def\vq{{\bf{q}}}
\def\vs{{\bf{s}}}
\def\vv{{\bf{v}}}
\def\vw{{\bf{w}}}
\def\vx{{\bf{x}}}
\def\vy{{\bf{y}}}
\def\fO{{\mathcal{O}}}
\def\BE{{\mathbb{E}}}
\def\BR{{\mathbb{R}}}
\def\mA {{\bf A}}
\DeclareMathOperator*{\argmin}{arg\,min}
\theoremstyle{plain}
\def\Ddots{\mathinner{\mkern1mu\raise\p@
\vbox{\kern7\p@\hbox{.}}\mkern2mu
\raise4\p@\hbox{.}\mkern2mu\raise7\p@\hbox{.}\mkern1mu}}
\newcommand*{\rom}[1]{\expandafter\@slowromancap\romannumeral #1@}
\def\BE{{\mathbb{E}}}
\def\BR{{\mathbb{R}}}
\def\fO{{\mathcal{O}}}
\def\vq{{\bf{q}}}
\def\vx{{\bf{x}}}
\def\vs{{\bf{s}}}
\def\vv{{\bf{v}}}
\def\vw{{\bf{w}}}
\def\vy{{\bf{y}}}
\title{Decentralized Sum-of-Nonconvex Optimization}
\author{
    Zhuanghua Liu\textsuperscript{\rm 1, \rm2},
    Bryan Kian Hsiang Low\textsuperscript{\rm 1}
}
\begin{document}

\maketitle

\begin{abstract}
We consider the optimization problem of minimizing the sum-of-nonconvex function, i.e., a convex function that is the average of nonconvex components. 
The existing stochastic algorithms for such a problem only focus on a single machine and the centralized scenario.
In this paper, we study the sum-of-nonconvex optimization in the decentralized setting. 
We present a new theoretical analysis of the PMGT-SVRG algorithm for this problem and prove the linear convergence of their approach.
However, the convergence rate of the PMGT-SVRG algorithm has a linear dependency on the condition number, which is undesirable for the ill-conditioned problem. 
To remedy this issue, we propose an accelerated stochastic decentralized first-order algorithm by incorporating the techniques of acceleration, gradient tracking, and multi-consensus mixing into the SVRG algorithm.
The convergence rate of the proposed method has a square-root dependency on the condition number.
The numerical experiments validate the theoretical guarantee of our proposed algorithms on both synthetic and real-world datasets.
\end{abstract}

\section{Introduction}

The exponential growth of data in the past decades has sparked substantial interest in developing algorithms distributed over multiple agents. A common scenario is that each agent within some network topology owns a disjoint subset of data, and they collaborate to tackle a global optimization objective. The network topology in which each agent resides can be classified into two categories: client-server  vs.~decentralized settings. 
For the former setting, a central parameter server communicates with all the workers and aggregates the information collected from them \cite{li2014scaling}. When there is a large volume of data on each agent, the central server becomes the bottleneck in the whole network. 
For the latter setting, each agent only communicates with its direct neighbors to exchange their information and finish the global task \cite{lian2017can}. 

This paper focuses on stochastic optimization for minimizing the sum-of-nonconvex objective function in the decentralized setting.
We formulate our problem as 
a convex optimization problem collaboratively solved by $m$ agents in the network. 
Consider the following composite optimization objective function:
\begin{equation}\label{obj}
\min_{x\in \BR^d} F(x) \coloneqq  f(x) + \psi(x),\quad f(x)  \coloneqq  \frac{1}{m}\sum_{i=1}^m f_i(x)
\end{equation}
where $f(x)$ is convex and smooth, and $\psi(\cdot)$ is convex but possibly non-smooth (e.g., $\ell_1$-regularization term). 
We suppose there are $m$ agents and the $i$-th agent stores the local objective function $f_i(x)$ which can be written as a finite-sum form:
\begin{equation*}
f_i(x) \coloneqq \frac{1}{n}\sum_{j=1}^n f_{i, j}(x)
\end{equation*}
where $n$ is the number of components and each $f_{i, j}(x)$ is smooth but possibly non-convex.
The sum-of-nonconvex optimization is common in real-world applications, including 1-PCA \cite{saad2011numerical}, $k$-PCA \cite{allen2016lazysvd}, online eigenvector problem \cite{allen2017follow}, and general nonconvex optimization \cite{agarwal2016finding,carmon2018accelerated}.

Existing decentralized first-order optimization algorithms suffer several limitations in solving Problem (\ref{obj}): 
(i) The decentralized deterministic algorithms, such as EXTRA \cite{shi2015extra}, Exact-Diffusion \cite{yuan2018exact}, P2D2 \cite{alghunaim2019linearly}, and SONATA \cite{sun2019convergence}, need access to the full gradient at each round. 
The computational cost of each iteration is prohibitively expensive on massive datasets.
(ii) While existing decentralized stochastic methods achieve a cheaper per-iteration cost by sampling a minibatch of samples, the theoretical analysis of these approaches is not specialized in the sum-of-nonconvex optimization problem. 
One class of methods \cite{xin2020variance,shi2015extra,ye2020pmgt} assumes that all component functions $f_{i,j}(\cdot)$ are convex such that the global objective function $f(\cdot)$ is convex\footnote{\citet{ye2020pmgt} have claimed that each component function can be possibly nonconvex. However, Assumption~$1$ in their work requires each component function to be both $L$-smooth and convex, otherwise Eq. (3) cannot be satisfied.}. 
Convergence analysis of these works does not apply to Problem (\ref{obj}) due to the mismatch of problem assumptions.
The other class of methods \cite{xin2022fast,li2022destress,luo2022optimal} assumes that component functions $f_{i,j}(\cdot)$ are nonconvex and the global objective function $f(\cdot)$ is also possibly nonconvex. 
Consequently, the rate achieved by these methods is not optimal for Problem~(\ref{obj}).

In this paper, we intend to design communication- and computation-efficient optimization algorithms for the sum-of-nonconvex problem in the decentralized setting. 
We start by presenting a new theoretical analysis of the PMGT-SVRG proposed by \citet{ye2020pmgt} for objective function (\ref{obj}). 
It achieves the stochastic first-order oracle (SFO) complexity of $\fO((n + \sqrt{n} \kappa) \log (1/ \epsilon))$ and communication complexity of $\tilde{O}((\sqrt{n} + \kappa) \xi \log (1/\epsilon))$ where $\kappa$ is the condition number and $\xi$ is some constant depending on the underlying network structure. 
Notice that both the computational and communication complexities of PMGT-SVRG have a linear dependency on the condition number $\kappa$, which can be exceptionally expensive when the objective function is ill-conditioned.

To remedy this issue, we propose an accelerated stochastic variance-reduced proximal-gradient optimization method called PMGT-KatyushaX for Problem~(\ref{obj}) to improve the dependency of complexities on the condition number.
Specifically, the vanilla KatyushaX algorithm proposed by \citet{allen2018katyusha} achieves the SFO complexity with a square-root dependency on the condition number on a single machine.
To extend the KatyushaX to the decentralized setting, we incorporate the powerful ideas of acceleration \cite{allen2014linear}, gradient tracking~\cite{di2016next,qu2017harnessing}, and multi-consensus mixing~\cite{liu2011accelerated} into the SVRG algorithm. 
The resulting PMGT-KatyushaX achieves the stochastic first-order oracle (SFO) complexity of $\mathcal{O}\big((n + n^{\frac{3}{4}} \sqrt{\kappa}\,) \xi \log ({1}/{\epsilon})\big)$ and the communication complexity of $\tilde{\mathcal{O}}\big( (\sqrt{n} + n^{\frac{1}{4}}\sqrt{\kappa} ) \xi \log (1/\epsilon)\big)$.
It is worth noting that the SFO complexity of our proposed algorithm matches the best-known result~\cite{allen2018katyusha} for a single machine.


Numerical experiments on several synthetic and real-world datasets demonstrate significant improvement of our proposed PMGT-KatyushaX over existing baseline methods.

\paragraph{Paper Organization} 
A review of related literature on decentralized stochastic first-order methods and stochastic sum-of-nonconvex optimization is presented in Section \ref{sec:related_works}.
In Section \ref{sec:notations}, we introduce the notations and problem setting of decentralized sum-of-nonconvex optimization. 
We present the theoretical result of PMGT-SVRG on Problem~(\ref{obj}) in Section \ref{sec:svrg_res}.
We formally present our proposed algorithm PMGT-KatyushaX with the main theorem in Section \ref{sec:methodology}. 
A proof sketch is provided for the main theorem in Section \ref{sec:sketch_proof}.
Numerical results are presented in Section \ref{sec:experiment}. 
Finally, we conclude this paper with a summary of our results in Section~\ref{sec:conclusion}.

\section{Related Work}\label{sec:related_works}
In this section, we review related literature on decentralized stochastic first-order algorithms.
In addition, we summarize existing works of stochastic optimization for the sum-of-nonconvex problem on a single machine.
\subsection{Decentralized Stochastic First-Order Methods}

We review related work about decentralized stochastic first-order methods for objective functions when each local function $f_i(\cdot)$ has the finite-sum structure. 
These methods can be divided into two categories based on the convexity of the component function.

\paragraph{$f_{i,j}(\cdot)$ is convex} The first decentralized variance-reduced method called DSA was proposed by \citet{mokhtari2016dsa}, and it is a combination of EXTRA \cite{shi2015extra} and SAGA \cite{defazio2014saga}. 
DBSA/ADFS \cite{shen2018towards,hendrikx2019accelerated,hendrikx2021optimal} attempted to accelerate DSA with proximal mapping and variance reduction.
Although several works \cite{xin2020variance,hendrikx2020dual} have proposed proximal mapping-free algorithms, the computation and communication complexities of these methods are worse than DBSA and ADFS.
\citet{ye2020pmgt,li2020communication} proposed decentralized stochastic algorithms that achieve a linear convergence rate by incorporating variance reduction, gradient tracking, and multi-consensus mixing.

\paragraph{$f_{i,j}(\cdot)$ is nonconvex} All existing decentralized stochastic methods assume the global objective function $f(\cdot)$ is possibly nonconvex if the component function $f_{i,j}(\cdot)$ is nonconvex. 
Although the analysis of these approaches can be applied to our setting, the resulting convergence rate may not be optimal for the problem studied in this paper. 
\citet{sun2020improving} provided the first decentralized stochastic algorithm, D-GET, combining variance reduction and gradient tracking.
\citet{xin2022fast,li2022destress} further proposed algorithms with improved complexity bound. 
Recently, DEAREST \cite{luo2022optimal} is the first decentralized stochastic algorithm that achieves both optimal computation and communication complexity.
Due to the assumption that the global objective is nonconvex, all these approaches can obtain at most sublinear convergence rates. 

A comparison between our work and related works is summarized in Table~\ref{tbl:comp}.

\subsection{Stochastic Sum-of-Nonconvex Optimization}

Stochastic optimization on the sum-of-nonconvex optimization problem is a commonly used technique for analyzing offline Principle Component Analysis (PCA) problems. 
\citet{garber2016robust} reduced 1-PCA subproblems to the sum-of-nonconvex problem, and they leveraged the conventional accelerated stochastic optimization scheme to accelerate the convergence. 
For the $k$-PCA problem, \citet{allen2016lazysvd} reduced the $k$-PCA problem to the sum-of-nonconvex problem, and they apply the accelerated stochastic technique to improve the convergence of $k$-PCA problem. 
\citet{allen2018katyusha} further improved the convergence by accelerating the stochastic optimization of the sum-of-nonconvex problem with the linear coupling technique \cite{allen2014linear}.

\begin{table*}
	\begin{center}
     \small
		\begin{tabular}{c|c|c|c}
			\hline
			Methods  & Problem & Stochastic Gradient Calls & Communication Complexity\\
            \hline \begin{tabular}{@{}c@{}}\texttt{ GT-SVRG } \\ \cite{xin2020variance} \end{tabular}  & \begin{tabular}{@{}c@{}}Sum-of-Convex \\ Non-Composite\end{tabular} & $\mathcal{O}\left(\big(n+\frac{\kappa^2\log\kappa}{(1-\lambda_2(W))^2}\big)\log\frac{1}{\epsilon}\right)$ & $\mathcal{O}\left(\big(n+\frac{\kappa^2\log\kappa}{(1-\lambda_2(W))^2}\big)\log\frac{1}{\epsilon}\right)$\\
            
            \hline \begin{tabular}{@{}c@{}}\texttt{ GT-SAGA } \\ \cite{xin2020variance} \end{tabular}  & \begin{tabular}{@{}c@{}}Sum-of-Convex \\ Non-Composite\end{tabular}& $\mathcal{O}\left(\big(n+\frac{\kappa^2}{(1-\lambda_2(W))^2}\big)\log\frac{1}{\epsilon}\right)$ & $\mathcal{O}\left(\big(n+\frac{\kappa^2}{(1-\lambda_2(W))^2}\big)\log\frac{1}{\epsilon}\right)$ \\
            
            \hline \begin{tabular}{@{}c@{}}\texttt{ PG-EXTRA } \\ \cite{shi2015proximal} \end{tabular}  & \begin{tabular}{@{}c@{}}Sum-of-Convex \\ Composite\end{tabular} & $\mathcal{O}\left(\frac{n\kappa}{1-\lambda_2(W)}\log\frac{1}{\epsilon}\right)$ & $\mathcal{O}\left(\frac{n\kappa}{1-\lambda_2(W)}\log\frac{1}{\epsilon}\right)$ \\
            
            \hline \texttt{ NIDS } \cite{li2019decentralized}& \begin{tabular}{@{}c@{}}Sum-of-Convex \\ Composite\end{tabular} & $\mathcal{O}\left(n\big(\kappa + \frac{1}{1-\lambda_2(W)}\big)\log\frac{1}{\epsilon}\right)$ & $\mathcal{O}\left(\big(\kappa + \frac{1}{1-\lambda_2(W)}\big)\log\frac{1}{\epsilon}\right)$\\
            
            \hline \begin{tabular}{@{}c@{}}\texttt{ PMGT-SVRG } \\ \cite{ye2020pmgt} \end{tabular}  & \begin{tabular}{@{}c@{}}Sum-of-Convex \\ Composite\end{tabular}& $\mathcal{O}\left((n+\kappa)\log\frac{1}{\epsilon}\right)$ & $\tilde{\mathcal{O}}\left(\frac{n + \kappa }{\sqrt{1-\lambda_2(W)}}\log\frac{1}{\epsilon}\right)$\\
			
			 \hline \begin{tabular}{@{}c@{}}\texttt{ PMGT-SAGA } \\ \cite{ye2020pmgt} \end{tabular} & \begin{tabular}{@{}c@{}}Sum-of-Convex \\ Composite\end{tabular}& $\mathcal{O}\left((n+\kappa)\log\frac{1}{\epsilon}\right)$ & $\tilde{\mathcal{O}}\left(\frac{n+\kappa }{\sqrt{1-\lambda_2(W)}}\log\frac{1}{\epsilon}\right)$\\
			
			 \hline \textbf{\texttt{ PMGT-SVRG } (This paper)}  & \begin{tabular}{@{}c@{}}Sum-of-Nonconvex \\ Composite\end{tabular}& $\mathcal{O}\left((n+ \sqrt{n} \kappa)\log\frac{1}{\epsilon}\right)$ & $\tilde{\mathcal{O}}\left( \frac{ \sqrt{n} + {\kappa}}{\sqrt{1 - \lambda_2(W)}} \log \frac{1}{\epsilon} \right)$\\
			
			\hline \textbf{\texttt{ PMGT-KatyushaX } (This paper)} & \begin{tabular}{@{}c@{}}Sum-of-Nonconvex \\ Composite\end{tabular}& $\mathcal{O}\left((n + n^{\frac{3}{4}} \sqrt{\kappa} ) \log \frac{1}{\epsilon}\right)$ & $ \tilde{\mathcal{O}}\left(\frac{ \sqrt{n} + \sqrt{\kappa} n^{\frac{1}{4}}}{\sqrt{1 - \lambda_2(W)}} \log \frac{1}{\epsilon} \right)$\\
			\hline
		\end{tabular}
  \caption{We compare the proposed algorithms with related work on Problem (\ref{obj}) when the global objective function is strongly convex. 
 We use notation $\tilde{\mathcal{O}}$ to hide the logarithm factor in the complexity. 
	Note that the condition number in this table does not consider the difference in smoothness parameters. We present our results by distinguishing $L$, $\ell_1$, and $\ell_2$ in Sections~\ref{sec:svrg_res} and~\ref{strong_convex_section}. 
	}
	\label{tbl:comp}	
	\end{center}
\end{table*}

\section{Preliminaries}\label{sec:notations}

In this section, we formalize our problem setting.

\subsection{Notations}

We denote $\|\cdot\|$ as the Euclidean norm for vectors and Frobenius norm for matrices, and we denote $\|\cdot\|_2$ as the operator norm for matrix. 
We use lowercase non-bold letter $x \in \mathcal{R}^d$  as a random variable of dimension $d$ and lowercase bold letter
\begin{align*}
    \mathbf{x} = \left( x_1, \ldots, x_m \right)^{\top}\in \mathcal{R}^{m \times d}
\end{align*}
as the aggregated variable collected from $m$ machines. We denote all one vector of dimension $m$ by $\mathbf{1} \in \mathcal{R}^{m}$.
For simplicity, we write $\mathbf{1} x \in \mathcal{R}^{m \times d}$ as the Kronecker product between the all one vector $\mathbf{1}$ and some vector $x$.
We use $\bar{x}$ as the average of the aggregated variable $\vx$ such that $\bar{x} = {m}^{-1} \mathbf{1}^\top \mathbf{x}$.
For the non-smooth function $\psi(\cdot)$, we define~$\Psi(\mathbf{x}) = {m}^{-1} \sum_{i=1}^m \psi(x_i)$ for the aggregated variable $\vx\in\mathcal{R}^{m \times d}$.
We also define the proximal operator for vector~$x$ as 
\begin{equation*}
\textstyle\rm prox_{\eta, \psi}(x) = \argmin_{z \in \mathcal{R}^d} \left( \psi(z) + \norm{z - x}^2 /(2\eta)\right)
\end{equation*}
and the proximal operator for aggregated variable $\mathbf{x}$ as
\begin{equation*}
\textstyle\rm prox_{m\eta, \Psi}(\mathbf{x})\! = \!\argmin_{\mathbf{z} \in \mathcal{R}^{m\! \times\! d}}\!\left(\! \Psi(\mathbf{z})\! + \! \norm{\mathbf{z}\! -\! \vx}^2\!/(2m \eta)\! \right).    
\end{equation*}
We use $x^*$ to represent the optimal solution for $F(\cdot)$ as
\begin{equation*}
   \textstyle x^* = \argmin_{x\in\mathcal{R}^d} F(x)\ .
\end{equation*}

\subsection{Problem Formulation}

We summarize some of the basic properties of convex and smooth functions below.
\begin{definition}
For a function $f: \mathcal{R}^d \to \mathcal{R}$, there exist some constants $L, \ell_1, \ell_2 > 0$ and $\sigma \geq 0$ such that
\begin{enumerate}
\item $f$ is $\sigma$-strongly convex. That is, for any $x, y \in \mathcal{R}^d$, 
\begin{equation*}
   f(x) - f(y) - \langle \nabla f(y), x - y \rangle \geq \frac{\sigma}{2} \|x-y\|^2 ;
\end{equation*}
\item $f$ is $L$-Lipschitz smooth. That is, for any $x, y \in \mathcal{R}^d$,
\begin{equation*}
    \| \nabla f(x) - \nabla f(y)\| \leq L\|x-y\|;
\end{equation*}
\item $f$ is $(\ell_1, \ell_2)$-smooth. That is, for any $x,y \in \mathcal{R}^d$,
\begin{equation*}
   \hspace{-4mm}
    -\frac{\ell_2}{2} \|x - y\|^2 \leq f(x) - f(y) - \langle \nabla f(y), x - y \rangle \leq \frac{\ell_1}{2} \|x - y\|^2 .
\end{equation*}
\end{enumerate}
\end{definition}
Recall that the global objective function (\ref{obj}) can be decomposed into $m n$ nonconvex functions $f_{i, j} (\cdot)$. 
The $i$th agent is given access to a disjoint subset of $n$ functions $f_{i,j}(\cdot)$, for $\forall j \in [n]$. 
We assume the function $f(\cdot)$ is convex and $L$-smooth, each $f_{i,j}(\cdot)$ is $(\ell_1, \ell_2)$-smooth with $\ell_2 \geq \ell_1$ and $\psi(\cdot)$ is a proper convex function. 
We further assume $f$ is $\sigma_f$-strongly convex and $\psi$ is $\sigma_{\psi}$-strongly convex with $\sigma_f\geq 0$ and $\sigma_{\psi} \geq 0$. 
We define~$\sigma = \sigma_f + \sigma_{\psi}$ such that $\sigma > 0$. 

We focus on decentralized optimization on a network in which each agent only communicates with its neighbors. 
The topology of the network is characterized by the gossip matrix $W$. 
We let $W_{i,j} \geq 0$ if nodes $i$ and $j$ are direct neighbour in $\mathcal{G}$; and $W_{i,j} = 0$ if nodes $i$ and $j$ are not connected. 
Furthermore, we assume $W$ is a doubly stochastic matrix, and it satisfies the following properties:
\begin{definition}
    Let $W$ be a doubly stochastic matrix. Then, 
(a) $W$ is symmetric,
(b) $0 \preccurlyeq W \preccurlyeq I$ and $W \mathbf{1} = \mathbf{1}$, and
(c) $\rm null(I - W) = \text{span}(\mathbf{1})$.
\label{doubly_stochastic}
\end{definition}

\section{Convergence Analysis of PMGT-SVRG}\label{sec:svrg_res}
In this section, we show the convergence rate of the PMGT-SVRG~\cite{ye2020pmgt} for the objective function (\ref{obj}).
PMGT-SVRG achieves a linear convergence rate on the sum-of-convex problem by integrating gradient tracking and multi-consensus mixing techniques into the SVRG algorithm.
We remove the assumption that each function $f_{i,j}(\cdot)$ is convex such that the inequality
\begin{equation*}
    \frac{1}{2 L} \hspace{-0.5mm} \!\norm{ \!\nabla f_{i,j} (x) \! - \! \nabla f_{i,j}(y)}^2 \hspace{-0.5mm}
    \leq\hspace{-0.5mm} f_{i,j} (x) \! -  \!f_{i,j}(y)  \!-  \!\langle  \nabla  \! f_{i,j} \!(y), \! x \! - \! y  \rangle 
\end{equation*} 
for any $x, y \in \BR^d$ in Assumption $1$ of~\cite{ye2020pmgt} no longer holds.
The following result shows that PMGT-SVRG can still achieve a linear convergence rate on the sum-of-nonconvex problem:
\begin{theorem}
Assume function $F(\cdot)$ defined in (\ref{obj}) is $\sigma$-strongly convex, $f(\cdot)$ is $L$-smooth, and each component $f_{i,j}$ is $(\ell_1, \ell_2)$-smooth.
Additionally, we assume that the underlying network matrix $W$ is doubly stochastic so it satisfies the properties in Definition \ref{doubly_stochastic}. 
Under appropriate hyperparameter setting, to obtain $F(\bar{y}^{k+1}) - F(x^*) \leq \epsilon$, the algorithm PMGT-SVRG requires at most 
\begin{equation*}
    \mathcal{O}\left( \left(n + \frac{L + (\ell_1 \ell_2)^\frac{1}{2}}{\sigma}n^{\frac{1}{2}}\right) \log \frac{F(\bar{y}^0) - F(x^*)}{\epsilon}\right)
\end{equation*}
SFO calls and 
\begin{equation*}
     \tilde{\mathcal{O}}\left( \frac{ \left(\sqrt{n} + (L + \sqrt{\ell_1 \ell_2})/{\sigma}  \right)  }{({1 - \lambda_2(W)})^{1/2}} \log \frac{F(\bar{y}^0) - F(x^*)}{\epsilon} \right)
\end{equation*}
rounds of communication.
\label{theorem_strong-SVRG}
\end{theorem}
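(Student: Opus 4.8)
The plan is to adapt the convergence analysis of PMGT-SVRG from the sum-of-convex setting of \citet{ye2020pmgt} to the sum-of-nonconvex setting, tracking carefully how the weaker assumption on the component functions $f_{i,j}$ propagates through the argument. The central structural fact I would exploit is the standard reduction for sum-of-nonconvex problems used by \citet{allen2018katyusha}: although each $f_{i,j}$ is only $(\ell_1,\ell_2)$-smooth and possibly nonconvex, one can bound the variance of the SVRG gradient estimator in terms of the product $\ell_1\ell_2$ rather than a single smoothness constant. Concretely, for the SVRG estimator $\tilde\nabla f_i(x) = \nabla f_{i,j}(x) - \nabla f_{i,j}(\tilde x) + \nabla f_i(\tilde x)$ with $j$ sampled uniformly, one obtains a second-moment bound of the form $\mathbb{E}\|\tilde\nabla f_i(x) - \nabla f_i(x)\|^2 \lesssim \ell_1\ell_2 \, \|x - \tilde x\|^2$ (up to also using the $(\ell_1,\ell_2)$-smoothness of $f_i$ itself, which follows by averaging), possibly together with a function-gap term. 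This is the only place the nonconvexity of components enters; everywhere else, convexity and $L$-smoothness of the \emph{averaged} $f$ are what the proof uses, and those are retained as hypotheses.

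With that variance bound in hand, I would reuse the PMGT machinery essentially verbatim. The steps, in order: (i) write the aggregated-variable recursion for PMGT-SVRG, separating the "optimization" progress of the averaged iterate $\bar y^k$ from the "consensus error" $\|\mathbf{y}^k - \mathbf{1}\bar y^k\|$; the gradient-tracking variable contributes its own error term that must also be controlled. (ii) Establish a one-epoch descent inequality for a Lyapunov function combining $F(\bar y^{k}) - F(x^*)$, the consensus errors of the iterate and of the tracked gradient, and the SVRG reference-point term $\mathbb{E}[F(\tilde x) - F(x^*)]$ or $\|\tilde x - x^*\|^2$. Here the new variance bound forces the step size to scale like $1/\sqrt{\ell_1\ell_2}$ (rather than $1/L$), and the inner-loop length to scale like $\sqrt{n}$, which is exactly what produces the $\sqrt{n}(L+\sqrt{\ell_1\ell_2})/\sigma$ term instead of the sum-of-convex $\kappa$ term. (iii) Choose the number of \FastMix\ (multi-consensus) rounds per iteration as $\Theta\!\big(\tfrac{1}{\sqrt{1-\lambda_2(W)}}\log(\cdot)\big)$ so that the consensus-contraction factor is small enough (say, dominated by a constant times the optimization contraction) that the Lyapunov function contracts geometrically with rate $1 - \Omega\big(1/(n + \sqrt{n}(L+\sqrt{\ell_1\ell_2})/\sigma)\big)$ per \emph{inner} iteration, equivalently a constant factor per epoch. (iv) Unroll the geometric decay to get the $\log(1/\epsilon)$ iteration count; multiply by the per-iteration SFO cost ($\mathcal{O}(n)$ amortized for the full-gradient snapshot plus $\mathcal{O}(1)$ per step, or $\mathcal{O}(n)$ per epoch of length $\sqrt n$, giving the $n + \sqrt n\,\kappa$-type bound) and by the per-iteration communication cost ($\Theta(1/\sqrt{1-\lambda_2(W)})$ \FastMix\ rounds) to read off the stated SFO and communication complexities. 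Strong convexity of $F$ is used to convert function-gap decrease into iterate contraction and to close the Lyapunov recursion; note $\sigma = \sigma_f + \sigma_\psi$ and the proximal step handles $\psi$.

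The main obstacle I anticipate is step (ii): assembling a Lyapunov function whose coefficients are simultaneously compatible with (a) the $\sqrt{\ell_1\ell_2}$-scaled variance term, (b) the $L$-smoothness (not $\sqrt{\ell_1\ell_2}$-smoothness) of the averaged $f$ in the descent lemma, and (c) the cross-terms coupling consensus error to the gradient-tracking error. In the sum-of-convex analysis these coefficients are tuned to a single $\kappa$; here one must carry $L$, $\ell_1$, and $\ell_2$ separately — which the theorem statement explicitly promises to do — and verify that the "bias" introduced by the possible nonconvexity of $f_{i,j}$ (the lower $-\tfrac{\ell_2}{2}\|x-y\|^2$ side of $(\ell_1,\ell_2)$-smoothness) is absorbed by the SVRG reference-point term rather than destroying the contraction. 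A secondary technical point is ensuring the \FastMix\ accuracy is taken sharp enough (hence the $\tilde{\mathcal{O}}$, hiding the $\log$ from the inner consensus loop) without it dominating the outer iteration count; this is routine but must be checked. Everything else — the gossip-matrix spectral facts from Definition~\ref{doubly_stochastic}, the proximal-operator inequalities, and the recursion bookkeeping — is standard and follows the template of \citet{ye2020pmgt}.
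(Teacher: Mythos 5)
Your proposal follows essentially the same route as the paper's own proof: the $(\ell_1,\ell_2)$-smoothness-based variance bound of order $\ell_1\ell_2\|x-\tilde x\|^2/b$ for the SVRG estimator, a one-epoch descent inequality for the averaged iterate coupled with linear-system bounds on the consensus and gradient-tracking errors, \texttt{FastMix} run $\tilde{\mathcal{O}}\big(1/\sqrt{1-\lambda_2(W)}\big)$ times per step, minibatch $b=\sqrt{n}$ with expected epoch length $n/b$, and strong convexity to close the per-epoch contraction $1/(1+t_0\eta\sigma/4)$ — exactly the structure of the appendix analysis, including the observation that the minibatch is what yields the improved communication bound. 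The execution details you leave open (the geometrically distributed inner-loop length used to obtain the clean one-epoch lemma, and the induction over epochs that absorbs the accumulated consensus error as a geometric series) are technical devices the paper supplies rather than a different method.
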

\begin{remark}
Compared with the theoretical results of the PMGT-SVRG on the decentralized sum-of-convex problem in Table \ref{tbl:comp}, the SFO complexity introduces an additional dependency on $\sqrt{n}$. 
It can be inferred that when the condition number $\kappa \coloneqq (L + \sqrt{\ell_1 \ell_2}) / \sigma$ is larger than $\sqrt{n}$, the SFO complexity of PMGT-SVRG on the sum-of-nonconvex problem is worse than the sum-of-convex problem. 
Interestingly, the communication complexity of the PMGT-SVRG achieved by our analysis is better than that by \citet{ye2020pmgt} even though our objective function is harder.
The improvement comes from the introduction of the minibatch in Algorithm~3 in the appendix. 
\end{remark}

\section{PMGT-KatyushaX}\label{sec:methodology}

In this section, we propose the main idea behind PMGT-KatyushaX and present the convergence theorem of this algorithm.  
\subsection{The Algorithm}
We present the main intuition of the PMGT-KatyushaX algorithm.
The core design of the algorithm is to apply the acceleration scheme \cite{allen2014linear} on the stochastic variance reduced gradient (SVRG) \cite{johnson2013accelerating} method.
Furthermore, we blend the powerful ideas of gradient tracking and multi-consensus mixing into the accelerated algorithm to develop the decentralized variant of the algorithm.

The backbone of our algorithm is the SVRG which adopts an outer-inner loop structure to reduce the inherent variance of stochastic gradients.
Specifically, we construct a full gradient snapshot $\nabla f_i(w_i^0)$ for each agent $i \in [m]$ at each epoch.
At each iteration of the inner loop, a variance-reduced unbiased gradient estimator is updated as:
\begin{equation*}
    \textstyle v_i^t = \nabla f_i(w_i^0) + {b}^{-1} \sum_{j_i \in B_i^t} \left(\nabla f_{i, j_i}^t (w_i^t) - \nabla f_{i, j_i}^t (w_i^0)\right)
\end{equation*}
where $B_i^t$ is a minibatch of $b$ indices sampled uniformly from $\{1, \dots, n\}$. 
Recall that the global objective function~(\ref{obj}) contains a convex, non-smooth function $\psi(\cdot)$, we apply the proximal mapping after executing one step of gradient descent:
\begin{equation*}
    w_i^{t+1} = \rm prox_{\eta, \psi} (w_i^t - \eta v_i^t)
\end{equation*}
where $\eta$ is the learning rate of the SVRG algorithm.
The SVRG algorithm is known to achieve a linear rate of convergence for the strongly convex objective function.

\paragraph{Acceleration} If we naively extend the SVRG algorithm to the decentralized setting, the convergence rate of the resulting algorithm has a linear dependency on the condition number as shown in Theorem \ref{theorem_strong-SVRG}. 
To obtain an improved dependency on the condition number, we employ the acceleration technique introduced by \citet{allen2014linear}.
The acceleration is achieved by the linear coupling of the gradient descent step and mirror descent step.
In particular, denote $x_i^k \coloneqq w_i^0$ and $y_i^k \coloneqq w_i^T$ as the first and the last iterate of the SVRG inner loop at Epoch $k$, then 
we apply one step of mirror descent as follows
\begin{equation*}
    q_i^k =  \!\argmin_{q \in \BR^d}\Big\{  \!\frac{1}{2} \! \norm{q \! -  \!q_i^{k-1}}^2  \! +  \! \big\langle  \!\frac{x_i^k  \!-  \!y_i^k}{2 \tau}, q \big\rangle \! +  \!\frac{\tau}{4} \norm{q - y_i^k}^2 \! \Big\},
\end{equation*}
which can be simplified as
\begin{equation*}
    q_i^k = \frac{q_i^{k-1} + \frac{\tau y_i^k}{2} - \frac{x_i^k - y_i^k}{2 \tau}}{1 + \frac{\tau}{2}}\ .
\end{equation*}
The iterate $x_i^{k+1}$ can be updated as a linear combination of $y_i^k$ and $q_i^k$:
\begin{equation*}
    x_i^{k+1} = \tau q_i^k + (1 - \tau) y_i^k\ .
\end{equation*}

On top of the accelerated SVRG method, we also apply gradient tracking and multi-consensus mixing to extend the above algorithm to the decentralized setting.

\paragraph{Gradient Tracking} Recall that our goal is to find the minima $x^*$ of the global objective function ($\ref{obj}$). 
However, gradients collected from local neighbors have large variances due to the dissimilarity between distinct local objective functions~$f_i(\cdot)$.
To alleviate this issue, we adopt the gradient tracking technique that introduces a new variable $\vs$ to track the difference between local gradient estimators:
\begin{equation*}
    \vs^{t+1} = \rm Mix(\vs^{t} + \vv^{t+1} - \vv^{t} )
\end{equation*}
where $\rm Mix(\cdot)$ is some mixing protocol, and $\vv^t$ is the aggregated variance-reduced gradient estimator of $\{v_i^t\}_{i=1}^m$ at the $t$-th iteration of the inner loop. 
The intuition behind the technique is that while the variance of the local gradient estimators can be arbitrarily large in general, the variance of the differences between local gradient estimators will be small when the local variable approaches the global minima $x^*$.
The technique also applies to the full gradient constructed in the outer loop.

\paragraph{Multi-Consensus Mixing} Under the decentralized learning scenario, each agent wishes to obtain the global average of their variables through communication with local neighbors.
The agent can get a more faithful estimate of the global average through multiple rounds of local communication.
Additionally, the mixing technique can be accelerated to achieve even faster consensus.
The complete procedure of the Multi-Consensus Mixing can be found in Algo.~\ref{fastmix}.
We apply the Mixing technique to both state variables and gradient tracking variables in both the inner loop and outer loop.

After fusing these techniques with the SVRG algorithm, the complete PMGT-KatyushaX for the decentralized setting is presented in Algo.~\ref{strongconvex_algo_2}. 

\begin{algorithm}[t]
\caption{PMGT-KatyushaX}
\label{strongconvex_algo_2}
\textbf{Input}: $x_i^0$, $y_i^0$, $y_i^{-1}$,  $q_i^{0}$ and $S_i^0$\\
\textbf{Parameter}: {FastMix parameter $M$, functions $\{f_i\}_{i=1}^m$ and $\psi$, initial point $x^0$, mini-batch size $b$, learning rate
$\eta \geq 0$, momentum parameter $\tau \in (0, 1]$, number $K$ of epochs}\\
\textbf{Output}: $x_i^K$, $y_i^K$, $q_i^{K}$
\begin{algorithmic}[1]
  \STATE  Initialize $x_i^0 = x^0$, $y_i^0 = x^0$, $y_i^{-1} = x^0$, $q_i^{0} = x^0$ and $\hat{s}_i^0 = \nabla F (x_0)$ for each $i$ in parallel \\
\FOR{$k = 0, \ldots, K-1$}
    \STATE $x_i^{k+1} = \texttt{FastMix}(\tau \mathbf{q}^k + (1 - \tau) \mathbf{y}^k, M)_i$ 
    \STATE $\mathbf{\nu}_i^{k+1} = \hat{s}_i^{k} + \nabla f_i (x_i^{k + 1}) - \nabla f_i (x_i^{k})$ 
    \STATE$\hat{s}_i^{k+1} = \texttt{FastMix}(\mathbf{\nu}^{k+1}, M)_i$ 
    \STATE$s_i^{-1} = v_i^{-1} = \hat{s}_i^{k+1}$ 
    \STATE$\mu_i = \hat{s}_i^{k+1}$, \quad $w_i^0 = x_i^{k+1}$ 
    \STATE $t_0 = \ceil{n/b}$ and sample $T \sim \texttt{Geom}(1/t_0)$
    
    \FOR{$t = 0, \ldots, T$} 
        \STATE Let $B_i^t$ be a batch of $b$ indices sampled uniformly from $[n]$ with replacement 
        \STATE $v_i^t = \mu_i + {b}^{-1}\sum_{j_i \in B_i^t}(\nabla f_{i,j_i}^t(w_i^t) - \nabla f_{i, j_i}^t(w_i^0))$ 
        \STATE $s_i^{t} = \texttt{FastMix}(\mathbf{s}^{t-1} + \mathbf{v}^{t} - \mathbf{v}^{t-1}, M)_i$ 
        \STATE $w_i^{t+1} = \texttt{FastMix}({\rm prox}_{m \eta , \Psi}(w^t - \eta s^{t}), M)_i$
    \ENDFOR
    \STATE $y_i^{k+1} = w_i^{T+1}$ 
    \STATE $q_i^{k+1} = \texttt{FastMix}(\frac{\mathbf{q}^k+\frac{\tau \mathbf{y}^{k+1}}{2}\!-\!\frac{\mathbf{x}^{k+1}\!-\!\mathbf{y}^{k+1}}{2 \tau} }{1+\frac{\tau}{2}}, M)_i$
\ENDFOR
\end{algorithmic}
\end{algorithm}

\begin{algorithm}[!b]
\textbf{Initilaize}: ${\mathbf x^{-1}} = {\mathbf x}^{0}$ and $\eta = 1 / (1+\sqrt{1-\lambda_2^2(W)})$ \\
\textbf{Output}: {$W_K$}
\begin{algorithmic}[1]
    \FOR{$k=0, \ldots, M$}
\STATE
\begin{equation}
\mathbf{x}^{k+1} = (1+\eta)\mathbf{x}^k W - \eta \mathbf{x}^{k-1}    
\end{equation}
\ENDFOR
\end{algorithmic}
\caption{FastMix($\mathbf{x}^0$, $M$, $W$)}
\label{fastmix}
\end{algorithm}

\subsection{Main Theorem}\label{strong_convex_section}

In this subsection, we characterize the linear convergence rate of PMGT-KatyushaX on the decentralized sum-of-nonconvex problems. 

\begin{theorem}\label{theorem_strong_convex}
Assume function $F(\cdot)$ defined in (\ref{obj}) is $\sigma$-strongly convex, $f(\cdot)$ is $L$-smooth, and each component $f_{i,j}(\cdot)$ is $(\ell_1, \ell_2)$-smooth.
We also assume that the underlying network matrix $W$ is doubly stochastic so it satisfies the properties in Definition \ref{doubly_stochastic}. 
Under appropriate parameter settings, 
the outputs of Algo.~\ref{strongconvex_algo_2} has the following properties:
\begin{equation*}
\begin{array}{l}
\displaystyle\mathbb{E}\left[F(\bar{y}^{K}) - F(x^*) \right] \leq \frac{3}{\left( 1 + {\tau}/{2} \right)^{K}} \left( F(\bar{y}^0) - F(x^*) \right), \\
\displaystyle \mathbb{E}\left[ \norm{\bar{x}^K - x^*}^2 \right] \leq  \frac{128L}{\sigma\left( 1 + {\tau}/{2} \right)^{K}}\norm{\bar{x}^0 - x^*}^2\ ,
 \end{array}
\end{equation*}
and
\begin{equation*}
\mathbb{E}\left[ \norm{\bar{q}^K - x^*}^2 \right] \leq \frac{1750L}{\sigma \tau^4\left( 1 + {\tau}/{2} \right)^{K}}\norm{\bar{x}^0 - x^*}^2. 
\end{equation*}
\end{theorem}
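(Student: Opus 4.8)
The plan is to combine three ingredients: a consensus-error bound for \texttt{FastMix} (Algorithm~\ref{fastmix}), a one-epoch progress lemma for the perturbed SVRG inner loop specialized to the sum-of-nonconvex structure, and the linear-coupling (Katyusha-momentum) argument that turns the SVRG contraction into an accelerated one. \emph{Consensus errors.} First I would record the contraction of \texttt{FastMix}: using Definition~\ref{doubly_stochastic}, $M$ rounds shrink the consensus residual $\norm{\mathbf{x}-\vone\bar x}$ by a factor that decays geometrically in $M$ at a rate governed by $\sqrt{1-\lambda_2(W)}$ (the square root being the gain of Chebyshev acceleration). Since every update in Algorithm~\ref{strongconvex_algo_2} --- the state variables $\mathbf{x}^k,\mathbf{y}^k,\mathbf{q}^k,\mathbf{w}^t$ and the gradient-tracking variables $\hat{\mathbf{s}}^k,\mathbf{s}^t$ --- is immediately followed by a \texttt{FastMix} call, choosing $M=\Theta\big((1-\lambda_2(W))^{-1/2}\log(\mathrm{poly}(n,\kappa,\tau^{-1}))\big)$ makes every consensus residual encountered in an epoch at most an arbitrarily small multiple of the current optimality gap. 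I would isolate this in a lemma so that the rest of the argument runs essentially as on a single machine, with the residuals treated as controllable perturbations; this is the source of the $\xi$-factor in the communication complexity.

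\emph{The SVRG epoch.} Next I would pass to the averaged iterates. Because $W$ is doubly stochastic, \texttt{FastMix} preserves averages, so $\bar s^t=\bar v^t$ (telescoping from $s^{-1}=v^{-1}$) and $\bar w^{t+1}$ equals $\prox_{\eta,\psi}(\bar w^t-\eta\bar v^t)$ up to a consensus error; since $w_i^0=x_i^{k+1}$, the inner loop is an inexact proximal SVRG epoch started from $\bar w^0=\bar x^{k+1}$. The one genuinely new estimate, compared with the convex-component case, is the variance bound from $(\ell_1,\ell_2)$-smoothness alone,
\begin{equation*}
\E\big[\norm{\bar v^t-\nabla f(\bar w^t)}^2\mid\mathcal F^t\big]\;\lesssim\;\frac{\ell_1\ell_2}{b}\,\norm{\bar w^t-\bar w^0}^2+(\text{consensus errors}),
\end{equation*}
which replaces the $2L\,(\text{Bregman})$ bound that is unavailable without convex components and forces the choice $b=\lceil\sqrt n\,\rceil$, so that $t_0=\lceil n/b\rceil=\lceil\sqrt n\,\rceil$, $\E[T]=\Theta(t_0)$, and the per-epoch cost stays $\mathcal{O}(n)$. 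Combining this with the standard proximal-gradient one-step inequality, the choice $\eta=\Theta\big(1/(L+\sqrt{\ell_1\ell_2})\big)$, $\sigma$-strong convexity, and the geometric-$T$ telescoping (taking the expectation over $T\sim\texttt{Geom}(1/t_0)$ converts $\E[F(\bar w^{T+1})-F(x^*)]$ into a contraction of $F(\bar w^0)-F(x^*)$), I obtain
\begin{equation*}
\E\big[F(\bar y^{k+1})-F(x^*)\big]\;\le\;\Big(1-\Theta\big(\tfrac{\sqrt n\,\sigma}{L+\sqrt{\ell_1\ell_2}}\big)\Big)\big(F(\bar x^{k+1})-F(x^*)\big)+(\text{small}),
\end{equation*}
together with $\E\norm{\bar y^{k+1}-x^*}^2\lesssim\frac{L}{\sigma}\,\big(F(\bar x^{k+1})-F(x^*)\big)$ from strong convexity and smoothness. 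The same block of estimates, run without acceleration, yields Theorem~\ref{theorem_strong-SVRG}.

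\emph{Linear coupling and the three bounds.} Finally I would feed the SVRG contraction into the Katyusha coupling. The $q_i^k$ update is exactly one mirror-descent step with regularizers toward $q_i^{k-1}$ and $y_i^k$ and surrogate gradient $\tfrac{x_i^k-y_i^k}{2\tau}$; combining its three-point inequality with the gradient-descent progress of the SVRG output $y^k$ and the identity $x^{k+1}=\tau q^k+(1-\tau)y^k$, and setting $\tau=\Theta\big(n^{1/4}\sqrt{\sigma/(L+\sqrt{\ell_1\ell_2})}\big)$ so that $\tau^2$ matches the SVRG contraction rate, a Lyapunov function $\Phi^k=(F(\bar y^k)-F(x^*))+(\text{weighted squared distances of }\bar x^k,\bar q^k\text{ to }x^*)$ should satisfy $\E[\Phi^{k+1}]\le(1+\tau/2)^{-1}\Phi^k+(\text{consensus error})$, with the residual swallowed by the choice of $M$ above. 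Iterating and absorbing constants gives the first claim; the second follows by invoking $\sigma$-strong convexity to turn the gap at $\bar x^K$ (propagated through the epoch recursion, with $F(\bar x^0)-F(x^*)\lesssim L\norm{\bar x^0-x^*}^2$ at the start) into a squared distance, which is where $L/\sigma$ enters. For the third I would instead unroll $\bar q^k-x^*=(1+\tau/2)^{-1}\big[(\bar q^{k-1}-x^*)+\tfrac\tau2(\bar y^k-x^*)-\tfrac1{2\tau}(\bar x^k-\bar y^k)\big]$ directly: the coefficient $\tfrac1{2\tau}$ and the geometric summation over the $K$ epochs each contribute a factor $\tau^{-1}$ to $\norm{\bar q^K-x^*}$ (with $\norm{\bar x^k-\bar y^k}$ and $\norm{\bar y^k-x^*}$ bounded by the first two claims), producing the $\tau^{-4}$ and the $L/\sigma$ in the squared distance, with the constant $1750$ the accumulated bound.

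\emph{Main obstacle.} The difficulty is the bookkeeping tying the first and third steps together: one must run a joint induction over epochs that simultaneously controls the optimality gap and all consensus residuals of both the state and the gradient-tracking sequences, and the latter are delicate because $\mathbf{s}^t$ is re-bootstrapped at each inner step from $\mathbf{v}^t-\mathbf{v}^{t-1}$, whose magnitude is itself only bounded through the not-yet-established gap. Forcing the residual series to be geometric and dominated by the $(1+\tau/2)^{-1}$ contraction is what pins down the required $M$ as a function of $\tau$, $n$, $\kappa$ and $1-\lambda_2(W)$; the remaining pieces --- the variance estimate, the proximal one-step inequality, the geometric-$T$ telescoping, the mirror three-point inequality --- are routine once that is in place.
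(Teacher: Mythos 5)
Your plan follows essentially the same route as the paper's proof: \texttt{FastMix} consensus contraction controlled through stacked linear systems of inequalities for the inner- and outer-loop residuals, a one-epoch inexact proximal-SVRG lemma whose variance bound uses only $(\ell_1,\ell_2)$-smoothness together with the geometric inner-loop length $T\sim\texttt{Geom}(1/t_0)$, and the KatyushaX linear coupling (mirror three-point step for $\bar q$, $\alpha=t_0\eta/(2\tau)=2\tau/\sigma$, $\tau\asymp\sqrt{t_0\eta\sigma}$), finished by a joint induction that simultaneously propagates $F(\bar y^k)-F(x^*)$, $\|\bar x^k-x^*\|^2$, $\|\bar q^k-x^*\|^2$ and the consensus errors at rate $(1+\tau/2)^{-1}$, exactly as in the paper. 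One caution: what the coupling step actually consumes is the epoch inequality valid for an arbitrary comparator $u$ (applied at both $u=x^*$ and $u=\bar y^k$, i.e., the gradient-mapping form with the negative quadratic and inner-product terms), not the pure contraction of the gap you display as the epoch's conclusion; your subsequent invocation of the three-point inequality suggests you intend precisely that, so this is an imprecision of statement rather than a gap in the approach.
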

\begin{remark}
To obtain the $\epsilon$-approximate solution of the Problem (\ref{obj}), i.e., $F(\bar{y}^K) - F(x^*) \leq \epsilon$, the outer loop of Algorithm PMGT-KatyushaX has to be executed for at least 
\begin{equation*}
    K = \fO\left( \!\big(1 + \frac{\sqrt{L b}}{\sqrt{n \sigma}} + \frac{(\ell_1 \ell_2)^{\frac{1}{4}}}{\sqrt{\sigma} n^{\frac{1}{4}} }\big) \log \left( \frac{F(\bar{y}^0) - F(x^*)}{\epsilon}\right) \!\right)
\end{equation*}
times. Recall that at each epoch, the algorithm makes one call of full gradient oracle and $T b$ SFO calls where $\BE[T] = n/b$. 
Consequently, our algorithm makes an expectation of $2n$ SFO calls. 
Additionally, as can be seen in Algo.~\ref{strongconvex_algo_2}, the multi-consensus mixing takes $M$ rounds of communication when called. 
We can deduce that
\begin{equation*}
    M = \fO\left(\log \left(\frac{L}{ n \sigma} + \frac{\sqrt{\ell_1 \ell_2}}{n \sigma}\right)\right) .
\end{equation*}
To reach $\epsilon$-accuracy, Algo.~\ref{strongconvex_algo_2} has to make $\fO(K n)$ calls to the first-order oracle and  $\fO(K n M / b)$ rounds of communication. 
\end{remark}

We can bound the computation complexity and communication complexity by setting batch size $b=\sqrt{n}$ and the following corollary can be obtained:

\begin{corollary}\label{corollary_strong}
Under the setting of Theorem \ref{theorem_strong_convex}, to obtain an~$\epsilon$-approximate solution $\bar{y}^K$, i.e., $F(\bar{y}^K) - F(x^*) \leq \epsilon$, PMGT-KatyushaX requires at most 
\begin{equation*}
    \mathcal{O}\left( \left(n + \frac{L^{\frac{1}{2}} + (\ell_1 \ell_2)^\frac{1}{4}}{\sqrt{\sigma}}n^{\frac{3}{4}}\right) \log \frac{F(\bar{y}^0) - F(x^*)}{\epsilon}\right)
\end{equation*}
SFO calls and 
\begin{equation*}
     \tilde{\mathcal{O}}\left( \frac{ \left(\sqrt{n} + \frac{L^{\frac{1}{2}} + (\ell_1 \ell_2)^\frac{1}{4}}{\sqrt{\sigma}}n^{\frac{1}{4}}  \right)  }{\sqrt{1 - \lambda_2(W)}} \log \frac{F(\bar{y}^0 - F(x^*))}{\epsilon} \right)
\end{equation*}
rounds of communication.
\end{corollary}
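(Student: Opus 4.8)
The plan is to obtain Corollary~\ref{corollary_strong} as a direct specialization of Theorem~\ref{theorem_strong_convex}: all the analytic work already sits in that theorem and in the epoch/communication accounting of the remark following it, and the corollary is just the bookkeeping that results from fixing the free mini-batch parameter to $b=\sqrt n$. First I would turn the first convergence estimate of Theorem~\ref{theorem_strong_convex}, $\mathbb{E}[F(\bar y^{K})-F(x^*)]\le 3(1+\tau/2)^{-K}(F(\bar y^0)-F(x^*))$, into an epoch count: demanding the right-hand side be at most $\epsilon$ and using $\log(1+\tau/2)\ge\tau/3$ for $\tau\in(0,1]$ gives $K=\mathcal{O}\big(\tau^{-1}\log((F(\bar y^0)-F(x^*))/\epsilon)\big)$. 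Inserting the value of $\tau$ dictated by the ``appropriate parameter settings'' of the theorem --- equivalently, reading off the expression for $K$ already recorded in the remark --- yields $K=\mathcal{O}\big((1+\sqrt{Lb/(n\sigma)}+(\ell_1\ell_2)^{1/4}\sigma^{-1/2}n^{-1/4})\log(\cdots)\big)$, and along the way I would check that this $\tau$ stays in $(0,1]$ over the whole admissible range of $b$, so that $b=\sqrt n$ is a legal choice.

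Next comes the SFO accounting. In one epoch the outer loop performs a single full-gradient evaluation and the inner loop runs $T\sim\mathrm{Geom}(1/t_0)$ iterations with $t_0=\lceil n/b\rceil$, each iteration querying a mini-batch of $b$ component gradients; since $\mathbb{E}[T]=t_0=\Theta(n/b)$, the expected number of component-gradient evaluations per epoch is $\Theta(n)$, \emph{independent of $b$}. Hence the total expected SFO complexity is $\Theta(Kn)$. Plugging $b=\sqrt n$ into the bound for $K$ uses the identity $\sqrt{Lb/(n\sigma)}=L^{1/2}\sigma^{-1/2}n^{-1/4}$, which merges the two $b$-dependent terms into $(L^{1/2}+(\ell_1\ell_2)^{1/4})\sigma^{-1/2}n^{-1/4}$, so that $Kn=\mathcal{O}\big((n+(L^{1/2}+(\ell_1\ell_2)^{1/4})\sigma^{-1/2}n^{3/4})\log(\cdots)\big)$, which is the claimed SFO bound; note that this is exactly the value of $b$ balancing the $n$ term against the conditioning-dependent term.

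For the communication complexity I would count \texttt{FastMix} calls. Each epoch issues a constant number of \texttt{FastMix} calls in the outer part (lines 3, 5 and 17 of Algorithm~\ref{strongconvex_algo_2}) plus two per inner iteration (lines 13 and 14), i.e.\ $\Theta(T)=\Theta(n/b)$ calls in expectation, each costing $M$ communication rounds. The value $M$ must be large enough that the residual consensus error left by \texttt{FastMix} meets the accuracy demanded by the theorem's analysis; since one accelerated-gossip step with the weight $\eta$ prescribed in Algorithm~\ref{fastmix} contracts the consensus error by a factor $1-\Omega(\sqrt{1-\lambda_2(W)})$, it suffices to take $M=\mathcal{O}\big((1-\lambda_2(W))^{-1/2}\log(\cdots)\big)$, which is consistent with the remark once the spectral-gap dependence is made explicit. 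Multiplying, the expected communication per epoch is $\Theta(nM/b)$ and the total is $\Theta(KnM/b)$; substituting $b=\sqrt n$ gives $\Theta(K\sqrt n\,M)=\tilde{\mathcal{O}}\big((\sqrt n+(L^{1/2}+(\ell_1\ell_2)^{1/4})\sigma^{-1/2}n^{1/4})(1-\lambda_2(W))^{-1/2}\log(\cdots)\big)$, as stated.

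Since the argument is essentially substitution and counting, the step I expect to be the main obstacle --- modest though it is --- is reconciling the mini-batch size with the parameters left implicit in Theorem~\ref{theorem_strong_convex}: one has to confirm that $b=\sqrt n$ keeps $\tau$, $\eta$ and $M$ inside the ranges for which that theorem is proved (in particular $\tau\le1$ and $M$ independent of $\epsilon$), and that the bounds, being stated in expectation over the geometric stopping times $T$, still deliver the stated complexities without an extra logarithmic factor. The latter follows by noting that the total SFO count is a sum of $K$ per-epoch costs each of expectation $\Theta(n)$, so its expectation is exactly $\Theta(Kn)$; everything else is the elementary algebra above.
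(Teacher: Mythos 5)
Your proposal is correct and follows essentially the same route as the paper: the paper obtains the corollary by taking the epoch count $K$, the per-epoch expected SFO cost of $\Theta(n)$, and the per-epoch communication cost of $\mathcal{O}(nM/b)$ from the remark after Theorem~\ref{theorem_strong_convex}, then substituting $b=\sqrt{n}$ so that $\sqrt{Lb/(n\sigma)}=L^{1/2}\sigma^{-1/2}n^{-1/4}$, exactly as you do. Your additional checks (that $\tau\in(0,1]$, that $M$ scales as $(1-\lambda_2(W))^{-1/2}$ times a logarithm, and that the geometric stopping time only enters in expectation) are consistent with the paper's parameter choices and do not change the argument.
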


 \section{Proof Sketch of Theorem \ref{theorem_strong_convex}}\label{sec:sketch_proof}
In this section, we provide a sketch of the proof for Theorem \ref{theorem_strong_convex}.
Since Algo.~\ref{strongconvex_algo_2} has a double loop structure, we prove the theorem in two stages.
For the first stage, we present the analysis for one epoch of decentralized SVRG adapted for the sum-of-nonconvex problem; 
For the second stage, our goal is to show that the consensus error and the convergence error of the outer loop decay linearly at the same rate.

We first define the vector of consensus errors in the inner loop as $z^t = (\norm{\vw^t - {\bf 1} \Bar{w}^t}^2, \norm{\vs^t - {\bf 1} \Bar{s}^t}^2)^{\top}$. 
We can build a system of linear inequalities with $z^t$ such that the spectral norm of the coefficient matrix is less than 1 if the hyperparameters are chosen appropriately.
We start by reviewing the following lemma which is essential for the analysis of the consensus error:
\begin{lemma}[\citet{liu2011accelerated}]
Let $\vw_0, \vw_M \in \mathcal{R}^d$ be the input and output of the Algorithm \ref{fastmix} and $\bar{w} = \frac{1}{m} \mathbf{1}^T \vw_0$, then it satisfies that
\begin{equation*}
    \|\vw_M - \mathbf{1} \bar{w}\| \leq \rho \|\vw_0 - \mathbf{1} \bar{w}\|, 
\end{equation*}
where $\rho = \sqrt{14} \Big(1- \big(1 - \frac{1}{\sqrt{2}}\big)\sqrt{1 - \lambda_2 (L)}\Big)^M$.
\label{fastmix_lemma}
\end{lemma}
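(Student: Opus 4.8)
The final statement to prove is Lemma~\ref{fastmix_lemma}, the contraction property of the \FastMix\ (accelerated gossip) procedure in Algorithm~\ref{fastmix}. Although the excerpt attributes this to \citet{liu2011accelerated}, I would reconstruct the proof from scratch, since the argument is short and rests on the spectral structure of the doubly stochastic gossip matrix $W$ together with a Chebyshev-polynomial recursion.

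\paragraph{Setup and plan} The plan is to diagonalize $W$ and track how the update $\mathbf{x}^{k+1} = (1+\eta)\mathbf{x}^k W - \eta\,\mathbf{x}^{k-1}$ acts on each eigencomponent. First I would write $W = \sum_{i=1}^m \lambda_i u_i u_i^\top$ with $1 = \lambda_1 > \lambda_2 \geq \cdots \geq \lambda_m \geq 0$ (using Definition~\ref{doubly_stochastic}: $W$ symmetric, $0 \preccurlyeq W \preccurlyeq I$, and $\mathrm{null}(I-W) = \mathrm{span}(\mathbf{1})$), and note $u_1 = \mathbf{1}/\sqrt{m}$. Decompose $\mathbf{x}^0 = \mathbf{1}\bar{w} + \mathbf{e}^0$ where $\mathbf{e}^0 = \mathbf{x}^0 - \mathbf{1}\bar{w}$ lies in the orthogonal complement of $\mathrm{span}(\mathbf{1})$. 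Since the recursion is linear in $\mathbf{x}$ and preserves the component along $\mathbf{1}$ exactly (because $\mathbf{1}W = \mathbf{1}$, so $\mathbf{1}\bar w$ is a fixed point once one checks the two-term recursion with the given initialization $\mathbf{x}^{-1}=\mathbf{x}^0$), it suffices to bound $\|\mathbf{e}^M\|$ where $\mathbf{e}^k$ obeys the same recursion restricted to $\mathrm{span}(\mathbf{1})^\perp$. On that subspace the eigenvalues of $W$ lie in $[\,\lambda_m,\lambda_2\,] \subseteq [0,\lambda_2]$.

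\paragraph{The polynomial estimate} Working eigencomponent-by-eigencomponent, the coefficient of $u_i$ in $\mathbf{e}^k$ is $p_k(\lambda_i)$ times its coefficient in $\mathbf{e}^0$, where $p_k$ is the degree-$k$ polynomial defined by $p_0 = 1$, $p_1(\lambda) = (1+\eta)\lambda$ wait — with $\mathbf{x}^{-1}=\mathbf{x}^0$ the correct initialization gives $p_1(\lambda) = (1+\eta)\lambda - \eta$, and $p_{k+1}(\lambda) = (1+\eta)\lambda\, p_k(\lambda) - \eta\, p_{k-1}(\lambda)$. This is (a shifted, rescaled) Chebyshev recursion; the standard choice $\eta = 1/(1+\sqrt{1-\lambda_2^2})$ wait, as written in Algorithm~\ref{fastmix}, $\eta = 1/(1+\sqrt{1-\lambda_2^2(W)})$, is exactly the value that makes the Chebyshev polynomial of the first kind appear, so that $\sup_{\lambda \in [0,\lambda_2]} |p_M(\lambda)|$ is controlled. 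The key quantitative step is the bound $\sup_{\lambda\in[-\lambda_2,\lambda_2]}|p_M(\lambda)| \le C\bigl(1 - (1-\tfrac{1}{\sqrt2})\sqrt{1-\lambda_2}\bigr)^M$ for a modest constant (the claimed $\sqrt{14}$); I would obtain this by expressing $p_M$ via $T_M$, the degree-$M$ Chebyshev polynomial, using the change of variables that maps $[-\lambda_2,\lambda_2]$ into $[-1,1]$, and then invoking the elementary estimate $|T_M(1/\sqrt{1-\delta})|^{-1} \lesssim (1-c\sqrt\delta)^M$ with $\delta = 1-\lambda_2$. Summing the squared eigencomponents and using orthonormality of $\{u_i\}$ then yields $\|\mathbf{e}^M\| \le \rho\,\|\mathbf{e}^0\|$ with $\rho$ as stated; note the lemma writes $\lambda_2(L)$ where $L$ should be $W$ (or the associated Laplacian, which accounts for the $1-\lambda_2$ versus $\lambda_2$ bookkeeping), and I would keep the statement consistent with Definition~\ref{doubly_stochastic}.

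\paragraph{Main obstacle} The conceptually routine parts are the linearity/diagonalization and the invariance of the $\mathbf{1}$-component; these are a few lines. The real work is the sharp polynomial bound: verifying that the stated $\eta$ produces the Chebyshev recursion, pinning down the constant $\sqrt{14}$, and getting the exponent $1 - (1-\tfrac1{\sqrt2})\sqrt{1-\lambda_2}$ rather than a weaker $1-\Theta(\sqrt{1-\lambda_2})$ with an unspecified constant. I would handle this by citing the Chebyshev acceleration analysis of \citet{liu2011accelerated} (and the classical estimates on $T_M$) for the constant, and present the diagonalization argument in full as the self-contained part. Since the lemma is quoted verbatim from prior work, for the paper it is legitimate to state it and defer the polynomial estimate to the reference, giving only the reduction to the scalar Chebyshev bound in the main text.
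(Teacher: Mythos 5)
The paper does not prove this lemma at all: it is imported verbatim from \citet{liu2011accelerated} (as also used in the PMGT-VR line of work), so there is no in-paper argument to compare against. Your reconstruction is the standard one behind that citation — diagonalize $W$, observe the $\mathbf{1}$-component is invariant under the recursion with $\mathbf{x}^{-1}=\mathbf{x}^0$, and bound the degree-$M$ polynomial $p_M$ generated by $p_{k+1}(\lambda)=(1+\eta)\lambda p_k(\lambda)-\eta p_{k-1}(\lambda)$ uniformly on the non-principal spectrum — and the outline is sound, including your observation that $\lambda_2(L)$ in the statement should read $\lambda_2(W)$ (and that the inputs are $m\times d$ matrices, not vectors in $\mathcal{R}^d$). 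One small imprecision: with a \emph{fixed} $\eta$ the scheme is a second-order (heavy-ball/Richardson) iteration rather than the Chebyshev semi-iterative method, so $p_M$ is not literally a rescaled $T_M$; the reference instead analyzes the $2\times 2$ companion matrix per eigenmode, whose complex eigenvalues have modulus $\sqrt{\eta}\le 1-(1-\tfrac{1}{\sqrt{2}})\sqrt{1-\lambda_2(W)}$ for the stated $\eta$, and the constant $\sqrt{14}$ comes from bounding the transient (non-normal) amplification of its powers. Since you explicitly defer exactly that quantitative step to the reference, your proposal is consistent with how the paper uses the lemma; only if you wanted a self-contained proof would you need to replace the $T_M$ claim with the companion-matrix (or first-plus-second-kind Chebyshev) computation.
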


After constructing the linear system of inequalities for $z^{t}$,
we obtain the following lemma that bounds the consensus error accumulated in the inner loop:
\begin{lemma}
    Given $T \sim \rm Geom (p)$, the expected consensus error at the end of the inner loop satisfies
     \begin{equation*}
    \begin{array}{rl}
           \mathbb{E}_T\left[\|{z}^{T} \|\right] \leq\hspace{-2.4mm} &\displaystyle \|{z}^0\| +  \frac{1-p}{p}{16\rho^2\eta^2\ell_1\ell_2}\|\vw^0 - \mathbf{1}\bar{w}^0\|^2 \\
    &\displaystyle + \frac{2-p}{p}{16\rho^2\eta^2 \ell_1\ell_2m}\ \mathbb{E}_T\left[\|\bar{w}^{T}- \bar{w}^0\|^2\right].
    \end{array}
    \end{equation*}
\end{lemma}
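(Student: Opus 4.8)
The plan is to set up a two-dimensional linear recursion for $z^t = (\norm{\vw^t - \mathbf{1}\bar w^t}^2,\ \norm{\vs^t - \mathbf{1}\bar s^t}^2)^\top$ and then take expectations over the geometric stopping time $T$. First I would analyze the two coordinates separately. For the state-variable consensus error $\norm{\vw^{t+1}-\mathbf{1}\bar w^{t+1}}^2$, I would use the update $\vw^{t+1} = \texttt{FastMix}(\prox_{m\eta,\Psi}(\vw^t - \eta\vs^t), M)$ together with Lemma~\ref{fastmix_lemma} to pull out a factor $\rho^2$, then use non-expansiveness of the proximal map and the fact that $\bar{\cdot}$ commutes appropriately with the mixing to bound the result by $\rho^2\norm{(\vw^t-\eta\vs^t) - \mathbf{1}(\bar w^t - \eta\bar s^t)}^2$, which splits (up to constants) into the $\vw$-consensus error and $\eta^2$ times the $\vs$-consensus error. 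For the gradient-tracking error $\norm{\vs^{t+1}-\mathbf{1}\bar s^{t+1}}^2$, I would use $\vs^{t+1}=\texttt{FastMix}(\vs^t + \vv^{t+1}-\vv^t, M)$, again extract $\rho^2$, and then control $\norm{\vv^{t+1}-\vv^t}$ using that each $f_{i,j}$ is $(\ell_1,\ell_2)$-smooth, so that $\norm{\nabla f_{i,j}(\vw^{t+1})-\nabla f_{i,j}(\vw^t)}^2 \lesssim \ell_1\ell_2\norm{\vw^{t+1}-\vw^t}^2$; this is where the $\ell_1\ell_2$ factor enters. The one-step increment $\norm{\vw^{t+1}-\vw^t}^2$ then gets decomposed into a consensus part $\norm{\vw^t-\mathbf{1}\bar w^t}^2$ and an average-movement part $m\norm{\bar w^{t+1}-\bar w^t}^2$.

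Assembling these two bounds gives a vector inequality of the form $z^{t+1} \preccurlyeq A z^t + \rho^2 c\,\eta^2\ell_1\ell_2\, (m\norm{\bar w^{t+1}-\bar w^t}^2)\, e$ for a fixed nonnegative matrix $A$ (whose entries are $O(\rho^2)$, hence with spectral norm $<1$ once $M$ is large enough, i.e.\ $\rho$ small enough) and a fixed vector $e$. Unrolling this recursion from $t=0$ to $t=T$ and summing the geometric-like series in $A$ (using $\norm{A}<1$ to bound $\sum_{s\ge 0}A^s$ by a constant) yields
\begin{equation*}
\norm{z^T} \lesssim \norm{z^0} + \rho^2\eta^2\ell_1\ell_2\sum_{t=0}^{T-1}\norm{\vw^t-\mathbf{1}\bar w^t}^2 + \rho^2\eta^2\ell_1\ell_2 m\sum_{t=0}^{T-1}\norm{\bar w^{t+1}-\bar w^t}^2.
\end{equation*}
Then I would take $\mathbb{E}_T$. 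The key facts about $T\sim\mathrm{Geom}(p)$ are that $\mathbb{E}_T[T] = (1-p)/p$ (or $1/p$ depending on the convention), and more importantly that for a summand indexed up to $T$ one has $\mathbb{E}_T[\sum_{t=0}^{T-1} a_t] = \sum_{t\ge 0}\Pr(T > t)\,\mathbb{E}[a_t\mid T>t]$; with the geometric tail $\Pr(T>t)=(1-p)^{t}$ this produces the weights $(1-p)/p$ and $(2-p)/p$ appearing in the statement (the slightly different constant on the $\bar w^{t+1}-\bar w^t$ term comes from the index shift, since that term is indexed by $t+1$ up to $T+1$). For the first sum I would crudely bound $\norm{\vw^t-\mathbf{1}\bar w^t}$ by $\norm{\vw^0-\mathbf{1}\bar w^0}$ — this is legitimate because the $\vw$-consensus error is non-increasing along the inner loop under the chosen parameters, a monotonicity fact that should follow from the same recursion with $\norm{A}<1$ — and for the second I would bound $\sum_{t=0}^{T}\norm{\bar w^{t+1}-\bar w^t}^2$ by a telescoping/averaging argument, or simply leave it as $\mathbb{E}_T[\norm{\bar w^T - \bar w^0}^2]$ after noting the relevant quantity is dominated by the endpoint displacement.

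The main obstacle I anticipate is twofold. First, bookkeeping the constants so that the two weights $(1-p)/p$ and $(2-p)/p$ come out exactly as stated — this requires being careful about which terms are indexed by $t$ versus $t+1$ when unrolling, and about the convention for $\mathrm{Geom}(p)$. Second, and more substantively, justifying the replacement of $\sum_{t=0}^{T-1}\norm{\vw^t-\mathbf{1}\bar w^t}^2$ by $\frac{1-p}{p}\norm{\vw^0-\mathbf{1}\bar w^0}^2$ and of the movement sum by $\frac{2-p}{p}\,m\,\mathbb{E}_T[\norm{\bar w^T-\bar w^0}^2]$: this needs the monotone decay of the per-iteration consensus error and a convexity/averaging bound $\sum_{t=0}^{T-1}\norm{\bar w^{t+1}-\bar w^t}^2$ controlled by the squared total displacement, which is not automatic for a sum of squared increments and likely uses the specific contraction built into the inner loop (so that the increments themselves shrink geometrically). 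Handling the coupling between the $\vw$- and $\vs$-consensus errors cleanly — rather than letting it blow up — is the place where the smallness of $\rho$ (large $M$) is really used, and getting a clean scalar bound out of the $2\times 2$ system is the crux of the argument.
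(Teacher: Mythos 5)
There is a genuine gap, and it sits exactly where you flagged your "main obstacle": your plan makes the inhomogeneous (forcing) term of the recursion an \emph{increment} quantity, $\rho^2\eta^2\ell_1\ell_2\, m\,\|\bar w^{t+1}-\bar w^t\|^2$, and then hopes to convert $\sum_{t\le T}\|\bar w^{t+1}-\bar w^t\|^2$ into $\mathbb{E}_T\big[\|\bar w^{T}-\bar w^0\|^2\big]$ by a telescoping/averaging argument. That conversion is false in general: by Cauchy--Schwarz the inequality goes the other way, $\|\bar w^{T}-\bar w^0\|^2\le T\sum_t\|\bar w^{t+1}-\bar w^t\|^2$, and nothing in the inner loop gives you geometric decay of the increments to rescue it. The paper avoids this entirely by a different decomposition at the step where $\|\vv^{t+1}-\vv^t\|^2$ is bounded: because the minibatches $B^t$ and $B^{t+1}$ are different (so you cannot simply pair gradients and write $\ell_1\ell_2\|\vw^{t+1}-\vw^t\|^2$ anyway), each gradient difference is anchored to the snapshot and to an arbitrary reference point $u$, producing forcing terms of the form $\|\vw^0-\mathbf{1}u\|^2$, $m\|\bar w^{t}-u\|^2$, $m\|\bar w^{t+1}-u\|^2$. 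Choosing $u=\bar w^0$, the forcing involves \emph{displacements from the epoch start}, $\|\bar w^{i}-\bar w^0\|^2$. Unrolling and swapping the order of summation, the coefficient multiplying $\|\bar w^{i}-\bar w^0\|^2$ comes out proportional to $(1-p)^i p$, i.e.\ exactly $\Pr(T=i)$, so the weighted sum \emph{is} $\mathbb{E}_T\big[\|\bar w^{T}-\bar w^0\|^2\big]$ by definition --- no telescoping, no endpoint-domination claim, and the factors $\tfrac{1-p}{p}$ and $\tfrac{2-p}{p}$ fall out of $1-(1-p)\|\mathbf{A}\|_2\ge p$.

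Your second patch has the same character of an unjustified shortcut: you propose to collapse $\sum_{t<T}\|\vw^t-\mathbf{1}\bar w^t\|^2$ to $\tfrac{1-p}{p}\|\vw^0-\mathbf{1}\bar w^0\|^2$ by asserting that the consensus error is non-increasing along the inner loop "by the same recursion with $\|\mathbf{A}\|<1$". Contraction of the homogeneous part does not give monotonicity of $z^t$ in the presence of the inhomogeneous forcing, and the paper never needs such a claim: after setting $u=\bar w^0$, the term $\|\vw^0-\mathbf{1}\bar w^0\|^2$ appears in every $b^i$ as a \emph{constant} forcing, so summing the geometric series in $\|\mathbf{A}\|_2$ directly yields the $\tfrac{1-p}{p}\,16\rho^2\eta^2\ell_1\ell_2\|\vw^0-\mathbf{1}\bar w^0\|^2$ term. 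In short, your overall architecture (the $2\times2$ linear system, extracting $\rho^2$ via Lemma~\ref{fastmix_lemma} and Lemma~\ref{prox_lemma}, smallness of $\|\mathbf{A}\|_2$) matches the paper, but the proof as proposed would not close: you need to re-derive the forcing term anchored at $u=\bar w^0$ rather than as per-step increments, and then use the exact matching between the geometric weights and the law of $T$ instead of monotonicity or endpoint-domination arguments.
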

Using the convexity and smoothness assumption, we can obtain the following result.
\begin{lemma}
For any iteration $t$ in the inner loop, the average variable is defined as $\bar{w}^{t} = {m}^{-1}\mathbf{1}^{\top}\vw^{t}$ where $\vw^{t+1} = \rm FastMix(\mathbf{prox}_{m \eta, \Psi}(\vw^t - \eta \vs^t), M)$. For any $u \in \BR^d$, 
\begin{equation*}
\hspace{-1.7mm}
\begin{array}{l}
     \mathbb{E}\left[F(\bar{w}^{t+1}) - F(u)\right]\\
     \displaystyle\leq  \mathbb{E} \left[  \frac{\eta}{1 - \eta L}\norm{ \bar{s}^t- \nabla f(\bar{w}^t)}^2 + \frac{2-\eta \sigma_f}{4\eta}\|u - \bar{w}^t\|^2 \right.\\
      \qquad\ \ \displaystyle -  \frac{(1 \!+ \!\sigma_{\psi} \eta)\|u  \!- \! \bar{w}^{t+1}\|^2}{2\eta} +  \frac{1 \!- \!\eta L  \!+ \! 2\eta}{2(1 \!- \!\eta L)m}\|\mathbf{1}\bar{s}^t  \! -  \! \mathbf{s}^t\|^2 \\
     \qquad\ \ \displaystyle \left.  + \left(\frac{\ell_1\ell_2}{\sigma_f m}+ \frac{1+\eta}{2m\eta}\right)\|\vw^t - \mathbf{1}\bar{w}^t \|^2 \right]. 
\end{array}    
\end{equation*}
\end{lemma}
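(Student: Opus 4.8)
The starting point is that \texttt{FastMix} preserves averages (its iteration is a polynomial in the doubly stochastic $W$ and $W\mathbf{1}=\mathbf{1}$), and that the aggregated map $\prox_{m\eta,\Psi}$ acts separately on each machine's block, i.e.\ $[\prox_{m\eta,\Psi}(\mathbf{x})]_i=\prox_{\eta,\psi}(x_i)$, which follows from the separable form of $\Psi$ and of the squared norm. Hence $\bar{w}^{t+1}=\frac{1}{m}\sum_{i=1}^m\hat{w}_i^{t+1}$ with $\hat{w}_i^{t+1}:=\prox_{\eta,\psi}(w_i^t-\eta s_i^t)$; the crucial structural feature is that $\bar{w}^{t+1}$ is an \emph{average of proximal steps taken at distinct local points}. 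I would establish the claimed inequality pathwise and then take expectations, since every step below is valid realization-by-realization.

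The core is a decentralized proximal-gradient descent estimate. By convexity of $\psi$ and Jensen, $\psi(\bar{w}^{t+1})\le\frac{1}{m}\sum_i\psi(\hat{w}_i^{t+1})$. For the smooth part I apply $L$-smoothness of $f$ along $\bar{w}^t\to\bar{w}^{t+1}$, write $\bar{w}^{t+1}-\bar{w}^t=\frac{1}{m}\sum_i(\hat{w}_i^{t+1}-w_i^t)$ (the $w_i^t-\bar{w}^t$ part sums to zero), and split the resulting quadratic by triangle/Young into a per-node term in $\hat{w}_i^{t+1}-w_i^t$ plus a consensus contribution proportional to $\frac{1}{m}\norm{\vw^t-\mathbf{1}\bar{w}^t}^2$. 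Next, the optimality condition $\tfrac{1}{\eta}(w_i^t-\eta s_i^t-\hat{w}_i^{t+1})\in\partial\psi(\hat{w}_i^{t+1})$, combined with $\sigma_\psi$-strong convexity of $\psi$ at the test point $u$ and $\sigma_f$-strong convexity of $f$ at $\bar{w}^t$ versus $u$, yields a three-point inequality per node. Applying the identity $\langle a-b,b-c\rangle=\tfrac{1}{2}(\norm{a-c}^2-\norm{a-b}^2-\norm{b-c}^2)$ to the $\tfrac{1}{\eta}\langle w_i^t-\hat{w}_i^{t+1},\hat{w}_i^{t+1}-u\rangle$ term, using $\frac{1}{m}\sum_i\norm{w_i^t-u}^2=\frac{1}{m}\norm{\vw^t-\mathbf{1}\bar{w}^t}^2+\norm{\bar{w}^t-u}^2$, and using Jensen once more as $\frac{1}{m}\sum_i\norm{\hat{w}_i^{t+1}-u}^2\ge\norm{\bar{w}^{t+1}-u}^2$, produces the $\frac{1}{2\eta}\norm{u-\bar{w}^t}^2$ term, the negative $-\frac{1+\sigma_\psi\eta}{2\eta}\norm{u-\bar{w}^{t+1}}^2$ term, and a negative progress term $-\frac{1-\eta L}{2\eta}\cdot\frac{1}{m}\sum_i\norm{\hat{w}_i^{t+1}-w_i^t}^2$ after absorbing the $f$-smoothness quadratic.

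What remains are the cross terms carrying the gradient estimator. The inner product $\langle\nabla f(\bar{w}^t)-\bar{s}^t,\bar{w}^{t+1}-\bar{w}^t\rangle$ is rewritten via $\bar{w}^{t+1}-\bar{w}^t=\frac{1}{m}\sum_i(\hat{w}_i^{t+1}-w_i^t)$ and Young-split against the progress term, the constant chosen so that exactly $\frac{\eta}{1-\eta L}\norm{\bar{s}^t-\nabla f(\bar{w}^t)}^2$ survives. The perturbation $-\frac{1}{m}\sum_i\langle s_i^t-\bar{s}^t,\hat{w}_i^{t+1}-u\rangle$ is Young-split, producing $\frac{1-\eta L+2\eta}{2(1-\eta L)m}\norm{\mathbf{1}\bar{s}^t-\mathbf{s}^t}^2$ plus further consensus error that is reabsorbed. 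Finally the gap between the aggregate gradient $\nabla f(\bar{w}^t)$ and the locally available gradient information is controlled using the $(\ell_1,\ell_2)$-smoothness of the components (which averaged gives each $f_i$ an $\ell_1$-upper and $\ell_2$-lower quadratic bound) together with $L$-smoothness and convexity of $f$; a Young split against the leftover $\tfrac{\sigma_f}{4}\norm{u-\bar{w}^t}^2$ both converts $\frac{1}{2\eta}-\frac{\sigma_f}{2}$ into the stated $\frac{2-\eta\sigma_f}{4\eta}$ and yields the coefficient $\frac{\ell_1\ell_2}{\sigma_f m}$ on $\norm{\vw^t-\mathbf{1}\bar{w}^t}^2$. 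Collecting all pieces with the Young constants calibrated, then taking expectation, gives the claim.

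\textbf{Main obstacle.} The real difficulty, compared with the single-machine prox-SVRG analysis, is that $\bar{w}^{t+1}$ is not a single proximal step, so there is no clean gradient mapping at the averaged iterate; one must instead push the per-node subgradient inequalities through Jensen and then control a proliferation of cross terms among the local iterates, the local gradient estimators, and their averages, arranging all the Young's-inequality constants so that every stray term collapses exactly into $\norm{\vw^t-\mathbf{1}\bar{w}^t}^2$ or $\norm{\mathbf{1}\bar{s}^t-\mathbf{s}^t}^2$ with precisely the stated coefficients (in particular recovering the $\ell_1\ell_2$ dependence).
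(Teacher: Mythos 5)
Your overall route is the same as the paper's: write $\bar{w}^{t+1}$ as the average of the per-node proximal points $\tilde{w}_i^{t+1}=\mathbf{prox}_{\eta,\psi}(w_i^t-\eta s_i^t)$ (FastMix preserves averages, prox is separable), use $L$-smoothness and $\sigma_f$-strong convexity of $f$ at $\bar w^t$, Jensen for $\psi$ and for $\|u-\bar w^{t+1}\|^2$, the per-node three-point prox inequality (the paper cites its Lemma on the mirror/prox step where you re-derive it from the optimality condition — same content), and Young splits calibrated so the stray terms collapse into $\|\vw^t-\mathbf{1}\bar w^t\|^2$, $\|\mathbf{s}^t-\mathbf{1}\bar s^t\|^2$, and $\frac{\eta}{1-\eta L}\|\bar s^t-\nabla f(\bar w^t)\|^2$, with the $(\ell_1,\ell_2)$-smoothness bound Young-split against $\frac{\sigma_f}{4}\|u-\bar w^t\|^2$ producing the $\frac{\ell_1\ell_2}{\sigma_f m}$ coefficient. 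All of that matches the paper's proof.

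There is, however, one genuine flaw: your claim that the inequality can be established \emph{pathwise}, with expectations taken only at the end. After the decomposition, the estimator error appears linearly paired with $\bar w^t-u$, i.e.\ a term of the form $\langle \nabla f(\bar w^t)-\bar s^t,\ \bar w^t-u\rangle$. Realization-by-realization this term cannot be absorbed into the stated right-hand side: the only pathwise option is a Young split (e.g.\ against $\frac{\sigma_f}{4}\|u-\bar w^t\|^2$), which produces an extra $\frac{1}{\sigma_f}\|\bar s^t-\nabla f(\bar w^t)\|^2$ that is not in the statement (the stated variance coefficient is only $\frac{\eta}{1-\eta L}$, and the $\frac{1}{\sigma_f}$ term would wreck the downstream rate). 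Indeed, already for $m=1$, $\psi\equiv 0$, $f(x)=\frac{\sigma_f}{2}\|x\|^2$, $w^t=0$, a small estimator error aligned with $w^t-u$ and $\|w^t-u\|\approx 2\|\text{error}\|/\sigma_f$ violates the pathwise version of the bound. The paper's proof avoids this by taking the conditional expectation over the minibatch $B^t$ at exactly this point and using unbiasedness, $\mathbb{E}_{B^t}[\bar s^t]=\frac{1}{m}\sum_{i}\nabla f_i(w_i^t)$ (which holds because $\bar s^t=\bar v^t$ by the average-preservation of the gradient-tracking/FastMix update, the paper's Lemma on decentralized gradients), so that only the \emph{deterministic} gap $\nabla f(\bar w^t)-\frac{1}{m}\sum_i\nabla f_i(w_i^t)$ remains; this gap is then bounded by $(\ell_1,\ell_2)$-smoothness and Young to give $\frac{\ell_1\ell_2}{\sigma_f m}\|\vw^t-\mathbf{1}\bar w^t\|^2$. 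So your plan needs this conditional-expectation step (and the measurability of $u$, $\vw^t$, $\vs^{t-1}$ with respect to the pre-$B^t$ sigma-algebra) inserted where the linear error term is handled; with that correction the rest of your calibration goes through as in the paper.
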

Moreover, one can show the following result by the $(\ell_1, \ell_2)$-smoothness of $f_{i,j}$:
\begin{lemma} Denote the variable $\bar{s}^t = {m}^{-1}\sum_{i=1}^m \mathbf{s}_i^t$ with $\mathbf{s}_i^{t} = FastMix(\mathbf{s}^{t-1} + \mathbf{v}^{t} - \mathbf{v}^{t-1}, M)_i$. Then,
\begin{equation*}
\begin{array}{l}
    \displaystyle\mathbb{E}\left[\|\bar{s}^t - \nabla f(\bar{w}^t)\|^2\right] \\
    \displaystyle\leq \left( \frac{6 \ell_1\ell_2}{m b} + \frac{2 \ell_1\ell_2}{m} \right) \|  \vw^t - \mathbf{1}\bar{w}^t\|^2 \\
    \displaystyle \quad \ + \frac{6\ell_1\ell_2}{b} \|  \bar{w}^t - \bar{w}^0\|^2 +\frac{6\ell_1\ell_2}{m b}\| \vw^0 - \mathbf{1}\bar{w}^0  \|^2. 
\end{array}        
\end{equation*}
\end{lemma}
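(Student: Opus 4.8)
The plan is to push everything onto the \emph{average} iterates, exploiting that \texttt{FastMix} preserves row-averages, and then to run a routine bias--variance analysis of the minibatch SVRG gradient estimator, invoking $(\ell_1,\ell_2)$-smoothness only at the very end.

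\textbf{Step 1 (exact tracking of the averages).} Since $W$ is doubly stochastic, one \texttt{FastMix} step $\mathbf{x}^{k+1}=(1+\eta)\mathbf{x}^kW-\eta\mathbf{x}^{k-1}$ satisfies $\mathbf{1}^\top\mathbf{x}^{k+1}=(1+\eta)\mathbf{1}^\top\mathbf{x}^k-\eta\mathbf{1}^\top\mathbf{x}^{k-1}$, so (with $\mathbf{x}^{-1}=\mathbf{x}^0$) the row-average is preserved, for every number of rounds $M$. Applying this to the gradient-tracking update of $\vs$ in Algo.~\ref{strongconvex_algo_2} gives $\bar s^t=\overline{\vs^{t-1}+\vv^t-\vv^{t-1}}=\bar s^{t-1}+\bar v^t-\bar v^{t-1}$, and telescoping down to the initialization $\vs^{-1}=\vv^{-1}$ yields $\bar s^t=\bar v^t$. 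The same argument on the outer loop, started from $\hat s_i^0=\nabla f(x^0)$, gives the standard gradient-tracking identity $\bar\mu=\tfrac1m\sum_i\nabla f_i(x_i^{k+1})=\tfrac1m\sum_i\nabla f_i(w_i^0)$. Hence $\bar s^t=\bar v^t=\tfrac1m\sum_i\big(\nabla f_i(w_i^0)+\tfrac1b\sum_{j_i\in B_i^t}(\nabla f_{i,j_i}(w_i^t)-\nabla f_{i,j_i}(w_i^0))\big)$, and conditioning on the history up to the start of iteration $t$ and writing $\mathbb{E}_{B^t}$ for expectation over the minibatches $\{B_i^t\}_i$, we get $\mathbb{E}_{B^t}[\bar v^t]=g^t:=\tfrac1m\sum_i\nabla f_i(w_i^t)$.

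\textbf{Step 2 (bias--variance split and assembly).} With $\mathbb{E}=\mathbb{E}_{B^t}$, $\mathbb{E}\|\bar s^t-\nabla f(\bar w^t)\|^2=\mathbb{E}\|\bar v^t-g^t\|^2+\|g^t-\nabla f(\bar w^t)\|^2$. For the bias term, Jensen plus $\|\nabla f_i(x)-\nabla f_i(y)\|^2\le\ell_1\ell_2\|x-y\|^2$ give $\|g^t-\nabla f(\bar w^t)\|^2\le\tfrac1m\sum_i\|\nabla f_i(w_i^t)-\nabla f_i(\bar w^t)\|^2\le\tfrac{\ell_1\ell_2}{m}\|\vw^t-\mathbf{1}\bar w^t\|^2$. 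For the variance term, the $B_i^t$ are independent across $i$, so $\mathbb{E}\|\bar v^t-g^t\|^2=\tfrac1{m^2}\sum_i\Var_{B_i^t}(v_i^t)$, and since $v_i^t$ depends on $B_i^t$ only through a $b$-sample average of i.i.d.\ terms, $\Var_{B_i^t}(v_i^t)\le\tfrac1b\mathbb{E}_{j}\|\nabla f_{i,j}(w_i^t)-\nabla f_{i,j}(w_i^0)\|^2\le\tfrac{\ell_1\ell_2}{b}\|w_i^t-w_i^0\|^2$. Finally, splitting $\|w_i^t-w_i^0\|^2\le 3\|w_i^t-\bar w^t\|^2+3\|\bar w^t-\bar w^0\|^2+3\|\bar w^0-w_i^0\|^2$, summing over $i$, and loosening the resulting coefficients using $m\ge1$ gives exactly the claimed inequality.

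\textbf{Main obstacle.} The one non-mechanical ingredient is the component bound $\mathbb{E}_{j}\|\nabla f_{i,j}(x)-\nabla f_{i,j}(y)\|^2\le\ell_1\ell_2\|x-y\|^2$ (which then gives the $f_i$-bound used above by Jensen): weak convexity of the $f_{i,j}$ alone only yields $\ell_2^2$, and obtaining the \emph{product} $\ell_1\ell_2$ — the source of the eventual $\sqrt{\ell_1\ell_2}$ (rather than $\ell_2$) dependence in the rate — requires applying co-coercivity to the convexified components $f_{i,j}+\tfrac{\ell_2}{2}\|\cdot\|^2$ and then eliminating the leftover cross-term via $\ell_1-\ell_2\le0$ together with convexity of $f_i$. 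Apart from that, the only care needed is the bookkeeping of the two nested gradient-tracking recursions in Step~1.
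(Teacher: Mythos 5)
Your proposal follows essentially the same route as the paper's own proof: average-preservation of \texttt{FastMix} gives $\bar{s}^t=\bar{v}^t$ (the paper simply cites this as an established lemma from \citet{ye2020pmgt}), then a bias--variance split of the minibatch estimator into a consensus term and a within-batch variance term, the componentwise bound $\mathbb{E}_j\|\nabla f_{i,j}(x)-\nabla f_{i,j}(y)\|^2\le \ell_1\ell_2\|x-y\|^2$, and the three-way splitting of $\|w_i^t-w_i^0\|^2$; your exact conditional orthogonality and cross-agent independence merely tighten the intermediate constants (the paper uses $\|a+b\|^2\le 2\|a\|^2+2\|b\|^2$ and Jensen), which you then relax to the stated coefficients, so the claimed inequality follows a fortiori. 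The only caveat is your justification of the $\ell_1\ell_2$ factor via ``convexity of $f_i$'': the paper assumes only the global $f$ to be convex, not the local $f_i$, and its own proof drops the cross term of the $(\ell_1,\ell_2)$-smoothness inequality without comment, so you are, if anything, more explicit about a hypothesis the paper leaves implicit rather than taking a different approach.
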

Combining the above three lemmas, we can prove the following main result for the inner loop:
\begin{lemma}
    If we choose the hyperparameters for Algorithm \ref{strongconvex_algo_2} such that $\eta \!\leq \! \min\left({1}/({2L}), \sqrt{{b}/{(\ell_1 \ell_2 t_0)}}/8\right)$, and $\rho \leq  \min\left((8 b m \eta J  t_0)^{-1/2}, (18(1 + {b}))^{-1/2}\right)$ where $J \coloneqq J(\eta, b)$ is some positive constant. Then, for any $u \in \BR^d$,
\begin{equation*}
\begin{array}{l}
    \mathbb{E}\left[F(\bar{w}^{T+1}) - F(u)\right] \\ 
    \displaystyle \leq  \mathbb{E} \left[-\frac{\|\bar{w}^{T+1} - \bar{w}^0\|^2}{4\eta t_0} + \frac{\langle \bar{w}^0 - u, \bar{w}^0 - \bar{w}^{T+1}\rangle}{t_0\eta} \right. \\
    \displaystyle \qquad\ \ - \frac{\sigma_f + \sigma_{\psi}}{8}\| u - \bar{w}^{T+1}\|^2 +  J \eta^2 \| \mathbf{s}^0 - \mathbf{1}\bar{s}^0  \|^2 \\
    \displaystyle  \qquad\  \left.  + \left( J + \frac{1}{4 \eta m  }\right)\| \vw^0 - \mathbf{1}\bar{w}^0  \|^2  \right].
\end{array}
\end{equation*}
\end{lemma}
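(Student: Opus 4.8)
The plan is to combine the three preceding lemmas into the stated per-epoch estimate. Throughout I write $D_t=\|\mathbf{w}^t-\mathbf{1}\bar w^t\|^2$, $E_t=\|\mathbf{s}^t-\mathbf{1}\bar s^t\|^2$ and $P_t=\|\bar w^t-\bar w^0\|^2$ for the two consensus errors and the epoch displacement, and recall $t_0=\lceil n/b\rceil$ and $T\sim\texttt{Geom}(1/t_0)$.

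\textbf{Step 1 (a single clean per-iteration inequality).} First I would plug the variance estimate $\mathbb{E}\|\bar s^t-\nabla f(\bar w^t)\|^2\le(\tfrac{6\ell_1\ell_2}{mb}+\tfrac{2\ell_1\ell_2}{m})D_t+\tfrac{6\ell_1\ell_2}{b}P_t+\tfrac{6\ell_1\ell_2}{mb}D_0$ into the functional-progress lemma and simplify all coefficients using $\eta\le 1/(2L)$ (so that $1-\eta L\ge\tfrac12$ and $\tfrac{\eta}{1-\eta L}\le 2\eta$). This yields, for every $t\ge0$ and every $u\in\BR^d$, an inequality of the shape
\begin{equation*}
\begin{array}{l}
\mathbb{E}[F(\bar w^{t+1})-F(u)]\le\mathbb{E}\big[\tfrac{1}{2\eta}\|u-\bar w^t\|^2-\tfrac{1}{2\eta}\|u-\bar w^{t+1}\|^2\\
\quad-\tfrac{\sigma_f}{4}\|u-\bar w^t\|^2-\tfrac{\sigma_\psi}{2}\|u-\bar w^{t+1}\|^2+a_1D_t+a_2E_t+a_3D_0+a_4P_t\big],
\end{array}
\end{equation*}
with the $a_i$ explicit in $\eta,\ell_1,\ell_2,m,b$ and, crucially, $a_4\le 12\eta\ell_1\ell_2/b$. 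I would let $J=J(\eta,b)$ denote the aggregate constant collecting $a_1,a_2$ together with the extra constants produced in Step 3.

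\textbf{Step 2 (integrate over the geometric stopping time).} Multiplying the step-$t$ inequality by $\Pr[T=t]=\tfrac1{t_0}(1-\tfrac1{t_0})^t$ and summing over $t\ge0$ turns the left-hand side into $\mathbb{E}[F(\bar w^{T+1})-F(u)]$, turns each running quantity $a_t$ into $\mathbb{E}_T[a_T]$, and collapses the potential pair through the geometric identity $\sum_{t\ge0}\Pr[T=t](a_t-a_{t+1})=\tfrac{1}{t_0-1}(a_0-\mathbb{E}_T[a_T])$. After using $t_0-1\ge t_0/2$, spending a slice of the reserved $-\tfrac{\sigma_f}{4}$-term and a Young step to shift the surviving distance term from time $T$ to time $T+1$, and expanding $\|u-\bar w^0\|^2-\|u-\bar w^{T+1}\|^2=-2\langle u-\bar w^0,\bar w^0-\bar w^{T+1}\rangle-\|\bar w^{T+1}-\bar w^0\|^2$, I arrive at a bound whose ``clean'' part is exactly $\tfrac1{t_0\eta}\langle\bar w^0-u,\bar w^0-\bar w^{T+1}\rangle-\tfrac1{2t_0\eta}\|\bar w^{T+1}-\bar w^0\|^2-\tfrac{\sigma_f+\sigma_\psi}{8}\|u-\bar w^{T+1}\|^2$, plus the error terms $a_1\mathbb{E}_T[D_T]+a_2\mathbb{E}_T[E_T]+a_3D_0+a_4\mathbb{E}_T[P_T]$.

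\textbf{Step 3 ($\rho$-absorption).} Finally I would control $\mathbb{E}_T[D_T]$ and $\mathbb{E}_T[E_T]$ with the consensus-error lemma (built on the FastMix contraction of Lemma~\ref{fastmix_lemma}), which re-expresses them in terms of $D_0,E_0$ with $O(1)$ coefficients and of $\mathbb{E}_T[P_T]$ with a coefficient of order $t_0\rho^2\eta^2\ell_1\ell_2 m$, and further split $\mathbb{E}_T[P_T]\le 2\mathbb{E}_T\|\bar w^{T+1}-\bar w^0\|^2+2\mathbb{E}_T\|\bar w^{T+1}-\bar w^T\|^2$, the last one-step move being $O(\eta^2)$ times gradient/consensus quantities that are again absorbable through $\rho$. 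The constraint $\eta\le\tfrac18\sqrt{b/(\ell_1\ell_2t_0)}$ makes $a_4$ a small multiple of $\tfrac1{t_0\eta}$, and the two stated restrictions on $\rho$ — which I would read off precisely as ``$8bm\eta J t_0\rho^2\le1$'' and ``$18(1+b)\rho^2\le1$'' — make the remaining contributions to the coefficient of $\mathbb{E}_T\|\bar w^{T+1}-\bar w^0\|^2$ small enough that the total stays below $\tfrac1{4t_0\eta}$ and is absorbed by the reserved $-\tfrac1{2t_0\eta}\|\bar w^{T+1}-\bar w^0\|^2$, leaving $-\tfrac1{4\eta t_0}\|\bar w^{T+1}-\bar w^0\|^2$. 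What survives is precisely the claimed right-hand side, with $\|\mathbf s^0-\mathbf1\bar s^0\|^2$ carrying coefficient $J\eta^2$ and $\|\mathbf w^0-\mathbf1\bar w^0\|^2$ carrying $J+\tfrac1{4\eta m}$.

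\textbf{Where the difficulty lies.} The only real work is Step 3: there are several positive terms proportional to $\mathbb{E}_T\|\bar w^{T+1}-\bar w^0\|^2$ — one from the variance bound, two coming out of the consensus recursions after substituting the consensus-error lemma, and one from the one-step correction — and one must verify that, under the stated bounds on $\eta$ and $\rho$, their sum is dominated by the single reserved negative coefficient $\tfrac1{4t_0\eta}$; this verification is exactly what pins down the constant $J$. A secondary nuisance is the bookkeeping in Step 2: the geometric telescoping naturally outputs quantities at time $T$ whereas the statement is phrased at $T+1$, which is why part of the $\sigma$-strong-convexity slack (and a Young inequality) has to be spent there.
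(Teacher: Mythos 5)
Your Step 1 and the absorption logic of Step 3 are the paper's own route: the paper likewise plugs the variance lemma into the progress lemma, uses $\eta\le 1/(2L)$ so that $\tfrac{\eta}{1-\eta L}\le 2\eta$, evaluates the per-iteration bound at the geometric time $T$, invokes the consensus-error lemma, and uses the constraints on $\eta$ and $\rho$ (with $J$ being exactly the aggregate of the coefficients you describe, plus the term $\tfrac{1-\eta L+2\eta}{2(1-\eta L)m\eta^2}$ coming from rescaling the gradient-tracking consensus error) to push the total coefficient of $\mathbb{E}\|\bar{w}^{T+1}-\bar{w}^0\|^2$ below $\tfrac{1}{4\eta t_0}$, finishing with the identity $\|u-\bar{w}^0\|^2-\|u-\bar{w}^{T+1}\|^2+\|\bar{w}^{T+1}-\bar{w}^0\|^2=2\langle\bar{w}^0-u,\,\bar{w}^0-\bar{w}^{T+1}\rangle$.

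The genuine gap is in your mechanism for shifting time-$T$ quantities to time $T+1$. You propose (i) a Young step plus strong-convexity slack to turn $-\mathbb{E}\|u-\bar{w}^T\|^2$ into $-\mathbb{E}\|u-\bar{w}^{T+1}\|^2$, and (ii) the split $\mathbb{E}\|\bar{w}^T-\bar{w}^0\|^2\le 2\,\mathbb{E}\|\bar{w}^{T+1}-\bar{w}^0\|^2+2\,\mathbb{E}\|\bar{w}^{T+1}-\bar{w}^T\|^2$, claiming the one-step term is $O(\eta^2)$ times quantities ``absorbable through $\rho$''. That claim fails: $\bar{w}^{T+1}-\bar{w}^T$ is a full proximal-gradient step, of size roughly $\eta\|\bar{s}^T\|$ up to consensus corrections, and $\|\bar{s}^T\|$ is controlled neither by $\rho$ (FastMix only contracts disagreement, not the averaged estimator) nor by the consensus errors; bounding it requires $\|\nabla f(\bar{w}^T)\|^2$ or function-value terms, which have no counterpart on the stated right-hand side for arbitrary $u$ and hence cannot be absorbed. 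The same objection hits the one-step correction generated in (i). The repair is to use the other form of the quoted geometric-distribution identity, $\mathbb{E}_T[D_T]=(1-p)\mathbb{E}[D_{T+1}]+pD_0$ with $p=1/t_0$: applied to $\|u-\bar{w}^T\|^2$, to $\|\bar{w}^T-\bar{w}^0\|^2$ (where $D_0=0$), and to the $\mathbb{E}\|\bar{w}^T-\bar{w}^0\|^2$ term output by the consensus lemma, it converts everything to time $T+1$ exactly, with no Young step, no one-step term, and no use of $t_0-1\ge t_0/2$ (your factor $\tfrac{1}{t_0-1}$ also degenerates at $t_0=1$). With that substitution your argument closes and coincides with the paper's proof.
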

For the theoretical analysis of the outer loop, we define the following vector of consensus errors 
\begin{equation*}
(\norm{\vx^t  \!- \! {\bf 1} \bar{x}^t}^2 \!,  \norm{\vy^t  \!-  \!{\bf 1} \bar{y}^t}^2\!,  \norm{\vq^t  \!-  \!{\bf 1} \bar{q}^t}^2\!,  \eta^2\norm{\bf \hat{s}^t  \!-  \!{\bf 1} \bar{\hat{s}}^t}^2)^{\top}.  
\end{equation*}
We also construct a system of linear inequalities of the above quantity. 
Using a similar proof technique as the inner loop to bound the consensus error of the outer loop, we can prove Theorem~\ref{theorem_strong_convex} by induction. 
The complete theoretical analysis of PMGT-KatyushaX is presented in the Appendix.

\section{Numerical Experiments}\label{sec:experiment}
To demonstrate the efficiency of PMGT-KatyushaX, we evaluate the proposed method on the sub-problem of solving PCA by the shift-and-invert method.
The corresponding sum-of-nonconvex optimizing problem has the form of
\begin{equation}\label{exp_obj}
\small\begin{split}
\min_{x\in\mathcal{R}^d} F(x) = \frac{1}{2} x^{\top}\left(\left(\lambda_1 + \frac{\lambda_1 - \lambda_2}{r}\right) I - A\right) x + b^{\top}x
\end{split}
\end{equation}
where $A = ({m n})^{-1}\sum_{i=1}^m \sum_{j=1}^n a_{i,j} a_{i,j}^{\top}\in\BR^{d\times d}$, $\lambda_1$ and $\lambda_2$ are the largest and the second largest eigenvalues of the matrix $A$; $r$ is a hyperparameter that controls the ratio for the eigengap.

\begin{figure}[!tb]
    \centering
    \begin{tabular}{cc}
     \includegraphics[scale=0.233]{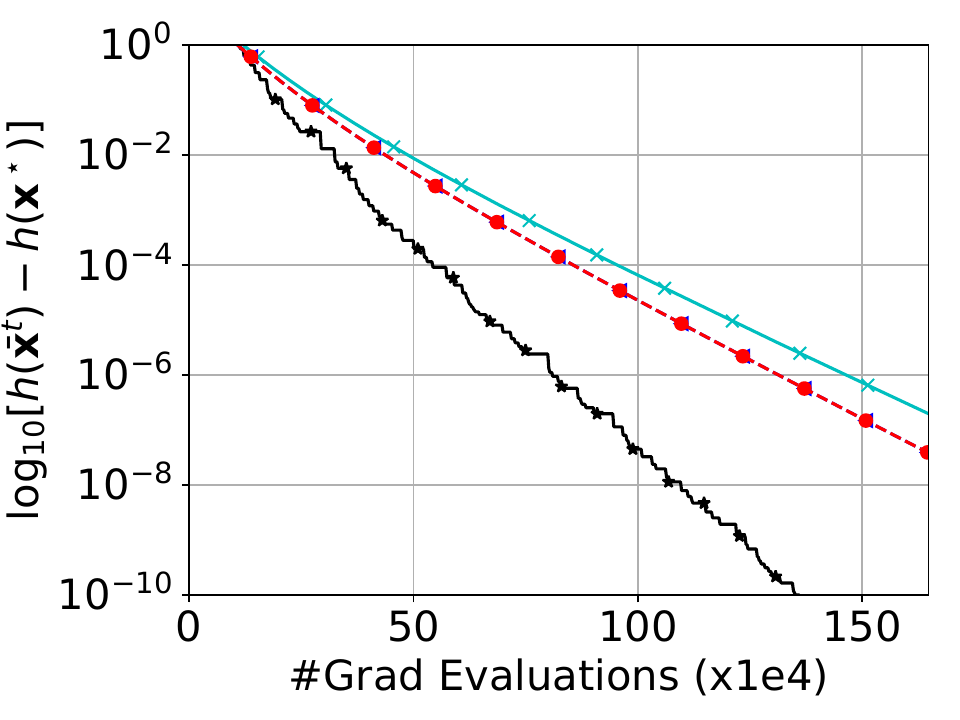}
     & \includegraphics[scale=0.233]{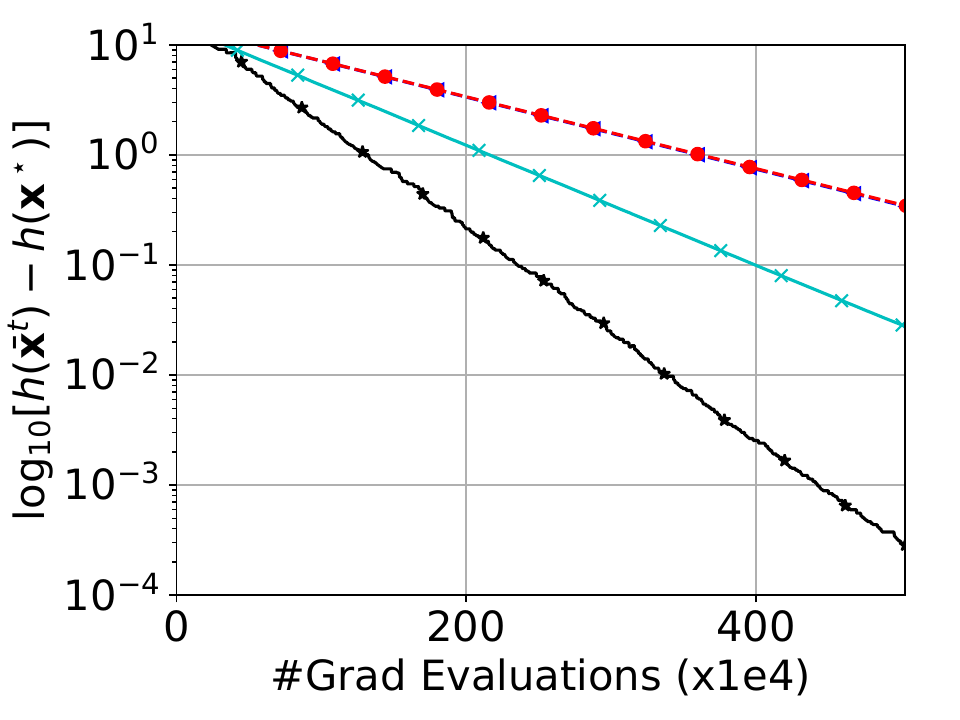}  \\
      \includegraphics[scale=0.233]{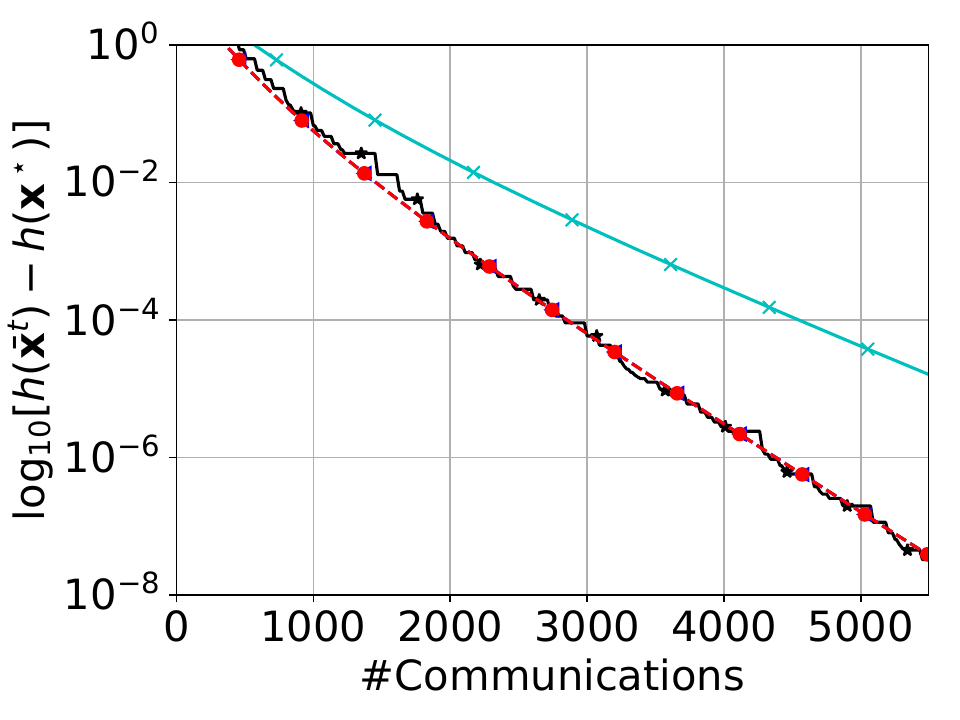}
     & \includegraphics[scale=0.233]{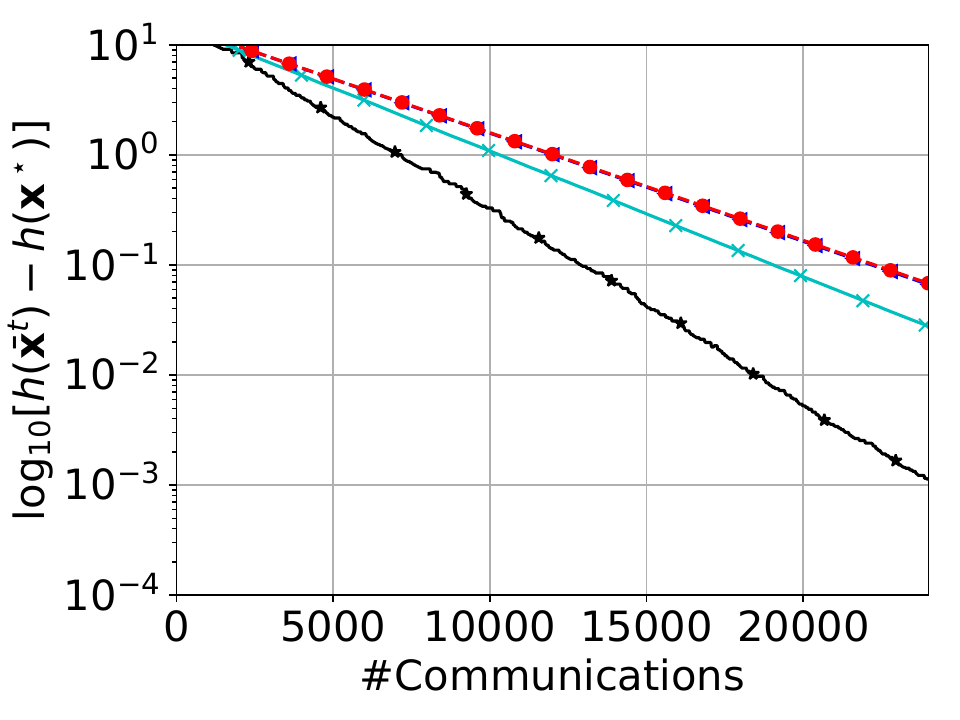}  \\
      \includegraphics[scale=0.233]{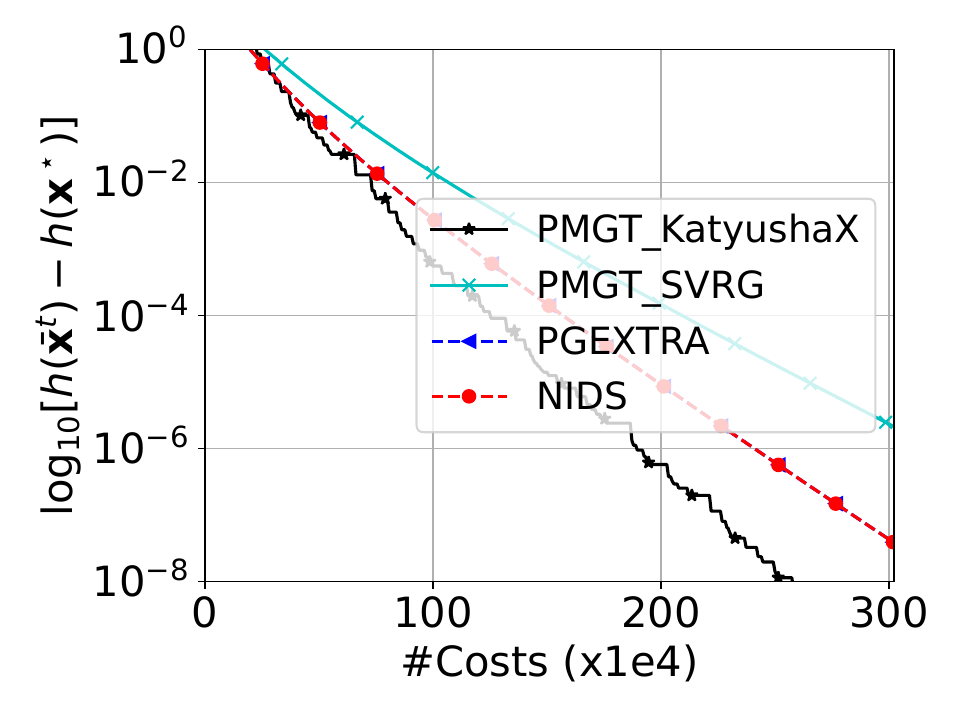}
     & \includegraphics[scale=0.233]{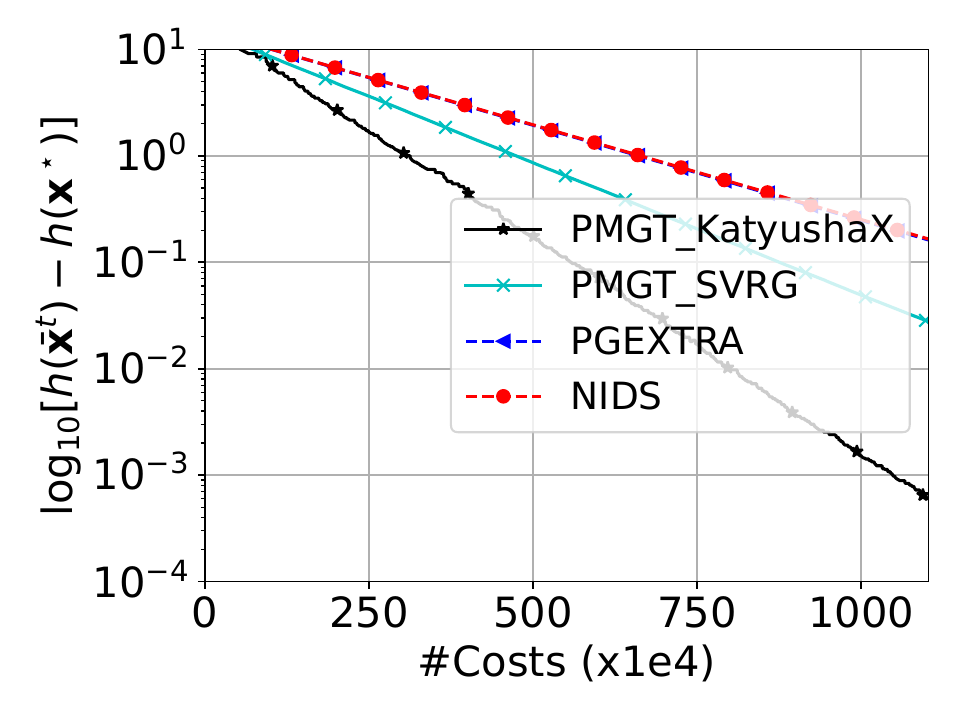}  \\
\end{tabular}
    \caption{Performance comparison between PGEXTRA, NIDS, PMGT-SVRG, and PMGT-KatyushaX on the synthetic dataset. 
The left column represents results with the ratio $r=2$ and the right column represents results with the ratio $r=300$ defined in Problem~(\ref{exp_obj}).
The plot of PGEXTRA and NIDS are overlapped as their performance are close to each other.} 
\label{synthetic_result}
\end{figure}

\begin{figure}[!tb]
    \centering
    \begin{tabular}{cc}
     \includegraphics[scale=0.233]{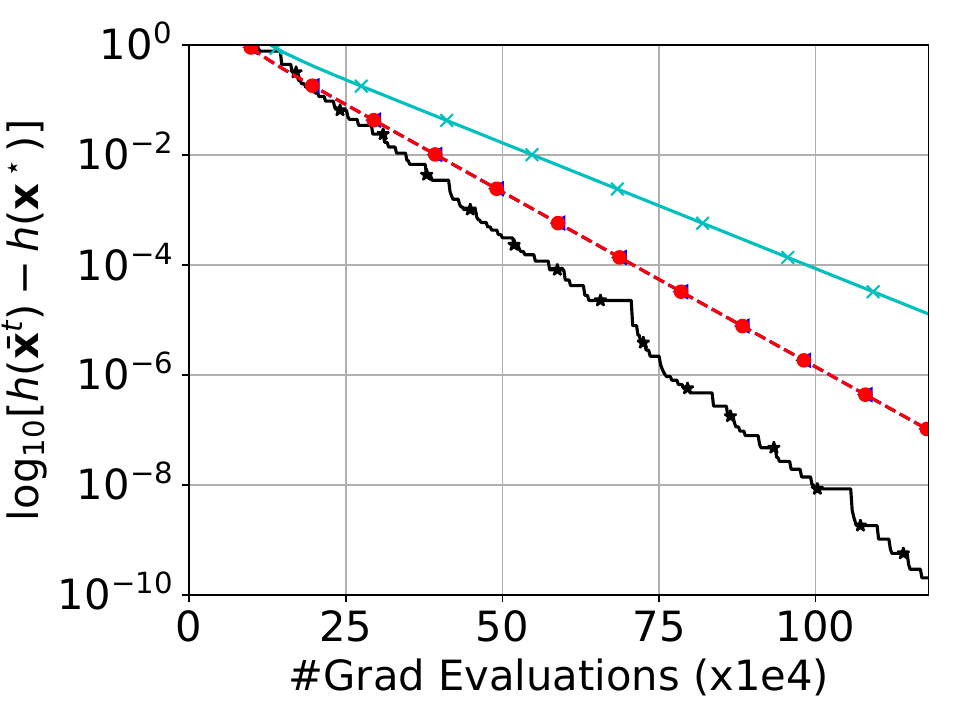}
     & \includegraphics[scale=0.233]{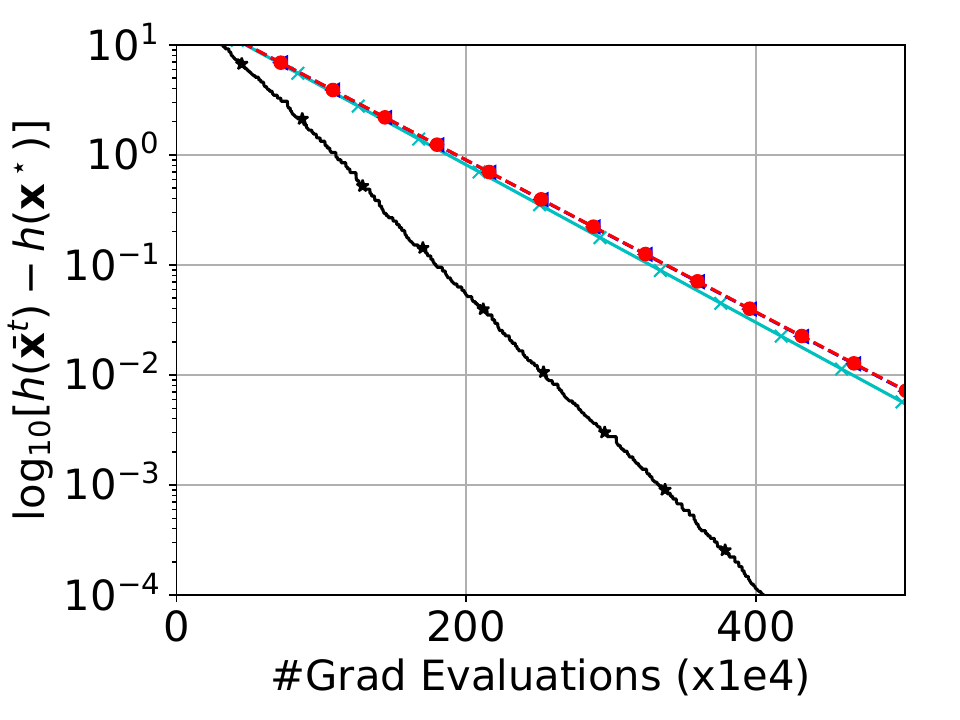}  \\
      \includegraphics[scale=0.233]{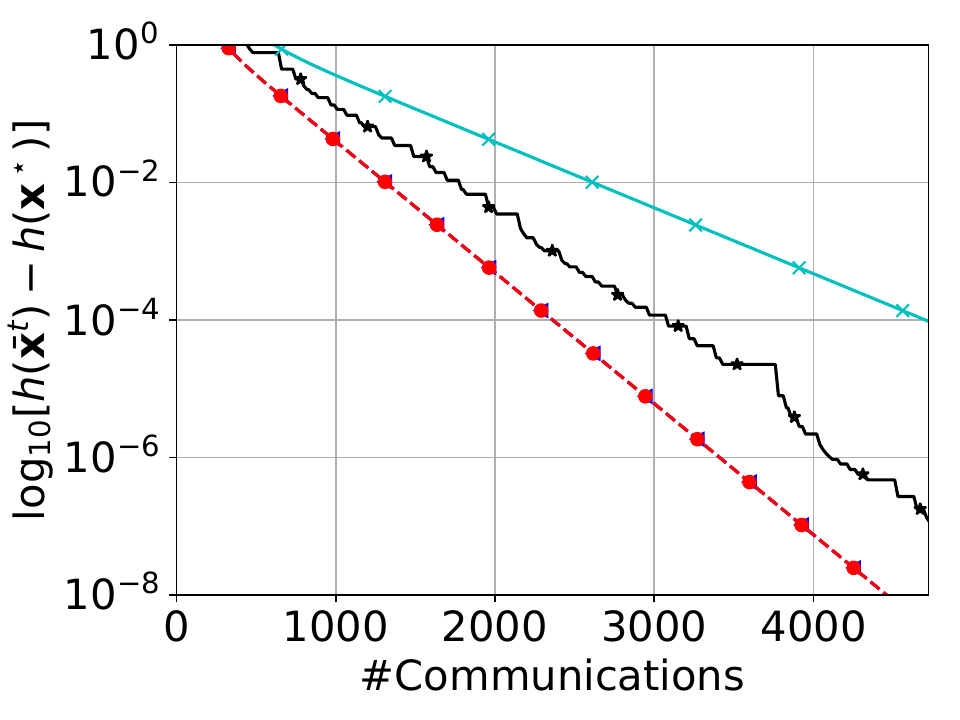}
     & \includegraphics[scale=0.233]{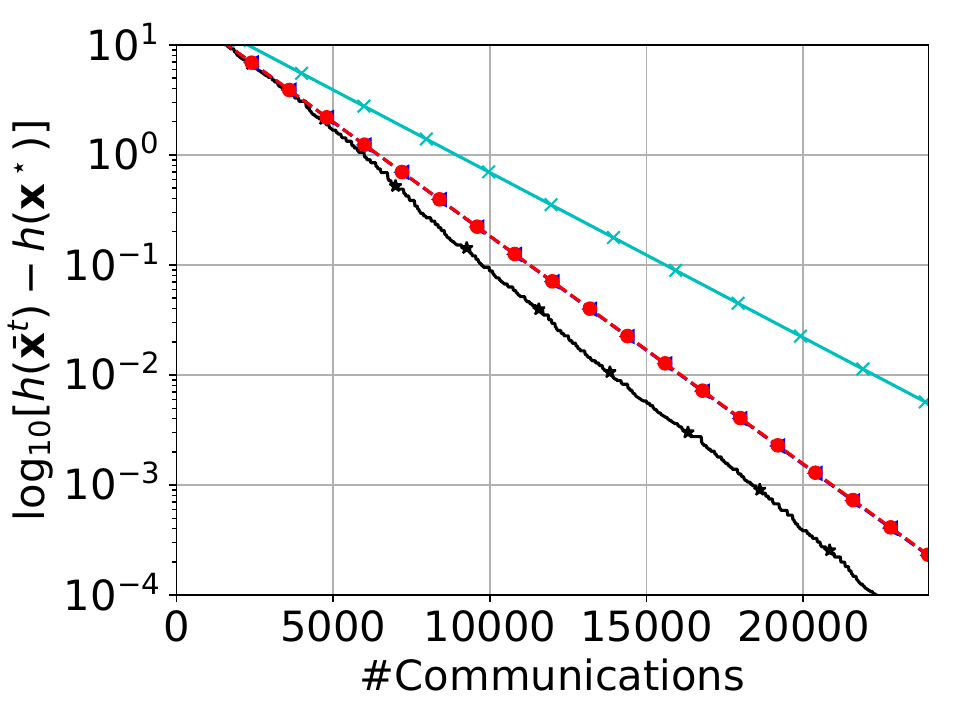}  \\
      \includegraphics[scale=0.233]{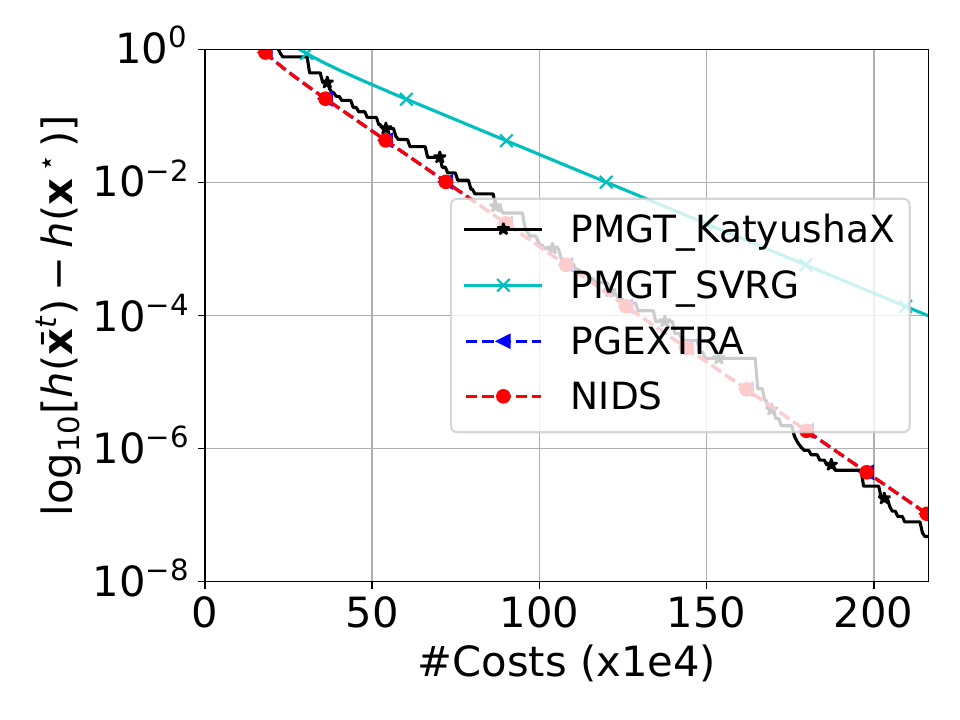}
     & \includegraphics[scale=0.233]{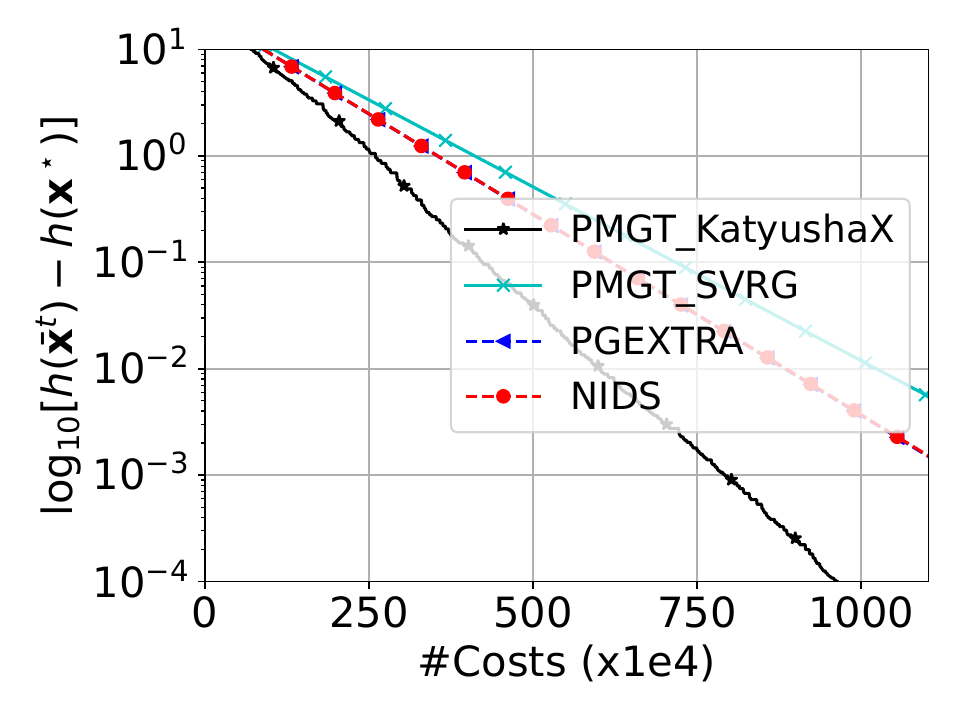}  \\
\end{tabular}
    \caption{Performance comparison between PGEXTRA, NIDS, PMGT-SVRG, and PMGT-KatyushaX on the Covtype dataset. The left column represents results with the ratio $r=2$ and the right column represents results with the ratio $r=300$ defined in~(\ref{exp_obj}). The plot of PGEXTRA and NIDS are overlapped as their performance are close to each other.} 
\label{covtype_result}
\end{figure}

We conduct our experiments on both synthetic and real-world datasets. 
For the synthetic dataset, we generate a Bernoulli matrix of size $60,000 \times 50$ with entries in $\{ \pm 1\}$; 
For the real-world dataset, we use the Covtype downloaded from the LIBSVM website\footnote{\url{https://www.csie.ntu.edu.tw/~cjlin/libsvmtools/datasets}}.
For the gossip matrix $W$ underlying the decentralized network, we generate a matrix where each agent is randomly connected to two neighbors. 
The second largest eigenvalue of the resulting matrix is $\lambda_2(W) = 0.97$. 

\paragraph{Baselines} We compare the empirical performance of our proposed method with several baselines including PMGT-SVRG (\cite{ye2020pmgt}), PGEXTRA \cite{shi2015proximal}, and NIDS \cite{li2019decentralized}. 
Although theoretical results of PGEXTRA and NIDS are developed for the sum-of-convex objective functions, they still show convergence behavior on the sum-of-nonconvex functions empirically.

\paragraph{Experiment Specifications}  For all of the experiments, the y-axis represents the suboptimality of function value, i.e., $F(\bar{y}) - F(x^*)$ where $F(x^*)$ is taken as the lowest value achieved among all the baselines.
 The left and the middle figure represents the suboptimality vs. gradient evaluations and communication rounds, respectively. 
 The right figure shows the suboptimality vs. the cost which is the weighted average between gradient evaluation and communication rounds.

\paragraph{Comparison Results on the Synthetic Dataset}  Figure~\ref{synthetic_result} reports the performance comparison between our PMGT-KatyushaX and baselines when the ratio $r=2$ or $r=300$.
PMGT-KatyushaX outperforms other baselines for both settings, even when the ratio is small. We point out that PGEXTRA and NIDS have similar performance so their curves are overlapped in the performance comparison. 
For each setting, PMGT-KatyushaX makes the least number of gradient evaluations compared with other baselines. 
To our surprise, it even requires fewer communication rounds to converge than  NIDS although the communication bound of PMGT-KatyushaX depends on the number of component functions~$\sqrt{n}$. 
As a result, PMGT-KatyushaX achieves the best cost among all baselines.

\paragraph{Comparison Results on the Real-world Dataset}  Figure~\ref{covtype_result} reports the performance comparison between PMGT-KatyushaX and other baselines on the Covtype dataset when the ratio $r=2$ or $r=300$. 
When $r$ is small, PMGT-KatyushaX has fewer gradient evaluations than NIDS/PGEXTRA, but PMGT-KatyushaX requires more communication rounds than NIDS/PGEXTRA. 
When $r$ is large, PMGT-KatyushaX outperforms other baselines in both the number of gradient evaluations and communication rounds.    

\section{Conclusions and Future Work}\label{sec:conclusion}
This paper presented a new theoretical analysis of PMGT-SVRG to the decentralized sum-of-nonconvex problem, and it has a linear dependency on the condition number.
To achieve a better dependency, we proposed the first accelerated stochastic first-order algorithm for the decentralized sum-of-nonconvex problem and showed it enjoys a linear convergence rate with a square-root dependency on the condition number. 
The empirical evidence validates the advantages of our algorithm on both synthetic and real-world datasets.  

Although the computational complexity of the proposed algorithm has matched the best-known algorithm for the centralized scenario, the communication upper bound still looks unsatisfying due to the inclusion of the factor $\sqrt{n}$. 
It is interesting to study how to design a more communication-efficient algorithm for decentralized sum-of-nonconvex optimization.

\section*{Acknowledgments}
This research/project is supported by the National Research Foundation, Singapore under its AI Singapore Programme (AISG Award No: AISG2-PhD-2023-08-043T-J).
This research is part of the programme DesCartes and is supported by the National Research Foundation, Prime Minister’s Office, Singapore under its Campus for Research Excellence and Technological Enterprise (CREATE) programme.

\bibliography{aaai24}
\appendix\onecolumn
The appendix is organized as below.
In Section \ref{appendix:svrg_epoch}, we provide theoretical analysis for one epoch of the SVRG algorithm, which is the foundation of both PMGT-SVRG and PMGT-KatyushaX algorithms.
Section \ref{appendix:pmgt_katyusha_analysis} presents the concrete theoretical analysis of the PMGT-KatyushaX algorithm on the sum-of-nonconvex problem. 
Section \ref{appendix:svrg_analysis} provides an alternative proof of the centralized SVRG algorithm.
Section \ref{appendix:pmgt_svrg_analysis} extends the convergence results for the single machine setting to the decentralized setting.
Finally, the extension of the theoretical analysis of the proposed algorithms to the case that the global objective function is convex is presented in Section \ref{appendix:general_convex_ext}.

\section{Theoretical Analysis for One Epoch of SVRG}\label{appendix:svrg_epoch}
In this section, we analyze the convergence for one epoch of SVRG, which is the building stone for the analysis of both PMGT-SVRG and PMGT-KatyushaX algorithms.

\subsection{Established Results}
We introduce several established results from existing literature.
\begin{lemma}[\citet{allen2018katyusha}]
Given any sequence $D_0,D_1, \ldots$ of reals, if $N \sim \texttt{Geom}(p)$, then
\begin{equation}
\mathbb{E}_N[D_N - D_{N+1}] = \frac{p}{1-p} \mathbb{E}[D_0 - D_N]   
\end{equation}
and
\begin{equation}
\mathbb{E}_N[D_N] = (1-p)\mathbb{E}[D_{N+1}] + p D_0.
\end{equation}
\end{lemma}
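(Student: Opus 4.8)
\textbf{Proof proposal for the geometric-sum identity (Lemma from \citet{allen2018katyusha}).}

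The plan is to prove both identities by direct computation with the probability mass function of the geometric distribution, without any clever reparametrization. Recall that $N \sim \texttt{Geom}(p)$ means $\Pr[N = k] = (1-p)^k p$ for $k = 0, 1, 2, \ldots$ (the "number of failures before the first success" convention, which is the one consistent with the later use $T \sim \texttt{Geom}(1/t_0)$ having $\mathbb{E}[T] = n/b = t_0 - \ldots$, i.e.\ mean $(1-p)/p$). First I would write out $\mathbb{E}_N[D_N] = \sum_{k=0}^{\infty} (1-p)^k p\, D_k$ and, separately, $\mathbb{E}_N[D_{N+1}] = \sum_{k=0}^{\infty} (1-p)^k p\, D_{k+1}$. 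The key algebraic move is an index shift in the second sum: substituting $j = k+1$ gives $\sum_{j=1}^{\infty} (1-p)^{j-1} p\, D_j = \frac{1}{1-p}\sum_{j=1}^{\infty}(1-p)^{j} p\, D_j = \frac{1}{1-p}\big(\mathbb{E}_N[D_N] - p D_0\big)$. Rearranging this single identity yields the second displayed equation $\mathbb{E}_N[D_N] = (1-p)\mathbb{E}_N[D_{N+1}] + p D_0$ immediately.

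For the first identity, I would subtract: $\mathbb{E}_N[D_N - D_{N+1}] = \mathbb{E}_N[D_N] - \mathbb{E}_N[D_{N+1}]$. Using the relation just derived, $\mathbb{E}_N[D_{N+1}] = \frac{1}{1-p}\big(\mathbb{E}_N[D_N] - p D_0\big)$, so
\begin{equation*}
\mathbb{E}_N[D_N - D_{N+1}] = \mathbb{E}_N[D_N] - \frac{\mathbb{E}_N[D_N] - p D_0}{1-p} = \frac{(1-p)\mathbb{E}_N[D_N] - \mathbb{E}_N[D_N] + p D_0}{1-p} = \frac{p\,(D_0 - \mathbb{E}_N[D_N])}{1-p},
\end{equation*}
which is exactly the claimed $\frac{p}{1-p}\mathbb{E}[D_0 - D_N]$. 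So in fact the two identities are equivalent, and proving either one (via the index shift) gives the other by elementary rearrangement.

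The only thing to be careful about — and the one place that deserves a remark rather than a computation — is convergence/absolute summability, so that the manipulations of infinite sums and the interchange implicit in "$\mathbb{E}_N[D_N] - \mathbb{E}_N[D_{N+1}] = \mathbb{E}_N[D_N - D_{N+1}]$" are legitimate. In the paper's applications the sequences $D_k$ are (non-negative, or bounded, or grow at most polynomially) potential/Lyapunov quantities, and $(1-p)^k$ decays geometrically, so $\sum_k (1-p)^k p |D_k| < \infty$; under that hypothesis everything above is justified by Fubini/Tonelli. I would state this as the standing assumption (implicitly made in \citet{allen2018katyusha}) and then the proof is the three-line computation above. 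I do not expect any genuine obstacle here — this is a self-contained elementary lemma whose sole purpose is bookkeeping for the telescoping across the inner loop of SVRG.
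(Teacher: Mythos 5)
Your proof is correct. Note that the paper itself gives no proof of this lemma: it is imported verbatim as an established result and attributed to \citet{allen2018katyusha}, so there is no in-paper argument to compare against. Your direct computation with the pmf $\Pr[N=k]=(1-p)^k p$, the index shift $j=k+1$ giving $\mathbb{E}_N[D_{N+1}]=\frac{1}{1-p}\left(\mathbb{E}_N[D_N]-pD_0\right)$, and the rearrangement to obtain both displayed identities is exactly the standard derivation, and your remark that absolute summability of $\sum_k (1-p)^k p\,|D_k|$ is the implicit hypothesis licensing the manipulations is the right caveat, and it is satisfied in every application the paper makes of the lemma (the $D_k$ there are finite potential quantities and the geometric weights decay). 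Nothing is missing.
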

\begin{lemma}[\citet{allen2018katyusha}]\label{smooth_bound}
If function $f_{i,j}$ is $(\ell_1, \ell_2)$-smooth, then it holds
\begin{equation}
    \|\nabla f_{i,j}(x) - \nabla f_{i,j}(y)\|^2 \leq (\ell_1 - \ell_2) \langle \nabla f_{i,j}(x) - \nabla f_{i,j}(y), x- y\rangle + \ell_1 \ell_2 \|x - y\|^2.
\end{equation}
\end{lemma}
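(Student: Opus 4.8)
The plan is to reduce the claim to the classical co-coercivity inequality for convex functions with Lipschitz gradient. First I would introduce the auxiliary function $g := f_{i,j} + \tfrac{\ell_2}{2}\norm{\cdot}^2$ and show it is both convex and $(\ell_1+\ell_2)$-Lipschitz-smooth. Convexity follows from the left half of the $(\ell_1,\ell_2)$-smoothness inequality: rearranging $f_{i,j}(x) - f_{i,j}(y) - \inner{\nabla f_{i,j}(y)}{x-y} \geq -\tfrac{\ell_2}{2}\norm{x-y}^2$ and completing the square gives $g(x) \geq g(y) + \inner{\nabla g(y)}{x-y}$ for all $x,y$. Symmetrically, the right half of the definition says $\tfrac{\ell_1}{2}\norm{\cdot}^2 - f_{i,j} = \tfrac{\ell_1+\ell_2}{2}\norm{\cdot}^2 - g$ is convex, which is exactly the statement that $\nabla g$ is $(\ell_1+\ell_2)$-Lipschitz.

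Next I would invoke the standard co-coercivity bound (Nesterov): for a convex function $g$ with $L_g$-Lipschitz gradient, $\norm{\nabla g(x) - \nabla g(y)}^2 \leq L_g\,\inner{\nabla g(x) - \nabla g(y)}{x-y}$ for all $x,y$, applied here with $L_g = \ell_1+\ell_2$. Writing $a := \nabla f_{i,j}(x) - \nabla f_{i,j}(y)$ and $u := x-y$, and substituting $\nabla g(\cdot) = \nabla f_{i,j}(\cdot) + \ell_2(\cdot)$ so that $\nabla g(x) - \nabla g(y) = a + \ell_2 u$, this becomes $\norm{a + \ell_2 u}^2 \leq (\ell_1+\ell_2)\inner{a + \ell_2 u}{u}$.

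Finally I would expand both sides, using $\norm{a+\ell_2 u}^2 = \norm{a}^2 + 2\ell_2\inner{a}{u} + \ell_2^2\norm{u}^2$, and collect terms: the $\ell_2^2\norm{u}^2$ contributions cancel, the coefficient of $\inner{a}{u}$ reduces to $\ell_1-\ell_2$, and the coefficient of $\norm{u}^2$ reduces to $\ell_1\ell_2$, yielding precisely $\norm{a}^2 \leq (\ell_1-\ell_2)\inner{a}{u} + \ell_1\ell_2\norm{u}^2$. The only mildly delicate step is the first one—translating the one-sided Bregman-type inequalities in the definition of $(\ell_1,\ell_2)$-smoothness into convexity and Lipschitz-smoothness of the shifted function $g$; once that is in hand the remainder is the textbook co-coercivity argument plus bookkeeping. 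As a consistency check, in the quadratic case $f_{i,j}(x) = \tfrac12 x^\top A x$ with $-\ell_2 I \preceq A \preceq \ell_1 I$ the inequality is equivalent to $u^\top (A - \ell_1 I)(A + \ell_2 I)\, u \leq 0$, which holds because the two factors commute and are negative- and positive-semidefinite respectively.
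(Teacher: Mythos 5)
Your proof is correct: the shifted function $g = f_{i,j} + \tfrac{\ell_2}{2}\norm{\cdot}^2$ is convex by the lower half of the $(\ell_1,\ell_2)$-smoothness definition and satisfies the quadratic upper bound with constant $\ell_1+\ell_2$ by the upper half, so the standard co-coercivity inequality (which needs only these two facts) applies, and your expansion of $\norm{a+\ell_2 u}^2 \leq (\ell_1+\ell_2)\langle a+\ell_2 u, u\rangle$ does collapse exactly to the stated bound. The paper gives no proof of this lemma, importing it from \citet{allen2018katyusha}, and your argument is the standard one behind that citation, so there is no discrepancy to resolve.
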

\begin{lemma}[\citet{allen2018katyusha}]
If $\tilde{\psi}(\cdot)$ is a proper convex and $\sigma$-strongly convex function and $\hat{w} = \mathbf{prox}_{\hat{\eta}, \tilde{\psi}}(\tilde{w} - \hat{\eta} s^t) = \argmin_{w\in \mathcal{R}^d}{\{\frac{1}{2\hat{\eta}} \|w- \tilde{w}\|^2 + \langle s^t, w\rangle + \tilde{\psi}(w)\}}$, then for $\forall u \in \mathcal{R}^d$, we have
\begin{equation}
    \begin{split}
        \langle s^t, \tilde{w} - u \rangle + \tilde{\psi}(\hat{w}) - \tilde{\psi}(u) &\leq \langle s^t, \tilde{w} - \hat{w}\rangle + \frac{\|u - \tilde{w}\|^2}{2\hat{\eta}} - \frac{(1+ \sigma \hat{\eta})\|u - \hat{w}\|^2}{2\hat{\eta}} - \frac{\|\tilde{w} - \hat{w}\|^2}{2 \hat{\eta}} \\
        & \leq \frac{\hat{\eta}}{2}\|s^t\|^2 + \frac{\|u - \tilde{w}\|^2}{2 \hat{\eta}} - \frac{(1+ \sigma \hat{\eta})\|u - \hat{w}\|^2}{2 \eta}.
    \end{split}
\end{equation}
\label{mirror_desc}
\end{lemma}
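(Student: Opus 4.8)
The plan is to derive the first inequality directly from the first-order optimality condition of the proximal subproblem combined with the strong convexity of $\tilde\psi$, and then obtain the second inequality by a single application of Young's inequality.

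First I would note that $\hat w$ is the minimizer of the objective $h(w) = \frac{1}{2\hat\eta}\|w - \tilde w\|^2 + \langle s^t, w\rangle + \tilde\psi(w)$, which is $(\tfrac{1}{\hat\eta}+\sigma)$-strongly convex. The optimality condition $0 \in \partial h(\hat w)$ supplies a subgradient $g \in \partial\tilde\psi(\hat w)$ with $g = -\tfrac{1}{\hat\eta}(\hat w - \tilde w) - s^t$. Since $\tilde\psi$ is $\sigma$-strongly convex, for every $u \in \mathcal{R}^d$ we have $\tilde\psi(u) \geq \tilde\psi(\hat w) + \langle g, u - \hat w\rangle + \tfrac{\sigma}{2}\|u - \hat w\|^2$; plugging in the expression for $g$ and rearranging yields
\begin{equation*}
\tilde\psi(\hat w) - \tilde\psi(u) \leq \frac{1}{\hat\eta}\langle \hat w - \tilde w,\, u - \hat w\rangle + \langle s^t, u - \hat w\rangle - \frac{\sigma}{2}\|u - \hat w\|^2 .
\end{equation*}

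Next I would add $\langle s^t, \tilde w - u\rangle$ to both sides: on the right the inner products with $s^t$ collapse to $\langle s^t, \tilde w - \hat w\rangle$, and I would rewrite the remaining bilinear term using the three-point identity $\langle \hat w - \tilde w,\, u - \hat w\rangle = \tfrac12\|u - \tilde w\|^2 - \tfrac12\|u - \hat w\|^2 - \tfrac12\|\hat w - \tilde w\|^2$. Merging the two $\|u - \hat w\|^2$ terms into $-\tfrac{1+\sigma\hat\eta}{2\hat\eta}\|u - \hat w\|^2$ gives exactly the first displayed bound. The second bound then follows because $\langle s^t, \tilde w - \hat w\rangle - \tfrac{1}{2\hat\eta}\|\tilde w - \hat w\|^2 \leq \tfrac{\hat\eta}{2}\|s^t\|^2$ by Young's inequality $\langle a, b\rangle \leq \tfrac{\hat\eta}{2}\|a\|^2 + \tfrac{1}{2\hat\eta}\|b\|^2$, so dropping that nonpositive-after-substitution combination leaves the stated right-hand side.

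There is no real obstacle: this is a routine prox/mirror-descent inequality. The only points demanding care are that $\tilde\psi$ is possibly nonsmooth, so the argument must proceed through a subgradient rather than a gradient, and that the strong-convexity modulus used in the $\tilde\psi$-inequality is exactly $\sigma$ (the quadratic penalty contributing its $\tfrac{1}{\hat\eta}$ separately through the three-point identity). I also note that the $2\eta$ in the denominator of the last term of the second inequality should read $2\hat\eta$.
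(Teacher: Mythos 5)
The paper does not actually prove this lemma: it is listed among the ``Established Results'' and is simply imported from \citet{allen2018katyusha}, so there is no in-paper argument to compare against. Your proof is correct and is the standard one for this prox/mirror-descent step: the optimality condition $0\in\frac{1}{\hat\eta}(\hat w-\tilde w)+s^t+\partial\tilde\psi(\hat w)$ supplies the subgradient $g$, the $\sigma$-strong-convexity inequality for $\tilde\psi$ at $\hat w$ gives the bound on $\tilde\psi(\hat w)-\tilde\psi(u)$, the three-point identity $\langle \hat w-\tilde w,\,u-\hat w\rangle=\tfrac12\|u-\tilde w\|^2-\tfrac12\|u-\hat w\|^2-\tfrac12\|\hat w-\tilde w\|^2$ converts the bilinear term into the stated squared norms (with the $-\tfrac{1}{2\hat\eta}\|u-\hat w\|^2$ and $-\tfrac{\sigma}{2}\|u-\hat w\|^2$ terms merging into $-\tfrac{1+\sigma\hat\eta}{2\hat\eta}\|u-\hat w\|^2$), and Young's inequality $\langle s^t,\tilde w-\hat w\rangle\le\tfrac{\hat\eta}{2}\|s^t\|^2+\tfrac{1}{2\hat\eta}\|\tilde w-\hat w\|^2$ yields the second line. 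You are also right that the $2\eta$ in the final denominator is a typo for $2\hat\eta$.
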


\begin{lemma}[\citet{ye2020pmgt}]\label{decentral_grad}
Suppose each function $f_i$ is $(\ell_1, \ell_2)$-smooth for $\ell_2 \geq \ell_1 >0$ and the update of $\vs^t$ follows
\begin{align*}
\mathbf{s}^{t} = \FastMix(\mathbf{s}^{t-1} + \mathbf{v}^{t} - \mathbf{v}^{t-1}, M),
\end{align*}
then it holds that $\bar{s}^{t} = \bar{v}^t$, $\mathbb{E}[\bar{s}^{t}] = \frac{1}{m}\sum_{i=1}^m \nabla f_i(w_i^t)$ and
\begin{equation}
    \|\nabla f(\bar{w}^t) - \mathbb{E}(\bar{s}^{t})\| \leq \frac{\sqrt{\ell_1\ell_2}}{\sqrt{m}} \|\vw^t - \mathbf{1}\bar{w}^t\|.
\end{equation}
\end{lemma}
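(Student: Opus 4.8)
The statement bundles three assertions — $\bar s^t=\bar v^t$, the unbiasedness identity $\mathbb{E}[\bar s^t]=\tfrac1m\sum_i\nabla f_i(w_i^t)$, and the consensus bound — and I would establish them in that order; the first two are bookkeeping, and only the last is quantitative. The engine for the first two is that \FastMix\ leaves the per-agent average unchanged: averaging the recursion of Algo.~\ref{fastmix} over agents and using $W\mathbf{1}=\mathbf{1}$ from Definition~\ref{doubly_stochastic} gives $\bar x^{k+1}=(1+\eta)\bar x^k-\eta\bar x^{k-1}$, and since that recursion starts from $\mathbf{x}^{-1}=\mathbf{x}^0$, an induction on $k$ yields $\bar x^k=\bar x^0$ for all $k$. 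Applying this to $\mathbf{s}^t=\FastMix(\mathbf{s}^{t-1}+\mathbf{v}^t-\mathbf{v}^{t-1},M)$ gives $\bar s^t=\bar s^{t-1}+\bar v^t-\bar v^{t-1}$, and since the algorithm initializes $s_i^{-1}=v_i^{-1}$, a second induction gives $\bar s^t=\bar v^t$ at every inner step. The same argument applied to the outer tracking step $\hat s_i^{k+1}=\FastMix(\bm{\nu}^{k+1},M)_i$, $\nu_i^{k+1}=\hat s_i^k+\nabla f_i(x_i^{k+1})-\nabla f_i(x_i^k)$, started from $\hat s_i^0=\nabla F(x^0)$, gives by induction $\bar{\hat{s}}^{k+1}=\tfrac1m\sum_i\nabla f_i(x_i^{k+1})$.

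For unbiasedness, condition on the iterates entering epoch $k$ and inner step $t$ and take expectation over the minibatches $\{B_i^t\}$, each sampled uniformly from $[n]$: the variance-reduced estimator then satisfies $\mathbb{E}[v_i^t]=\mu_i+\nabla f_i(w_i^t)-\nabla f_i(w_i^0)$. Averaging over $i$, substituting $\bar s^t=\bar v^t$, and recalling that the algorithm sets $\mu_i=\hat s_i^{k+1}$ and $w_i^0=x_i^{k+1}$, we get $\mathbb{E}[\bar s^t]=\bar{\hat{s}}^{k+1}+\tfrac1m\sum_i\nabla f_i(w_i^t)-\tfrac1m\sum_i\nabla f_i(x_i^{k+1})$; the first and third terms cancel by the identity for $\bar{\hat{s}}^{k+1}$ above, leaving $\mathbb{E}[\bar s^t]=\tfrac1m\sum_i\nabla f_i(w_i^t)$.

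For the bound, write $\mathbb{E}[\bar s^t]-\nabla f(\bar w^t)=\tfrac1m\sum_i(\nabla f_i(w_i^t)-\nabla f_i(\bar w^t))$, so convexity of $\|\cdot\|^2$ gives $\|\nabla f(\bar w^t)-\mathbb{E}[\bar s^t]\|^2\le\tfrac1m\sum_i\|\nabla f_i(w_i^t)-\nabla f_i(\bar w^t)\|^2$. I would then bound each summand by $\ell_1\ell_2\|w_i^t-\bar w^t\|^2$, which yields $\|\nabla f(\bar w^t)-\mathbb{E}[\bar s^t]\|^2\le\tfrac{\ell_1\ell_2}{m}\|\vw^t-\mathbf{1}\bar w^t\|^2$ and hence the claim after a square root. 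This per-component estimate is the one delicate step: Lemma~\ref{smooth_bound}, applied to the $(\ell_1,\ell_2)$-smooth $f_i$, gives $\|\nabla f_i(x)-\nabla f_i(y)\|^2\le\ell_1\ell_2\|x-y\|^2+(\ell_1-\ell_2)\langle\nabla f_i(x)-\nabla f_i(y),x-y\rangle$, and since $\ell_2\ge\ell_1$ the cross term is nonpositive exactly when $\nabla f_i$ is monotone along the relevant segment. Under the component-convexity hypothesis of \citet{ye2020pmgt} it drops outright; for the genuinely nonconvex components treated here one must instead carry it forward and absorb it against the strong-convexity/descent terms produced in the inner-loop progress lemma (or invoke it only along segments where monotonicity holds). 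Everything else is routine.
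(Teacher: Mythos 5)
Your first two claims are handled correctly and by the standard route: averaging the \texttt{FastMix} recursion and using $W\mathbf{1}=\mathbf{1}$ shows the mixing step preserves per-agent averages, whence $\bar{s}^t=\bar{v}^t$ by induction from $s_i^{-1}=v_i^{-1}$, and the unbiasedness identity follows from the outer tracking invariant $\bar{\hat{s}}^{k+1}=\frac{1}{m}\sum_i\nabla f_i(x_i^{k+1})$ together with $\mu_i=\hat{s}_i^{k+1}$, $w_i^0=x_i^{k+1}$. Note the paper itself offers no proof to compare against: the lemma is imported from \citet{ye2020pmgt}, whose Assumption~1 (as the paper's own footnote stresses) makes every component convex, so the burden of the nonconvex case falls entirely on your third part.

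That third part is where there is a genuine gap, and it sits exactly at the step you flag and then defer. After Jensen you need $\|\nabla f_i(x)-\nabla f_i(y)\|^2\le\ell_1\ell_2\|x-y\|^2$ for each agent function $f_i$, i.e.\ Lemma~\ref{smooth_bound} with the cross term $(\ell_1-\ell_2)\langle\nabla f_i(x)-\nabla f_i(y),x-y\rangle$ discarded. For a nonconvex $(\ell_1,\ell_2)$-smooth $f_i$ this is simply false: $f_i(x)=-\tfrac{\ell_2}{2}\|x\|^2$ is $(\ell_1,\ell_2)$-smooth yet $\|\nabla f_i(x)-\nabla f_i(y)\|=\ell_2\|x-y\|>\sqrt{\ell_1\ell_2}\,\|x-y\|$ whenever $\ell_2>\ell_1$. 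Your two proposed repairs do not close this: absorbing the inner product into the descent terms of the inner-loop lemma proves a different statement (not the standalone bound quoted here, which is what the appendix later reuses, e.g.\ the same $\ell_1\ell_2/m$ estimate inside the proof of Lemma~\ref{lemma:inner_lemma_2}), and monotonicity of $\nabla f_i$ along the relevant segments is precisely the component convexity you are not entitled to. Moreover, the agentwise route cannot be rescued at all: with only the hypotheses stated in the lemma, the conclusion is false — take $m=2$, quadratics with Hessians $\ell_1 I$ and $-\ell_2 I$ and deviations $w_1-\bar{w}=-(w_2-\bar{w})$, giving ratio $\tfrac{\ell_1+\ell_2}{2}/\sqrt{\ell_1\ell_2}>1$ — so any valid proof in the sum-of-nonconvex setting must exploit structure your Jensen step throws away, namely that the deviations $w_i^t-\bar{w}^t$ sum to zero together with convexity of the global $f$ (for quadratics with $\sum_i H_i\succeq 0$ the constant $\sqrt{\ell_1\ell_2}$ is then tight). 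As written, your argument establishes the lemma only under the component-convexity hypothesis of \citet{ye2020pmgt}.
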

\begin{lemma}[\citet{ye2020pmgt}]\label{prox_lemma}
Let $\mathbf{prox}_{m \eta, \Psi}(\vw)$ and $\mathbf{prox}_{\eta, \psi}(w)$ be defined as 
\begin{align*}
\bf{prox}_{m \eta, \Psi}(\vw) = \argmin_{\mathbf{z} \in \mathcal{R}^{m \times d}} (\Psi(\mathbf{z}) + \frac{1}{2 m \eta}\|\mathbf{z} - \vw\|^2),
\end{align*}
and
\begin{align*}
\bf{prox}_{\eta, \psi}(w) = \argmin_{z\in \mathcal{R}^d}(\psi(z) + \frac{1}{2\eta}\|z - w\|^2)   
\end{align*}
respectively. Then we have the following relationship:
\begin{equation}
    \mathbf{prox}_{m \eta, \Psi}^{(i)}(\vw) =  \mathbf{prox}_{\eta, \psi}(\vw_i)
\end{equation}
and
\begin{equation}
    \norm{\mathbf{prox}_{m \eta, \Psi}(\frac{1}{m}\mathbf{1}\mathbf{1}^{\top}\vw) - \frac{1}{m}\mathbf{1}\mathbf{1}^{\top} \mathbf{prox}_{m \eta, \Psi}(\vw)}^2 \leq 
     \norm{\vw - \frac{1}{m}\mathbf{1}\mathbf{1}^\top\vw}^2.
\end{equation}
\end{lemma}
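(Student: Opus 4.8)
The plan is to establish the two claims in order: the block identity follows from the separable structure of $\Psi$, and the contraction inequality then reduces to classical nonexpansiveness of proximal maps.

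\emph{The block identity.} I would start from $\mathbf{prox}_{m\eta,\Psi}(\vw) = \argmin_{\vz\in\mathcal{R}^{m\times d}}\big(\Psi(\vz) + \tfrac{1}{2m\eta}\|\vz-\vw\|^2\big)$ and note that, because $\Psi(\vz)=\tfrac1m\sum_{i=1}^m\psi(z_i)$ and $\|\vz-\vw\|^2=\sum_{i=1}^m\|z_i-w_i\|^2$ in the Frobenius norm, the objective splits into $m$ independent subproblems, one per row $z_i$. The $i$-th subproblem minimizes $\tfrac1m\psi(z_i)+\tfrac{1}{2m\eta}\|z_i-w_i\|^2$, which after multiplying by the constant $m$ is exactly the definition of $\mathbf{prox}_{\eta,\psi}(w_i)$. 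Hence the $i$-th block of $\mathbf{prox}_{m\eta,\Psi}(\vw)$ equals $\mathbf{prox}_{\eta,\psi}(\vw_i)$.

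\emph{The contraction inequality.} Write $P=\tfrac1m\mathbf{1}\mathbf{1}^\top$; acting by left multiplication on $\mathcal{R}^{m\times d}$, $P$ is self-adjoint and idempotent for the Frobenius inner product, i.e.\ the orthogonal projection onto the consensus subspace $\{\mathbf{1}v:v\in\mathcal{R}^d\}$, and $P\vw=\mathbf{1}\bar w$ with $\bar w=\tfrac1m\mathbf{1}^\top\vw$. By the block identity, every block of $\mathbf{prox}_{m\eta,\Psi}(P\vw)$ equals $\mathbf{prox}_{\eta,\psi}(\bar w)$, so $\mathbf{prox}_{m\eta,\Psi}(P\vw)$ lies in the consensus subspace and is fixed by $P$. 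Consequently
\[
\mathbf{prox}_{m\eta,\Psi}(P\vw)-P\,\mathbf{prox}_{m\eta,\Psi}(\vw)=P\big(\mathbf{prox}_{m\eta,\Psi}(P\vw)-\mathbf{prox}_{m\eta,\Psi}(\vw)\big),
\]
and since $P$ is an orthogonal projection (hence $1$-Lipschitz in Frobenius norm) while $\mathbf{prox}_{m\eta,\Psi}$ is firmly nonexpansive — $\psi$, hence $\Psi$, being proper convex — in particular $1$-Lipschitz, we conclude
\begin{align*}
\big\|\mathbf{prox}_{m\eta,\Psi}(P\vw)-P\,\mathbf{prox}_{m\eta,\Psi}(\vw)\big\|^2
&\le\big\|\mathbf{prox}_{m\eta,\Psi}(P\vw)-\mathbf{prox}_{m\eta,\Psi}(\vw)\big\|^2\\
&\le\|P\vw-\vw\|^2=\big\|\vw-\tfrac1m\mathbf{1}\mathbf{1}^\top\vw\big\|^2 .
\end{align*}
An equivalent route that avoids the projection language: by the block identity the left-hand side equals $m\,\big\|\mathbf{prox}_{\eta,\psi}(\bar w)-\tfrac1m\sum_i\mathbf{prox}_{\eta,\psi}(w_i)\big\|^2$; Jensen's inequality applied to $\|\cdot\|^2$ bounds this by $\sum_i\|\mathbf{prox}_{\eta,\psi}(\bar w)-\mathbf{prox}_{\eta,\psi}(w_i)\|^2$, and termwise $1$-Lipschitzness of $\mathbf{prox}_{\eta,\psi}$ then gives $\sum_i\|\bar w-w_i\|^2=\|\vw-\mathbf{1}\bar w\|^2$.

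The only substantive ingredient is the (firm) nonexpansiveness of the proximal operator of a proper convex function, which is classical; everything else is bookkeeping with the Kronecker/block structure. The step most prone to error is tracking the factor $m$ linking $\|\mathbf{1}v\|^2$ on $\mathcal{R}^{m\times d}$ to $\|v\|^2$ on $\mathcal{R}^d$ and keeping straight which norm is in force, so I would state explicitly at the outset that $\mathbf{prox}_{m\eta,\Psi}$ is $1$-Lipschitz with respect to the Frobenius norm on $\mathcal{R}^{m\times d}$.
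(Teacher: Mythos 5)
Your proof is correct. Note that the paper does not prove this lemma at all — it is imported verbatim as an established result from \citet{ye2020pmgt} — so there is no in-paper argument to compare against; your derivation (row-separability of $\Psi$ giving the block identity, then nonexpansiveness of $\mathbf{prox}_{\eta,\psi}$ combined with either the projection viewpoint or Jensen's inequality) is the standard one and both of your routes are sound, including the bookkeeping of the factor $m$ between $\|\mathbf{1}v\|^2$ and $\|v\|^2$. The only implicit ingredient worth stating is that $\psi$ is proper, closed and convex so that the proximal map is single-valued and nonexpansive, which matches the paper's standing assumption on $\psi$.
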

\subsection{Supporting Lemmas}
In this subsection, we are aimed to obtain the theoretical results for one epoch of the SVRG algorithm.
First, we show the relationships of the consensus error between successive iterations:
\begin{lemma}
Let $\vw^{t}$, $\vs^{t}$ be the $t$-th iterate of local weight variable and gradient tracking variable in the inner loop of Algorithm \ref{strongconvex_algo_2}, then we have
\begin{equation}
    \|\mathbf{w^{t+1}} - \mathbf{1}\bar{w}^{t+1}\|^2  \leq 8\rho^2  \|\vw^t - \mathbf{1}\bar{w}^t\|^2 + 8\rho^2\eta^2\|\mathbf{s}^{t} - \mathbf{1}\bar{s}^{t}\|^2
\end{equation}
and
\begin{equation}
    \|\mathbf{s^{t+1}} - \mathbf{1}\bar{s}^{t+1}\|^2 \leq 2\rho^2 \|\mathbf{s}^t - \mathbf{1} \bar{s}^t\|^2 + 2\rho^2 \| \mathbf{v}^{t+1} - \mathbf{v}^{t}\|^2.
    \label{eq:grad_track_inner}
\end{equation}
\end{lemma}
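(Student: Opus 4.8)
The plan is to establish both inequalities by combining the contraction property of \texttt{FastMix} (Lemma \ref{fastmix_lemma}) with the elementary fact that a mixing step commutes with the averaging projection $\frac{1}{m}\mathbf{1}\mathbf{1}^{\top}$, so that the averaged iterates $\bar{w}^{t+1}, \bar{s}^{t+1}$ are unchanged by the call to \texttt{FastMix}. Concretely, both $\vw^{t+1}$ and $\vs^{t+1}$ have the form $\texttt{FastMix}(\vu, M)$ for an appropriate pre-mixing quantity $\vu$; applying Lemma \ref{fastmix_lemma} gives $\norm{\vw^{t+1} - \mathbf{1}\bar{w}^{t+1}}^2 \le \rho^2 \norm{\vu_w - \mathbf{1}\bar{u}_w}^2$ and likewise for $\vs$, since $\bar{u}_w = \bar{w}^{t+1}$ and $\bar{u}_s = \bar{s}^{t+1}$ (the average is preserved by \texttt{FastMix} because $W$ is doubly stochastic).

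For the gradient-tracking bound \eqref{eq:grad_track_inner}, the pre-mixing quantity is $\vu_s = \vs^t + \vv^{t+1} - \vv^t$, and its deviation from the mean is $\vu_s - \mathbf{1}\bar{u}_s = (\vs^t - \mathbf{1}\bar{s}^t) + (I - \frac{1}{m}\mathbf{1}\mathbf{1}^\top)(\vv^{t+1} - \vv^t)$. Using $\norm{a+b}^2 \le 2\norm{a}^2 + 2\norm{b}^2$ and the fact that $I - \frac{1}{m}\mathbf{1}\mathbf{1}^\top$ is a projection (hence nonexpansive), we get $\norm{\vu_s - \mathbf{1}\bar{u}_s}^2 \le 2\norm{\vs^t - \mathbf{1}\bar{s}^t}^2 + 2\norm{\vv^{t+1} - \vv^t}^2$, and multiplying by $\rho^2$ yields \eqref{eq:grad_track_inner}.

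For the state-variable bound, the pre-mixing quantity is $\vu_w = \mathbf{prox}_{m\eta,\Psi}(\vw^t - \eta\vs^t)$. Here the extra ingredient is Lemma \ref{prox_lemma}: the proximal map acts coordinatewise, so it is nonexpansive, and the inequality in Lemma \ref{prox_lemma} controls how much the deviation-from-mean can grow under $\mathbf{prox}_{m\eta,\Psi}$. The plan is to write $\vu_w - \mathbf{1}\bar{u}_w = (I - \frac{1}{m}\mathbf{1}\mathbf{1}^\top)\mathbf{prox}_{m\eta,\Psi}(\vw^t - \eta\vs^t)$, bound this by $\norm{\mathbf{prox}_{m\eta,\Psi}(\vw^t - \eta\vs^t) - \mathbf{prox}_{m\eta,\Psi}(\frac{1}{m}\mathbf{1}\mathbf{1}^\top(\vw^t - \eta\vs^t))}$ plus the term controlled by the second inequality of Lemma \ref{prox_lemma}, then use nonexpansiveness of the prox on the first piece to reduce to $\norm{(I - \frac{1}{m}\mathbf{1}\mathbf{1}^\top)(\vw^t - \eta\vs^t)}$. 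Expanding $\vw^t - \eta\vs^t$ and again using $\norm{a+b}^2 \le 2\norm{a}^2 + 2\norm{b}^2$ together with the projection bound gives a bound of the form $C(\norm{\vw^t - \mathbf{1}\bar{w}^t}^2 + \eta^2\norm{\vs^t - \mathbf{1}\bar{s}^t}^2)$ with an absolute constant; tracking the constants carefully through the two applications of the $2\norm{a}^2+2\norm{b}^2$ inequality and the prox lemma produces the factor $8\rho^2$ after multiplying by $\rho^2$.

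The main obstacle is the bookkeeping on the state-variable inequality: one must be careful to apply Lemma \ref{prox_lemma} in the right direction (its second inequality bounds the discrepancy between "prox of the average" and "average of the prox", which is exactly what is needed to pass the deviation-from-mean through the proximal step), and to make sure the constants accumulated from splitting $\vw^t - \eta\vs^t$ and from the nonexpansiveness of prox combine to exactly $8$ rather than a larger number. The gradient-tracking inequality is routine once the averaging-commutes-with-mixing observation is in place.
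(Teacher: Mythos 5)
Your proposal is correct and follows essentially the same route as the paper: apply the \texttt{FastMix} contraction (Lemma \ref{fastmix_lemma}, with the average preserved since $W$ is doubly stochastic), for the tracking variable use nonexpansiveness of the centering projection plus $\norm{a+b}^2\le 2\norm{a}^2+2\norm{b}^2$, and for the state variable split the deviation into ``prox at the point minus prox at the average'' (handled by nonexpansiveness of the coordinatewise prox) plus ``prox of the average minus average of the prox'' (handled by Lemma \ref{prox_lemma}), each contributing $2\norm{\vw^t-\mathbf{1}\bar w^t}^2+2\eta^2\norm{\vs^t-\mathbf{1}\bar s^t}^2$, which yields the factor $8\rho^2$ exactly as in the paper.
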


\begin{proof}
We can bound the difference between local variable $\mathbf{w}^{t+1}$ with the average of local variables $\bar{w}^{t+1}$ with:
\begin{align*}
    \|\mathbf{w}^{t+1} - \mathbf{1}\bar{w}^{t+1}\|^2
     \leq &\rho^2 \norm{ \mathbf{prox}_{m\eta, \Psi} (\vw^t - \eta \mathbf{s}^{t}) - \frac{1}{m} \mathbf{1} \mathbf{1}^{\top} \mathbf{prox}_{m\eta, \Psi} (\vw^t - \eta \mathbf{s}^{t})}^2 \\
     \leq & 2\rho^2 \norm{ \mathbf{prox}_{m\eta, \Psi} (\vw^t - \eta \mathbf{s}^{t}) -  \mathbf{prox}_{m\eta, \Psi} (\frac{1}{m} \mathbf{1} \mathbf{1}^{\top}\vw^t - \frac{1}{m} \mathbf{1} \mathbf{1}^{\top}\eta \mathbf{s}^{t})}^2  \\
    & + 2\rho^2 \norm{\mathbf{prox}_{m\eta, \Psi} (\frac{1}{m} \mathbf{1} \mathbf{1}^{\top}\vw^t - \frac{1}{m} \mathbf{1} \mathbf{1}^{\top}\eta \mathbf{s}^{t})  - \frac{1}{m} \mathbf{1} \mathbf{1}^{\top} \mathbf{prox}_{m\eta, \Psi} (\vw^t - \eta \mathbf{s}^{t})}^2 \\
     \leq  & 2\rho^2 \norm{\vw^t - \eta \mathbf{s}^{t} - \mathbf{1}\bar{w}^t + \eta \mathbf{1} \bar{s}^{t}}^2 + 2\rho^2\norm{\vw^t - \eta \mathbf{s}^{t} - \frac{1}{m} \mathbf{1}\mathbf{1}^{\top}(\vw^t - \eta \mathbf{s}^{t})}^2 \\
    \leq  & 8\rho^2  \|\vw^t - \mathbf{1}\bar{w}^t\|^2 + 8\rho^2\eta^2\|\mathbf{s}^{t} - \mathbf{1}\bar{s}^{t}\|^2,
\end{align*}
where the first inequality uses Lemma \ref{fastmix_lemma}; the second and the last inequality use $\|a + b\|^2 \leq 2 \|a\|^2 + 2 \|b\|^2$; the third inequality uses Lemma \ref{prox_lemma}. Then we have
\begin{align*}
    \|\mathbf{s^{t+1}} - \mathbf{1}\bar{s}^{t+1}\|^2 &\leq \rho^2 \norm{ \mathbf{s}^t + \mathbf{v}^{t+1} - \mathbf{v}^{t} - \frac{1}{m} \mathbf{1} \mathbf{1}^{\top} (\mathbf{s}^t + \mathbf{v}^{t+1} - \mathbf{v}^{t})}^2 \\
    & \leq 2\rho^2 \|\mathbf{s}^t - \mathbf{1} \bar{s}^t\|^2 + 2\rho^2 \| \mathbf{v}^{t+1} - \mathbf{v}^{t}\|^2,
\end{align*}
where the last inequality follows from $\norm{\mathbf{x} - \frac{1}{m} \mathbf{1} \mathbf{1}^{\top} \mathbf{x}} \leq \|\mathbf{x}\|$ for $\forall \mathbf{x} \in \mathcal{R}^{m\times d}$.
\end{proof}

We show the quantity $\norm{\vv^{t+1} - \vv^{t}}^2$ that appears in the right-hand side of inequality \ref{eq:grad_track_inner} can be expressed as:
\begin{lemma}
Let $\mathbf{v}_i^t = \mu_i + \frac{1}{b}\sum_{j_i\in S_i^t}(\nabla f_{i, j_i}(w_i^t) - \nabla f_{i,j_i} (w_i^0))$ where $S_i^t$ is a set of uniformly sampled indices of size of $b$, then for $\forall u\in \mathcal{R}^d$,  we have
\begin{align*}
    \mathbb{E}\left[\| \mathbf{v}^{t+1} - \mathbf{v}^t\|^2\right]  \leq & {8\ell_1\ell_2}\left((1 + 8\rho^2)\|\vw^t - \mathbf{1}\bar{w}^t\|^2 + \|\vw^0 - \mathbf{1}u\|^2\right) \\
    & + {64 \ell_1\ell_2\rho^2\eta^2}\|\mathbf{s}^t - \mathbf{1}\bar{s}^t\|^2 + {8\ell_1\ell_2m}\left(\|\bar{w}^t - u\|^2   + \|\bar{w}^{t+1} - u\|^2\right).
\end{align*}
\end{lemma}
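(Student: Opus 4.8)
The plan is to expand $\mathbf{v}_i^{t+1}-\mathbf{v}_i^{t}$, take the minibatch expectation, convert the resulting gradient differences into squared distances via the $(\ell_1,\ell_2)$-smoothness of the components, and then fold in the inner-loop consensus-error recursion already established. First I would cancel the common snapshot term: both $\mathbf{v}_i^{t+1}$ and $\mathbf{v}_i^{t}$ contain $\mu_i$, so $\mathbf{v}_i^{t+1}-\mathbf{v}_i^{t}=\tfrac1b\sum_{j\in S_i^{t+1}}\bigl(\nabla f_{i,j}(w_i^{t+1})-\nabla f_{i,j}(w_i^{0})\bigr)-\tfrac1b\sum_{j\in S_i^{t}}\bigl(\nabla f_{i,j}(w_i^{t})-\nabla f_{i,j}(w_i^{0})\bigr)$; then I would apply $\norm{a-b}^2\le 2\norm{a}^2+2\norm{b}^2$ to split this into a ``$w^{t+1}$ part'' and a ``$w^{t}$ part''. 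This split is forced rather than cosmetic: $w_i^{t+1}$ is a deterministic function of the iterate-$t$ state and the step-$t$ minibatches $\{S_{i'}^{t}\}_{i'\in[m]}$ (through $\mathbf{v}^{t}$, the tracking update producing $\mathbf{s}^{t}$, and the prox and \FastMix{} steps), so $S_i^{t}$ is \emph{not} independent of $w_i^{t+1}$; after the split, the $w^{t+1}$ part can be handled with the fresh batch $S_i^{t+1}$ and the $w^{t}$ part with $S_i^{t}$ conditioned on iterate-$t$ information.

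Next, for each of the two parts I would take the conditional expectation over the relevant minibatch and invoke the sampling-with-replacement estimate $\mathbb{E}_{S}\norm{\tfrac1b\sum_{j\in S}g_j}^2\le\tfrac1n\sum_{j=1}^{n}\norm{g_j}^2$ (Jensen; the loose form, without a $1/b$ factor, is precisely what is needed here), reducing the task to bounding the expectation of $\sum_{i,j}\norm{\nabla f_{i,j}(w_i^{\star})-\nabla f_{i,j}(w_i^{0})}^2$ for each of $\star=t$ and $\star=t+1$. I would then invoke Lemma~\ref{smooth_bound} to bound each $\norm{\nabla f_{i,j}(w_i^{\star})-\nabla f_{i,j}(w_i^{0})}^2$ by (essentially) $\ell_1\ell_2\norm{w_i^{\star}-w_i^{0}}^2$, insert the reference point $u$ via $\norm{w_i^{\star}-w_i^{0}}^2\le 2\norm{w_i^{\star}-u}^2+2\norm{w_i^{0}-u}^2$, and split only the first term through $\norm{w_i^{\star}-u}^2\le 2\norm{w_i^{\star}-\bar w^{\star}}^2+2\norm{\bar w^{\star}-u}^2$ while leaving $\norm{w_i^{0}-u}^2$ intact. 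Summing over $i\in[m]$ then produces $\norm{\vw^{t}-\mathbf{1}\bar w^{t}}^2$, $\norm{\vw^{t+1}-\mathbf{1}\bar w^{t+1}}^2$, $\norm{\vw^{0}-\mathbf{1}u}^2$, $m\norm{\bar w^{t}-u}^2$ and $m\norm{\bar w^{t+1}-u}^2$, each carrying the coefficient $8\ell_1\ell_2$ (the $\norm{\vw^0-\mathbf{1}u}^2$ coefficient being $4\ell_1\ell_2$ from each of the two parts).

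The only term not yet expressed as an iterate-$t$ quantity is $\norm{\vw^{t+1}-\mathbf{1}\bar w^{t+1}}^2$, so I would substitute the consensus-error recursion from the immediately preceding lemma, $\norm{\vw^{t+1}-\mathbf{1}\bar w^{t+1}}^2\le 8\rho^2\norm{\vw^{t}-\mathbf{1}\bar w^{t}}^2+8\rho^2\eta^2\norm{\mathbf{s}^{t}-\mathbf{1}\bar s^{t}}^2$. This turns the coefficient of $\norm{\vw^{t}-\mathbf{1}\bar w^{t}}^2$ into $8\ell_1\ell_2(1+8\rho^2)$ and creates the new term $64\ell_1\ell_2\rho^2\eta^2\norm{\mathbf{s}^{t}-\mathbf{1}\bar s^{t}}^2$, while leaving $8\ell_1\ell_2\norm{\vw^0-\mathbf{1}u}^2$ and $8\ell_1\ell_2 m\bigl(\norm{\bar w^t-u}^2+\norm{\bar w^{t+1}-u}^2\bigr)$ unchanged, which is exactly the asserted inequality.

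The main obstacle is the measurability issue in the first step: because $w_i^{t+1}$ already depends on $S_i^{t}$, one cannot bring $\mathbb{E}_{S_i^{t}}$ inside a single sum, so no tighter telescoping of $\mathbf{v}^{t+1}-\mathbf{v}^{t}$ is available and the crude $2\norm{a}^2+2\norm{b}^2$ split (hence the constant $8$) is essentially unavoidable. A secondary, bookkeeping-level difficulty is keeping the $(\ell_1-\ell_2)$-weighted inner-product terms that Lemma~\ref{smooth_bound} yields alongside the $\ell_1\ell_2\norm{\cdot}^2$ contribution — together with the numerical constants from the two Young-type splits — under control, so that after substituting the consensus recursion the coefficients collapse to exactly $8\ell_1\ell_2$, $(1+8\rho^2)$, $64\rho^2\eta^2$ and $m$ as displayed.
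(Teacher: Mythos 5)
Your proposal is correct and follows essentially the same route as the paper's proof: cancel the $\mu_i$ terms, split into the $S^{t+1}$ and $S^t$ parts via $\|a-b\|^2\le 2\|a\|^2+2\|b\|^2$, pass the expectation through the minibatch average, reduce to $\ell_1\ell_2$ times squared distances, insert $u$ and the averages $\bar w^t,\bar w^{t+1}$, and finally substitute the consensus recursion $\|\vw^{t+1}-\mathbf{1}\bar w^{t+1}\|^2\le 8\rho^2\|\vw^{t}-\mathbf{1}\bar w^{t}\|^2+8\rho^2\eta^2\|\mathbf{s}^{t}-\mathbf{1}\bar s^{t}\|^2$, with identical constants. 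The only difference — you apply the smoothness bound before the Young-type splits while the paper splits at the gradient level first — is immaterial.
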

\begin{proof}
Denote $S^t$ as the aggregated set of $S_i^t$ with $i\in[m]$, we claim that
\begin{align*}
    & \mathbb{E}_{B^t, B^{t+1}}\left[\| \mathbf{v}^{t+1} - \mathbf{v}^t\|^2\right] \\ \leq & \mathbb{E}_{B^t, B^{t+1}}\left[ \sum_{i=1}^m \|\mathbf{v}_i^{t+1} - \mathbf{v}_i^{t}\|^2\right] \\
    = &  \mathbb{E}_{B^t, B^{t+1}} \sum_{i=1}^m\left[\left\|\mu_i + \frac{1}{b}\sum_{j_i\in S_i^{t+1}}\left(\nabla f_{i, j_i}(\vw_i^{t+1}) - \nabla f_{i,j_i} (\vw_i^0)\right) \right. \right.
      \left. \left. - \left(\mu_i + \frac{1}{b}\sum_{j_i\in S_i^t}\left(\nabla f_{i, j_i}(\vw_i^t) - \nabla f_{i,j_i} (\vw_i^0)\right)\right)\right\|^2\right] \\
    \leq & 2\mathbb{E}_{B^t, B^{t+1}} \sum_{i=1}^m\left[\norm{\frac{1}{b}\sum_{j_i\in S_i^{t+1}}(\nabla f_{i, j_i}(\vw_i^{t+1}) - \nabla f_{i,j_i} (\vw_i^0))}^2 \right.  \left. + \norm{\frac{1}{b}\sum_{j_i\in S_i^t}\left(\nabla f_{i, j_i}(\vw_i^t) - \nabla f_{i,j_i} (\vw_i^0)\right)}^2 \right] \\
    \leq & 2\mathbb{E}_{B^t, B^{t+1}} \sum_{i=1}^m\left[\frac{1}{b}\sum_{j_i\in S_i^{t+1}}\norm{\nabla f_{i, j_i}(\vw_i^{t+1}) - \nabla f_{i,j_i} (\vw_i^0)}^2 \right.  \left. + \frac{1}{b}\sum_{j_i\in S_i^t}\norm{\nabla f_{i, j_i}(\vw_i^t) - \nabla f_{i,j_i} (\vw_i^0)}^2 \right] \\
     = & \frac{2}{n} \sum_{i=1}^m \sum_{j=1}^n \left(\norm{\nabla f_{i, j}(\vw_i^{t+1}) - \nabla f_{i,j} (\vw_i^0)}^2 + \norm{\nabla f_{i, j}(\vw_i^{t}) - \nabla f_{i,j} (\vw_i^0)}^2 \right),
\end{align*}
where second inequality is due to the cancellation of the $\mu_i$ term and $\|a + b\|^2 \leq 2 \|a\|^2 + 2 \|b\|^2$ for $\forall a, b \in \mathcal{R}^d$; the third inequality uses the bound $\norm{\sum_{i=1}^b a_i}^2 \leq b\sum_{i=1}^b \norm{a_i}^2$ for any sequence $a_i \in \mathcal{R}^d$; the last equality follows that $S^{t}$ and $S^{t+1}$ are samples from uniform distribution.

For any $u \in \mathcal{R}^d$, we have
\begin{align*}
    \norm{\nabla f_{i, j}(\vw_i^{t}) - \nabla f_{i,j} (\vw_i^0)}^2 \leq & 2\|\nabla f_{i, j}(\vw_i^{t}) - \nabla f_{i, j}(u)\|^2 + 2\|\nabla f_{i, j}(u)- \nabla f_{i,j} (\vw_i^0)\|^2 \\
    \leq & 4\|\nabla f_{i, j}(\vw_i^{t}) - \nabla f_{i, j}(\bar{w}^{t})\|^2 + 4\|\nabla f_{i, j}(\bar{w}^{t}) - \nabla f_{i, j}(u)\|^2 \\
    & + 2 \|\nabla f_{i, j}(u)- \nabla f_{i,j} (\vw_i^0)\|^2 
\end{align*}
and
\begin{align*}
    \|\nabla f_{i, j}(\vw_i^{t+1}) - \nabla f_{i,j} (\vw_i^0)\|^2 \leq & 2\|\nabla f_{i, j}(\vw_i^{t+1}) - \nabla f_{i, j}(u)\|^2 + 2\|\nabla f_{i, j}(u)- \nabla f_{i,j} (\vw_i^0)\|^2 \\
    \leq & 4\|\nabla f_{i, j}(\vw_i^{t+1}) - \nabla f_{i, j}(\bar{w}^{t})\|^2 + 4\|\nabla f_{i, j}(\bar{w}^{t+1}) - \nabla f_{i, j}(u)\|^2 \\
    & + 2 \|\nabla f_{i, j}(u)- \nabla f_{i,j} (\vw_i^0)\|^2.
\end{align*}
Therefore, we achieve
\begin{align*}
     & \mathbb{E}_{S^t, S^{t+1}}\left[\norm{ \mathbf{v}^{t+1} - \mathbf{v}^t}^2\right] \\
     \leq &\frac{8}{n} \sum_{i=1}^m \sum_{j=1}^n \left(\|\nabla f_{i, j}(\vw_i^{t}) - \nabla f_{i, j}(\bar{w}^{t})\|^2 + \|\nabla f_{i, j}(\bar{w}^{t}) - \nabla f_{i, j}(u)\|^2 \right.\\
    & +  \|\nabla f_{i, j}(u)- \nabla f_{i,j} (\vw_i^0)\|^2 + \|\nabla f_{i, j}(\vw_i^{t+1}) - \nabla f_{i, j}(\bar{w}^{t+1})\|^2 \\
    & \left. + \|\nabla f_{i, j}(\bar{w}^{t+1}) - \nabla f_{i, j}(u)\|^2 \right) \\
    \leq & \frac{8\ell_1\ell_2}{n} \sum_{i=1}^m \sum_{j=1}^n \left(\|\vw_i^{t} - \bar{w}^{t}\|^2 + \|\bar{w}^{t} - u\|^2 +  \|u- \vw_i^0\|^2 \right. \\
    &  \left. + \|\vw_i^{t+1} - \bar{w}^{t+1}\|^2 + \norm{ \bar{w}^{t+1} - u}^2 \right) \\ 
    = & {8\ell_1\ell_2}\left(\|\vw^t - \mathbf{1}\bar{w}^t\|^2 + \|\vw^0 - \mathbf{1}u\|^2  + \|\vw^{t+1} - \mathbf{1}\bar{w}^t\|^2\right) \\
    &+  8\ell_1\ell_2m\left(\|\bar{w}^t - u\|^2 + \|\bar{w}^{t+1} - u\|^2\right) \\
    \leq & {8\ell_1\ell_2}\left(\|\vw^t - \mathbf{1}\bar{w}^t\|^2 + \|\vw^0 - \mathbf{1}u\|^2 + (8\rho^2\|\vw^t - \mathbf{1}\bar{w}^t\|^2 \right. \\
    & \left. + 8\rho^2\eta^2\|\mathbf{s}^t - \mathbf{1}\bar{s}^t\|^2)\right) + {8\ell_1\ell_2m}(\|\bar{w}^t - u\|^2  + \|\bar{w}^{t+1} - u\|^2 ) \\
    \leq & {8\ell_1\ell_2}\left((1 + 8\rho^2)\|\vw^t - \mathbf{1}\bar{w}^t\|^2 + \|\vw^0 - \mathbf{1}u\|^2\right) \\
    & + {64 \ell_1\ell_2\rho^2\eta^2}\|\mathbf{s}^t - \mathbf{1}\bar{s}^t\|^2 + {8\ell_1\ell_2m}\left(\|\bar{w}^t - u\|^2   + \|\bar{w}^{t+1} - u\|^2\right).
\end{align*}
\end{proof}
Combining the above two lemmas, we arrive at the following linear system of inequalities for $\|\vw^{t+1} - \mathbf{1}\bar{w}^{t+1}\|^2$ and
$\eta^2\|\mathbf{s}^{t+1} - \mathbf{1}\bar{s}^{t+1}\|^2$:
\begin{align}
\begin{split}
    & \begin{pmatrix}
           \|\vw^{t+1} - \mathbf{1}\bar{w}^{t+1}\|^2\\
           \eta^2\|\mathbf{s}^{t+1} - \mathbf{1}\bar{s}^{t+1}\|^2\\
    \end{pmatrix} \\
    \leq & 2\rho^2
    \begin{pmatrix}
    4 & 4\\
    {8\ell_1\ell_2\eta^2}(1+8\rho^2)& 1+{64 \ell_1 \ell_2\rho^2\eta^2}\\
    \end{pmatrix}
        \begin{pmatrix}
           \|\vw^{t} - \mathbf{1}\bar{w}^{t}\|^2\\
           \eta^2\|\mathbf{s}^{t} - \mathbf{1}\bar{s}^{t}\|^2\\
    \end{pmatrix} \\
    &+  \begin{pmatrix}
       0 \\
       {16\rho^2\eta^2\ell_1\ell_2}\|\vw^0 - \mathbf{1}u\|^2 + {16\ell_1\ell_2m\rho^2\eta^2}\|\bar{w}^t- u\|^2 + {16\ell_1\ell_2m\rho^2\eta^2}\|\bar{w}^{t+1}- u\|^2
    \end{pmatrix}.
    \end{split}
\end{align}
To simplify the notation, we abbreviate $\begin{pmatrix}
           \|\vw^{t+1} - \mathbf{1}\bar{w}^{t+1}\|^2\\
           \eta^2\|\mathbf{s}^{t+1} - \mathbf{1}\bar{s}^{t+1}\|^2\\
    \end{pmatrix}$ as $z^{t+1}$, the above inequality can be simplified as:     
    \begin{equation}
    \begin{split}
        &z^{t+1} \leq \mathbf{A}z^t + b^t,
    \end{split}
    \end{equation}    
where the matrix $\mathbf{A}$ and the vector $b^t$ are defined as
        \begin{equation}
    \begin{split}
     \mathbf{A} &= 2\rho^2 \begin{pmatrix}
    4 & 4\\
    {8\ell_1\ell_2\eta^2}(1+8\rho^2)& 1+{64 \ell_1 \ell_2\rho^2\eta^2}\\
    \end{pmatrix} 
    \end{split}     
    \label{A_def}
    \end{equation}
and
    \begin{equation}
        \begin{split}
     \mathbf{b}^t&=\begin{pmatrix}
       0 \\
       {16\rho^2\eta^2\ell_1\ell_2}\|\vw^0 - \mathbf{1}u\|^2 + {16\ell_1\ell_2m\rho^2\eta^2}\|\bar{w}^t- u\|^2 + {16\ell_1\ell_2m\rho^2\eta^2}\|\bar{w}^{t+1}- u\|^2
    \end{pmatrix}.
    \end{split}
    \label{b_def}
\end{equation}
Based on the construction of the linear system of inequalities, we have:
\begin{lemma}
Let $z^t = \begin{pmatrix}
           \|\vw^{t+1} - \mathbf{1}\bar{w}^{t+1}\|^2\\
           \eta^2\|\mathbf{s}^{t+1} - \mathbf{1}\bar{s}^{t+1}\|^2\\
    \end{pmatrix}$, and it enjoys linear system of inequalities such that $z^{t+1} \leq A z^{t} + b^t$ where $A$ is defined in \eqref{A_def} and $b^t$ is defined by \eqref{b_def}. Given $T \sim \rm Geom(p)$, it follows that:
    \begin{equation}
    \begin{split}
     \mathbb{E}_T\left[\|{z}^{T} \|\right] \leq \|{z}^0\| +  \frac{1-p}{p}{16\rho^2\eta^2\ell_1\ell_2}\|\vw^0 - \mathbf{1}\bar{w}^0\|^2   + \frac{2-p}{p}{16\rho^2\eta^2 \ell_1\ell_2m}\mathbb{E}_T\left[\|\bar{w}^{T}- \bar{w}^0\|^2\right].
    \end{split}
    \end{equation}
    \label{inner_lemma_1}
\end{lemma}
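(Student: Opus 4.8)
The plan is to analyze the recursion $z^{t+1} \leq A z^t + b^t$ and take the expectation over the geometric stopping time $T \sim \mathrm{Geom}(p)$, exploiting the first identity of Lemma~(the \citet{allen2018katyusha} lemma on geometric sums), namely $\mathbb{E}_T[D_T - D_{T+1}] = \frac{p}{1-p}\mathbb{E}[D_0 - D_T]$, equivalently $\mathbb{E}_T[D_T] = (1-p)\mathbb{E}[D_{T+1}] + p D_0$. First I would unroll the recursion: since $z^{t+1} \leq A z^t + b^t$ with $A$ having nonnegative entries, iterating gives $z^{t} \leq A^t z^0 + \sum_{j=0}^{t-1} A^{t-1-j} b^j$. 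The key structural fact I need is that the spectral norm (or an operator bound) of $A$ is small — indeed $A = 2\rho^2 (\cdots)$ and the hypothesis on $\rho$ in the inner-loop lemma (Lemma with $\rho \leq \min((8bm\eta J t_0)^{-1/2}, (18(1+b))^{-1/2})$) is exactly what forces $\|A\|_2 \leq 1$, so that $\|A^t\|_2 \leq 1$ for all $t$ and the Neumann-type sum $\sum_{j\geq 0} \|A^j\|_2$ behaves well. In fact, a cleaner route given the form of the claimed bound is to observe that $A$ acts as a contraction enough that $\|A z\| \leq \|z\|$, so telescoping simply yields $\|z^{t}\| \leq \|z^0\| + \sum_{j=0}^{t-1}\|b^j\|$.

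Next I would take $\mathbb{E}_T$ of $\|z^T\| \leq \|z^0\| + \sum_{j=0}^{T-1}\|b^j\|$. For a geometric random variable, $\mathbb{E}_T\big[\sum_{j=0}^{T-1} \|b^j\|\big] = \sum_{j\geq 0} \Pr(T > j)\,\|b^j\| = \sum_{j\geq 0}(1-p)^{j+1}\|b^j\|$ (adjusting for whether the geometric is supported on $\{0,1,\dots\}$ or $\{1,2,\dots\}$ as used in the paper). Then I substitute the explicit form of $b^j$ from \eqref{b_def}: each $b^j$ has a fixed contribution $16\rho^2\eta^2\ell_1\ell_2\|\vw^0 - \mathbf{1}u\|^2$ (independent of $j$, with $u = \bar{w}^0$) plus terms $16\ell_1\ell_2 m\rho^2\eta^2(\|\bar{w}^j - u\|^2 + \|\bar{w}^{j+1} - u\|^2)$. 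The constant part sums geometrically to $\frac{1-p}{p} \cdot 16\rho^2\eta^2\ell_1\ell_2\|\vw^0-\mathbf{1}\bar{w}^0\|^2$ (using $\sum_{j\geq 0}(1-p)^{j+1} = \frac{1-p}{p}$). For the $\|\bar{w}^j - \bar{w}^0\|^2$ terms, I would crudely bound each $\|\bar{w}^j - \bar{w}^0\|^2 \leq \mathbb{E}_T[\|\bar{w}^T - \bar{w}^0\|^2]$ inside the relevant range — or more carefully, use that the weighted sum of the two shifted copies $\|\bar w^j - u\|^2 + \|\bar w^{j+1}-u\|^2$ over the geometric distribution collapses, via $\mathbb{E}_T[D_T] = (1-p)\mathbb{E}[D_{T+1}] + pD_0$ and $\bar{w}^0 = u$ so $D_0 = 0$, into a $\frac{2-p}{p}$ factor times $\mathbb{E}_T[\|\bar{w}^T - \bar{w}^0\|^2]$. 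This is where the $\frac{1-p}{p}$ and $\frac{2-p}{p}$ coefficients in the target statement originate, and matching them is essentially bookkeeping with the geometric-sum identity.

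The main obstacle, I expect, is handling the coupling between the two shifted indices $j$ and $j+1$ in $b^j$ cleanly enough to produce exactly the stated $\frac{2-p}{p}$ constant rather than a looser $\frac{2(1-p)}{p}$ or $\frac{2}{p}$; this requires carefully applying the identity $\mathbb{E}_T\big[\sum_{j=0}^{T-1}(\|\bar w^j - u\|^2 + \|\bar w^{j+1} - u\|^2)\big]$ and reindexing one of the two sums, then using $D_0 = \|\bar w^0 - u\|^2 = 0$ to absorb the boundary term. A secondary subtlety is justifying $\|A z\| \leq \|z\|$ (or $\|A^t\|_2 \leq 1$) from the hypothesized smallness of $\rho$; this is a routine estimate on a $2\times 2$ matrix but must be done under the specific parameter constraints $\eta \leq \min(1/(2L), \sqrt{b/(\ell_1\ell_2 t_0)}/8)$ and the $\rho$ bound, which presumably guarantee that each entry of $A$ is small enough (e.g., $\|A\|_2 \leq 16\rho^2 \cdot \mathrm{const} < 1$). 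Once those two ingredients are in place, the lemma follows by direct substitution and collecting terms; no induction is needed beyond the elementary unrolling of the linear recursion.
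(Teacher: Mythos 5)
Your proposal is correct and takes essentially the same route as the paper: unroll the linear recursion $z^{t+1}\leq \mathbf{A}z^t+b^t$, invoke the parameter-induced bound $\|\mathbf{A}\|_2\leq 1$, take the expectation over $T\sim\mathrm{Geom}(p)$ with $u=\bar{w}^0$ so that the $\|\bar{w}^0-u\|^2$ terms vanish, and reindex the shifted sum to obtain exactly the $\frac{1-p}{p}$ and $\frac{2-p}{p}$ coefficients. The only cosmetic difference is that you drop the powers of $\|\mathbf{A}\|_2$ immediately, whereas the paper carries them as factors $\frac{1}{1-(1-p)\|\mathbf{A}\|_2}$ and relaxes them to $\frac{1}{p}$ only at the end (verifying $\|\mathbf{A}\|_2\leq\|\mathbf{A}\|_F\leq 1$ under the stated constraints on $\rho$ and $\eta$, as you anticipate).
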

\begin{proof}
We have ${z}^{t+1} = \mathbf{A}{z}^t + {b}^t = \mathbf{A}^{t+1}{z}^0 + \sum_{i=0}^t \mA^{t-i}b^i$. Note that $\mathbf{A}^i$ is the $i$-th power of the matrix $\mathbf{A}$, $b^i$ is expressed in the same way as $b^t$ except $\bar{w}^t$ and $\bar{w}^{t+1}$ is replaced with $\bar{w}^i$ and $\bar{w}^{i+1}$, then
\begin{equation}
    \|{z}^{t+1}\| \leq \|\mathbf{A}\|_2^{t+1}\|{z}^0\| + \sum_{i=0}^t \|\mathbf{A}\|_2^{t-i}\|b^i\|.
\label{z_split}
\end{equation}
Also by the definition of $\mathbf{b}_i$ and triangle inequality of norm, we have
\begin{equation}
\begin{split}
    \|\mathbf{b}^i\|=& \norm{
       {16\rho^2\eta^2\ell_1\ell_2}\|\vw^0 - \mathbf{1}u\|^2 + {16\ell_1\ell_2m\rho^2\eta^2}\|\bar{w}^i- u\|^2 + {16\ell_1\ell_2m\rho^2\eta^2}\|\bar{w}^{i+1}- u\|^2
}\\ 
    \leq  & 
       {16\rho^2\eta^2\ell_1\ell_2}\|\vw^0 - \mathbf{1}u\|^2
     + 
        {16\ell_1\ell_2m\rho^2\eta^2}\|\bar{w}^i- u\|^2
     +  
       {16\ell_1\ell_2m\rho^2\eta^2}\|\bar{w}^{i+1}- u\|^2. 
    \end{split}
\label{b_split}
\end{equation}
By combining \ref{z_split} and \ref{b_split}, we claim that 
\begin{equation}
\begin{split}
    \|{z}^{t+1}\| \leq &\|\mathbf{A}\|_2^{t+1}\|{z}^0\| + \sum_{i=0}^t 
       {16\rho^2\eta^2\ell_1\ell_2}\|\mathbf{A}\|_2^i\|\vw^0 - \mathbf{1}u\|^2  \\
   & +  \sum_{i=1}^t (\|\mathbf{A}\|_2^{t-i} + \|\mathbf{A}\|_2^{t-i+1})
        {16\ell_1\ell_2m\rho^2\eta^2}\|\bar{w}^i- u\|^2 \\
   & +         {16\ell_1\ell_2m\rho^2\eta^2}\|\mathbf{A}\|_2^{t}\|\bar{w}^0- u\|^2    +
    {16\ell_1\ell_2m\rho^2\eta^2}\|\bar{w}^{t+1}- u\|^2.
    \end{split}
\end{equation}
Let $T\sim \texttt{Geom}(p)$ follows from the geometric distribution where $p\in [0, 1]$, it follows that 
\begin{equation}
    \begin{split}
        \mathbb{E}_T[\|{z}^{T} \|] 
        \leq & \sum_{t\geq0}(1-p)^t p(\|\mathbf{A}\|_2^{t}\|{z}^0\| + \frac{1-\|\mathbf{A}\|_2^t}{1-\|\mathbf{A}\|_2}
       {16\rho^2\eta^2\ell_1\ell_2}\|\vw^0 - \mathbf{1}u\|^2 
    \\
    &+  \sum_{i=1}^{t-1} (\|\mathbf{A}\|_2^{t-i} + \|\mathbf{A}\|_2^{t-i-1})  {16\ell_1\ell_2m\rho^2\eta^2}\|\bar{w}^i- u\|^2
  \\
    &+ \|\mathbf{A}\|_2^{t-1}
        {16\ell_1\ell_2m\rho^2\eta^2}\|\bar{w}^0- u\|^2
    +    {16\ell_1\ell_2m\rho^2\eta^2}\|\bar{w}^{t}- u\|^2
    ).
    \end{split}
\end{equation}
We will show that by carefully choosing $\rho$. We can bound $0 < \|\mathbf{A}\|_2 < 1$ and we replace $u$ with $\bar{w}^0$. By elementary mathematics, it follows that
\begin{equation}
\begin{split}\label{inner_comm}
       & \mathbb{E}_T\left[\|{z}^{T} \|\right]  \\
       \leq & \frac{p}{1-(1-p)\|\mathbf{A}\|_2}\|{z}^0\| +  \frac{1-p}{1-(1-p)\|\mathbf{A}\|_2}{16\rho^2\eta^2\ell_1\ell_2}\|\vw^0 - \mathbf{1}\bar{w}^0\|^2\\
         & +  \sum_{i>0}\sum_{t\geq i} \left((1 - p)^t p \|\mathbf{A}\|_2^{t-i} + (1 - p)^{t+1} p \|\mathbf{A}\|_2^{t-i}\right){16\ell_1\ell_2m\rho^2\eta^2}\|\bar{w}^{i}- \bar{w}^0\|^2\\
        \leq & \frac{p}{1-(1-p)\|\mathbf{A}\|_2}\|{z}^0\| +  \frac{1-p}{1-(1-p)\|\mathbf{A}\|_2}{16\rho^2\eta^2\ell_1\ell_2}\|\vw^0 - \mathbf{1}\bar{w}^0\|^2\\
         & + \sum_{i>0} \frac{(2-p)(1-p)^i p}{1-(1-p)\|\mathbf{A}\|_2}{16\ell_1\ell_2m\rho^2\eta^2}\|\bar{w}^{i}- \bar{w}^0\|^2\\
    = & \mathbb{E}_T\left[\frac{p}{1-(1-p)\|\mathbf{A}\|_2}\|{z}^0\| +  \frac{1-p}{1-(1-p)\|\mathbf{A}\|_2}{16\rho^2\eta^2\ell_1\ell_2}\|\vw^0 - \mathbf{1}\bar{w}^0\|^2 \right.\\
    & \left. + \frac{2-p}{1-(1-p)\|\mathbf{A}\|_2}{16\ell_1\ell_2m\rho^2\eta^2}\|\bar{w}^{T}- \bar{w}^0\|^2\right] .
\end{split}
\end{equation}
\end{proof}

One can obtain the following key result by using the definition of convexity and smoothness of function $f(\cdot)$:

\begin{lemma}
Let $\bar{w}^{t+1} = \frac{1}{m}\mathbf{1}^{\top}\vw^{t+1}$, where $\vw^{t+1} = \FastMix(\mathbf{prox}_{m \eta , \Psi}(w^t - \eta s^t), M)$, it satisfies that
\begin{equation}
\begin{split}
      \mathbb{E}\left[F(\bar{w}^{t+1}) - F(u)\right] &\leq \mathbb{E} \left[  \frac{\eta}{1 - \eta L}\norm{ \bar{s}^t- \nabla f(\bar{w}^t)}^2 + \frac{2-\eta \sigma_f}{4\eta}\|u - \bar{w}^t\|^2  -  \frac{(1+\sigma_{\psi} \eta)\|u - \bar{w}^{t+1}\|^2}{2\eta} \right.\\
    & \quad \left. +  \frac{1-\eta L + 2\eta}{2(1-\eta L)m}\|\mathbf{1}\bar{s}^t - \mathbf{s}^t\|^2 + \left(\frac{\ell_1\ell_2}{\sigma_f m}+ \frac{1+\eta}{2m\eta}\right)\|\vw^t - \mathbf{1}\bar{w}^t \|^2 \right]. 
\end{split}
\label{cur_iterate_inner_bound}
\end{equation}
\label{lemma:inner_lemma_2}
\end{lemma}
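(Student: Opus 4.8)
The plan is to analyze one proximal-gradient-with-tracking step at the level of the \emph{averaged} iterates, treating the consensus errors $\|\vw^t - \mathbf{1}\bar{w}^t\|^2$ and $\|\mathbf{s}^t - \mathbf{1}\bar{s}^t\|^2$ as error terms to be carried to the right-hand side. First I would establish how the average $\bar{w}^{t+1}$ relates to a proximal step on $\bar{w}^t$. Since $\texttt{FastMix}$ preserves the average (it is built from $\mathbf{x}\mapsto\mathbf{x}W$ with $W\mathbf{1}=\mathbf{1}$, so $\mathbf{1}^\top W = \mathbf{1}^\top$), we have $\bar{w}^{t+1} = \frac1m\mathbf{1}^\top \mathbf{prox}_{m\eta,\Psi}(\vw^t-\eta\vs^t)$, and by Lemma~\ref{prox_lemma} together with Jensen's inequality this is close to $\mathbf{prox}_{\eta,\psi}(\bar{w}^t-\eta\bar{s}^t)$ up to a term controlled by $\|\vw^t-\mathbf{1}\bar{w}^t\|^2 + \eta^2\|\mathbf{s}^t-\mathbf{1}\bar{s}^t\|^2$; that is where the $\frac{1+\eta}{2m\eta}\|\vw^t-\mathbf{1}\bar{w}^t\|^2$ and $\frac{1}{2(1-\eta L)m}\|\mathbf{1}\bar{s}^t-\mathbf{s}^t\|^2$ contributions originate. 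Denote $\hat{w} := \mathbf{prox}_{\eta,\psi}(\bar{w}^t - \eta\bar{s}^t)$ as the ``ideal'' centralized step.

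Next I would invoke Lemma~\ref{mirror_desc} (the standard prox-inequality) with $\tilde{\psi}=\psi$, $\tilde{w}=\bar{w}^t$, $s^t = \bar{s}^t$, $\hat\eta=\eta$, and $\sigma=\sigma_\psi$, to get, for every $u$,
\begin{equation*}
\langle \bar{s}^t, \bar{w}^t - u\rangle + \psi(\hat{w}) - \psi(u) \leq \langle \bar{s}^t, \bar{w}^t - \hat{w}\rangle + \frac{\|u-\bar{w}^t\|^2}{2\eta} - \frac{(1+\sigma_\psi\eta)\|u-\hat{w}\|^2}{2\eta} - \frac{\|\bar{w}^t-\hat{w}\|^2}{2\eta}.
\end{equation*}
Then I would add and subtract $\nabla f(\bar{w}^t)$ inside the inner products, use $L$-smoothness of $f$ to bound $f(\hat{w}) \le f(\bar{w}^t) + \langle \nabla f(\bar{w}^t), \hat{w}-\bar{w}^t\rangle + \tfrac L2\|\hat w - \bar w^t\|^2$ (which is where the $\frac{1}{1-\eta L}$ factor and the $\|\bar s^t - \nabla f(\bar w^t)\|^2$ term appear, after a Young's inequality to absorb the cross term $\langle \bar s^t - \nabla f(\bar w^t), \bar w^t - \hat w\rangle$ against $-\tfrac{\|\bar w^t - \hat w\|^2}{2\eta}$), and use $\sigma_f$-strong convexity of $f$ in the form $f(\bar w^t) - f(u) \le \langle\nabla f(\bar w^t), \bar w^t - u\rangle - \tfrac{\sigma_f}{2}\|u-\bar w^t\|^2$. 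Combining these with $F=f+\psi$ gives a bound on $F(\hat w) - F(u)$; the $\tfrac{2-\eta\sigma_f}{4\eta}$ coefficient on $\|u-\bar w^t\|^2$ is exactly $\tfrac{1}{2\eta} - \tfrac{\sigma_f}{4}$, reflecting that the $-\tfrac{\sigma_f}{2}$ term is split (half used here, half kept in reserve, which is typical for these couplings). Finally I transfer from $F(\hat w)$ to $F(\bar w^{t+1})$ using $L$-smoothness of $f$ and convexity of $\psi$ across the gap $\|\bar w^{t+1} - \hat w\|$, which again is bounded by the consensus errors via Lemma~\ref{prox_lemma} and the $\texttt{FastMix}$ contraction (Lemma~\ref{fastmix_lemma}).

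The main obstacle I anticipate is bookkeeping the several Young's-inequality splits so that the coefficients come out exactly as stated — in particular, keeping the factor $\tfrac{1}{1-\eta L}$ clean requires choosing the Young parameter in the cross term $\langle \bar s^t - \nabla f(\bar w^t), \bar w^t - \hat w\rangle$ to be precisely $\eta L$ versus $1-\eta L$, and then the leftover $-\tfrac{\|\bar w^t-\hat w\|^2}{2\eta}\cdot\tfrac{1-\eta L - \eta L}{1-\eta L}$-type term must be non-positive (this is where $\eta\le 1/(2L)$ would be used) so it can be dropped. A secondary subtlety is that $\bar s^t = \bar v^t$ is a random quantity (it depends on the minibatch), so the inequality must be stated in expectation and the expectation commutes through the linear and quadratic terms without trouble; the $\|\bar s^t - \nabla f(\bar w^t)\|^2$ term is then left for Lemma~4 (the $(\ell_1,\ell_2)$-smoothness bound) to control downstream. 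None of the individual steps is deep; the care is entirely in matching constants.
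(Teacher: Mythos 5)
Your overall plan (analyze the averaged iterate, push consensus errors to the right-hand side, use the prox inequality of Lemma~\ref{mirror_desc}, then smoothness and strong convexity with Young splits) is in the right spirit, but the specific route through the ``ideal'' centralized point $\hat{w} := \mathbf{prox}_{\eta,\psi}(\bar{w}^t-\eta\bar{s}^t)$ has a genuine gap at the final transfer step. You can indeed bound $\|\bar{w}^{t+1}-\hat{w}\|^2 \le \tfrac{2}{m}\bigl(\|\vw^t-\mathbf{1}\bar{w}^t\|^2+\eta^2\|\vs^t-\mathbf{1}\bar{s}^t\|^2\bigr)$ from Lemma~\ref{prox_lemma} (FastMix preserves averages), but you then claim to pass from $F(\hat{w})-F(u)$ to $F(\bar{w}^{t+1})-F(u)$ ``using $L$-smoothness of $f$ and convexity of $\psi$ across the gap.'' For the non-smooth part this does not work: convexity of $\psi$ gives no control of $\psi(\bar{w}^{t+1})-\psi(\hat{w})$ in terms of $\|\bar{w}^{t+1}-\hat{w}\|$ at all (if $\psi$ is an indicator, $\psi(\bar{w}^{t+1})$ can be $+\infty$ while $\hat{w}$ is feasible), and even for a Lipschitz $\psi$ such as the $\ell_1$ norm you only get a \emph{first-order} term $G\|\bar{w}^{t+1}-\hat{w}\|$, i.e.\ consensus errors entering with power one rather than squared, which is incompatible with the claimed bound and with the downstream linear-system argument. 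The smooth part has a similar bookkeeping problem: the cross term $\langle\nabla f(\hat{w}),\bar{w}^{t+1}-\hat{w}\rangle$ is not absorbable into the stated coefficients without introducing extra quantities that do not appear in the lemma.

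The paper's proof avoids this pitfall by never comparing function values at two nearby points. It writes $\bar{w}^{t+1}=\tfrac1m\sum_i\tilde{\vw}_i^{t+1}$ with $\tilde{\vw}_i^{t+1}=\mathbf{prox}_{\eta,\psi}(\vw_i^t-\eta\vs_i^t)$ (Lemma~\ref{prox_lemma} plus average preservation of FastMix), bounds $\psi(\bar{w}^{t+1})\le\tfrac1m\sum_i\psi(\tilde{\vw}_i^{t+1})$ and $\|u-\bar{w}^{t+1}\|^2\le\tfrac1m\sum_i\|u-\tilde{\vw}_i^{t+1}\|^2$ by Jensen, and applies Lemma~\ref{mirror_desc} \emph{per agent} (with $\tilde{w}=\vw_i^t$, $s^t=\vs_i^t$), so the local $\psi$-values and the local proximal decrease terms are handled exactly; the consensus errors then enter only through quadratic quantities such as $\|\vw^t-\mathbf{1}\bar{w}^t\|^2$, $\|\vs^t-\mathbf{1}\bar{s}^t\|^2$ and the gradient-heterogeneity bound $\|\nabla f(\bar{w}^t)-\tfrac1m\sum_i\nabla f_i(\vw_i^t)\|^2\le\tfrac{\ell_1\ell_2}{m}\|\vw^t-\mathbf{1}\bar{w}^t\|^2$ (Young against $\tfrac{\sigma_f}{4}\|u-\bar{w}^t\|^2$ is what produces the $\tfrac{\ell_1\ell_2}{\sigma_f m}$ coefficient). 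To repair your proposal you would need to replace the ``centralized $\hat{w}$ step plus transfer'' by this per-agent application of the prox inequality followed by averaging; the rest of your outline (Young split giving $\tfrac{\eta}{1-\eta L}\|\bar{s}^t-\nabla f(\bar{w}^t)\|^2$, the $\tfrac{2-\eta\sigma_f}{4\eta}=\tfrac1{2\eta}-\tfrac{\sigma_f}{4}$ bookkeeping, and stating everything in expectation over the minibatch) then matches the paper's argument.
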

\begin{proof}
Denote $\Tilde{\vw}^{t+1} = \mathbf{prox}_{m \eta , \Psi}(w^t - \eta s^t)$, so $\vw^{t+1}= \FastMix(\Tilde{\vw}^{t+1}, M)$ then we have
\begin{align*}
&    \mathbb{E}\left[F(\bar{w}^{t+1}) - F(u)\right] \\
=& \mathbb{E}\left[f(\bar{w}^{t+1}) - f(u) + \psi(\bar{w}^{t+1}) - \psi(u)\right] \\
\leq & \mathbb{E}\left[ f(\bar{w}^{t}) + \langle \nabla f(\bar{w}^t), \bar{w}^{t+1} - \bar{w}^t \rangle + \frac{L}{2}\|\bar{w}^{t+1} - \bar{w}^t\|^2 - f(u) \right. \left. + \psi(\bar{w}^{t+1}) - \psi(u)\right] \\
\leq & \mathbb{E}\left[\langle \nabla f(\bar{w}^t), \bar{w}^t - u \rangle + \psi(\bar{w}^{t+1}) - \psi(u)  - \frac{\sigma_f}{2}\| \bar{w}^t - u\|^2  \right. \left. + \langle \nabla f(\bar{w}^t), \bar{w}^{t+1} - \bar{w}^t \rangle + \frac{L}{2}\|\bar{w}^{t+1} - \bar{w}^t\|^2\right] \\
\leq &\mathbb{E}\left[\frac{1}{m}\sum_{i=1}^m \left(\langle \nabla f(\bar{w}^t), \vw_i^t - u \rangle + \psi(\Tilde{\vw}_i^{t+1}) - \psi(u)\right)  - \frac{\sigma_f}{2}\| \bar{w}^t - u\|^2 \right. \left. + \langle \nabla f(\bar{w}^t), \bar{w}^{t+1} - \bar{w}^t \rangle + \frac{L}{2}\|\bar{w}^{t+1} - \bar{w}^t\|^2 \right], 
\end{align*}
where the first inequality is due to the $L$-smoothness of $f(\cdot)$, the second inequality is due to the strong convexity of $f(\cdot)$. The last inequality holds because $\psi(\cdot)$ is a convex function. 

Then we have
\begin{align*}
     \mathbb{E}&\left[F(\bar{w}^{t+1}) - F(u)\right]  \\
     = \mathbb{E}&\left[\frac{1}{m}\sum_{i=1}^m \left(\langle \mathbf{s}_i^t, \vw_i^t - u \rangle + \langle \nabla f(\bar{w}^t) - \mathbf{s}_i^t, \vw_i^t - u\rangle + \psi(\Tilde{\vw}_i^{t+1}) - \psi(u)\right)  - \frac{\sigma_f}{2}\| \bar{w}^t - u\|^2 \right.\\
    & \quad \left. + \langle \nabla f(\bar{w}^t), \bar{w}^{t+1} - \bar{w}^t \rangle + \frac{L}{2}\|\bar{w}^{t+1} - \bar{w}^t\|^2 \right] \\
     \leq \mathbb{E}&\left[ \langle \nabla f(\bar{w}^t) - \bar{s}^t, \bar{w}^t - u \rangle + \frac{1}{m}\sum_{i=1}^m\langle \bar{s}^t - \mathbf{s}_i^t, \vw_i^t - u\rangle \right. \\
    & \quad  + \frac{1}{m} \sum_{i=1}^m \left(\langle \mathbf{s}_i^t, \vw_i^t - \Tilde{\vw}_i^{t+1} \rangle  + \frac{\|u - \vw_i^t\|^2}{2\eta} - \frac{(1+\sigma_{\psi} \eta)\|u - \Tilde{\vw}_i^{t+1}\|^2}{2\eta}   - \frac{\|\vw_i^t - \Tilde{\vw}_i^{t+1}\|^2}{2\eta}\right)\\
    &  \quad \left. - \frac{\sigma_f}{2}\| \bar{w}^t - u\|^2 + \langle \nabla f(\bar{w}^t), \bar{w}^{t+1} - \bar{w}^t \rangle  + \frac{L}{2}\|\bar{w}^{t+1} - \bar{w}^t\|^2 \right] \\
     \leq \mathbb{E}&\left[ \langle \nabla f(\bar{w}^t) - \bar{s}^t, \bar{w}^t - u \rangle + \frac{1}{m}\sum_{i=1}^m\langle \bar{s}^t - \mathbf{s}_i^t, \vw_i^t - \bar{w}^t\rangle \right. \\
    & \quad + \frac{1}{m} \sum_{i=1}^m \left(\langle \mathbf{s}_i^t - \nabla f(\bar{w}^t) , \vw_i^t - \Tilde{\vw}_i^{t+1} \rangle + \frac{\|u - \vw_i^t\|^2}{2\eta} - \frac{(1+\sigma_{\psi} \eta)\|u - \Tilde{\vw}_i^{t+1}\|^2}{2\eta}  - \frac{\|\vw_i^t - \Tilde{\vw}_i^{t+1}\|^2}{2\eta}\right)\\
    & \left. \quad - \frac{\sigma_f}{2}\| \bar{w}^t - u\|^2 + \frac{L}{2m} \sum_{i=1}^m\|\vw_i^t - \Tilde{\vw}_i^{t+1} \|^2 \right],  
\end{align*}
where the first inequality is deduced from Lemma \ref{mirror_desc}. It can be easily verified that $\frac{1}{m}\sum_{i=1}^m \langle \bar{s}^t - \mathbf{s}_i^t, \vw_i^t - u\rangle = \frac{1}{m}\sum_{i=1}^m \langle \bar{s}^t - \mathbf{s}_i^t, \vw_i^t - \bar{w}^t\rangle$ and $\|\bar{w}^{t+1} - \bar{w}^t\|^2 = \|\frac{1}{m}\sum_{i=1}^m (\vw_i^t - \Tilde{\vw}_i^{t+1})\|^2 \leq \frac{1}{m}\sum_{i=1}^m\|\vw_i^t - \Tilde{\vw}_i^{t+1}\|^2$, so the second inequality holds. Recall that $B^t$ is the batch of samples used at iteration $t$, we have
\begin{align*}
    \mathbb{E} & \left[  F(\bar{w}^{t+1}) - F(u)\right]  \\
    \leq \mathbb{E} & \left[  \langle \nabla f(\bar{w}^t) - \mathbb{E}_{B^t}[\bar{s}^t], \bar{w}^t - u\rangle  +  \frac{\|\mathbf{1}\bar{s}^t - \mathbf{s}^t\|^2 + \|\vw^t - \mathbf{1}\bar{w}^t \|^2}{2m} \right.\\
    &  + \frac{1}{m} \sum_{i=1}^m \left(\langle \mathbf{s}_i^t - \nabla f(\bar{w}^t) , \vw_i^t - \Tilde{\vw}_i^{t+1} \rangle - \frac{(1-\eta L)\|\vw_i^t - \Tilde{\vw}_i^{t+1}\|^2}{2\eta}  + \frac{\|u - \vw_i^t\|^2}{2\eta}\right)   \\ 
     &  \left. - \frac{(1+\sigma_{\psi} \eta)\|u - \bar{w}^{t+1}\|^2}{2\eta}  - \frac{\sigma_f}{2}\| \bar{w}^t - u\|^2 \right] \\
     \leq \mathbb{E} & \left[  \langle \nabla f(\bar{w}^t) - \mathbb{E}_{B^t}[\bar{s}^t], \bar{w}^t - u\rangle +  \frac{\|\mathbf{1}\bar{s}^t - \mathbf{s}^t\|^2 + \|\vw^t - \mathbf{1}\bar{w}^t \|^2}{2m} \right. \\
    &  \left. + \frac{\eta}{2(1 - \eta L)m} \|\mathbf{s}^t - \mathbf{1}\nabla f(\bar{w}^t)\|^2  + \frac{1}{m} \sum_{i=1}^m \frac{\|u - \vw_i^t\|^2}{2\eta} -  \frac{(1+\sigma_{\psi} \eta)\|u - \bar{w}^{t+1}\|^2}{2\eta} - \frac{\sigma_f}{2}\| \bar{w}^t - u\|^2 \right]  \\
      \leq \mathbb{E} & \left[  \norm{ \nabla f(\bar{w}^t) - \frac{1}{m} \sum_{i=1}^m \nabla f_i(\vw_i^t)}\norm{ \bar{w}^t - u} +   \frac{\|\mathbf{1}\bar{s}^t - \mathbf{s}^t\|^2 + \|\vw^t - \mathbf{1}\bar{w}^t \|^2}{2m} \right. \\
      & + \frac{\eta}{2(1 - \eta L)m} (2\|\mathbf{s}^t - \mathbf{1}\bar{s}^t\|^2 + 2m\| \bar{s}^t- \nabla f(\bar{w}^t)\|^2)  + \frac{1}{m} \sum_{i=1}^m \frac{\|u - \bar{w}^t\|^2 +  \|\bar{w}^t -\vw_i^t \|^2}{2\eta}  \\
    & \left. -  \frac{(1+\sigma_{\psi} \eta)\|u - \bar{w}^{t+1}\|^2}{2\eta} - \frac{\sigma_f}{2}\| \bar{w}^t - u\|^2 \right] \\
     = \mathbb{E} & \left[  \norm{ \nabla f(\bar{w}^t) - \frac{1}{m} \sum_{i=1}^m \nabla f_i(\vw_i^t)}\| \bar{w}^t - u\| +   \frac{\|\mathbf{1}\bar{s}^t - \mathbf{s}^t\|^2 + \|\vw^t - \mathbf{1}\bar{w}^t \|^2}{2m} \right. \\
    &  + \frac{\eta}{(1 - \eta L)m} \|\mathbf{s}^t - \mathbf{1}\bar{s}^t\|^2 + \frac{\eta}{1 - \eta L}\| \bar{s}^t- \nabla f(\bar{w}^t)\|^2
    + \frac{(1 - \sigma_f \eta)\|u - \bar{w}^t\|^2 }{2\eta}  \\
    & \left. -  \frac{(1+\sigma_{\psi} \eta)\|u - \bar{w}^{t+1}\|^2}{2\eta} +   \frac{\|\mathbf{1}\bar{w}^t -\vw^t \|^2}{2m\eta} \right] \\
     = \mathbb{E} & \left[ \left(\norm{ \nabla f(\bar{w}^t) - \frac{1}{m} \sum_{i=1}^m \nabla f_i(\vw_i^t)}\norm{ \bar{w}^t - u}  - \frac{\sigma_f}{4} \|u - \bar{w}^t\|^2\right) + \frac{2-\eta \sigma_f}{4\eta} \|u - \bar{w}^t\|^2 \right.\\
    &  + \frac{\eta}{1 - \eta L}\| \bar{s}^t- \nabla f(\bar{w}^t)\|^2 -  \frac{(1+\sigma_{\psi} \eta)\|u - \bar{w}^{t+1}\|^2}{2\eta}\\
    &  \left. +  \frac{1-\eta L + 2\eta}{2(1-\eta L)m}\|\mathbf{1}\bar{s}^t - \mathbf{s}^t\|^2 + \frac{1+ \eta}{2m\eta}\|\vw^t - \mathbf{1}\bar{w}^t \|^2 \right] \\
    \leq \mathbb{E} & \left[\frac{1}{\sigma_f}\norm{ \nabla f(\bar{w}^t) - \frac{1}{m} \sum_{i=1}^m \nabla f_i(\vw_i^t)}^2 + \frac{2-\eta \sigma_f}{4\eta}\norm{u - \bar{w}^t}^2 \right. \\
    &  + \frac{\eta}{1 - \eta L}\| \bar{s}^t- \nabla f(\bar{w}^t)\|^2 -  \frac{(1+\sigma_{\psi} \eta)\|u - \bar{w}^{t+1}\|^2}{2\eta}\\
    &  \left. +  \frac{1-\eta L + 2\eta}{2(1-\eta L)m}\|\mathbf{1}\bar{s}^t - \mathbf{s}^t\|^2 + \frac{1+ \eta}{2m\eta}\|\vw^t - \mathbf{1}\bar{w}^t \|^2 \right].  
\end{align*}
Applying Young's inequality and rearranging term, we reach the first inequality. The second and the last inequalities also follow from Young's inequality. Using $\sum_{i=1}^m \frac{\|\bar{w}^t - u\|^2 + \|\bar{w}^t - \vw^i_t\|^2 - \|u - \vw^i_t\|^2}{m} = \sum_{i=1}^m \frac{2\langle \bar{w}^t - u, \bar{w}^t - \vw^i_t \rangle}{m} = 0$, we have the third inequality. Using Lemma \ref{smooth_bound}
, we immediately have
\begin{align*}
    & \quad \mathbb{E}\left[\norm{ \nabla f(\bar{w}^t) - \frac{1}{m} \sum_{i=1}^m \nabla f_i(\vw_i^t)}^2 \right]  \leq  \mathbb{E}\left[\frac{1}{m} \sum_{i=1}^m \|\nabla f_i(\bar{w}^t) - \nabla f_i(\vw_i^t)\|^2 \right] \\
    & \leq \mathbb{E}\left[\frac{1}{m} \sum_{i=1}^m \norm{\frac{1}{n}\sum_{j=1}^n \left(\nabla f_{i,j}(\bar{w}^t) - \nabla f_{i,j}(\vw_i^t)\right)}^2\right] \\
    & \leq \mathbb{E}\left[\frac{\ell_1\ell_2}{m} \sum_{i=1}^m \|\bar{w}^t - \vw_i^t\|^2 \right]  = \mathbb{E}\left[\frac{\ell_1\ell_2}{m}  \|\mathbf{1}\bar{w}^t - \vw^t\|^2 \right]. 
\end{align*}

After combining the two inequalities above, we have
\begin{align*}
     \mathbb{E} &\left[F(\bar{w}^{t+1}) - F(u)\right]  \\
     \leq \mathbb{E} & \left[\frac{\ell_1\ell_2}{\sigma_f m} \|\mathbf{1}\bar{w}^t - \vw^t\|^2 + \frac{2-\eta \sigma_f}{4\eta}\|u - \bar{w}^t\|^2 \right. \\
    & + \frac{\eta}{1 - \eta L}\| \bar{s}^t- \nabla f(\bar{w}^t)\|^2 -  \frac{(1+\sigma_{\psi} \eta)\|u - \bar{w}^{t+1}\|^2}{2\eta} \\
    &  \left.  +  \frac{1-\eta L + 2\eta}{2(1-\eta L)m}\|\mathbf{1}\bar{s}^t - \mathbf{s}^t\|^2 + \frac{1+ \eta}{2m\eta}\|\vw^t - \mathbf{1}\bar{w}^t \|^2 \right] \\
     = \mathbb{E} & \left[  \frac{\eta}{1 - \eta L}\| \bar{s}^t- \nabla f(\bar{w}^t)\|^2 + \frac{2-\eta \sigma_f}{4\eta}\|u - \bar{w}^t\|^2  -  \frac{(1+\sigma_{\psi} \eta)\|u - \bar{w}^{t+1}\|^2}{2\eta} \right.\\
    &  \left. +  \frac{1-\eta L + 2\eta}{2(1-\eta L)m}\|\mathbf{1}\bar{s}^t - \mathbf{s}^t\|^2 + \left(\frac{\ell_1\ell_2}{\sigma_f m}+ \frac{1+\eta}{2m\eta}\right)\|\vw^t - \mathbf{1}\bar{w}^t \|^2 \right].
\end{align*}
\end{proof}
One can show that the expected value of the quantity $\norm{\bar{s}^t - \nabla f(\bar{w}^t)}^2$ on the right-hand side of inequality (\ref{cur_iterate_inner_bound}) can be upper bounded with the $(\ell_1, \ell_2)$-smoothness of functions $f_{i,j}(\cdot)$:
\begin{lemma} If $\bar{s}^t = \frac{1}{m}\sum_{i=1}^m \mathbf{s}_i^t$ with $\mathbf{s}_i^{t} = FastMix(\mathbf{s}^{t-1} + \mathbf{v}^{t} - \mathbf{v}^{t-1}, M)_i$, we have
\begin{equation}
    \mathbb{E}\left[\|\bar{s}^t - \nabla f(\bar{w}^t)\|^2\right] \leq \left( \frac{6 \ell_1\ell_2}{m b} + \frac{2 \ell_1\ell_2}{m} \right) \|  \vw^t - \mathbf{1}\bar{w}^t\|^2 + \frac{6\ell_1\ell_2}{b} \|  \bar{w}^t - \bar{w}^0\|^2 +\frac{6\ell_1\ell_2}{m b}\| \vw^0 - \mathbf{1}\bar{w}^0  \|^2. 
\end{equation}
\label{variance_lemma}
\end{lemma}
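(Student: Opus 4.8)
The plan is to split $\bar s^t - \nabla f(\bar w^t)$ into a zero-mean stochastic part and a deterministic bias part and to bound each separately. Since $\bar s^t = \bar v^t$, Lemma~\ref{decentral_grad} gives $\mathbb{E}_{B^t}[\bar s^t] = \frac{1}{m}\sum_{i=1}^m \nabla f_i(w_i^t)$, so by $\|a+b\|^2 \le 2\|a\|^2 + 2\|b\|^2$,
\begin{equation*}
\mathbb{E}\left[\|\bar s^t - \nabla f(\bar w^t)\|^2\right] \le 2\,\mathbb{E}\left[\Big\|\bar s^t - \tfrac{1}{m}\textstyle\sum_i \nabla f_i(w_i^t)\Big\|^2\right] + 2\,\Big\|\tfrac{1}{m}\textstyle\sum_i \nabla f_i(w_i^t) - \nabla f(\bar w^t)\Big\|^2 .
\end{equation*}
For the bias term I would invoke Lemma~\ref{decentral_grad} directly, $\big\|\tfrac{1}{m}\sum_i \nabla f_i(w_i^t) - \nabla f(\bar w^t)\big\|^2 \le \tfrac{\ell_1\ell_2}{m}\|\vw^t - \mathbf{1}\bar w^t\|^2$, which after the factor $2$ produces exactly the $\tfrac{2\ell_1\ell_2}{m}\|\vw^t - \mathbf{1}\bar w^t\|^2$ summand of the claim.

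For the variance term I would write $\bar s^t - \mathbb{E}\bar s^t = \tfrac{1}{m}\sum_i (v_i^t - \mathbb{E} v_i^t)$ and use convexity of $\|\cdot\|^2$ to move the average outside, $\mathbb{E}\|\bar s^t - \mathbb{E}\bar s^t\|^2 \le \tfrac{1}{m}\sum_i \mathbb{E}\|v_i^t - \mathbb{E} v_i^t\|^2$. Because each $B_i^t$ is a set of $b$ i.i.d.\ uniform indices, the variance of the minibatch average is $1/b$ times the single-sample variance, and the variance is at most the second moment, hence $\mathbb{E}\|v_i^t - \mathbb{E} v_i^t\|^2 \le \tfrac{1}{bn}\sum_{j=1}^n \|\nabla f_{i,j}(w_i^t) - \nabla f_{i,j}(w_i^0)\|^2$. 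Next I would insert the averaged iterates $\bar w^t,\bar w^0$ and use $\|a+b+c\|^2 \le 3(\|a\|^2+\|b\|^2+\|c\|^2)$,
\begin{equation*}
\|\nabla f_{i,j}(w_i^t) - \nabla f_{i,j}(w_i^0)\|^2 \le 3\|\nabla f_{i,j}(w_i^t) - \nabla f_{i,j}(\bar w^t)\|^2 + 3\|\nabla f_{i,j}(\bar w^t) - \nabla f_{i,j}(\bar w^0)\|^2 + 3\|\nabla f_{i,j}(\bar w^0) - \nabla f_{i,j}(w_i^0)\|^2 ,
\end{equation*}
sum over $i$ and $j$, and convert each squared gradient difference into a squared point difference with constant $\ell_1\ell_2$ using Lemma~\ref{smooth_bound}: for the middle term the two arguments are common to all $mn$ components, so summing Lemma~\ref{smooth_bound} over all of them turns its linear term into $\langle \nabla f(\bar w^t) - \nabla f(\bar w^0),\,\bar w^t - \bar w^0\rangle \ge 0$, which is discarded thanks to the non-positive coefficient $\ell_1-\ell_2$; the first and third terms are handled the same way at the level of the $n$ local components. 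Collecting the constants ($\tfrac{1}{bn}$, the $\tfrac{1}{m}$ from the machine average, the factor $3$, and the factor $2$ from the bias--variance split) gives coefficients $\tfrac{6\ell_1\ell_2}{mb}$ on $\|\vw^t - \mathbf{1}\bar w^t\|^2$, $\tfrac{6\ell_1\ell_2}{b}$ on $\|\bar w^t - \bar w^0\|^2$, and $\tfrac{6\ell_1\ell_2}{mb}$ on $\|\vw^0 - \mathbf{1}\bar w^0\|^2$; adding the bias contribution yields the stated inequality.

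The step I expect to be most delicate is the conversion from $\|\nabla f_{i,j}(x) - \nabla f_{i,j}(y)\|^2$ to $\ell_1\ell_2\|x-y\|^2$. Since $\ell_2 \ge \ell_1$, the linear term in Lemma~\ref{smooth_bound} comes with the non-positive coefficient $\ell_1-\ell_2$, so it can be dropped only after averaging over enough components for the averaged gradient to be monotone --- immediate for the middle term (all of $f$, which is convex by assumption) and, for the terms carrying a machine-dependent argument, handled at the level of $f_i$ using its $(\ell_1,\ell_2)$-smoothness. Keeping this averaging bookkeeping straight, and carefully distinguishing the Frobenius norm over the $m$ machines from the Euclidean norm on $\mathbb{R}^d$, is where essentially all of the work lies; the remaining manipulations are Young's inequality and the elementary variance identity for sampling with replacement.
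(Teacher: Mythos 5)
Your proposal is correct and follows essentially the same route as the paper's own proof: the same bias--variance split with a factor of $2$, the same reduction of the minibatch variance to $1/b$ times a single-sample second moment, and the same three-way triangle splitting, which yields exactly the stated constants. The only cosmetic differences are that you cite Lemma~\ref{decentral_grad} for the bias term (the paper re-derives it inline via Jensen and $(\ell_1,\ell_2)$-smoothness) and apply the three-term split at the gradient rather than the iterate level; in particular, the conversion $\|\nabla f_{i,j}(x)-\nabla f_{i,j}(y)\|^2\le \ell_1\ell_2\|x-y\|^2$ for the machine-dependent terms, which you rightly flag as the delicate step, is carried out in exactly the same (implicit) way in the paper.
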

\begin{proof}
Recall that $\bar{s}^t = \bar{v}^t$ by Lemma \ref{decentral_grad}. Then we have
\begin{align*}
    & \mathbb{E}\left[\|\bar{s}^t - \nabla f(\bar{w}^t)\|^2\right] \\
    = & \mathbb{E} \left[\| \bar{v}^t - \nabla f(\bar{w}^t) \|^2 \right] \\
    = & \mathbb{E}\left[\norm{ \bar{\mu} + \frac{1}{m b} \sum_{i=1}^m \sum_{j_i \in B_i^t} \left(\nabla f_{i, j_i}^t(w_i^t) - \nabla f_{i, j_i}^t(w_i^0)\right) - \nabla f(\bar{w}^t)}^2 \right] \\
    = & \mathbb{E}\left[\norm{ \frac{1}{m}\sum_{i=1}^m \nabla f_i(w_i^0) + \frac{1}{m b} \sum_{i=1}^m \sum_{j_i \in B_i^t} \left(\nabla f_{i, j_i}^t(w_i^t) - \nabla f_{i, j_i}^t(w_i^0)\right) - \nabla f(\bar{w}^t)}^2\right] \\
     = & \mathbb{E}\left[ \left\| \frac{1}{m}\sum_{i=1}^m \nabla f_i(w_i^0) + \frac{1}{m b} \sum_{i=1}^m \sum_{j_i \in B_i^t} \left(\nabla f_{i, j_i}^t(w_i^t) - \nabla f_{i, j_i}^t(w_i^0)\right)  -  \frac{1}{m}\sum_{i=1}^m \nabla f_i(w_i^t) + \frac{1}{m}\sum_{i=1}^m \nabla f_i(w_i^t)  - \nabla f(\bar{w}^t)\right\|^2\right] \\
    \leq & \mathbb{E}\left[2 \left\| \frac{1}{m}\sum_{i=1}^m \nabla f_i(w_i^0) + \frac{1}{m b} \sum_{i=1}^m \sum_{j_i \in B_i^t} (\nabla f_{i, j_i}^t(w_i^t) - \nabla f_{i, j_i}^t(w_i^0))  -  \frac{1}{m}\sum_{i=1}^m \nabla f_i(w_i^t)\right\|^2 + 2\left\|\frac{1}{m}\sum_{i=1}^m \nabla f_i(w_i^t)  - \nabla f(\bar{w}^t)\right\|^2\right].   
\end{align*}
We can bound the first variance term in the above inequality using the classical variance reduction argument, i.e.,
\begin{align*}
      & \mathbb{E}\left[\|\bar{s}^t - \nabla f(\bar{w}^t)\|^2\right] \\
     \leq & \mathbb{E}\left[2\norm{ \frac{1}{m b} \sum_{i=1}^m \sum_{j_i \in B_i^t} \left(\nabla f_{i, j_i}^t(w_i^t) - \nabla f_{i, j_i}^t(w_i^0)\right) - \frac{1}{m}\sum_{i=1}^m \left( \nabla f_i(w_i^t) -\nabla f_i(w_i^0)\right) }^2 
       + \frac{2}{m} \sum_{i=1}^m\norm{ \nabla f_i(w_i^t)  - \nabla f_i(\bar{w}^t)}^2\right] \\
     \leq & \mathbb{E}\left[\frac{2}{m } \sum_{i=1}^m \norm{\frac{1}{b} \sum_{j_i \in B_i^t} (\nabla f_{i, j_i}^t(w_i^t) - \nabla f_{i, j_i}^t(w_i^0)) - \left( \nabla f_i(w_i^t) -\nabla f_i(w_i^0)\right)}^2 
       + \frac{2\ell_1\ell_2}{m} \sum_{i=1}^m\| w_i^t  - \bar{w}^t\|^2\right] \\
       = & \mathbb{E}\left[\frac{2}{m b} \sum_{i=1}^m \norm{  \left(\nabla f_{i, j_i}^t(w_i^t) - \nabla f_{i, j_i}^t(w_i^0)\right) - \left( \nabla f_i(w_i^t) -\nabla f_i(w_i^0)\right)}^2 
       + \frac{2\ell_1\ell_2}{m} \sum_{i=1}^m\| w_i^t  - \bar{w}^t\|^2\right] \\
       \leq  & \mathbb{E}\left[\frac{2}{m b} \sum_{i=1}^m \|  (\nabla f_{i, j_i}^t(w_i^t) - \nabla f_{i, j_i}^t(w_i^0)) \|^2 
       + \frac{2\ell_1\ell_2}{m} \sum_{i=1}^m\| w_i^t  - \bar{w}^t\|^2\right] \\
      \leq & \frac{2\ell_1\ell_2}{m b} \sum_{i=1}^m \|  w_i^t - w_i^0\|^2 
       + \frac{2\ell_1\ell_2}{m} \| \vw^t  - \mathbf{1}\bar{w}^t\|^2 \\
        \leq & \frac{2\ell_1\ell_2}{m b} \sum_{i=1}^m \left[3\|  w_i^t - \bar{w}^t\|^2 + 3\|  \bar{w}^t - \bar{w}^0\|^2 +3\| \bar{w}^0 - w_i^0\|^2 \right] 
       + \frac{2\ell_1\ell_2}{m} \| \vw^t  - \mathbf{1}\bar{w}^t\|^2 \\
        = & \left( \frac{6 \ell_1\ell_2}{m b} + \frac{2 \ell_1\ell_2}{m} \right) \|  \vw^t - \mathbf{1}\bar{w}^t\|^2 + \frac{6\ell_1\ell_2}{b} \|  \bar{w}^t - \bar{w}^0\|^2 +\frac{6\ell_1\ell_2}{m b}\| \vw^0 - \mathbf{1}\bar{w}^0  \|^2.
\end{align*}
\end{proof}
Finally, we can show the main theoretical results in the inner loop by combining the results of Lemma \ref{inner_lemma_1}, Lemma \ref{lemma:inner_lemma_2}, and Lemma \ref{variance_lemma}.
\begin{lemma}
If the gradient estimator $\mathbf{s}_i^t$ satisfies  Lemma \ref{variance_lemma} for every $t$, then as long as $\eta \leq \min(\frac{1}{2L}, \frac{1}{8}\sqrt{\frac{b}{\ell_1 \ell_2 t_0}})$ and $\rho \leq  \min\left(\sqrt{\frac{1}{8 b m \eta J  t_0}}, \sqrt{\frac{1}{18(1 + {b})}}\right)$ where $J \coloneqq \frac{12 \eta \ell_1\ell_2}{m b} + \frac{4 \eta \ell_1\ell_2}{m} + \frac{\ell_1\ell_2}{\sigma_f m}+ \frac{1+\eta}{2m\eta} + \frac{1-\eta L + 2\eta}{2(1-\eta L)m\eta^2}$, let $T$ be the last iterate of the inner loop in Algorithm \ref{strongconvex_algo_2}, then it satisfies 
\begin{align*}
    \mathbb{E}\left[F(\bar{w}^{T+1}) - F(u)\right] \leq  \mathbb{E} & \left[-\frac{\|\bar{w}^{T+1} - \bar{w}^0\|^2}{4\eta t_0} + \frac{\langle \bar{w}^0 - u, \bar{w}^0 - \bar{w}^{T+1}\rangle}{t_0\eta} - \frac{\sigma_f + \sigma_{\psi}}{8}\| u - \bar{w}^{T+1}\|^2 \right.\\
    &  \left.  + \left( J + \frac{1}{4 \eta m  }\right)\| \vw^0 - \mathbf{1}\bar{w}^0  \|^2 +  J \eta^2 \| \mathbf{s}^0 - \mathbf{1}\bar{s}^0  \|^2 \right].
\end{align*}
\label{lemma_inner}
\end{lemma}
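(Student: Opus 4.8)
The plan is to obtain this one‑epoch estimate by combining the three ingredients already in hand — the per‑step descent inequality (Lemma~\ref{lemma:inner_lemma_2}), the variance bound on $\bar{s}^t-\nabla f(\bar{w}^t)$ (Lemma~\ref{variance_lemma}), and the inner‑loop consensus bound (Lemma~\ref{inner_lemma_1}) — and then passing to the random stopping index $T\sim\mathrm{Geom}(p)$, $p=1/t_0$, using the two geometric‑expectation identities of \citet{allen2018katyusha}. First I would substitute Lemma~\ref{variance_lemma} into Lemma~\ref{lemma:inner_lemma_2}, using $\eta\le 1/(2L)$ to bound $\tfrac{\eta}{1-\eta L}\le 2\eta$. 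The (otherwise opaque) definition of $J$ is engineered so that after this substitution the coefficient in front of $\norm{\vw^t-\mathbf{1}\bar{w}^t}^2$ is exactly the sum of the first four terms of $J$ and the coefficient in front of $\eta^2\norm{\mathbf{s}^t-\mathbf{1}\bar{s}^t}^2$ is its fifth term, so both inner‑loop consensus quantities are absorbed into $J\big(\norm{\vw^t-\mathbf{1}\bar{w}^t}^2+\eta^2\norm{\mathbf{s}^t-\mathbf{1}\bar{s}^t}^2\big)$; the only genuinely new contribution is a drift term $\tfrac{12\eta\ell_1\ell_2}{b}\norm{\bar{w}^t-\bar{w}^0}^2$. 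Writing $\tfrac{2-\eta\sigma_f}{4\eta}=\tfrac1{2\eta}-\tfrac{\sigma_f}4$, the step‑$t$ inequality becomes, for any $u\in\BR^d$,
\begin{equation*}
\mathbb{E}[F(\bar{w}^{t+1})-F(u)]\le\mathbb{E}\Big[\tfrac1{2\eta}\norm{u-\bar{w}^t}^2-\tfrac{1+\sigma_\psi\eta}{2\eta}\norm{u-\bar{w}^{t+1}}^2-\tfrac{\sigma_f}4\norm{u-\bar{w}^t}^2+\tfrac{12\eta\ell_1\ell_2}{b}\norm{\bar{w}^t-\bar{w}^0}^2+\tfrac{12\eta\ell_1\ell_2}{mb}\norm{\vw^0-\mathbf{1}\bar{w}^0}^2+J\norm{z^t}\Big].
\end{equation*}

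Next I would take $\mathbb{E}_T$ on both sides and apply $\mathbb{E}_N[D_N]=(1-p)\mathbb{E}[D_{N+1}]+pD_0$. With $D_t=\norm{u-\bar{w}^t}^2$ this turns $\tfrac1{2\eta}\mathbb{E}_T\norm{u-\bar{w}^T}^2$ into $\tfrac{1-p}{2\eta}\mathbb{E}_T\norm{u-\bar{w}^{T+1}}^2+\tfrac{p}{2\eta}\norm{u-\bar{w}^0}^2$, and completing the square,
\begin{equation*}
\norm{u-\bar{w}^0}^2=\norm{u-\bar{w}^{T+1}}^2+2\langle\bar{w}^0-u,\bar{w}^0-\bar{w}^{T+1}\rangle-\norm{\bar{w}^{T+1}-\bar{w}^0}^2,
\end{equation*}
together with $p/\eta=1/(t_0\eta)$, produces exactly the target cross‑term $\tfrac1{t_0\eta}\langle\bar{w}^0-u,\bar{w}^0-\bar{w}^{T+1}\rangle$ and the term $-\tfrac1{2t_0\eta}\norm{\bar{w}^{T+1}-\bar{w}^0}^2$. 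The $\norm{u-\bar{w}^{T+1}}^2$ coefficients from the $\tfrac1{2\eta}$‑part cancel to $-\tfrac{\sigma_\psi}2$, and the $-\tfrac{\sigma_f}4$‑term (same identity) adds $-\tfrac{\sigma_f(1-p)}4\norm{u-\bar{w}^{T+1}}^2$ plus a free $-\tfrac{\sigma_f p}4\norm{u-\bar{w}^0}^2\le0$, so for $t_0\ge2$ the net coefficient on $\norm{u-\bar{w}^{T+1}}^2$ is at most $-\tfrac{\sigma_f+\sigma_\psi}8$. Applying the same identity with $D_t=\norm{\bar{w}^t-\bar{w}^0}^2$ gives $\mathbb{E}_T\norm{\bar{w}^T-\bar{w}^0}^2\le\mathbb{E}_T\norm{\bar{w}^{T+1}-\bar{w}^0}^2$, and Lemma~\ref{inner_lemma_1} rewrites $\mathbb{E}_T\norm{z^T}$ in terms of $\norm{z^0}$, an amplified $\vw^0$‑consensus term, and an amplified drift term.

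It then remains to verify that the two hyperparameter conditions make everything fit. Since $\tfrac{1-p}p,\tfrac{2-p}p\le 2t_0$, multiplying Lemma~\ref{inner_lemma_1} by $J$ and invoking $\rho^2\le(8bm\eta J t_0)^{-1}$ cancels the factor $J$, leaving the amplified drift coefficient $O(\eta\ell_1\ell_2/b)$ and the amplified $\vw^0$‑consensus coefficient $O(\eta^2\ell_1\ell_2/(mb))$. Adding the $\tfrac{12\eta\ell_1\ell_2}{b}$ drift coefficient from Step~1 and using $\eta\le\tfrac18\sqrt{b/(\ell_1\ell_2 t_0)}$ (equivalently $\eta\ell_1\ell_2/b\le 1/(64\eta t_0)$) keeps the total positive multiple of $\norm{\bar{w}^{T+1}-\bar{w}^0}^2$ below $\tfrac1{4t_0\eta}$, so after cancelling against $-\tfrac1{2t_0\eta}$ a term $-\tfrac1{4t_0\eta}\norm{\bar{w}^{T+1}-\bar{w}^0}^2$ survives. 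Since $\norm{z^0}$ splits into $\norm{\vw^0-\mathbf{1}\bar{w}^0}^2$ and $\eta^2\norm{\mathbf{s}^0-\mathbf{1}\bar{s}^0}^2$, the term $J\norm{z^0}$ gives $J\norm{\vw^0-\mathbf{1}\bar{w}^0}^2+J\eta^2\norm{\mathbf{s}^0-\mathbf{1}\bar{s}^0}^2$, and the remaining $\vw^0$‑consensus pieces are bounded by $\tfrac1{4\eta m}\norm{\vw^0-\mathbf{1}\bar{w}^0}^2$ (using $64\eta^2\ell_1\ell_2\le b$), which yields the stated coefficient $J+\tfrac1{4\eta m}$. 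Collecting the surviving terms gives the lemma.

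The delicate part is precisely this last balancing act: the drift $\norm{\bar{w}^t-\bar{w}^0}^2$ is created by the variance term, amplified a \emph{second} time by the consensus recursion of Lemma~\ref{inner_lemma_1}, and all of it must be reabsorbed into the single negative $\norm{\bar{w}^{T+1}-\bar{w}^0}^2$ that completing the square under the geometric expectation hands back. That budget is exactly what pins down both conditions — $\eta\lesssim\sqrt{b/(\ell_1\ell_2 t_0)}$ tames the variance‑drift, while $\rho\lesssim(bm\eta J t_0)^{-1/2}$ tames the consensus amplification with the factors of $J$ cancelling — and $J$ is reverse‑engineered so the consensus coefficients line up. A secondary care‑point is that the \citet{allen2018katyusha} identities are applied to sequences $D_t$ that depend on the minibatch draws $B^0,\dots,B^t$; this is legitimate because $T$ is drawn independently of them (equivalently, one runs the inner loop for $t_0$ steps and then selects index $T+1$).
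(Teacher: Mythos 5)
Your proposal is correct and follows essentially the same route as the paper's own proof: substitute Lemma~\ref{variance_lemma} into Lemma~\ref{lemma:inner_lemma_2} (using $\eta\le 1/(2L)$), evaluate at $T\sim\texttt{Geom}(1/t_0)$ via the geometric-expectation identities, absorb the consensus terms through Lemma~\ref{inner_lemma_1}, and balance the drift against $-\tfrac{1}{2\eta t_0}\|\bar{w}^{T+1}-\bar{w}^0\|^2$ using exactly the stated $\eta$ and $\rho$ conditions, landing on the same constants ($\tfrac{3}{16\eta t_0}+\tfrac{1}{16\eta t_0}$ for the drift and $J+\tfrac{1}{4\eta m}$ for the initial consensus error). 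The only step you leave implicit is the verification $\|\mathbf{A}\|_2\le 1$ — which is where the condition $\rho\le\sqrt{1/(18(1+b))}$ actually enters and which the simplified statement of Lemma~\ref{inner_lemma_1} tacitly relies on — and which the paper's proof carries out first.
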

\begin{proof}
We bound the spectral norm of $\mathbf{A}$ by
\begin{align*}
    & \quad \|\mathbf{A}\|_2 \leq \|\mathbf{A}\|_F \\
          &  = 2\rho^2 \sqrt{ 4^2 +  4^2 +
    [{8\ell_1\ell_2\eta^2}(1+8\rho^2)]^2 + (1+{64 \ell_1 \ell_2\rho^2\eta^2})^2} \\
    & \leq  2\rho^2 (4 + 4 + {8\ell_1\ell_2\eta^2}(1+8\rho^2) + (1+{64 \ell_1 \ell_2\rho^2\eta^2}) ) \\
    & \leq  2\rho^2 (9 + 40 \ell_1\ell_2\eta^2 + 32 \ell_1 \ell_2 \eta^2 ) = 18 \rho^2 (1 + 8 \ell_1 \ell_2 \eta^2) \\
    & \leq 18 \rho^2 \left(1 +  \frac{b}{8t_0}\right) \\
    & \leq 1.
\end{align*}
For any $t$, it holds that
\begin{align*}
    \mathbb{E} &\left[F(\bar{w}^{t+1}) - F(u)\right]  \\
    \leq \mathbb{E}  & \left[  2\eta\| \bar{s}^t- \nabla f(\bar{w}^t)\|^2 + \frac{2-\eta \sigma_f}{4\eta}\|u - \bar{w}^t\|^2  -  \frac{(1+\sigma_{\psi} \eta)\|u - \bar{w}^{t+1}\|^2}{2\eta} \right. \\
     &\left.  +  \frac{1-\eta L + 2\eta}{2(1-\eta L)m}\|\mathbf{1}\bar{s}^t - \mathbf{s}^t\|^2 + \left(\frac{\ell_1\ell_2}{\sigma_f m}+ \frac{1+\eta}{2m\eta}\right)\|\vw^t - \mathbf{1}\bar{w}^t \|^2 \right] \\
     \leq \mathbb{E} &\left[  2\eta \left(\left( \frac{6 \ell_1\ell_2}{m b} + \frac{2 \ell_1\ell_2}{m} \right) \|  \vw^t - \mathbf{1}\bar{w}^t\|^2 + \frac{6\ell_1\ell_2}{b} \|  \bar{w}^t - \bar{w}^0\|^2 +\frac{6\ell_1\ell_2}{m b}\| \vw^0 - \mathbf{1}\bar{w}^0  \|^2\right) \right. \\
    &  + \frac{2-\eta \sigma_f}{4\eta}\|u - \bar{w}^t\|^2  -  \frac{(1+\sigma_{\psi} \eta)\|u - \bar{w}^{t+1}\|^2}{2\eta}\\
    &  \left. +  \frac{1-\eta L + 2\eta}{2(1-\eta L)m}\|\mathbf{1}\bar{s}^t - \mathbf{s}^t\|^2 + \left(\frac{\ell_1\ell_2}{\sigma_f m}+ \frac{1+\eta}{2m\eta}\right)\|\vw^t - \mathbf{1}\bar{w}^t \|^2\right] \\
     \leq \mathbb{E} &\left[\frac{12\eta \ell_1 \ell_2}{b} \|\bar{w}^t - \bar{w}^0\|^2 + \frac{2-\eta \sigma_f}{4\eta}\|u - \bar{w}^t\|^2  -  \frac{(1+\sigma_{\psi} \eta)\|u - \bar{w}^{t+1}\|^2}{2\eta} \right. \\
    &  + \frac{12 \eta \ell_1\ell_2}{m b}\| \vw^0 - \mathbf{1}\bar{w}^0  \|^2 +  \frac{1-\eta L + 2\eta}{2(1-\eta L)m}\|\mathbf{1}\bar{s}^t - \mathbf{s}^t\|^2 \\
    &  \left. + \left(\frac{12 \eta \ell_1\ell_2}{m b} + \frac{4 \eta \ell_1\ell_2}{m} + \frac{\ell_1\ell_2}{\sigma_f m}+ \frac{1+\eta}{2m\eta} \right)\|\vw^t - \mathbf{1}\bar{w}^t \|^2\right].
\end{align*}
Taking $t=T$ and we choose $p = \frac{1}{t_0}$, so $T\sim \texttt{Geom}(\frac{1}{t_0})$ follows from the geometric distribution, one has
\begin{align*}
\small
    \mathbb{E} &\left[F(\bar{w}^{T+1}) - F(u)\right] \\
  \leq \mathbb{E} &\left[\frac{12\eta \ell_1 \ell_2 (t_0 - 1)}{ b t_0} \|\bar{w}^{T+1} - \bar{w}^0\|^2 + \frac{(2-\eta \sigma_f) (t_0 - 1)}{4\eta t_0}\|u - \bar{w}^{T+1}\|^2 + \frac{2-\eta \sigma_f}{4 \eta t_0}\|u - \bar{w}^{0}\|^2 \right.\\
    &  -  \frac{(1+\sigma_{\psi} \eta)\|u - \bar{w}^{T+1}\|^2}{2\eta}
     + \frac{12 \eta \ell_1\ell_2}{m b}\| \vw^0 - \mathbf{1}\bar{w}^0  \|^2 +  \frac{1-\eta L + 2\eta}{2(1-\eta L)m}\|\mathbf{1}\bar{s}^T - \mathbf{s}^T\|^2 \\
    &  \left. + \left(\frac{12 \eta \ell_1\ell_2}{m b} + \frac{4 \eta \ell_1\ell_2}{m} + \frac{\ell_1\ell_2}{\sigma_f m}+ \frac{1+\eta}{2m\eta}\right)\|\vw^T - \mathbf{1}\bar{w}^T \|^2 \right] \\
    \leq \mathbb{E} &\left[\frac{12\eta \ell_1 \ell_2 (t_0 - 1)}{b t_0} \|\bar{w}^{T+1} - \bar{w}^0\|^2 - \frac{1}{2 \eta t_0} \|u - \bar{w}^{T+1}\|^2 - \frac{\sigma_f}{8}\|u - \bar{w}^{T+1}\|^2 + \frac{1}{ 2 \eta t_0}\|u-\bar{w}^0\|^2 \right. \\
    &  - \frac{\sigma_{\psi}}{2}\|u - \bar{w}^{T+1}\|^2 + \frac{12 \eta \ell_1\ell_2}{m b}\| \vw^0 - \mathbf{1}\bar{w}^0  \|^2  \\
     &  + \left(\frac{12 \eta \ell_1\ell_2}{m b} + \frac{4 \eta \ell_1\ell_2}{m} + \frac{\ell_1\ell_2}{\sigma_f m}+ \frac{1+\eta}{2m\eta} + \frac{1-\eta L + 2\eta}{2(1-\eta L)m\eta^2}\right) \times \\
    &  \left. \left(\frac{p}{1-(1-p)\|A\|_2}\|\mathbf{z}^0\| +  \frac{1-p}{1-(1-p)\|A\|_2}{16\rho^2\eta^2\ell_1\ell_2}\|\vw^0 - \mathbf{1}\bar{w}^0\|^2 + \frac{(2-p)}{1-(1-p)\|\mathbf{A}\|_2}{16\ell_1\ell_2m\rho^2\eta^2}\|\bar{w}^{T}- \bar{w}^0\|^2\right) \right],
\end{align*}
where the last inequality is due to Eq. (\ref{inner_comm}). Recall the definition of $J$, we have
\begin{align*}
    \mathbb{E}&\left[F(\bar{w}^{T+1}) - F(u)\right] \\
    \leq \mathbb{E} &\left[\left(\frac{12\eta \ell_1 \ell_2 (t_0 - 1)}{b t_0} + \frac{16\ell_1\ell_2m\rho^2\eta^2J(2-p)(1-p)}{1-(1-p)\|\mathbf{A}\|_2} \right)\|\bar{w}^{T+1} - \bar{w}^0\|^2 + \frac{\|u - \bar{w}^0\|^2 - \|u - \bar{w}^{T+1}\|^2}{2 \eta t_0} \right. \\
    &  - \frac{\sigma_f + \sigma_{\psi}}{8}\| u - \bar{w}^{T+1}\|^2  + \frac{pJ}{1-(1-p)\|A\|_2}(\|\vw^0 - \mathbf{1}\bar{w}^0\|^2 + \eta^2 \|\mathbf{s}^0 - \mathbf{1}\bar{s}^0\|^2) \\
    &  \left. + \left(\frac{12 \eta \ell_1\ell_2}{m b}  + \frac{16\rho^2\eta^2\ell_1\ell_2J(1-p)}{ 1-(1-p)\|A\|_2}\right) \| \vw^0 - \mathbf{1}\bar{w}^0  \|^2 \right]\\
    \leq \mathbb{E} &\left[\left(\frac{12\eta \ell_1 \ell_2 }{b } + \frac{b m\rho^2J(2-p)}{4 t_0 (1-(1-p)\|\mathbf{A}\|_2) } \right)\|\bar{w}^{T+1} - \bar{w}^0\|^2 + \frac{\|u - \bar{w}^0\|^2 - \|u - \bar{w}^{T+1}\|^2}{2 \eta t_0} \right. \\
    &  - \frac{\sigma_f + \sigma_{\psi}}{8}\| u - \bar{w}^{T+1}\|^2  + \frac{p J}{1-(1-p)\|A\|_2}(\|\vw^0 - \mathbf{1}\bar{w}^0\|^2 + \eta^2 \|\mathbf{s}^0 - \mathbf{1}\bar{s}^0\|^2) \\
    &  \left. + \left(\frac{12 \eta \ell_1\ell_2}{m b}  + \frac{16\rho^2\eta^2\ell_1\ell_2J(1-p)}{ 1-(1-p)\|A\|_2}\right) \| \vw^0 - \mathbf{1}\bar{w}^0  \|^2\right]\\
    \leq \mathbb{E} &\left[\left(\frac{3 }{16 \eta t_0 } + \frac{1}{16 \eta t_0}\right)\|\bar{w}^{T+1} - \bar{w}^0\|^2 + \frac{\|u - \bar{w}^0\|^2 - \|u - \bar{w}^{T+1}\|^2}{2 \eta t_0} \right. \\
    &  - \frac{\sigma_f + \sigma_{\psi}}{8}\| u - \bar{w}^{T+1}\|^2  + J(\|\vw^0 - \mathbf{1}\bar{w}^0\|^2 + \eta^2 \|\mathbf{s}^0 - \mathbf{1}\bar{s}^0\|^2) \\
    &  \left. + \left(\frac{3}{16 m \eta t_0}  + {\frac{1-p}{16 \eta m t_0  }}\right) \| \vw^0 - \mathbf{1}\bar{w}^0  \|^2\right]\\
     \leq \mathbb{E}&\left[-\frac{\|\bar{w}^{T+1} - \bar{w}^0\|^2}{4\eta t_0} + \frac{\|u-\bar{w}^0\|^2 - \|u - \bar{w}^{T+1}\|^2 + \|\bar{w}^{T+1} - \bar{w}^0\|^2}{2t_0\eta} \right. \\
    &  \left. - \frac{\sigma_f + \sigma_{\psi}}{8}\| u - \bar{w}^{T+1}\|^2  + \left( J + \frac{1}{4 \eta m t_0 }\right)\| \vw^0 - \mathbf{1}\bar{w}^0  \|^2 +  J \eta^2 \| \mathbf{s}^0 - \mathbf{1}\bar{s}^0  \|^2 \right] \\
     = \mathbb{E}&\left[-\frac{\|\bar{w}^{T+1} - \bar{w}^0\|^2}{4\eta t_0} + \frac{\langle \bar{w}^0 - u, \bar{w}^0 - \bar{w}^{T+1}\rangle}{t_0\eta} \right. \\
    &  \left. - \frac{\sigma_f + \sigma_{\psi}}{8}\| u - \bar{w}^{T+1}\|^2  + \left( J + \frac{1}{4 \eta m  }\right)\| \vw^0 - \mathbf{1}\bar{w}^0  \|^2 +  J \eta^2 \| \mathbf{s}^0 - \mathbf{1}\bar{s}^0  \|^2 \right]. 
\end{align*}
\end{proof}

\section{Theoretical Analysis of PMGT-KatyushaX}\label{appendix:pmgt_katyusha_analysis}
In this section, we present the theoretical analysis of the PMGT-KatyushaX algorithm.

\subsection{Theoretical Results of Outer Loop in PMGT-KatyushaX}
To better understand the acceleration trick of our algorithm, we introduce random variable $q_i^k$ at iteration $k$ on agent $i$. In addition, we introduce a new hyperparameter $\alpha_{k+1}$ following the centralized KatyushaX counterpart \cite{allen2018katyusha}. We will show later that $\alpha_{k+1} = \alpha = \frac{t_0 \eta}{2 \tau}$, so we can get the PMGT-KatyushaX algorithm in the main text.   We aim to choose $\bar{q}^{k+1}$ with the following rule:
\begin{equation*}
    \bar{q}^{k+1} = \argmin_{q\in \mathcal{R}^d}\{ \frac{1}{2\alpha_{k+1}}\|q - \bar{q}^k\|^2 + \Bigl \langle \frac{x^{k+1} - y^{k+1}}{t_0 \eta}, q \Bigr \rangle + \frac{\sigma}{8}\|q - \bar{y}^{k+1}\|^2 \}.
\end{equation*}
So it equivalent to choose $\mathbf{q}_i^k$ with the following closed form:
\begin{equation}
    q_i^{k+1}= \FastMix\left(\frac{1}{\frac{1}{\alpha_{k+1}} + \frac{\sigma}{4}}\left(\frac{q_i^k}{\alpha_{k+1}} + \left(\frac{1}{t_0\eta} + \frac{\sigma}{4}\right)y_i^{k+1} - \frac{1}{t_0\eta}x_i^{k+1}\right), M\right)_i.
    \label{q_recur}
\end{equation}
We can bound the difference of iterates at successive iterations with the following lemma:
\begin{lemma} Let $\mathbf{x}^k$ be the $k$-th iterate of local weight variables in the outer loop of PMGT-KatyushaX, then we can bound the difference of $\mathbf{x}^{k+1}$ and $\mathbf{x}^{k}$ with:
\begin{equation}
\begin{split}
      & \|\mathbf{x}^{k + 1} - \mathbf{x}^{k}\|^2 \\
    \leq & 3\|\mathbf{x}^{k + 1} - \mathbf{1}\bar{x}^{k+1}\|^2 + 3 \| \mathbf{x}^{k} - \mathbf{1}\bar{x}^{k}\|^2 + 12m\tau_k^2 \|\bar{q}^k - u\|^2 + 
    \frac{24m(1-\tau_{k})^2}{\sigma}(F(\bar{y}^k) - F(u)) \\
    & + 12m\tau_{k-1}^2 \|\bar{q}^{k-1} - x^*\|^2      +  \frac{24m(1-\tau_{k-1})^2}{\sigma}\left(F(\bar{y}^{k-1}) - F(x^*)\right).
\end{split}
\end{equation}
\label{x_recur_lemma}
\end{lemma}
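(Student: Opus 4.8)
The plan is to push the bound through the per-coordinate averages, using that \texttt{FastMix} is average-preserving, and then to unfold the linear-coupling update. First I would apply the crude inequality $\|a+b+c\|^2 \le 3\|a\|^2+3\|b\|^2+3\|c\|^2$ with $a=\mathbf{x}^{k+1}-\mathbf{1}\bar{x}^{k+1}$, $b=\mathbf{1}\bar{x}^{k+1}-\mathbf{1}\bar{x}^{k}$, $c=\mathbf{1}\bar{x}^{k}-\mathbf{x}^{k}$, which immediately produces the two consensus-error terms $3\|\mathbf{x}^{k+1}-\mathbf{1}\bar{x}^{k+1}\|^2$ and $3\|\mathbf{x}^{k}-\mathbf{1}\bar{x}^{k}\|^2$ and leaves only the cross term $3\|\mathbf{1}(\bar{x}^{k+1}-\bar{x}^{k})\|^2 = 3m\|\bar{x}^{k+1}-\bar{x}^{k}\|^2$ to control. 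Since $W$ is doubly stochastic (Definition~\ref{doubly_stochastic}), the \texttt{FastMix} recursion preserves the mean across machines, so line~3 of Algorithm~\ref{strongconvex_algo_2} gives $\bar{x}^{k+1}=\tau_k\bar{q}^k+(1-\tau_k)\bar{y}^k$, and likewise $\bar{x}^{k}=\tau_{k-1}\bar{q}^{k-1}+(1-\tau_{k-1})\bar{y}^{k-1}$.

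Next I would estimate $m\|\bar{x}^{k+1}-\bar{x}^{k}\|^2$ by inserting reference points: I pair the current-epoch iterate with the free point $u$ via $\bar{x}^{k+1}-u=\tau_k(\bar{q}^k-u)+(1-\tau_k)(\bar{y}^k-u)$, and the previous-epoch iterate with the optimum via $\bar{x}^{k}-x^*=\tau_{k-1}(\bar{q}^{k-1}-x^*)+(1-\tau_{k-1})(\bar{y}^{k-1}-x^*)$. Bounding $\|\bar{x}^{k+1}-\bar{x}^{k}\|^2\le 2\|\bar{x}^{k+1}-u\|^2+2\|\bar{x}^{k}-x^*\|^2$ and then expanding each square with $\|p+q\|^2\le 2\|p\|^2+2\|q\|^2$ isolates the four quantities $\tau_k^2\|\bar{q}^k-u\|^2$, $(1-\tau_k)^2\|\bar{y}^k-u\|^2$, $\tau_{k-1}^2\|\bar{q}^{k-1}-x^*\|^2$, $(1-\tau_{k-1})^2\|\bar{y}^{k-1}-x^*\|^2$, each carrying the accumulated constant $4$; multiplying through by the $3m$ prefactor already yields $12m\tau_k^2\|\bar{q}^k-u\|^2$ and $12m\tau_{k-1}^2\|\bar{q}^{k-1}-x^*\|^2$. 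Finally I would convert the two $\|\bar{y}-\cdot\|^2$ terms into objective-value gaps using the $\sigma$-strong convexity of $F$: for the optimum this is the clean bound $\|\bar{y}^{k-1}-x^*\|^2\le\tfrac{2}{\sigma}\bigl(F(\bar{y}^{k-1})-F(x^*)\bigr)$ (as $0\in\partial F(x^*)$), and for the free point $u$ the analogous $\|\bar{y}^k-u\|^2\le\tfrac{2}{\sigma}\bigl(F(\bar{y}^k)-F(u)\bigr)$; pushing $12m(1-\tau_k)^2$ and $12m(1-\tau_{k-1})^2$ through the factor $\tfrac{2}{\sigma}$ gives exactly $\tfrac{24m(1-\tau_k)^2}{\sigma}$ and $\tfrac{24m(1-\tau_{k-1})^2}{\sigma}$, matching the claim.

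The hard part is not any individual inequality --- each is elementary --- but the bookkeeping of the Young's-inequality weights, so that the final constants are precisely $3$, $12m$, and $24m/\sigma$: in particular the split into the $(k)$- and $(k-1)$-groups must be done with weight $2$ rather than $3$, and the reference points must be assigned consistently (current-epoch iterates to $u$, previous-epoch iterates to $x^*$), since mixing $u$ into the $(k-1)$-group or $x^*$ into the $(k)$-group would leave an extraneous $\|u-x^*\|^2$ residual not present on the right-hand side (in the downstream application $u$ is the test point of the outer-loop descent lemma, chosen compatibly with $x^*$). The one genuinely substantive ingredient is the strong-convexity conversion of $\|\bar{y}-\cdot\|^2$ into a suboptimality gap, which is exactly what allows the outer-loop argument to telescope these terms against the per-epoch guarantee of Lemma~\ref{lemma_inner}.
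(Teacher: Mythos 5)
Your overall structure matches the paper's own argument (the same three-term Young split producing the two consensus errors plus $3m\|\bar{x}^{k+1}-\bar{x}^k\|^2$, then expanding $\bar{x}^{k+1}=\tau_k\bar{q}^k+(1-\tau_k)\bar{y}^k$ and converting the $\bar{y}$-distances via strong convexity), but the two steps you use to keep the free point $u$ in the $(k)$-group are genuinely invalid when $u\neq x^*$. First, the split $\|\bar{x}^{k+1}-\bar{x}^k\|^2\le 2\|\bar{x}^{k+1}-u\|^2+2\|\bar{x}^k-x^*\|^2$ is false in general: writing $\bar{x}^{k+1}-\bar{x}^k=(\bar{x}^{k+1}-u)+(u-x^*)+(x^*-\bar{x}^k)$ shows that using two different reference points in a single difference necessarily leaves a $\|u-x^*\|^2$ cross term (take $\bar{x}^{k+1}=u$, $\bar{x}^k=x^*$: the left side is $\|u-x^*\|^2>0$ while your right side is $0$). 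Your own closing remark about avoiding an "extraneous $\|u-x^*\|^2$ residual" has the logic backwards --- it is precisely your mixed assignment (current epoch to $u$, previous epoch to $x^*$) that creates it; a single common reference point avoids it. Second, the conversion $\|\bar{y}^k-u\|^2\le\frac{2}{\sigma}\bigl(F(\bar{y}^k)-F(u)\bigr)$ is not a consequence of $\sigma$-strong convexity at an arbitrary $u$: strong convexity gives $F(\bar{y}^k)-F(u)\ge\langle g,\bar{y}^k-u\rangle+\frac{\sigma}{2}\|\bar{y}^k-u\|^2$ for $g\in\partial F(u)$, and the inner product can be negative (indeed $F(\bar{y}^k)-F(u)$ itself can be negative); the inequality requires $0\in\partial F(u)$, i.e.\ $u=x^*$.

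Both defects disappear if $u=x^*$, and that is exactly what the paper does: its proof bounds $\|\bar{x}^{k+1}-\bar{x}^k\|^2\le 2\|\bar{x}^{k+1}-x^*\|^2+2\|\bar{x}^k-x^*\|^2$ with the single reference point $x^*$, uses $\frac{\sigma}{2}\|\bar{y}^k-x^*\|^2\le F(\bar{y}^k)-F(x^*)$ (valid since $0\in\partial F(x^*)$), and its conclusion, as well as every downstream use in the outer-loop consensus analysis, has $x^*$ in all four terms. The "$u$" appearing in the lemma statement is an inconsistency of the paper (the bound with a genuinely free $u$ is false by the counterexample above, e.g.\ with $\tau_k=\tau_{k-1}=1$ and zero consensus error), so the statement should be read with $u=x^*$. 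With that substitution your argument is correct and coincides with the paper's proof; as written, it does not establish the claim it sets out to prove.
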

\begin{proof}
Recall $x^*$ is the minimizer of $F(\cdot)$, it follows that
\begin{align*}
   & \|\mathbf{x}^{k + 1} - \mathbf{x}^{k}\|^2 \\
   \leq & \|\mathbf{x}^{k + 1} - \mathbf{1}\bar{x}^{k+1} - \mathbf{x}^{k} + \mathbf{1}\bar{x}^{k} + \mathbf{1}\bar{x}^{k+1} - \mathbf{1}\bar{x}^{k} \|^2 \\
   \leq & 3\|\mathbf{x}^{k + 1} - \mathbf{1}\bar{x}^{k+1}\|^2 + 3 \| \mathbf{x}^{k} - \mathbf{1}\bar{x}^{k}\|^2 + 3m\|\bar{x}^{k+1} - \bar{x}^{k}\|^2 \\
   \leq & 3\|\mathbf{x}^{k + 1} - \mathbf{1}\bar{x}^{k+1}\|^2 + 3 \| \mathbf{x}^{k} - \mathbf{1}\bar{x}^{k}\|^2 + 6m\|\bar{x}^{k+1} - x^*\|^2 + 6m\|\bar{x}^{k} - x^*\|^2  .
\end{align*}
Since $0 \in \partial F(x^*)$, so $\frac{\sigma}{2}\|\bar{y}^k - x^*\|^2 \leq F(\bar{y}^k) - F(x^*) $, and we have
\begin{align*}
    &\|\bar{x}^{k+1} - x^*\|^2  \leq 2\tau_k^2 \|\bar{q}^k - x^*\|^2 + 2(1 - \tau_{k})^2\|\bar{y}^k - x^*\|^2 \\
    \leq & 2\tau_k^2 \|\bar{q}^k - x^*\|^2 + \frac{4(1 - \tau_{k})^2}{\sigma}\left(F(\bar{y}^k) - F(x^*)\right) .
\end{align*}
Combining the above two equations, we have 
\begin{align*}
    & \|\mathbf{x}^{k + 1} - \mathbf{x}^{k}\|^2 \\
    \leq & 3\|\mathbf{x}^{k + 1} - \mathbf{1}\bar{x}^{k+1}\|^2 + 3 \| \mathbf{x}^{k} - \mathbf{1}\bar{x}^{k}\|^2 + 6m\|\bar{x}^{k+1} - x^*\|^2 + 6m\|\bar{x}^{k} - x^*\|^2 \\
    \leq & 3\|\mathbf{x}^{k + 1} - \mathbf{1}\bar{x}^{k+1}\|^2 + 3 \| \mathbf{x}^{k} - \mathbf{1}\bar{x}^{k}\|^2 + 12m\tau_k^2 \|\bar{q}^k - x^*\|^2 + 
    \frac{24m(1-\tau_{k})^2}{\sigma}\left(F(\bar{y}^k) - F(x^*)\right) \\
    & + 12m\tau_{k-1}^2 \|\bar{q}^{k-1} - x^*\|^2      +  \frac{24m(1-\tau_{k-1})^2}{\sigma}\left(F(\bar{y}^{k-1}) - F(x^*)\right).
\end{align*}
\end{proof}

Similar to the analysis of the inner loop, we can upper bound the consensus error of state variables and gradient tracking variables with the following lemma:
\begin{lemma}
Let $\mathbf{x}^{k}$,  $\mathbf{q}^k$, $\mathbf{y}^k$ and $\hat{\mathbf{s}}^k$ be the $k$-th iterate of local weight variables in the outer loop of Algorithm \ref{strongconvex_algo_2}, then we have the following recursive inequalities:
\begin{equation}
    \|\mathbf{x^{k+1}} - \mathbf{1}\bar{x}^{k+1}\|^2 \leq  2 \rho^2 \tau_k^2 \|\mathbf{q}^k - \mathbf{1} \bar{q}^k\|^2 + 2\rho^2 (1 - \tau_{k})^2 \| \mathbf{y}^k - \mathbf{1} \bar{y}^k\|^2.
\end{equation}
The variable $\mathbf{q}^{k+1}$ has the following recursive form:
\begin{equation}
\small\begin{split}
    &\|\mathbf{q}^{k+1} - \mathbf{1} \bar{q}^{k+1}\|^2 \\
     \leq&  \frac{3 \rho^2}{\left(\frac{1}{\alpha_{k+1}} + \frac{\sigma }{4}\right)^2 } \left(\frac{1}{\alpha_{k+1}^2} + \frac{2 \rho^2 \tau_k^2}{t_0^2 \eta^2} + 2 \rho^2 \tau_k^2 \left(1 + 16\rho^2\eta^2\ell_1\ell_2 t_0  \right)\left(\frac{1}{t_0\eta} + \frac{\sigma}{4}\right)^2 \right)\|\mathbf{q}^k - \mathbf{1} \bar{q}^k\|^2 \\
   & + \frac{6 \rho^4 (1 - \tau_{k})^2}{\left(\frac{1}{\alpha_{k+1}} + \frac{\sigma }{4}\right)^2 }\left(\frac{1}{t_0^2 \eta^2} +  \left(1 + 16\rho^2\eta^2\ell_1\ell_2 t_0  \right)\left(\frac{1}{t_0\eta} + \frac{\sigma}{4}\right)^2 \right) \| \mathbf{y}^k - \mathbf{1} \bar{y}^k\|^2 +    \frac{6\rho^4 \eta^2 \left(\frac{1}{t_0\eta} + \frac{\sigma}{4}\right)^2}{\left(\frac{1}{\alpha_{k+1}} + \frac{\sigma}{4}\right)^2}\|\hat{\mathbf{s}}^{k} - \mathbf{1} \bar{\hat{s}}^{k}\|^2 \\
     & +  \frac{18 \rho^4 \eta^2 \ell_1 \ell_2 \left(\frac{1}{t_0\eta} + \frac{\sigma}{4}\right)^2}{\left(\frac{1}{\alpha_{k+1}} + \frac{\sigma}{4}\right)^2} \| \mathbf{x}^{k} - \mathbf{1}\bar{x}^{k}\|^2   +   \frac{72 \rho^4  \eta^2 \ell_1 \ell_2 m \tau_k^2 \left(\frac{1}{t_0\eta} + \frac{\sigma}{4}\right)^2}{\left(\frac{1}{\alpha_{k+1}} + \frac{\sigma}{4}\right)^2} \|\bar{q}^k - x^*\|^2 \\
     & + 
    \frac{144 \rho^4 \eta^2 \ell_1 \ell_2 m (1 - \tau_k)^2 \left(\frac{1}{t_0\eta} + \frac{\sigma}{4}\right)^2}{\sigma \left(\frac{1}{\alpha_{k+1}} + \frac{\sigma}{4}\right)^2}(F(\bar{y}^k) - F(x^*)) +   \frac{72 \rho^4  \eta^2 \ell_1 \ell_2 m \tau_{k-1}^2\left(\frac{1}{t_0\eta} + \frac{\sigma}{4}\right)^2}{\left(\frac{1}{\alpha_{k+1}} + \frac{\sigma}{4}\right)^2} \|\bar{q}^{k-1} - x^*\|^2 \\
     & + 
    \frac{144 \rho^4 \eta^2 \ell_1 \ell_2 m (1 - \tau_{k-1})^2 \left(\frac{1}{t_0\eta} + \frac{\sigma}{4}\right)^2}{\sigma \left(\frac{1}{\alpha_{k+1}} + \frac{\sigma}{4}\right)^2}(F(\bar{y}^{k-1}) - F(x^*)) + \frac{48 \rho^4 \ell_1 \ell_2 m\eta^4 t_0^2 (2 t_0 - 1)\left(\frac{1}{t_0\eta} + \frac{\sigma}{4}\right)^2}{\left(\frac{1}{\alpha_{k+1}} + \frac{\sigma}{4}\right)^2}\|\bar{\mathcal{G}}^{k+1}\|^2.
    \end{split}
\end{equation}
The variable $\mathbf{y}^{k+1}$ has the following recursive form:
\begin{equation}
    \begin{split}
        \|\mathbf{y}^{k+1} - \mathbf{1} \bar{y}^{k+1}\|^2 
           \leq &2 \rho^2 \tau_k^2 \left(1 + 16\rho^2\eta^2\ell_1\ell_2 t_0  \right) \|\mathbf{q}^k - \mathbf{1} \bar{q}^k\|^2  + 2\rho^2 (1 - \tau_{k})^2 \left(1 + 16\rho^2\eta^2\ell_1\ell_2 t_0  \right) \| \mathbf{y}^k - \mathbf{1} \bar{y}^k\|^2 \\
     & +   2 \rho^2 \eta^2 \|\hat{\mathbf{s}}^{k} - \mathbf{1} \bar{\hat{s}}^{k}\|^2 + {6\rho^2 \eta^2 \ell_1 \ell_2} \| \mathbf{x}^{k} - \mathbf{1}\bar{x}^{k}\|^2  \\
     & +  24\rho^2 \eta^2 \ell_1 \ell_2 m \tau_k^2 \|\bar{q}^k - x^*\|^2 + 
    \frac{48\rho^2 \eta^2 \ell_1 \ell_2 m (1 - \tau_k)^2}{\sigma}\left(F(\bar{y}^k) - F(x^*)\right) \\
    & + {24\rho^2 \eta^2 \ell_1 \ell_2 m \tau_{k-1}^2} \|\bar{q}^{k-1} - x^*\|^2      +  \frac{48\rho^2 \eta^2 \ell_1 \ell_2 m (1 - \tau_{k-1})^2}{\sigma}\left(F(\bar{y}^{k-1}) - F(x^*)\right) \\
    & + {16\ell_1\ell_2m\rho^2\eta^4 t_0^2 (2 t_0 - 1)}\|\bar{\mathcal{G}}^{k+1}\|^2.
    \end{split}
\end{equation}
The variable $\hat{\mathbf{s}}^{k+1}$ has the following recursive form:
\begin{equation}\small
\begin{split}
    \|\hat{\mathbf{s}}^{k+1} - \mathbf{1} \bar{\hat{s}}^{k+1}\|^2  \leq & 2\rho^2 \|\hat{\mathbf{s}}^{k} - \mathbf{1} \bar{\hat{s}}^{k}\|^2 +   {12\rho^4 \ell_1 \ell_2 \tau_k^2} \|\mathbf{q}^k - \mathbf{1} \bar{q}^k\|^2 + {12\rho^4 \ell_1 \ell_2 (1-\tau_k)^2} \| \mathbf{y}^k - \mathbf{1} \bar{y}^k\|^2 \\
     & + {6\rho^2 \ell_1 \ell_2} \| \mathbf{x}^{k} - \mathbf{1}\bar{x}^{k}\|^2   +  24\rho^2 \ell_1 \ell_2 m \tau_k^2 \|\bar{q}^k - x^*\|^2 + 
    \frac{48\rho^2 \ell_1 \ell_2 m (1 - \tau_k)^2}{\sigma}\left(F(\bar{y}^k) - F(x^*)\right) \\
    & + {24\rho^2 \ell_1 \ell_2 m \tau_{k-1}^2} \|\bar{q}^{k-1} - x^*\|^2      +  \frac{48\rho^2 \ell_1 \ell_2 m (1 - \tau_{k-1})^2}{\sigma}\left(F(\bar{y}^{k-1}) - F(x^*)\right).     
\end{split}
\end{equation}
\label{consensus_error_outer_loop}
\end{lemma}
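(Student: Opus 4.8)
The plan is to establish each of the four recursions by the same two-move template already used for the inner loop. First, apply Lemma~\ref{fastmix_lemma} to a \FastMix\ output, which replaces its consensus error $\|\mathbf{p}_{\mathrm{out}}-\mathbf{1}\bar p_{\mathrm{out}}\|^2$ by $\rho^2\|\mathbf{p}_{\mathrm{in}}-\mathbf{1}\bar p_{\mathrm{in}}\|^2$. Second, write $\mathbf{p}_{\mathrm{in}}$ as an affine combination of earlier iterates, split via $\|\sum_{i=1}^{k}a_i\|^2\le k\sum_{i=1}^{k}\|a_i\|^2$, and use $\|\mathbf{a}-\tfrac1m\mathbf{1}\mathbf{1}^\top\mathbf{a}\|\le\|\mathbf{a}\|$ to discard unwanted averages. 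Any gradient-difference term $\|\nabla f_i(x)-\nabla f_i(y)\|^2$ that shows up is bounded by $\ell_1\ell_2\|x-y\|^2$ using the $(\ell_1,\ell_2)$-smoothness of the components (Lemma~\ref{smooth_bound}), and any squared displacement of the $\mathbf{x}$-iterates is then controlled by Lemma~\ref{x_recur_lemma}.

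For $\mathbf{x}^{k+1}=\FastMix(\tau_k\mathbf{q}^k+(1-\tau_k)\mathbf{y}^k,M)$ the two moves plus $\|a+b\|^2\le2\|a\|^2+2\|b\|^2$ immediately give the first inequality. For $\hat{\mathbf{s}}^{k+1}=\FastMix(\hat{\mathbf{s}}^k+\nabla f(\mathbf{x}^{k+1})-\nabla f(\mathbf{x}^k),M)$, the same step leaves $2\rho^2\|\hat{\mathbf{s}}^k-\mathbf{1}\bar{\hat s}^k\|^2$ and $2\rho^2\sum_i\|\nabla f_i(x_i^{k+1})-\nabla f_i(x_i^k)\|^2\le2\rho^2\ell_1\ell_2\|\mathbf{x}^{k+1}-\mathbf{x}^k\|^2$; inserting Lemma~\ref{x_recur_lemma} for $\|\mathbf{x}^{k+1}-\mathbf{x}^k\|^2$ and then the just-derived $\mathbf{x}^{k+1}$ recursion into the $3\|\mathbf{x}^{k+1}-\mathbf{1}\bar x^{k+1}\|^2$ piece it produces yields the stated form, with the $\rho^4$ coefficients and the $\|\bar q^k-x^*\|^2$, $\|\bar q^{k-1}-x^*\|^2$ and function-gap terms.

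The $\mathbf{y}^{k+1}$ bound is the core step, since $\mathbf{y}^{k+1}=\mathbf{w}^{T+1}$ is the output of an entire inner epoch. Here I would reuse the inner-loop consensus system $z^{t+1}\le\mathbf{A}z^t+b^t$ with $\|\mathbf{A}\|_2<1$ from Section~\ref{appendix:svrg_epoch} together with the geometric-stopping estimate of Lemma~\ref{inner_lemma_1}, applied to $z^{T+1}$ in place of $z^T$ via $\mathbb{E}_T[D_T]=(1-p)\mathbb{E}[D_{T+1}]+pD_0$ with $p=1/t_0$. This writes $\mathbb{E}\|\mathbf{w}^{T+1}-\mathbf{1}\bar w^{T+1}\|^2$ in terms of the inner-epoch initial errors $\|\mathbf{w}^0-\mathbf{1}\bar w^0\|^2=\|\mathbf{x}^{k+1}-\mathbf{1}\bar x^{k+1}\|^2$ and $\eta^2\|\mathbf{s}^0-\mathbf{1}\bar s^0\|^2$ (the latter controlled by $\eta^2\|\hat{\mathbf{s}}^{k+1}-\mathbf{1}\bar{\hat s}^{k+1}\|^2$), plus a term proportional to $\mathbb{E}\|\bar w^{T+1}-\bar w^0\|^2$. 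Writing $\bar w^0=\bar x^{k+1}$, $\bar w^{T+1}=\bar y^{k+1}$ and setting $\bar{\mathcal{G}}^{k+1}:=(\bar x^{k+1}-\bar y^{k+1})/(t_0\eta)$, that last term equals $t_0^2\eta^2\|\bar{\mathcal{G}}^{k+1}\|^2$, accounting for the $\eta^4 t_0^2(2t_0-1)$ coefficient (the $2t_0-1$ being $\tfrac{2-p}{p}$), while the factor $1+16\rho^2\eta^2\ell_1\ell_2 t_0$ is exactly the combination of the $\|z^0\|$ term and the first correction $\tfrac{1-p}{p}\cdot16\rho^2\eta^2\ell_1\ell_2$ in Lemma~\ref{inner_lemma_1}. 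Substituting the $\mathbf{x}^{k+1}$ and $\hat{\mathbf{s}}^{k+1}$ recursions, and Lemma~\ref{x_recur_lemma} wherever $\|\mathbf{x}^{k+1}-\mathbf{x}^k\|^2$ or $\|\bar x^{k+1}-x^*\|^2$ resurface, produces the $\mathbf{y}^{k+1}$ inequality. Finally, for $\mathbf{q}^{k+1}$ I would start from the closed form~\eqref{q_recur}, apply Lemma~\ref{fastmix_lemma}, split the affine combination of $\mathbf{q}^k$, $\mathbf{y}^{k+1}$, $\mathbf{x}^{k+1}$ into three parts with the prefactor $\tfrac{1}{(1/\alpha_{k+1}+\sigma/4)^2}$ carried outside, and substitute the $\mathbf{y}^{k+1}$ and $\mathbf{x}^{k+1}$ recursions; the coefficients then combine into the displayed expression.

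The hard part is bookkeeping rather than ideas: the recursions for $\mathbf{q},\mathbf{y},\hat{\mathbf{s}}$ at level $k+1$ are mutually coupled, each feeding on the level-$k$ consensus errors of all of them plus $\|\mathbf{x}^{k+1}-\mathbf{x}^k\|^2$, and Lemma~\ref{x_recur_lemma} reaches two epochs back, so one must be meticulous about which quantities land at index $k$ versus $k-1$ and about keeping the $\rho^2$ versus $\rho^4$ powers and the $(1/\alpha_{k+1}+\sigma/4)$ denominators consistent. The only genuine subtlety is adapting Lemma~\ref{inner_lemma_1} from $z^T$ to $z^{T+1}$, which costs one extra $\|\mathbf{A}\|_2\le1$ in each correction term and is harmless, already absorbed into the constants $2t_0-1$ and $1+16\rho^2\eta^2\ell_1\ell_2 t_0$.
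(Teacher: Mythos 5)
Your proposal follows essentially the same route as the paper's proof: the FastMix contraction plus norm-splitting argument for $\mathbf{x}^{k+1}$ and $\hat{\mathbf{s}}^{k+1}$ (using $(\ell_1,\ell_2)$-smoothness and Lemma~\ref{x_recur_lemma}), the inner-loop consensus estimate of Lemma~\ref{inner_lemma_1} with $p=1/t_0$ and $\bar{\mathcal{G}}^{k+1}=(\bar{x}^{k+1}-\bar{y}^{k+1})/(t_0\eta)$ for $\mathbf{y}^{k+1}$, and the closed form in \eqref{q_recur} with substitution of the earlier recursions for $\mathbf{q}^{k+1}$. Your explicit handling of the $z^T$ versus $z^{T+1}$ index is a minor refinement of a step the paper treats implicitly, and it is absorbed in the stated constants as you claim.
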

\begin{proof}
Recall that $\mathbf{x^{k+1}} = \tau_{k} \mathbf{q}^k  + (1 - \tau_{k}) \mathbf{y}^k$, we have
\begin{align*}
    &  \|\mathbf{x^{k+1}} - \mathbf{1}\bar{x}^{k+1}\|^2 \\
    \leq &  \rho^2 \|\tau_{k} \mathbf{q}^k  + (1 - \tau_{k}) \mathbf{y}^k - \tau_{k} \mathbf{1} \bar{q}^k + (1 - \tau_{k}) \mathbf{1} \bar{y}^k\|^2 \\
    \leq & 2 \rho^2 \tau_k^2 \|\mathbf{q}^k - \mathbf{1} \bar{q}^k\|^2 + 2\rho^2 (1 - \tau_{k})^2 \| \mathbf{y}^k - \mathbf{1} \bar{y}^k\|^2.
\end{align*}
Due to Eq. (\ref{q_recur}), it follows that:
\begin{equation}
   \begin{split}
   &  \|\mathbf{q}^{k+1} - \mathbf{1} \bar{q}^{k+1}\|^2 \\
    \leq & \rho^2 \left\|\frac{1}{\frac{1}{\alpha_{k+1}} + \frac{\sigma}{4}}\left(\frac{\mathbf{q}^k}{\alpha_{k+1}} + \left(\frac{1}{t_0\eta} + \frac{\sigma}{4}\right)\mathbf{y}^{k+1} - \frac{1}{t_0\eta}\mathbf{x}^{k+1}\right) \right. \\
     & \left. - \frac{1}{\frac{1}{\alpha_{k+1}} + \frac{\sigma}{4}}\left(\frac{\mathbf{1}\bar{q}^k}{\alpha_{k+1}} + \left(\frac{1}{t_0\eta} + \frac{\sigma}{4}\right)\mathbf{1}\bar{y}^{k+1} - \frac{1}{t_0\eta}\mathbf{1}\bar{x}^{k+1}\right)\right\|^2 \\
    \leq & \frac{3 \rho^2}{\left(\frac{1}{\alpha_{k+1}} + \frac{\sigma}{4}\right)^2}\left(\frac{1}{\alpha_{k+1}^2}\|\mathbf{q}^k - \mathbf{1} \bar{q}^k\|^2 + \frac{1}{t_0^2 \eta^2}\|\mathbf{x}^{k+1} - \mathbf{1} \bar{x}^{k+1}\|^2   + \left(\frac{1}{t_0\eta} + \frac{\sigma}{4}\right)^2\|\mathbf{y}^{k+1} - \mathbf{1} \bar{y}^{k+1}\|^2\right).    
   \end{split}
   \label{q_recur_1}
\end{equation}
Recall that $\hat{\mathbf{s}}^{k+1} = \texttt{FastMix}(\mathbf{\nu}^{k+1}, M)$, it can be verified that
\begin{align*}
    &  \|\hat{\mathbf{s}}^{k+1} - \mathbf{1} \bar{\hat{s}}^{k+1}\|^2 \\
    \leq & \rho^2 \|\mathbf{\nu}^{k+1} - \mathbf{1} \bar{\nu}^{k+1}\|^2 \\
    \leq & 2\rho^2 \|\hat{\mathbf{s}}^{k} - \mathbf{1} \bar{\hat{s}}^{k}\|^2 +   2\rho^2 \sum_{i=1}^m\norm{\nabla f_i (x_i^{k + 1}) - \nabla f_i (x_i^{k})}^2 \\
     \leq & 2\rho^2 \|\hat{\mathbf{s}}^{k} - \mathbf{1} \bar{\hat{s}}^{k}\|^2 +   {2\rho^2 \ell_1 \ell_2} \|\mathbf{x}^{k + 1} - \mathbf{x}^{k}\|^2,
\end{align*}
where the second inequality follows from $\norm{\mathbf{x} - \frac{1}{m} \mathbf{1} \mathbf{1}^T \mathbf{x}} \leq \|\mathbf{x}\|$ for $\forall \mathbf{x} \in \mathcal{R}^{m\times d}$. Using Lemma \ref{x_recur_lemma}, we can have the following bound:
\begin{align*}
      & \|\hat{\mathbf{s}}^{k+1} - \mathbf{1} \bar{\hat{s}}^{k+1}\|^2 \\ 
      \leq & 2\rho^2 \|\hat{\mathbf{s}}^{k} - \mathbf{1} \bar{\hat{s}}^{k}\|^2 +   {6\rho^2 \ell_1 \ell_2} \|\mathbf{x}^{k + 1} - \mathbf{1}\bar{x}^{k+1}\|^2 + {6\rho^2 \ell_1 \ell_2} \| \mathbf{x}^{k} - \mathbf{1}\bar{x}^{k}\|^2  \\
     & +  24\rho^2 \ell_1 \ell_2 m \tau_k^2 \|\bar{q}^k - x^*\|^2 + 
    \frac{48\rho^2 \ell_1 \ell_2 m (1 - \tau_k)^2}{\sigma}\left(F(\bar{y}^k) - F(x^*)\right) \\
    & + {24\rho^2 \ell_1 \ell_2 m \tau_{k-1}^2} \|\bar{q}^{k-1} - x^*\|^2      +  \frac{48\rho^2 \ell_1 \ell_2 m (1 - \tau_{k-1})^2}{\sigma}\left(F(\bar{y}^{k-1}) - F(x^*)\right) \\
      \leq & 2\rho^2 \|\hat{\mathbf{s}}^{k} - \mathbf{1} \bar{\hat{s}}^{k}\|^2 +   {6\rho^2 \ell_1 \ell_2} \left(2 \rho^2 \tau_k^2 \|\mathbf{q}^k - \mathbf{1} \bar{q}^k\|^2 + 2\rho^2 (1 - \tau_{k})^2 \| \mathbf{y}^k - \mathbf{1} \bar{y}^k\|^2\right) \\
     & + {6\rho^2 \ell_1 \ell_2} \| \mathbf{x}^{k} - \mathbf{1}\bar{x}^{k}\|^2   +  24\rho^2 \ell_1 \ell_2 m \tau_k^2 \|\bar{q}^k - x^*\|^2 + 
    \frac{48\rho^2 \ell_1 \ell_2 m (1 - \tau_k)^2}{\sigma}\left(F(\bar{y}^k) - F(x^*)\right) \\
    & + {24\rho^2 \ell_1 \ell_2 m \tau_{k-1}^2} \|\bar{q}^{k-1} - x^*\|^2      +  \frac{48\rho^2 \ell_1 \ell_2 m (1 - \tau_{k-1})^2}{\sigma}\left(F(\bar{y}^{k-1}) - F(x^*)\right) \\
     = & 2\rho^2 \|\hat{\mathbf{s}}^{k} - \mathbf{1} \bar{\hat{s}}^{k}\|^2 +   {12\rho^4 \ell_1 \ell_2 \tau_k^2} \|\mathbf{q}^k - \mathbf{1} \bar{q}^k\|^2 + {12\rho^4 \ell_1 \ell_2 (1-\tau_k)^2} \| \mathbf{y}^k - \mathbf{1} \bar{y}^k\|^2 \\
     & + {6\rho^2 \ell_1 \ell_2} \| \mathbf{x}^{k} - \mathbf{1}\bar{x}^{k}\|^2   +  24\rho^2 \ell_1 \ell_2 m \tau_k^2 \|\bar{q}^k - x^*\|^2 + 
    \frac{48\rho^2 \ell_1 \ell_2 m (1 - \tau_k)^2}{\sigma}\left(F(\bar{y}^k) - F(x^*)\right) \\
    & + {24\rho^2 \ell_1 \ell_2 m \tau_{k-1}^2} \|\bar{q}^{k-1} - x^*\|^2      +  \frac{48\rho^2 \ell_1 \ell_2 m (1 - \tau_{k-1})^2}{\sigma}\left(F(\bar{y}^{k-1}) - F(x^*)\right) .
\end{align*}
Let  $\bar{\mathcal{G}}^{k + 1} = \frac{\bar{w}^0 - \bar{w}^{T+1} }{\eta t_0}$.
Noting that $\mathbf{y}^{k+1} = \mathbf{w}^{T+1}$ in Algorithm \ref{strongconvex_algo_2}, one has $\bar{\mathcal{G}}^{k + 1} = \frac{\bar{y}^{k+1} - \bar{x}^{k+1}}{\eta t_0}$.
By Eq. (\ref{inner_comm}), we have the following bound
\begin{align*}
    & \mathbb{E}_T\left[\|\mathbf{y}^{k+1} - \mathbf{1}\bar{y}^{k+1}\|^2\right] \\
     \leq& \mathbb{E}_T\left[\frac{p}{1-(1-p)\|A\|_2}(\|\mathbf{x}^{k+1} - \mathbf{1} \bar{x}^{k+1}\|^2 + \eta^2 \|\hat{\mathbf{s}}^{k+1} - \mathbf{1}\bar{\hat{s}}^{k+1}\|^2) \right.\\
    &  \left. +  \frac{1-p}{1-(1-p)\|A\|_2}{16\rho^2\eta^2\ell_1\ell_2}\|\mathbf{x}^{k+1} - \mathbf{1}\bar{x}^{k+1}\|^2 + \frac{(2-p)}{1-(1-p)\|\mathbf{A}\|_2}{16\ell_1\ell_2m\rho^2\eta^2}\|\bar{y}^{k+1}- \bar{x}^{k+1}\|^2\right] \\
      \leq &\left(1 + {16\rho^2\eta^2\ell_1\ell_2(t_0 -1)}\right)\|\mathbf{x}^{k+1} - \mathbf{1}\bar{x}^{k+1}\|^2 +  \eta^2 \|\hat{\mathbf{s}}^{k+1} - \mathbf{1}\bar{\hat{s}}^{k+1}\|^2 
      + {16\ell_1\ell_2m\rho^2\eta^4 t_0^2 (2 t_0 - 1)}\|\bar{\mathcal{G}}^{k+1}\|^2 \\
      \leq & \left(1 + {16\rho^2\eta^2\ell_1\ell_2(t_0 -1)}\right) \left(2 \rho^2 \tau_k^2 \|\mathbf{q}^k - \mathbf{1} \bar{q}^k\|^2 + 2\rho^2 (1 - \tau_{k})^2 \| \mathbf{y}^k - \mathbf{1} \bar{y}^k\|^2 \right) \\
     & +  \eta^2 \left( 2\rho^2 \|\hat{\mathbf{s}}^{k} - \mathbf{1} \bar{\hat{s}}^{k}\|^2 +   {12\rho^4 \ell_1 \ell_2 \tau_k^2} \|\mathbf{q}^k - \mathbf{1} \bar{q}^k\|^2 + {12\rho^4 \ell_1 \ell_2 (1-\tau_k)^2} \| \mathbf{y}^k - \mathbf{1} \bar{y}^k\|^2 \right.\\
     & + {6\rho^2 \ell_1 \ell_2} \| \mathbf{x}^{k} - \mathbf{1}\bar{x}^{k}\|^2   +  24\rho^2 \ell_1 \ell_2 m \tau_k^2 \|\bar{q}^k - x^*\|^2 + 
    \frac{48\rho^2 \ell_1 \ell_2 m (1 - \tau_k)^2}{\sigma}(F(\bar{y}^k) - F(x^*)) \\
    & \left. + {24\rho^2 \ell_1 \ell_2 m \tau_{k-1}^2} \|\bar{q}^{k-1} - x^*\|^2      +  \frac{48\rho^2 \ell_1 \ell_2 m (1 - \tau_{k-1})^2}{\sigma}\left(F(\bar{y}^{k-1}) - F(x^*)\right) \right) \\
     & + {16\ell_1\ell_2m\rho^2\eta^4 t_0^2 (2 t_0 - 1)}\|\bar{\mathcal{G}}^{k+1}\|^2 \\
      \leq & 2 \rho^2 \tau_k^2 \left(1 + 16\rho^2\eta^2\ell_1\ell_2 t_0 \right) \|\mathbf{q}^k - \mathbf{1} \bar{q}^k\|^2 \\
     & + 2\rho^2 (1 - \tau_{k})^2 \left(1 + 16\rho^2\eta^2\ell_1\ell_2 t_0  \right) \| \mathbf{y}^k - \mathbf{1} \bar{y}^k\|^2 +   2 \rho^2 \eta^2 \|\hat{\mathbf{s}}^{k} - \mathbf{1} \bar{\hat{s}}^{k}\|^2 \\
     & + {6\rho^2 \eta^2 \ell_1 \ell_2} \| \mathbf{x}^{k} - \mathbf{1}\bar{x}^{k}\|^2   +  24\rho^2 \eta^2 \ell_1 \ell_2 m \tau_k^2 \|\bar{q}^k - x^*\|^2 + 
    \frac{48\rho^2 \eta^2 \ell_1 \ell_2 m (1 - \tau_k)^2}{\sigma}\left(F(\bar{y}^k) - F(x^*)\right) \\
    & + {24\rho^2 \eta^2 \ell_1 \ell_2 m \tau_{k-1}^2} \|\bar{q}^{k-1} - x^*\|^2      +  \frac{48\rho^2 \eta^2 \ell_1 \ell_2 m (1 - \tau_{k-1})^2}{\sigma}\left(F(\bar{y}^{k-1}) - F(x^*)\right) \\
    &+ {16\ell_1\ell_2m\rho^2\eta^4 t_0^2 (2 t_0 - 1)}\|\bar{\mathcal{G}}^{k+1}\|^2 .
\end{align*}
By combining Eq. (\ref{q_recur_1}) and the  above inequality, one has 
\begin{align*}
     &\|\mathbf{q}^{k+1} - \mathbf{1} \bar{q}^{k+1}\|^2 \\
    \leq & \frac{3 \rho^2}{\left(1 + \frac{\sigma \alpha_{k+1}}{4}\right)^2}\|\mathbf{q}^k - \mathbf{1} \bar{q}^k\|^2 + \frac{3 \rho^2}{\left(\frac{1}{\alpha_{k+1}} + \frac{\sigma}{4}\right)^2 t_0^2 \eta^2}\|\mathbf{x}^{k+1} - \mathbf{1} \bar{x}^{k+1}\|^2  + \frac{3 \rho^2 \left(\frac{1}{t_0\eta} + \frac{\sigma}{4}\right)^2}{\left(\frac{1}{\alpha_{k+1}} + \frac{\sigma}{4}\right)^2}\|\mathbf{y}^{k+1} - \mathbf{1} \bar{y}^{k+1}\|^2 \\
    \leq & \frac{3 \rho^2}{\left(\frac{1}{\alpha_{k+1}} + \frac{\sigma}{4}\right)^2 \alpha_{k+1}^2}\|\mathbf{q}^k - \mathbf{1} \bar{q}^k\|^2 + \frac{3 \rho^2}{\left(\frac{1}{\alpha_{k+1}} + \frac{\sigma}{4}\right)^2 t_0^2 \eta^2}\left(2 \rho^2 \tau_k^2 \|\mathbf{q}^k - \mathbf{1} \bar{q}^k\|^2 \right.    \left. + 2\rho^2 (1 - \tau_{k})^2 \| \mathbf{y}^k - \mathbf{1} \bar{y}^k\|^2\right) \\
    & + \frac{3 \rho^2 \left(\frac{1}{t_0\eta} + \frac{\sigma}{4}\right)^2}{\left(\frac{1}{\alpha_{k+1}} + \frac{\sigma}{4}\right)^2} \left[2 \rho^2 \tau_k^2 \left(1 + 16\rho^2\eta^2\ell_1\ell_2 t_0  \right) \|\mathbf{q}^k - \mathbf{1} \bar{q}^k\|^2 \right.\\
     & + 2\rho^2 (1 - \tau_{k})^2 \left(1 + 16\rho^2\eta^2\ell_1\ell_2 t_0  \right) \| \mathbf{y}^k - \mathbf{1} \bar{y}^k\|^2 +   2 \rho^2 \eta^2 \|\hat{\mathbf{s}}^{k} - \mathbf{1} \bar{\hat{s}}^{k}\|^2 \\
     & + {6\rho^2 \eta^2 \ell_1 \ell_2} \| \mathbf{x}^{k} - \mathbf{1}\bar{x}^{k}\|^2   +  24\rho^2 \eta^2 \ell_1 \ell_2 m \tau_k^2 \|\bar{q}^k - x^*\|^2 + 
    \frac{48\rho^2 \eta^2 \ell_1 \ell_2 m (1 - \tau_k)^2}{\sigma}\left(F(\bar{y}^k) - F(x^*)\right) \\
    & \left. + {24\rho^2 \eta^2 \ell_1 \ell_2 m \tau_{k-1}^2} \|\bar{q}^{k-1} - x^*\|^2      +  \frac{48\rho^2 \eta^2 \ell_1 \ell_2 m (1 - \tau_{k-1})^2}{\sigma}\left(F(\bar{y}^{k-1}) - F(x^*)\right) + {16\ell_1\ell_2m\rho^2\eta^4 t_0^2 (2 t_0 - 1)}\|\bar{\mathcal{G}}^{k+1}\|^2\right]\\
   \leq & \frac{3 \rho^2}{\left(\frac{1}{\alpha_{k+1}} + \frac{\sigma }{4}\right)^2 } \left(\frac{1}{\alpha_{k+1}^2} + \frac{2 \rho^2 \tau_k^2}{t_0^2 \eta^2} + 2 \rho^2 \tau_k^2 \left(1 + 16\rho^2\eta^2\ell_1\ell_2 t_0  \right)\left(\frac{1}{t_0\eta} + \frac{\sigma}{4}\right)^2 \right)\|\mathbf{q}^k - \mathbf{1} \bar{q}^k\|^2 \\
   & + \frac{6 \rho^4 (1 - \tau_{k})^2}{\left(\frac{1}{\alpha_{k+1}} + \frac{\sigma }{4}\right)^2 }\left(\frac{1}{t_0^2 \eta^2} +  \left(1 + 16\rho^2\eta^2\ell_1\ell_2 t_0  \right)\left(\frac{1}{t_0\eta} + \frac{\sigma}{4}\right)^2 \right) \| \mathbf{y}^k - \mathbf{1} \bar{y}^k\|^2\\
     &  +    \frac{6\rho^4 \eta^2 \left(\frac{1}{t_0\eta} + \frac{\sigma}{4}\right)^2}{\left(\frac{1}{\alpha_{k+1}} + \frac{\sigma}{4}\right)^2}\|\hat{\mathbf{s}}^{k} - \mathbf{1} \bar{\hat{s}}^{k}\|^2  +  \frac{18 \rho^4 \eta^2 \ell_1 \ell_2 \left(\frac{1}{t_0\eta} + \frac{\sigma}{4}\right)^2}{\left(\frac{1}{\alpha_{k+1}} + \frac{\sigma}{4}\right)^2} \| \mathbf{x}^{k} - \mathbf{1}\bar{x}^{k}\|^2   \\
     & +   \frac{72 \rho^4  \eta^2 \ell_1 \ell_2 m \tau_k^2\left(\frac{1}{t_0\eta} + \frac{\sigma}{4}\right)^2}{\left(\frac{1}{\alpha_{k+1}} + \frac{\sigma}{4}\right)^2} \|\bar{q}^k - x^*\|^2  + 
    \frac{144 \rho^4 \eta^2 \ell_1 \ell_2 m (1 - \tau_k)^2 \left(\frac{1}{t_0\eta} + \frac{\sigma}{4}\right)^2}{\sigma \left(\frac{1}{\alpha_{k+1}} + \frac{\sigma}{4}\right)^2}\left(F(\bar{y}^k) - F(x^*)\right) \\
     &+   \frac{72 \rho^4  \eta^2 \ell_1 \ell_2 m \tau_{k-1}^2\left(\frac{1}{t_0\eta} + \frac{\sigma}{4}\right)^2}{\left(\frac{1}{\alpha_{k+1}} + \frac{\sigma}{4}\right)^2} \|\bar{q}^{k-1} - x^*\|^2 + 
    \frac{144 \rho^4 \eta^2 \ell_1 \ell_2 m (1 - \tau_{k-1})^2 \left(\frac{1}{t_0\eta} + \frac{\sigma}{4}\right)^2}{\sigma \left(\frac{1}{\alpha_{k+1}} + \frac{\sigma}{4}\right)^2}\left(F(\bar{y}^{k-1}) - F(x^*)\right) \\
     &  + \frac{48 \rho^2 \rho^2 \ell_1 \ell_2 m\eta^4 t_0^2 (2 t_0 - 1)\left(\frac{1}{t_0\eta} + \frac{\sigma}{4}\right)^2}{\left(\frac{1}{\alpha_{k+1}} + \frac{\sigma}{4}\right)^2}\|\bar{\mathcal{G}}^{k+1}\|^2.
\end{align*}
\end{proof}
Denote ${z}_{out}^{k} = \begin{pmatrix}
           \|\mathbf{x}^{k} - \mathbf{1}\bar{x}^{k}\|^2\\
           \|\mathbf{y}^{k} - \mathbf{1}\bar{y}^{k}\|^2\\
           \|\mathbf{q}^{k} - \mathbf{1}\bar{q}^{k}\|^2\\
           \eta^2 \|\hat{\mathbf{s}}^{k} - \mathbf{1}\bar{\hat{s}}^{k}\|^2
    \end{pmatrix}$, then we can construct a linear system of inequalities using the results from Lemma \ref{consensus_error_outer_loop}: 
\begin{align*}
    & {z}_{out}^{k+1} = \mathbf{C}_k {z}_{out}^{k} + {d}^{k} ,
    \end{align*}
where    
    \begin{align*}
    &  \mathbf{C}_k = \rho^2
    \begin{pmatrix}
    \mathbf{C}_{k, 1} &
    \mathbf{C}_{k, 2} &
    \mathbf{C}_{k, 3} &
    \mathbf{C}_{k, 4} 
    \end{pmatrix}^{\top} \\
    &  \mathbf{C}_{k, 1} = \begin{pmatrix}
          0 \\
          2 (1 - \tau_{k})^2 \\  
          2  \tau_k^2 \\ 
          0 
    \end{pmatrix}, \\
   & \mathbf{C}_{k, 2} = \begin{pmatrix}
           {6 \eta^2 \ell_1 \ell_2} \\ 
           2 (1 - \tau_{k})^2 \left(1 + 16\rho^2\eta^2\ell_1\ell_2 t_0  \right) \\
           2  \tau_k^2 \left(1 + 16\rho^2\eta^2\ell_1\ell_2 t_0  \right) \\
           2 
    \end{pmatrix}, \\
   & \mathbf{C}_{k, 3} = \begin{pmatrix}
           \frac{18 \rho^2 \eta^2 \ell_1 \ell_2 \left(\frac{1}{t_0\eta} + \frac{\sigma}{4}\right)^2}{\left(\frac{1}{\alpha_{k+1}} + \frac{\sigma}{4}\right)^2} \\
           \frac{6 \rho^2 (1 - \tau_{k})^2}{\left(\frac{1}{\alpha_{k+1}} + \frac{\sigma }{4}\right)^2 }\left(\frac{1}{t_0^2 \eta^2} +  \left(1 + 16\rho^2\eta^2\ell_1\ell_2 t_0 \right)\left(\frac{1}{t_0\eta} + \frac{\sigma}{4}\right)^2 \right) \\
           \frac{3 }{\left(\frac{1}{\alpha_{k+1}} + \frac{\sigma }{4}\right)^2 } \left(\frac{1}{\alpha_{k+1}^2} + \frac{2 \rho^2 \tau_k^2}{t_0^2 \eta^2} + 2 \rho^2 \tau_k^2 \left(1 + 16\rho^2\eta^2\ell_1\ell_2 t_0 \right)\left(\frac{1}{t_0\eta} + \frac{\sigma}{4}\right)^2 \right) \\
           \frac{6\rho^2  \left(\frac{1}{t_0\eta} + \frac{\sigma}{4}\right)^2}{\left(\frac{1}{\alpha_{k+1}} + \frac{\sigma}{4}\right)^2} 
    \end{pmatrix}, \\
   & \mathbf{C}_{k, 4} = \begin{pmatrix}
           6 \eta^2 \ell_1 \ell_2 \\
           {12\rho^2 \eta^2 \ell_1 \ell_2 (1-\tau_k)^2} \\
           {12\rho^2 \eta^2 \ell_1 \ell_2 \tau_k^2} \\
           2
    \end{pmatrix},
    \end{align*}
    and
    \begin{align*}
       {d}^k = &  \begin{pmatrix}
    {d}_1^k & {d}_2^k & {d}_3^k & {d}_4^k
    \end{pmatrix}^{\top},
    \\
     {d}_1^k =& 0, \\
     {d}_2^k =& 24\rho^2 \eta^2 \ell_1 \ell_2 m \tau_k^2 \|\bar{q}^k - x^*\|^2 + 
    \frac{48\rho^2 \eta^2 \ell_1 \ell_2 m (1 - \tau_k)^2}{\sigma}\left(F(\bar{y}^k) - F(x^*)\right) \\
     & + {24\rho^2 \eta^2 \ell_1 \ell_2 m \tau_{k-1}^2} \|\bar{q}^{k-1} - x^*\|^2      +  \frac{48\rho^2 \eta^2 \ell_1 \ell_2 m (1 - \tau_{k-1})^2}{\sigma}\left(F(\bar{y}^{k-1}) - F(x^*)\right) \\
     & + {16\ell_1\ell_2m\rho^2\eta^4 t_0^2 (2 t_0 - 1)}\|\bar{\mathcal{G}}^{k+1}\|^2, \\
      {d}_3^k =& \frac{72 \rho^4  \eta^2 \ell_1 \ell_2 m \tau_k^2\left(\frac{1}{t_0\eta} + \frac{\sigma}{4}\right)^2}{\left(\frac{1}{\alpha_{k+1}} + \frac{\sigma}{4}\right)^2} \|\bar{q}^k - x^*\|^2   + \frac{144 \rho^4 \eta^2 \ell_1 \ell_2 m (1 - \tau_k)^2 \left(\frac{1}{t_0\eta} + \frac{\sigma}{4}\right)^2}{\sigma \left(\frac{1}{\alpha_{k+1}} + \frac{\sigma}{4}\right)^2}\left(F(\bar{y}^k) - F(x^*)\right) \\
     & +   \frac{72 \rho^4  \eta^2 \ell_1 \ell_2 m \tau_{k-1}^2\left(\frac{1}{t_0\eta} + \frac{\sigma}{4}\right)^2}{\left(\frac{1}{\alpha_{k+1}} + \frac{\sigma}{4}\right)^2} \|\bar{q}^{k-1} - x^*\|^2  + 
    \frac{144 \rho^4 \eta^2 \ell_1 \ell_2 m (1 - \tau_{k-1})^2 \left(\frac{1}{t_0\eta} + \frac{\sigma}{4}\right)^2}{\sigma \left(\frac{1}{\alpha_{k+1}} + \frac{\sigma}{4}\right)^2}\left(F(\bar{y}^{k-1}) - F(x^*)\right) \\
    & + \frac{48 \rho^2 \rho^2 \ell_1 \ell_2 m\eta^4 t_0^2 (2 t_0 - 1)\left(\frac{1}{t_0\eta} + \frac{\sigma}{4}\right)^2}{\left(\frac{1}{\alpha_{k+1}} + \frac{\sigma}{4}\right)^2}\|\bar{\mathcal{G}}^{k+1}\|^2, \\
     {d}_4^k =& 24\rho^2 \eta^2 \ell_1 \ell_2 m \tau_k^2 \|\bar{q}^k - x^*\|^2 + 
    \frac{48\rho^4 \ell_1 \ell_2 m (1 - \tau_k)^2}{\sigma}\left(F(\bar{y}^k) - F(x^*)\right) \\
    & + {24\rho^2  \eta^2 \ell_1 \ell_2 m \tau_{k-1}^2} \|\bar{q}^{k-1} - x^*\|^2   +  \frac{48\rho^2 \eta^2 \ell_1 \ell_2 m (1 - \tau_{k-1})^2}{\sigma}\left(F(\bar{y}^{k-1}) - F(x^*)\right). 
\end{align*}

Since the function $f(\cdot)$ is strongly convex, we set $\tau_i = \tau$ and $\alpha_i = \alpha$ as constant for $\forall i \in \mathcal{N}$, so we can set $\mathbf{C}_i = \mathbf{C}$ because the matrix values do not change at each iteration. If we unroll the above equation, we can get the following rule:
\begin{equation}
    {z}_{out}^{k+1} =  \mathbf{C}^{k+1}{z}_{out}^0 + \sum_{i=0}^k \mathbf{C}^{k-i}{d}^i =  \sum_{i=0}^k \mathbf{C}^{k-i}{d}^i.
    \label{z_out_recur}
\end{equation}
The last equality is due to every random variable are initialized with the same value. We can further decompose ${d}^i$ into the following form
\begin{align*}
    {d}^i &=  24\rho^2 \eta^2 \ell_1 \ell_2 m \tau_i^2  \begin{pmatrix}
       0 \\
       1 \\
        \frac{3 \rho^2  \left(\frac{1}{t_0\eta} + \frac{\sigma}{4}\right)^2}{\left(\frac{1}{\alpha_{i+1}} + \frac{\sigma}{4}\right)^2}\\
        1
    \end{pmatrix}\|\bar{q}^i - x^*\|^2 + 
   {24\rho^2 \eta^2 \ell_1 \ell_2 m \tau_{i-1}^2} \begin{pmatrix}
       0 \\
        1 \\
        \frac{3 \rho^2   \left(\frac{1}{t_0\eta} + \frac{\sigma}{4}\right)^2}{\left(\frac{1}{\alpha_{i+1}} + \frac{\sigma}{4}\right)^2}\\
        1
    \end{pmatrix}\|\bar{q}^{i-1} - x^*\|^2 \\
     & +  \frac{48\rho^2 \eta^2 \ell_1 \ell_2 m (1 - \tau_i)^2}{\sigma} \begin{pmatrix}
       0 \\
        1 \\
        \frac{3 \rho^2   \left(\frac{1}{t_0\eta} + \frac{\sigma}{4}\right)^2}{ \left(\frac{1}{\alpha_{i+1}} + \frac{\sigma}{4}\right)^2}\\
        1
    \end{pmatrix}\left(F(\bar{y}^i) - F(x^*)\right) \\ 
    & + \frac{48\rho^2 \eta^2 \ell_1 \ell_2 m (1 - \tau_{i-1})^2}{ \sigma}\begin{pmatrix}
       0 \\
        1 \\
        \frac{3 \rho^2   \left(\frac{1}{t_0\eta} + \frac{\sigma}{4}\right)^2}{ \left(\frac{1}{\alpha_{i+1}} + \frac{\sigma}{4}\right)^2}\\
        1
    \end{pmatrix}\left(F(\bar{y}^{i-1}) - F(x^*)\right)\\
    & + 16\rho^2\eta^4 \ell_1\ell_2m t_0^2 (2 t_0 - 1) \begin{pmatrix}
       0\\
       1 \\
       \frac{3 \rho^2   \left(\frac{1}{t_0\eta} + \frac{\sigma}{4}\right)^2}{\left(\frac{1}{\alpha_{i+1}} + \frac{\sigma}{4}\right)^2} \\
       0
    \end{pmatrix}\|\bar{\mathcal{G}}^{i+1}\|^2.
\end{align*}
If we take the norm on ${z}_{out}^k$ and denote 
$\xi = \norm{
\begin{pmatrix}
      0\\
       1\\
       \frac{3 \rho^2   \left(\frac{1}{t_0\eta} + \frac{\sigma}{4}\right)^2}{\left(\frac{1}{\alpha} + \frac{\sigma}{4}\right)^2} \\
       1
    \end{pmatrix}
}$, we can have the following bound: 
\begin{align*}
   \| {z}_{out}^{k+1} \| \leq & \sum_{i=0}^k \|\mathbf{C}\|_2^{k-i} \|d^i\| \leq \sum_{i=0}^k \|\mathbf{C}\|_2^{k-i} \xi  \left({24\rho^2 \eta^2 \ell_1 \ell_2 m \tau^2} \|\bar{q}^i - x^*\|^2  \right.\\ 
   & + {24\rho^2 \eta^2 \ell_1 \ell_2 m \tau^2} \|\bar{q}^{i-1} - x^*\|^2 + \frac{48\rho^2 \eta^2 \ell_1 \ell_2 m (1 - \tau)^2}{\sigma} \left(F(\bar{y}^i) - F(x^*)\right) \\ 
   & \left. + \frac{48\rho^2 \eta^2 \ell_1 \ell_2 m (1 - \tau)^2}{\sigma} \left(F(\bar{y}^{i-1}) - F(x^*)\right) + 16\ell_1\ell_2m\rho^2\eta^4 t_0^2 (2 t_0 - 1) \|\bar{\mathcal{G}}^{i+1}\|^2 \right).
\end{align*}
To determine $\|\mathbf{C}\|_2$, note that $(1 - \tau)^2 + \tau^2 \leq 1$ for $\forall \tau \in [0, 1]$, one has
\begin{align*}
    \|\mathbf{C}\|_2 \leq & \|\mathbf{C}\|_F \\
    \leq & \rho^2\left( 6 + 2 \left(1 + 16\rho^2\eta^2\ell_1\ell_2 t_0   \right) + 12 \eta^2 \ell_1 \ell_2 + \frac{6 \rho^2  (3 \eta^2 \ell_1 \ell_2 + 1) \left(\frac{1}{t_0\eta} + \frac{\sigma}{4}\right)^2}{\left(\frac{1}{\alpha} + \frac{\sigma}{4}\right)^2} +  12\rho^2 \eta^2 \ell_1 \ell_2 \right.\\
    & \left.+ \frac{6 \rho^2}{\left(\frac{1}{\alpha} + \frac{\sigma }{4}\right)^2 }\left(\frac{1}{t_0^2 \eta^2} +  \left(1 + 16\rho^2\eta^2\ell_1\ell_2 t_0  \right)\left(\frac{1}{t_0\eta} + \frac{\sigma}{4}\right)^2 \right) + \frac{3}{\left(1 + \frac{\alpha \sigma}{4}\right)^2} \right)\\
     = & \rho^2\left( 8 + 32\rho^2\eta^2\ell_1\ell_2 (t_0 - 1) +  12 (1 + \rho^2)  \eta^2 \ell_1 \ell_2  \right.\\
     & \left. + \frac{6 \rho^2}{\left(\frac{1}{\alpha} + \frac{\sigma }{4}\right)^2 }\left(\frac{1}{t_0^2 \eta^2} +  \left(2 + 3 \eta^2 \ell_1\ell_2  + 16\rho^2\eta^2\ell_1\ell_2 t_0   \right)\left(\frac{1}{t_0\eta} + \frac{\sigma}{4}\right)^2 \right)  + \frac{3}{\left(1 + \frac{\alpha \sigma}{4}\right)^2}\right) \\
     \leq & \rho^2\left( 11 + 32\rho^2\eta^2\ell_1\ell_2 t_0 +  12 (1 + \rho^2)  \eta^2 \ell_1 \ell_2  +
      \frac{6 \rho^2}{\left(\frac{1}{\alpha} + \frac{\sigma }{4}\right)^2 }  \left(3 + 3 \eta^2 \ell_1\ell_2  + 16\rho^2\eta^2\ell_1\ell_2 t_0 \right)\left(\frac{1}{t_0\eta} + \frac{\sigma}{4}\right)^2  \right) .
\end{align*}
\subsection{Proof of Theorem \ref{theorem_strong_convex}}
We restate the main theorem \ref{theorem_strong_convex} with detail of hyperparameters as below:
\begin{theorem}
    Running Algo. \ref{strongconvex_algo_2} with the hyperparameters $ \eta \leq \min\left(\frac{1}{2L}, \frac{1}{8}\sqrt{\frac{b}{\ell_1 \ell_2 t_0}}\right)$ and $\tau = min\{\frac{1}{2}, \frac{\sqrt{t_0 \eta \sigma}}{2}\}$.
Let $\rho \leq  \min\left( \sqrt{\frac{1}{64(1 + {b} + \frac{1}{\tau^4})}}, \sqrt{\frac{1}{64 m J b (\frac{k}{\sigma} + \frac{8 k L}{\sigma^2} + \frac{16 k L}{\sigma^2 \tau^4} + \eta t_0)}} \right)$, we have
\begin{align*}
     \mathbb{E}\left[F(\bar{y}^{k}) - F(x^*) \right] &\leq \frac{3}{\left( 1 + \frac{\tau}{2} \right)^{k}} \left( F(\bar{y}^0) - F(x^*) \right), \\
     \mathbb{E}\left[ \norm{\bar{x}^k - x^*}^2 \right] &\leq  \frac{128L}{\sigma\left( 1 + \frac{\tau}{2} \right)^{k}}\norm{\bar{x}^0 - x^*}^2, \\
     \mathbb{E}\left[ \norm{\bar{q}^k - x^*}^2 \right] & \leq
     \frac{1750L}{\sigma \tau^4\left( 1 + \frac{\tau}{2} \right)^{k}}\norm{\bar{x}^0 - x^*}^2 . 
\end{align*}
\end{theorem}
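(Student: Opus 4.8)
I would establish all three bounds simultaneously by induction on the epoch index $k$, together with a companion bound on the outer-loop consensus vector ${z}_{out}^k$ defined in the excerpt. The engine is a one-epoch contraction of a Lyapunov potential, obtained by coupling the one-epoch inner-SVRG guarantee (Lemma~\ref{lemma_inner}) with the mirror-descent step for $\bar q$ through the identity $\bar x^{k+1}=\tau\bar q^k+(1-\tau)\bar y^k$; the consensus errors are controlled by unrolling the recursion ${z}_{out}^{k+1}=\mathbf{C}{z}_{out}^k+{d}^k$ (Lemma~\ref{consensus_error_outer_loop}) and using the spectral bound $\|\mathbf{C}\|_2=O(\rho^2)$ already proved.

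\textbf{Step 1: per-epoch descent.} Because \texttt{FastMix} preserves averages, $\bar q^{k+1}$ is exactly the minimizer of the \emph{averaged} $\bar q$-subproblem behind~(\ref{q_recur}), which is $\bigl(\tfrac1\alpha+\tfrac\sigma4\bigr)$-strongly convex; its three-point inequality gives, with $\bar{\mathcal{G}}^{k+1}\coloneqq(\bar x^{k+1}-\bar y^{k+1})/(\eta t_0)$ and $\alpha=t_0\eta/(2\tau)$,
\[
\langle\bar{\mathcal{G}}^{k+1},\bar q^{k+1}-x^*\rangle+\tfrac{1}{2\alpha}\|\bar q^{k+1}-\bar q^k\|^2+\tfrac{\sigma}{8}\|\bar q^{k+1}-\bar y^{k+1}\|^2\le\tfrac{1}{2\alpha}\|x^*-\bar q^k\|^2-\tfrac{1+\alpha\sigma/4}{2\alpha}\|x^*-\bar q^{k+1}\|^2+\tfrac{\sigma}{8}\|x^*-\bar y^{k+1}\|^2.
\]
I would then apply Lemma~\ref{lemma_inner} with $\bar w^0=\bar x^{k+1}$, $\bar w^{T+1}=\bar y^{k+1}$, $\vw^0=\vx^{k+1}$, $\vs^0=\hat{\vs}^{k+1}$: once with $u=x^*$ (producing $\langle\bar{\mathcal{G}}^{k+1},\bar x^{k+1}-x^*\rangle-\tfrac{\eta t_0}{4}\|\bar{\mathcal{G}}^{k+1}\|^2-\tfrac{\sigma}{8}\|x^*-\bar y^{k+1}\|^2$ plus consensus residuals) and once with $u=\bar y^k$ (giving $\langle\bar{\mathcal{G}}^{k+1},\bar y^k-\bar x^{k+1}\rangle\le F(\bar y^k)-F(\bar y^{k+1})-\tfrac{\eta t_0}{4}\|\bar{\mathcal{G}}^{k+1}\|^2+(\text{residuals})$). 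Using $\bar x^{k+1}-\bar q^k=(1-\tau)(\bar y^k-\bar q^k)$ and $\bar y^k-\bar q^k=\tfrac1\tau(\bar y^k-\bar x^{k+1})$ to split $\langle\bar{\mathcal{G}}^{k+1},\bar x^{k+1}-x^*\rangle=(1-\tau)\langle\bar{\mathcal{G}}^{k+1},\bar y^k-\bar q^k\rangle+\langle\bar{\mathcal{G}}^{k+1},\bar q^k-x^*\rangle$, substituting both displays (the $\tfrac\sigma8\|x^*-\bar y^{k+1}\|^2$ terms cancel --- that is precisely why the strongly-convex term sits in the $\bar q$-update), taking expectations over $T$ and the minibatches, and discarding the net-negative $\|\bar{\mathcal{G}}^{k+1}\|^2$ term, I obtain
\[
\mathbb{E}[\Phi_{k+1}]\le\frac{1}{1+\tau/2}\,\mathbb{E}[\Phi_k],\qquad \Phi_k\coloneqq\frac{1}{\tau}\bigl(F(\bar y^k)-F(x^*)\bigr)+\frac{1}{2\alpha}\|\bar q^k-x^*\|^2+\langle\boldsymbol{\omega},{z}_{out}^k\rangle,
\]
for a fixed nonnegative weight $\boldsymbol{\omega}=O(\rho^2)$; the choice $\tau=\min\{1/2,\sqrt{t_0\eta\sigma}/2\}$ is exactly what makes $1+\tfrac{\alpha\sigma}{4}\ge1+\tfrac\tau2$ and $(1-\tau)(1+\tfrac\tau2)\le1$, so the coefficients align.

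\textbf{Step 2: consensus control, by the same induction.} The residuals dropped in Step~1 are precisely the entries of ${z}_{out}^{k+1}$ (up to a net-nonpositive multiple of $\eta t_0\|\bar{\mathcal{G}}^{k+1}\|^2$). Unrolling (\ref{z_out_recur}), ${z}_{out}^{k+1}=\sum_{i=0}^{k}\mathbf{C}^{k-i}{d}^i$, where $\|{d}^i\|$ is $O(\rho^2)$ times a fixed combination of $\|\bar q^i-x^*\|^2$, $\|\bar q^{i-1}-x^*\|^2$, $\tfrac1\sigma(F(\bar y^i)-F(x^*))$, $\tfrac1\sigma(F(\bar y^{i-1})-F(x^*))$, and $\|\bar{\mathcal{G}}^{i+1}\|^2$; the last is in turn bounded by $\tfrac{2}{(\eta t_0)^2}\bigl(\|\bar x^{i+1}-x^*\|^2+\tfrac2\sigma(F(\bar y^{i+1})-F(x^*))\bigr)$. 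The induction hypothesis bounds each of these by $(1+\tau/2)^{-i}$ times its epoch-$0$ value; since $\|\mathbf{C}\|_2=O(\rho^2)<(1+\tau/2)^{-1}$ for $\rho$ as in the hypothesis, $\sum_i\|\mathbf{C}\|_2^{k-i}(1+\tau/2)^{k-i}=O(1)$, so $\|{z}_{out}^{k+1}\|\le O(\rho^2)(1+\tau/2)^{-(k+1)}\Phi_0$. For $\rho$ small enough (as stated) this makes $\langle\boldsymbol{\omega},{z}_{out}^{k+1}\rangle$ a small fraction of the leading terms of $\Phi_{k+1}$ and keeps the net $\|\bar{\mathcal{G}}^{k+1}\|^2$-coefficient $\le0$; plugging back into Step~1 closes the induction on $\Phi$.

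\textbf{Step 3: initialization and the three bounds.} At $k=0$, $\bar y^0=\bar q^0=\bar x^0=x^0$ and ${z}_{out}^0=\mathbf{0}$, so by $L$-smoothness $F(\bar y^0)-F(x^*)\le\tfrac L2\|\bar x^0-x^*\|^2$, while $\tfrac{\tau}{2\alpha}=\tfrac{\tau^2}{t_0\eta}\le\tfrac\sigma4$ (using $\tau^2\le t_0\eta\sigma/4$) combined with strong convexity gives $\tfrac{1}{2\alpha}\|\bar q^0-x^*\|^2\le\tfrac1{2\tau}(F(\bar y^0)-F(x^*))$; hence $\tau\Phi_0\le\tfrac32(F(\bar y^0)-F(x^*))$ and $2\alpha\Phi_0=O(L/\sigma)\|\bar x^0-x^*\|^2$. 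From $\mathbb{E}[\Phi_k]\le(1+\tau/2)^{-k}\Phi_0$ and $\Phi_k\ge\tfrac1\tau(F(\bar y^k)-F(x^*))$ we get $\mathbb{E}[F(\bar y^k)-F(x^*)]\le3(1+\tau/2)^{-k}(F(\bar y^0)-F(x^*))$ (the $3$ being a loose constant absorbing the $\bar q$-term and consensus weight); for the state iterate, $\bar x^k=\tau\bar q^{k-1}+(1-\tau)\bar y^{k-1}$ together with $\tfrac\sigma2\|\bar y^{k-1}-x^*\|^2\le F(\bar y^{k-1})-F(x^*)$ and the $\bar q$-bound gives the $128L/\sigma$ factor; and $\|\bar q^k-x^*\|^2\le2\alpha\Phi_k$ with $2\alpha=t_0\eta/\tau$ (or, alternatively, by iterating the $\bar q$-recursion (\ref{q_recur})), the extra $\tau^{-4}$ an artifact of the $\alpha\sim\tau^{-1}$ normalizations in the $\bar q$-row of $\mathbf{C}$ and ${d}^k$, yields the $1750L/(\sigma\tau^4)$ factor.

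\textbf{Main obstacle.} The crux is the two-way coupling: the driving term ${d}^k$ of the consensus recursion is built from exactly the quantities we are trying to control ($F(\bar y^i)-F(x^*)$, $\|\bar q^i-x^*\|^2$, $\|\bar{\mathcal{G}}^{i+1}\|^2$), so the function-value and consensus analyses cannot be separated --- they must go through one joint induction. Making it work requires simultaneously keeping the geometric ratio $\|\mathbf{C}\|_2(1+\tau/2)$ strictly below $1$, making every $O(\rho^2)$-scaled consensus contribution small enough to be absorbed into the leading terms of $\Phi_k$ (and to keep the $\|\bar{\mathcal{G}}^{k+1}\|^2$-coefficient nonpositive), and tracking all numeric constants so the final factors come out as $3$, $128L/\sigma$, and $1750L/(\sigma\tau^4)$; this bookkeeping, pinned down by the stated choices of $\eta$, $\tau$, and $\rho$, is the delicate part.
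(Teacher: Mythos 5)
Your Steps 1--2 track the paper's actual argument closely: the per-epoch inequality is obtained exactly as you describe, by applying Lemma~\ref{lemma_inner} once with $u=x^*$ and once with $u=\bar y^k$, splitting $\langle\bar{\mathcal G}^{k+1},\bar x^{k+1}-x^*\rangle$ through $\bar x^{k+1}-\bar q^k=\tfrac{1-\tau}{\tau}(\bar y^k-\bar x^{k+1})$, and absorbing the $-\tfrac{\sigma}{8}\|x^*-\bar y^{k+1}\|^2$ term into the strongly convex $\bar q$-step via Lemma~\ref{mirror_desc}; the consensus vector is controlled by unrolling ${z}_{out}^{k+1}=\sum_i\mathbf{C}^{k-i}{d}^i$ under the induction hypothesis, just as you propose. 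Two caveats on your packaging: the paper does not establish a strict one-step contraction of a joint potential containing $\langle\boldsymbol{\omega},{z}_{out}^k\rangle$ (the driving term ${d}^k$ also contains epoch-$(k{-}1)$ quantities, so such a potential would need lagged terms), and the accumulated $O(\rho^2)$ errors in the function-value recursion pick up a factor $k$ --- which is precisely why the admissible $\rho$ in the statement depends on $k$; your claim of a clean $(1+\tau/2)^{-1}$ contraction is stronger than what is needed or proved, and is not justified in the sketch.

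The genuine gap is in your route to the second bound, $\mathbb{E}\|\bar x^k-x^*\|^2\le\tfrac{128L}{\sigma}(1+\tau/2)^{-k}\|\bar x^0-x^*\|^2$. You propose to deduce it from $\bar x^k=\tau\bar q^{k-1}+(1-\tau)\bar y^{k-1}$ together with the $\bar q$-bound, but that gives at best $\|\bar x^k-x^*\|^2\lesssim 2\tau^2 C_q\,(1+\tau/2)^{-(k-1)}\|\bar x^0-x^*\|^2$ with $C_q=\tfrac{1750L}{\sigma\tau^4}$, i.e. a constant of order $L/(\sigma\tau^2)$, not the $\tau$-free $128L/\sigma$ claimed in the theorem. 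Worse, inside the joint induction this choice is not merely lossy but inconsistent: the $\bar q$-step (and the consensus driving terms) force $C_q\gtrsim C_x/\tau^3$, so your $C_x\gtrsim\tau^2C_q$ would imply $C_x\gtrsim C_x/\tau$, which no finite constants satisfy for $\tau<1$; the induction cannot be closed this way. The paper avoids this by eliminating $\bar q^k$ altogether: combining $\bar x^{k+1}=\tau\bar q^k+(1-\tau)\bar y^k$ with the closed form (\ref{q_recur}) of the $\bar q$-update expresses $\bar x^{k+1}$ as an explicit combination of $\bar x^k$, $\bar y^k$, $\bar y^{k-1}$ in which, for $\alpha=\tfrac{t_0\eta}{2\tau}=\tfrac{2\tau}{\sigma}$, the coefficient of $\bar x^k$ collapses to $\tfrac{1}{2(1+\tau/2)}$ and the $\bar y$-coefficients stay $O(1)$; this cancellation (plus $\tfrac{\sigma}{2}\|\bar y-x^*\|^2\le F(\bar y)-F(x^*)$ and $F(\bar y^0)-F(x^*)\le\tfrac{L}{2}\|\bar x^0-x^*\|^2$) is what produces $C_x=128L/\sigma$ and lets $C_q=1750L/(\sigma\tau^4)$ then be verified directly from (\ref{q_recur}) by Young's inequality. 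Without this elimination step, or an equivalent cancellation argument, your proposal does not prove the stated theorem.
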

We prove the theorem by induction.

{\bf Induction Hypothesis}: Assume that for any $i \leq k$, we have $F(\bar{y}^i) - F(x^*) \leq \frac{C_f}{\left(1+\frac{\tau}{2}\right)^i} \left(F(\bar{x}^0) - F(x^*)\right)$, $\|\bar{q}^i - x^*\|^2 \leq \frac{C_q}{\left(1+\frac{\tau}{2}\right)^i} \|\bar{x}^0 - x^*\|^2$ and $\|\bar{x}^i - x^*\|^2 \leq \frac{C_x}{\left(1+\frac{\tau}{2}\right)^i} \|\bar{x}^0 - x^*\|^2$, then it follows:

By induction, one has:
\begin{align*}
   \| {z}_{out}^{k} \|  \leq &  \sum_{i=0}^{k-1} \|\mathbf{C}\|_2^{k-i-1} \xi  \left({24\rho^2 \eta^2 \ell_1 \ell_2 m \tau^2} \|\bar{q}^i - x^*\|^2  + {24\rho^2 \eta^2 \ell_1 \ell_2 m \tau^2} \|\bar{q}^{i-1} - x^*\|^2 + \frac{48\rho^2 \eta^2 \ell_1 \ell_2 m (1 - \tau)^2}{\sigma} \left(F(\bar{y}^i) - F(x^*)\right)  \right.\\ 
   & \left. + \frac{48\rho^2 \eta^2 \ell_1 \ell_2 m (1 - \tau)^2}{\sigma} \left(F(\bar{y}^{i-1}) - F(x^*)\right) + 32 \ell_1\ell_2m\rho^2\eta^4 t_0^3  \|\bar{\mathcal{G}}^{i+1}\|^2 \right) \\
    \leq & \sum_{i=0}^{k-1} \|\mathbf{C}\|_2^{k-i -1} \xi \left({24\rho^2 \eta^2 \ell_1 \ell_2 m \tau^2}\frac{C_q}{\left(1+\frac{\tau}{2}\right)^i} \|\bar{x}^0 - x^*\|^2 + {24\rho^2 \eta^2 \ell_1 \ell_2 m \tau^2} \frac{C_q}{\left(1+\frac{\tau}{2}\right)^{i-1}} \|\bar{x}^0 - x^*\|^2 \right. \\
   & + \frac{48\rho^2 \eta^2 \ell_1 \ell_2 m (1 - \tau)^2}{\sigma}\frac{C_f}{\left(1+\frac{\tau}{2}\right)^i} \left(F(\bar{x}^0) - F(x^*)\right) + \frac{48\rho^2 \eta^2 \ell_1 \ell_2 m (1 - \tau)^2}{\sigma}\frac{C_f}{(1+\frac{\tau}{2})^{i-1}} \left(F(\bar{x}^0) - F(x^*)\right) \\
   & \left. + 32 \ell_1\ell_2m\rho^2\eta^2 t_0  \|\bar{x}^{i + 1} - \bar{y}^{i + 1}\|^2 \right)\\
      \leq & \sum_{i=0}^{k-1} \|\mathbf{C}\|_2^{k-i -1} \xi \left({24\rho^2 \eta^2 \ell_1 \ell_2 m \tau^2}\frac{C_q}{\left(1+\frac{\tau}{2}\right)^i} \|\bar{x}^0 - x^*\|^2 + {24\rho^2 \eta^2 \ell_1 \ell_2 m \tau^2} \frac{C_q}{\left(1+\frac{\tau}{2}\right)^{i-1}} \|\bar{x}^0 - x^*\|^2 \right. \\
   &+ \frac{48\rho^2 \eta^2 \ell_1 \ell_2 m (1 - \tau)^2}{\sigma}\frac{C_f}{\left(1+\frac{\tau}{2}\right)^i} \left(F(\bar{x}^0) - F(x^*)\right) + \frac{48\rho^2 \eta^2 \ell_1 \ell_2 m (1 - \tau)^2}{\sigma}\frac{C_f}{\left(1+\frac{\tau}{2}\right)^{i-1}} \left(F(\bar{x}^0) - F(x^*)\right) \\
   & \left. + 64 \ell_1\ell_2m\rho^2\eta^2 t_0 \frac{C_x}{\left(1+\frac{\tau}{2}\right)^{i+1}} \|\bar{x}^{0} - x ^ *\|^2 + \frac{128 \ell_1\ell_2m\rho^2\eta^2 t_0}{ \sigma} \frac{C_f}{\left(1+\frac{\tau}{2}\right)^{i+1}}\left( F(\bar{y}^{0}) - F(x ^ *)\right) \right)\\
   \leq  & {48\rho^2 \eta^2 \ell_1 \ell_2 m \tau^2}C_q \xi \frac{1}{\left[1 - \|\mathbf{C}\|_2 \left(1 + \frac{\tau}{2}\right)\right]\left(1+\frac{\tau}{2}\right)^{k-2}} \|\bar{x}^0 - x^*\|^2 \\
   & + \frac{96\rho^2 \eta^2 \ell_1 \ell_2 m C_f \xi (1 - \tau)^2}{\sigma}\frac{1}{\left[1 - \|\mathbf{C}\|_2 (1 + \frac{\tau}{2})\right]\left(1+\frac{\tau}{2}\right)^{k-2}}  \left(F(\bar{x}^0) - F(x^*)\right) \\
   & + {64 \ell_1\ell_2m\rho^2\eta^2 t_0   }C_x \xi \frac{1}{\left[1 - \|\mathbf{C}\|_2 \left(1 + \frac{\tau}{2}\right)\right]\left(1+\frac{\tau}{2}\right)^{k}} \|\bar{x}^0 - x^*\|^2 \\
   & +\frac{128 \ell_1\ell_2m\rho^2\eta^2 t_0 C_f \xi}{\sigma} \frac{1}{\left[1 - \|\mathbf{C}\|_2 (1 + \frac{\tau}{2})\right]\left(1+\frac{\tau}{2}\right)^{k}}  \left(F(\bar{x}^0) - F(x^*)\right).  
\end{align*}
We can upper bound the suboptimality of the function $F(\cdot)$ at the current iterate with consensus errors and suboptimality from the previous iteration:
\begin{lemma}
For any $\alpha_{k+1} > 0$, it follows that:
\begin{equation}
\begin{split}
   \mathbb{E} &\left[\frac{\alpha_{k+1}}{\tau_{k}}\left(F(\bar{y}^{k+1}) - F(x^*)\right)\right]  \\
     \leq  \mathbb{E} & \left[ \frac{\alpha_{k+1} (1 - \tau_{k})}{\tau_{k}} \left(1  +  \frac{48\rho^2 \ell_1 \ell_2 m J \eta^2 (1 - \tau_k)}{\sigma}\right) \left(F(\bar{y}^k) - F(x^*)\right) \right.\\
    &  + \frac{\alpha_{k+1}}{\tau_k}\frac{48\rho^2 \ell_1 \ell_2 m J \eta^2 (1 - \tau_{k-1})^2}{\sigma}\left(F(\bar{y}^{k-1}) - F(x^*)\right)  + \left (  \frac{\alpha_{k+1}^2}{2} -\frac{\alpha_{k+1}\eta t_0}{4\tau_k} \right) \|\bar{\mathcal{G}}^{k+1}\|^2 \\
    &  + \left(\frac{1}{2}   +  24 \alpha_{k+1}\rho^2 \ell_1 \ell_2 m \tau_k J \eta^2 \right)\|x^* - \bar{q}^k\|^2 - \frac{\left(1+ \frac{\sigma \alpha_{k+1}}{4}\right)}{2}\|x^* - \bar{q}^{k+1}\|^2 \\
    &  + \frac{\alpha_{k+1}}{\tau_k} {24\rho^2 \ell_1 \ell_2 m \tau_{k-1}^2 J \eta^2} \|\bar{q}^{k-1} - x^*\|^2  + \frac{\alpha_{k+1}}{\tau_k} 2 \rho^2 \tau_k^2 \left( J + \frac{1}{4 \eta m  } + 6 \rho^2 \ell_1 \ell_2 \eta^2 J\right) \|\mathbf{q}^k - \mathbf{1} \bar{q}^k\|^2 \\
    &  + \frac{\alpha_{k+1}}{\tau_k} 2\rho^2 (1 - \tau_{k})^2 \left( J + \frac{1}{4 \eta m  } + 6 \rho^2 \ell_1 \ell_2 \eta^2 J\right) \| \mathbf{y}^k - \mathbf{1} \bar{y}^k\|^2     + \frac{\alpha_{k+1}}{\tau_k} {6\rho^2 \ell_1 \ell_2 J \eta^2} \| \mathbf{x}^{k} - \mathbf{1}\bar{x}^{k}\|^2 \\
    & \left. + \frac{\alpha_{k+1}}{\tau_k} 2\rho^2  J \eta^2 \|\hat{\mathbf{s}}^{k} - \mathbf{1} \bar{\hat{s}}^{k}\|^2 \right] .
    \end{split}
    \label{out_recursive_form}
\end{equation}
\end{lemma}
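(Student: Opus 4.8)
The plan is to fuse the one–epoch SVRG bound (Lemma~\ref{lemma_inner}) with the mirror–descent inequality (Lemma~\ref{mirror_desc}) through the linear–coupling argument of \citet{allen2014linear,allen2018katyusha}, and then substitute the outer–loop consensus recursions (Lemma~\ref{consensus_error_outer_loop}). First I would instantiate Lemma~\ref{lemma_inner} with the outer–loop identifications $\bar{w}^0 = \bar{x}^{k+1}$, $\bar{w}^{T+1} = \bar{y}^{k+1}$, $\mathbf{w}^0 = \mathbf{x}^{k+1}$, and the inner loop's initial tracking variable $\mathbf{s}^0$ (which by the initialisation in Algo.~\ref{strongconvex_algo_2} satisfies $\|\mathbf{s}^0 - \mathbf{1}\bar{s}^0\|^2 \le \rho^2 \|\hat{\mathbf{s}}^{k+1} - \mathbf{1}\bar{\hat{s}}^{k+1}\|^2$). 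Writing $\bar{\mathcal{G}}^{k+1} = (\bar{x}^{k+1} - \bar{y}^{k+1})/(\eta t_0)$ turns the bound into $\mathbb{E}[F(\bar{y}^{k+1}) - F(u)] \le \mathbb{E}\big[-\tfrac{\eta t_0}{4}\|\bar{\mathcal{G}}^{k+1}\|^2 + \langle \bar{\mathcal{G}}^{k+1}, \bar{x}^{k+1} - u\rangle - \tfrac{\sigma}{8}\|u - \bar{y}^{k+1}\|^2 + (J + \tfrac{1}{4\eta m})\|\mathbf{x}^{k+1} - \mathbf{1}\bar{x}^{k+1}\|^2 + J\eta^2\|\mathbf{s}^0 - \mathbf{1}\bar{s}^0\|^2\big]$, valid for every $u$.

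I would apply this twice, with $u = x^*$ and $u = \bar{y}^k$, and take the $\tau_k$/$(1-\tau_k)$–weighted combination. Since $F(x^*) \le F(\bar{y}^k)$ by optimality, the left side becomes $\ge F(\bar{y}^{k+1}) - F(x^*) - (1-\tau_k)\bigl(F(\bar{y}^k) - F(x^*)\bigr)$; since $\texttt{FastMix}$ preserves averages, $\bar{x}^{k+1} = \tau_k\bar{q}^k + (1-\tau_k)\bar{y}^k$, so $\bar{x}^{k+1} - \tau_k x^* - (1-\tau_k)\bar{y}^k = \tau_k(\bar{q}^k - x^*)$ and the inner–product term collapses to $\tau_k\langle \bar{\mathcal{G}}^{k+1}, \bar{q}^k - x^*\rangle$; and the proximity terms reduce to $-\tfrac{\tau_k\sigma}{8}\|x^* - \bar{y}^{k+1}\|^2$ (dropping the nonpositive $(1-\tau_k)$–weighted $\|\bar{y}^k - \bar{y}^{k+1}\|^2$ piece). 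Next I would control $\langle \bar{\mathcal{G}}^{k+1}, \bar{q}^k - x^*\rangle$ via Lemma~\ref{mirror_desc}: because $\texttt{FastMix}$ again does not move the mean, $\bar{q}^{k+1}$ is exactly the minimiser of the averaged proximal objective $\tfrac{1}{2\alpha_{k+1}}\|q - \bar{q}^k\|^2 + \langle \bar{\mathcal{G}}^{k+1}, q\rangle + \tfrac{\sigma}{8}\|q - \bar{y}^{k+1}\|^2$, so Lemma~\ref{mirror_desc} with step $\alpha_{k+1}$ and the $\tfrac{\sigma}{4}$–strongly convex regulariser $\tfrac{\sigma}{8}\|q - \bar{y}^{k+1}\|^2$ gives $\langle \bar{\mathcal{G}}^{k+1}, \bar{q}^k - x^*\rangle \le \tfrac{\alpha_{k+1}}{2}\|\bar{\mathcal{G}}^{k+1}\|^2 + \tfrac{1}{2\alpha_{k+1}}\|x^* - \bar{q}^k\|^2 - \tfrac{1 + \sigma\alpha_{k+1}/4}{2\alpha_{k+1}}\|x^* - \bar{q}^{k+1}\|^2 + \tfrac{\sigma}{8}\|x^* - \bar{y}^{k+1}\|^2 - \tfrac{\sigma}{8}\|\bar{q}^{k+1} - \bar{y}^{k+1}\|^2$.

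Substituting this and multiplying the whole chain by $\alpha_{k+1}/\tau_k$, the two $\|x^* - \bar{y}^{k+1}\|^2$ terms cancel exactly — this is why the constant $1/8$ of Lemma~\ref{lemma_inner} is matched to the $\sigma/8$ in the definition of $\bar{q}^{k+1}$ — and dropping $-\tfrac{\tau_k\sigma}{8}\|\bar{q}^{k+1} - \bar{y}^{k+1}\|^2 \le 0$ yields the skeleton: the term $\bigl(\tfrac{\alpha_{k+1}^2}{2} - \tfrac{\alpha_{k+1}\eta t_0}{4\tau_k}\bigr)\|\bar{\mathcal{G}}^{k+1}\|^2$, the pair $\tfrac12\|x^* - \bar{q}^k\|^2 - \tfrac{1 + \sigma\alpha_{k+1}/4}{2}\|x^* - \bar{q}^{k+1}\|^2$, the term $\tfrac{\alpha_{k+1}(1-\tau_k)}{\tau_k}(F(\bar{y}^k) - F(x^*))$, and a carried–over consensus block $\tfrac{\alpha_{k+1}}{\tau_k}\bigl[(J + \tfrac{1}{4\eta m})\|\mathbf{x}^{k+1} - \mathbf{1}\bar{x}^{k+1}\|^2 + J\eta^2\|\mathbf{s}^0 - \mathbf{1}\bar{s}^0\|^2\bigr]$. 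Finally I would expand this block with Lemma~\ref{consensus_error_outer_loop}: $\|\mathbf{x}^{k+1} - \mathbf{1}\bar{x}^{k+1}\|^2 \le 2\rho^2\tau_k^2\|\mathbf{q}^k - \mathbf{1}\bar{q}^k\|^2 + 2\rho^2(1-\tau_k)^2\|\mathbf{y}^k - \mathbf{1}\bar{y}^k\|^2$, while $\|\mathbf{s}^0 - \mathbf{1}\bar{s}^0\|^2 \le \rho^2\|\hat{\mathbf{s}}^{k+1} - \mathbf{1}\bar{\hat{s}}^{k+1}\|^2$ feeds into the $\hat{\mathbf{s}}^{k+1}$–recursion, which is precisely what produces the additional $\|\bar{q}^k - x^*\|^2$, $\|\bar{q}^{k-1} - x^*\|^2$, $(F(\bar{y}^k) - F(x^*))$, $(F(\bar{y}^{k-1}) - F(x^*))$ contributions and the $\|\mathbf{q}^k - \mathbf{1}\bar{q}^k\|^2$, $\|\mathbf{y}^k - \mathbf{1}\bar{y}^k\|^2$, $\|\mathbf{x}^k - \mathbf{1}\bar{x}^k\|^2$, $\eta^2\|\hat{\mathbf{s}}^k - \mathbf{1}\bar{\hat{s}}^k\|^2$ terms in the claim; absorbing $\rho^4 \le \rho^2$ and regrouping the $J$, $\eta$, $\ell_1\ell_2$, $m$, $\tau_k$ factors lands on the stated right–hand side. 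The conceptual content (the coupling and the $\|\cdot - \bar{y}^{k+1}\|^2$ cancellation) is short; the main obstacle is the bookkeeping in this last substitution — keeping the $k$– and $(k-1)$–indexed pieces separate and tracking the powers of $\rho$ and $\eta$ so that the resulting recursion telescopes cleanly in the induction of Theorem~\ref{theorem_strong_convex}.
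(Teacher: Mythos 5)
Your proposal is correct and takes essentially the same route as the paper's proof: two applications of Lemma~\ref{lemma_inner} (at $u=x^*$ and $u=\bar{y}^k$) coupled through $\bar{x}^{k+1}=\tau_k\bar{q}^k+(1-\tau_k)\bar{y}^k$, then Lemma~\ref{mirror_desc} with $\hat{\eta}=\alpha_{k+1}$ and regulariser $\tfrac{\sigma}{8}\|\cdot-\bar{y}^{k+1}\|^2$ to get the $(1+\sigma\alpha_{k+1}/4)$ contraction and cancel the $\|x^*-\bar{y}^{k+1}\|^2$ terms, and finally the expansion of $\|\mathbf{x}^{k+1}-\mathbf{1}\bar{x}^{k+1}\|^2$ and $\|\hat{\mathbf{s}}^{k+1}-\mathbf{1}\bar{\hat{s}}^{k+1}\|^2$ via Lemma~\ref{consensus_error_outer_loop}. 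The only cosmetic difference is that you form the $\tau_k/(1-\tau_k)$-weighted combination up front, whereas the paper splits $\langle\bar{x}^{k+1}-x^*,\bar{\mathcal{G}}^{k+1}\rangle$ and invokes the coupling identity afterwards — algebraically the same argument.
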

\begin{proof}
 Recall that $\bar{\mathcal{G}}^{k + 1} = \frac{\bar{w}^0 - \bar{w}^{T+1} }{\eta t_0}$, from Lemma \ref{lemma_inner} we have: 
\begin{align*}
\small\begin{split}
        \mathbb{E} & \left[\alpha_{k+1}\left(F(\bar{y}^{k+1}) - F(x^*)\right)\right]  \\
     \leq \mathbb{E} &\left[-\frac{\alpha_{k+1}\eta t_0\|\bar{\mathcal{G}}^{k+1} \|^2}{4} + \alpha_{k+1}\langle \bar{x}^{k+1} - x^*, \bar{\mathcal{G}}^{k+1}\rangle  - \frac{ \alpha_{k+1}\sigma}{8}\| x^* - \bar{y}^{k+1}\|^2 \right.\\
    &  \left.   + \left(\alpha_{k+1}  J + \frac{ \alpha_{k+1}}{4 \eta m  }\right)\| \mathbf{x}^{k+1} - \mathbf{1}\bar{x}^{k+1}  \|^2 +  \alpha_{k+1} J \eta^2 \| \hat{\mathbf{s}}^{k+1} - \mathbf{1}\bar{\hat{s}}^{k+1}  \|^2 \right]\\
     \leq \mathbb{E} &\left[ \alpha_{k+1}\langle \bar{x}^{k+1} - \bar{q}^k, \bar{\mathcal{G}}^{k+1}\rangle -\frac{\alpha_{k+1}\eta t_0\|\bar{\mathcal{G}}^{k+1} \|^2}{4} + \alpha_{k+1}(\langle \bar{q}^k - x^*, \bar{\mathcal{G}}^{k+1}\rangle \right.\\
    & \left.  -  \frac{\sigma}{8}\| x^* - \bar{y}^{k+1}\|^2) + \left(\alpha_{k+1}  J + \frac{ \alpha_{k+1}}{4 \eta m  }\right)\| \mathbf{x}^{k+1} - \mathbf{1}\bar{x}^{k+1}  \|^2 +  \alpha_{k+1} J \eta^2 \| \hat{\mathbf{s}}^{k+1} - \mathbf{1}\bar{\hat{s}}^{k+1}  \|^2 \right]\\
     \leq \mathbb{E}&\left[ \frac{\alpha_{k+1} (1 - \tau_{k})}{\tau_{k}}\langle \bar{y}^k - \bar{x}^{k+1}, \bar{\mathcal{G}}^{k+1}\rangle -\frac{\alpha_{k+1}\eta t_0\|\bar{\mathcal{G}}^{k+1} \|^2}{4} + \alpha_{k+1}\left(\langle \bar{q}^k - x^*, \bar{\mathcal{G}}^{k+1}\rangle -  \frac{\sigma}{8}\| x^* - \bar{y}^{k+1}\|^2\right) \right.\\
    &  \left.  + \left(\alpha_{k+1}  J + \frac{ \alpha_{k+1}}{4 \eta m  }\right)\| \mathbf{x}^{k+1} - \mathbf{1}\bar{x}^{k+1}  \|^2 +  \alpha_{k+1} J \eta^2 \| \hat{\mathbf{s}}^{k+1} - \mathbf{1}\bar{\hat{s}}^{k+1}  \|^2 \right]\\
      \leq \mathbb{E}&\left[ \frac{\alpha_{k+1} (1 - \tau_{k})}{\tau_{k}}\langle \bar{y}^k - \bar{x}^{k+1}, \bar{\mathcal{G}}^{k+1}\rangle -\frac{\alpha_{k+1}\eta t_0\|\bar{\mathcal{G}}^{k+1} \|^2}{4} +  \frac{(\alpha_{k+1})^2}{2} \|\bar{\mathcal{G}}^{k+1}\|^2 \right.\\
    & \left. + \frac{\|x^* - \bar{q}^k\|^2}{2} - \frac{\left(1+ \frac{\sigma \alpha_{k+1}}{4}\right)\|x^* - \bar{q}^{k+1}\|^2}{2} + \left(\alpha_{k+1}  J + \frac{ \alpha_{k+1}}{ \eta m  }\right)\| \mathbf{x}^{k+1} - \mathbf{1}\bar{x}^{k+1}  \|^2  +  \alpha_{k+1} J \eta^2 \| \hat{\mathbf{s}}^{k+1} - \mathbf{1}\bar{\hat{s}}^{k+1}  \|^2 \right],
\end{split}
\end{align*}
where the last inequality is due to Lemma \ref{mirror_desc} with $\hat{\eta}=\alpha_{k+1}$, $\hat{w} = \bar{q}^{k+1}$, $\tilde{w} = \bar{q}^k$ and $\tilde{\psi}(u) = \frac{\sigma}{8}\norm{u - \bar{y}^{k+1}}^2$ with $u = x^*$. 

By Lemma \ref{lemma_inner} with $u = \bar{y}^{k}$ and recall that $\bar{w}^0 = \bar{x}^{k+1}$ and $\bar{w}^T = \bar{y}^{k+1}$, we have:
\begin{align*}
        \mathbb{E}&\left[\alpha_{k+1}\left(F(\bar{y}^{k+1}) - F(x^*)\right)\right]  \\
      \leq \mathbb{E}&\left[ \frac{\alpha_{k+1} (1 - \tau_{k})}{\tau_{k}}\left(F(\bar{y}^k) - F(\bar{y}^{k+1}) - \frac{t_0 \eta}{4} \|\bar{\mathcal{G}}^{k+1}\|^2 + \left( J + \frac{1}{4 \eta m   }\right)\| \mathbf{x}^{k+1} - \mathbf{1}\bar{x}^{k+1} \|^2  \right. \right.\\
    &  \left. +  J \eta^2 \| \hat{\mathbf{s}}^{k+1} - \mathbf{1}\bar{\hat{s}}^{k+1}  \|^2 \right) -\frac{\alpha_{k+1}\eta t_0\|\bar{\mathcal{G}}^{k+1} \|^2}{4} +  \frac{\alpha_{k+1}^2}{2} \|\bar{\mathcal{G}}^{k+1}\|^2\\
    &  + \frac{\|x^* - \bar{q}^k\|^2}{2} - \frac{\left(1+ \frac{\sigma \alpha_{k+1}}{4}\right)\|x^* - \bar{q}^{k+1}\|^2}{2} \\
    &  \left. + \left(\alpha_{k+1}  J + \frac{ \alpha_{k+1}}{4 \eta m }\right)\| \mathbf{x}^{k+1} - \mathbf{1}\bar{x}^{k+1}  \|^2 +  \alpha_{k+1} J \eta^2 \| \hat{\mathbf{s}}^{k+1} - \mathbf{1}\bar{\hat{s}}^{k+1}  \|^2 \right]\\
     \leq \mathbb{E}&\left[ \frac{\alpha_{k+1} (1 - \tau_{k})}{\tau_{k}}\left(F(\bar{y}^k) - F(\bar{y}^{k+1}) \right) -\frac{\alpha_{k+1}\eta t_0\|\bar{\mathcal{G}}^{k+1} \|^2}{4\tau_k} +  \frac{\alpha_{k+1}^2}{2} \|\bar{\mathcal{G}}^{k+1}\|^2 \right.\\
    &  + \frac{\|x^* - \bar{q}^k\|^2}{2} - \frac{\left(1+ \frac{\sigma \alpha_{k+1}}{4}\right)\|x^* - \bar{q}^{k+1}\|^2}{2} \\
    &  \left. + \frac{\alpha_{k+1}}{\tau_k}\left( J + \frac{1}{4 \eta m  }\right)\| \mathbf{x}^{k+1} - \mathbf{1}\bar{x}^{k+1}  \|^2 +  \frac{\alpha_{k+1}}{\tau_k} J \eta^2 \| \hat{\mathbf{s}}^{k+1} - \mathbf{1}\bar{\hat{s}}^{k+1}  \|^2 \right] . 
\end{align*}
By expanding $\| \mathbf{x}^{k+1} - \mathbf{1}\bar{x}^{k+1}  \|^2$ and $\| \hat{\mathbf{s}}^{k+1} - \mathbf{1}\bar{\hat{s}}^{k+1}  \|^2$, we have
\begin{align*}
     \mathbb{E}&\left[\alpha_{k+1}\left(F(\bar{y}^{k+1}) - F(x^*)\right)\right]  \\
      \leq \mathbb{E}&\left[ \frac{\alpha_{k+1} (1 - \tau_{k})}{\tau_{k}}\left(F(\bar{y}^k) - F(\bar{y}^{k+1}) \right) -\frac{\alpha_{k+1}\eta t_0\|\bar{\mathcal{G}}^{k+1} \|^2}{4\tau_k} +  \frac{\alpha_{k+1}^2}{2} \|\bar{\mathcal{G}}^{k+1}\|^2 \right. \\
    &  + \frac{\|x^* - \bar{q}^k\|^2}{2} - \frac{\left(1+ \frac{\sigma \alpha_{k+1}}{4}\right)\|x^* - \bar{q}^{k+1}\|^2}{2} \\
    &  + \frac{\alpha_{k+1}}{\tau_k}\left( J + \frac{1}{4 \eta m   }\right)\left(2 \rho^2 \tau_k^2 \|\mathbf{q}^k - \mathbf{1} \bar{q}^k\|^2 + 2\rho^2 (1 - \tau_{k})^2 \| \mathbf{y}^k - \mathbf{1} \bar{y}^k\|^2\right) \\
    &  +  \frac{\alpha_{k+1}}{\tau_k} J \eta^2 \left( 2\rho^2 \|\hat{\mathbf{s}}^{k} - \mathbf{1} \bar{\hat{s}}^{k}\|^2 +   {12\rho^4 \ell_1 \ell_2 \tau_k^2} \|\mathbf{q}^k - \mathbf{1} \bar{q}^k\|^2 + {12\rho^4 \ell_1 \ell_2 (1-\tau_k)^2} \| \mathbf{y}^k - \mathbf{1} \bar{y}^k\|^2 \right. \\
     &  + {6\rho^2 \ell_1 \ell_2} \| \mathbf{x}^{k} - \mathbf{1}\bar{x}^{k}\|^2   +  24\rho^2 \ell_1 \ell_2 m \tau_k^2 \|\bar{q}^k - x^*\|^2 + 
    \frac{48\rho^2 \ell_1 \ell_2 m (1 - \tau_k)^2}{\sigma}\left(F(\bar{y}^k) - F(x^*)\right) \\
    &  \left. \left. + {24\rho^2 \ell_1 \ell_2 m \tau_{k-1}^2} \|\bar{q}^{k-1} - x^*\|^2      +  \frac{48\rho^2 \ell_1 \ell_2 m (1 - \tau_{k-1})^2}{\sigma}\left(F(\bar{y}^{k-1}) - F(x^*)\right) \right) \right]\\
     = \mathbb{E}&\left[ \frac{\alpha_{k+1} (1 - \tau_{k})}{\tau_{k}}\left(F(\bar{y}^k) - F(\bar{y}^{k+1}) \right) -\frac{\alpha_{k+1}\eta t_0\|\bar{\mathcal{G}}^{k+1} \|^2}{4\tau_k} +  \frac{\alpha_{k+1}^2}{2} \|\bar{\mathcal{G}}^{k+1}\|^2 \right. \\
    &  + \frac{\|x^* - \bar{q}^k\|^2}{2} - \frac{\left(1+ \frac{\sigma \alpha_{k+1}}{4}\right)\|x^* - \bar{q}^{k+1}\|^2}{2} 
     + \frac{\alpha_{k+1}}{\tau_k} 2 \rho^2 \tau_k^2 \left( J + \frac{1}{4 \eta m } + 6 \rho^2 \ell_1 \ell_2 \eta^2 J\right) \|\mathbf{q}^k - \mathbf{1} \bar{q}^k\|^2 \\
    &  + \frac{\alpha_{k+1}}{\tau_k} 2\rho^2 (1 - \tau_{k})^2 \left( J + \frac{1}{4 \eta m  } + 6 \rho^2 \ell_1 \ell_2 \eta^2 J \right) \| \mathbf{y}^k - \mathbf{1} \bar{y}^k\|^2   \\
    &  + \frac{\alpha_{k+1}}{\tau_k} {6\rho^2 \ell_1 \ell_2 J \eta^2} \| \mathbf{x}^{k} - \mathbf{1}\bar{x}^{k}\|^2 + \frac{\alpha_{k+1}}{\tau_k} 2\rho^2  J \eta^2 \|\hat{\mathbf{s}}^{k} - \mathbf{1} \bar{\hat{s}}^{k}\|^2 \\ 
    &  + \frac{\alpha_{k+1}}{\tau_k} 24\rho^2 \ell_1 \ell_2 m \tau_k^2 J \eta^2 \|\bar{q}^k - x^*\|^2 + \frac{\alpha_{k+1}}{\tau_k} \frac{48\rho^2 \ell_1 \ell_2 m J \eta^2 (1 - \tau_k)^2}{\sigma}\left(F(\bar{y}^k) - F(x^*)\right) \\
    &  \left.\left. + \frac{\alpha_{k+1}}{\tau_k} {24\rho^2 \ell_1 \ell_2 m \tau_{k-1}^2 J \eta^2} \|\bar{q}^{k-1} - x^*\|^2      +  \frac{\alpha_{k+1}}{\tau_k}\frac{48\rho^2 \ell_1 \ell_2 m J \eta^2 (1 - \tau_{k-1})^2}{\sigma}\left(F(\bar{y}^{k-1}) - F(x^*)\right) \right) \right] .
\end{align*}
After rearranging the terms, we get the following form:
\begin{align*}
      \mathbb{E} &\left[\frac{\alpha_{k+1}}{\tau_{k}}\left(F(\bar{y}^{k+1}) - F(x^*)\right)\right]  \\
     \leq  \mathbb{E} & \left[ \frac{\alpha_{k+1} (1 - \tau_{k})}{\tau_{k}} \left(1  +  \frac{48\rho^2 \ell_1 \ell_2 m J \eta^2 (1 - \tau_k)}{\sigma}\right) \left(F(\bar{y}^k) - F(x^*)\right) \right.\\
    &  + \frac{\alpha_{k+1}}{\tau_k}\frac{48\rho^2 \ell_1 \ell_2 m J \eta^2 (1 - \tau_{k-1})^2}{\sigma}\left(F(\bar{y}^{k-1}) - F(x^*)\right)  + \left (  \frac{\alpha_{k+1}^2}{2} -\frac{\alpha_{k+1}\eta t_0}{4\tau_k} \right) \|\bar{\mathcal{G}}^{k+1}\|^2 \\
    &  + \left(\frac{1}{2}   +  24 \alpha_{k+1}\rho^2 \ell_1 \ell_2 m \tau_k J \eta^2 \right)\|x^* - \bar{q}^k\|^2 - \frac{\left(1+ \frac{\sigma \alpha_{k+1}}{4}\right)}{2}\|x^* - \bar{q}^{k+1}\|^2 \\
    &  + \frac{\alpha_{k+1}}{\tau_k} {24\rho^2 \ell_1 \ell_2 m \tau_{k-1}^2 J \eta^2} \|\bar{q}^{k-1} - x^*\|^2  + \frac{\alpha_{k+1}}{\tau_k} 2 \rho^2 \tau_k^2 \left( J + \frac{1}{4 \eta m  } + 6 \rho^2 \ell_1 \ell_2 \eta^2 J\right) \|\mathbf{q}^k - \mathbf{1} \bar{q}^k\|^2 \\
    &  + \frac{\alpha_{k+1}}{\tau_k} 2\rho^2 (1 - \tau_{k})^2 \left( J + \frac{1}{4 \eta m  } + 6 \rho^2 \ell_1 \ell_2 \eta^2 J\right) \| \mathbf{y}^k - \mathbf{1} \bar{y}^k\|^2     + \frac{\alpha_{k+1}}{\tau_k} {6\rho^2 \ell_1 \ell_2 J \eta^2} \| \mathbf{x}^{k} - \mathbf{1}\bar{x}^{k}\|^2 \\
    & \left.+ \frac{\alpha_{k+1}}{\tau_k} 2\rho^2  J \eta^2 \|\hat{\mathbf{s}}^{k} - \mathbf{1} \bar{\hat{s}}^{k}\|^2 \right] .
\end{align*}
\end{proof}
Now we show that the consensus errors on the right-hand side of inequality (\ref{out_recursive_form}) can be simplified with the variable $z_{out}^{k}$. 
\begin{lemma} Recall the definition of $z_{out}^{k+1}$ as 
\begin{equation*}
{z}_{out}^{k+1} = \begin{pmatrix}
           \|\mathbf{x}^{k+1} - \mathbf{1}\bar{x}^{k+1}\|^2\\
           \|\mathbf{y}^{k+1} - \mathbf{1}\bar{y}^{k+1}\|^2\\
           \|\mathbf{q}^{k+1} - \mathbf{1}\bar{q}^{k+1}\|^2\\
           \eta^2 \|\hat{\mathbf{s}}^{k+1} - \mathbf{1}\bar{\hat{s}}^{k+1}\|^2
    \end{pmatrix},
\end{equation*}
we can rewrite (\ref{out_recursive_form}) in term of $z_{out}^k$ as: 
\begin{equation}
\begin{split}
       \mathbb{E} &\left[\frac{\alpha}{\tau}\left(F(\bar{y}^{k+1}) - F(u)\right)\right]  \\
     \leq \mathbb{E}&\left[ \frac{\alpha (1 - \tau)}{\tau} \left(1  +  \frac{3\rho^2  m J  b (1 - \tau)}{4 \sigma t_0}\right)\left(F(\bar{y}^k) - F(x^*)\right)  
     + \frac{\alpha}{\tau}\frac{3 \rho^2   m J b (1 - \tau)^2}{4 \sigma t_0}\left(F(\bar{y}^{k-1}) - F(x^*)\right) \right. \\
     & + \left (  \frac{\alpha^2}{2} -\frac{\alpha\eta t_0}{4\tau} \right) \|\bar{\mathcal{G}}^{k+1}\|^2 
     + \left(\frac{1}{2}   +  \frac{3 \alpha\rho^2  m b \tau J  }{ 8 t_0}\right)\|x^* - \bar{q}^k\|^2 - \frac{\left(1+ \frac{\sigma \alpha}{4}\right)}{2}\|x^* - \bar{q}^{k+1}\|^2 \\
     & \left. +  \frac{3 \alpha \rho^2  m  J b \tau}{8 t_0} \|\bar{q}^{k-1} - x^*\|^2  + \frac{ \alpha \rho^2 J b}{\tau}   \|\mathbf{z}_{out}^k\|^2 \right]. 
\end{split}
 \label{outer_recur}
\end{equation}
\end{lemma}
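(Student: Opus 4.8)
The plan is to derive (\ref{outer_recur}) from (\ref{out_recursive_form}) by two elementary simplifications, each driven by one of the hyperparameter constraints. First, since $f$ is strongly convex, freeze the momentum/step sequences, $\tau_k \equiv \tau$ and $\alpha_{k+1}\equiv\alpha$, so that every occurrence of $\alpha_{k+1},\tau_k$ in (\ref{out_recursive_form}) becomes $\alpha,\tau$; the coefficient $\frac{\alpha^2}{2}-\frac{\alpha\eta t_0}{4\tau}$ of $\|\bar{\mathcal{G}}^{k+1}\|^2$ and the two leading quadratic coefficients $\frac12$ (on $\|x^*-\bar q^k\|^2$) and $\frac{1+\sigma\alpha/4}{2}$ (on $\|x^*-\bar q^{k+1}\|^2$) then pass through verbatim. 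Second, absorb the product $\ell_1\ell_2\eta^2$ wherever it appears: the bound $\eta\le\frac18\sqrt{b/(\ell_1\ell_2 t_0)}$ gives $\ell_1\ell_2\eta^2\le b/(64 t_0)$, which turns $\frac{48\rho^2\ell_1\ell_2 mJ\eta^2(1-\tau_k)}{\sigma}$ into $\frac{3\rho^2 mJb(1-\tau)}{4\sigma t_0}$ on $F(\bar y^k)-F(x^*)$, turns $\frac{48\rho^2\ell_1\ell_2 mJ\eta^2(1-\tau_{k-1})^2}{\sigma}$ into $\frac{3\rho^2 mJb(1-\tau)^2}{4\sigma t_0}$ on $F(\bar y^{k-1})-F(x^*)$, and turns the $24\rho^2\ell_1\ell_2 m\tau_k^2 J\eta^2$ (resp.\ $\tau_{k-1}^2$) coefficients of $\|\bar q^k-x^*\|^2$ (resp.\ $\|\bar q^{k-1}-x^*\|^2$) into $\frac{3\alpha\rho^2 m\tau Jb}{8 t_0}$, exactly as written in (\ref{outer_recur}).

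The remaining work is to collapse the four consensus-error contributions of (\ref{out_recursive_form}) — on $\|\mathbf{q}^k-\mathbf{1}\bar q^k\|^2$, $\|\mathbf{y}^k-\mathbf{1}\bar y^k\|^2$, $\|\mathbf{x}^k-\mathbf{1}\bar x^k\|^2$, and $\eta^2\|\hat{\mathbf{s}}^k-\mathbf{1}\bar{\hat{s}}^k\|^2$ — into the single term $\frac{\alpha\rho^2 Jb}{\tau}\|\mathbf{z}_{out}^k\|^2$, where $\|\mathbf{z}_{out}^k\|^2$ is read as $\|\mathbf{x}^k-\mathbf{1}\bar x^k\|^2+\|\mathbf{y}^k-\mathbf{1}\bar y^k\|^2+\|\mathbf{q}^k-\mathbf{1}\bar q^k\|^2+\eta^2\|\hat{\mathbf{s}}^k-\mathbf{1}\bar{\hat{s}}^k\|^2$. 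For this I would use three facts: (i) from the definition of $J$ one has $J\ge\frac{1+\eta}{2m\eta}\ge\frac{1}{2m\eta}$, hence $\frac{1}{4\eta m}\le\frac J2$; (ii) from $\rho^2\le\frac{1}{8bm\eta Jt_0}$ together with $\ell_1\ell_2\eta^2\le\frac{b}{64t_0}$ and $t_0\ge1$, the quantity $6\rho^2\ell_1\ell_2\eta^2 J$ is a negligible fraction of $J$, so each bracket $J+\frac1{4\eta m}+6\rho^2\ell_1\ell_2\eta^2 J\le 2J$; and (iii) $\tau\le\frac12$ and $b\ge1$, so $\tau^2\le\frac14$ and $(1-\tau)^2\le1$. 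Combining these, the $\|\mathbf{x}^k\|^2$-coefficient $\frac{\alpha}{\tau}6\rho^2\ell_1\ell_2 J\eta^2\le\frac{\alpha}{\tau}\frac{6\rho^2 Jb}{64t_0}$ already carries a factor $b$; the $\hat{\mathbf{s}}$-coefficient is $\frac{\alpha}{\tau}2\rho^2 J$; and the $\|\mathbf{q}^k\|^2$ and $\|\mathbf{y}^k\|^2$ coefficients are $\le\frac{\alpha}{\tau}4\rho^2 J$ by the $2J$-bound. Each is thus dominated by the common value $\frac{\alpha\rho^2 Jb}{\tau}$, and overbounding $\sum_j c_j a_j\le(\max_j c_j)\sum_j a_j$ gives precisely $\frac{\alpha\rho^2 Jb}{\tau}\|\mathbf{z}_{out}^k\|^2$, completing (\ref{outer_recur}).

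There is no conceptual difficulty; the only obstacle is bookkeeping. One must invoke the correct constraint for each term — the $\eta$-bound for every $\ell_1\ell_2\eta^2$ factor, and the $\rho$-bound together with the estimate $J\ge1/(2m\eta)$ for the $\rho^2\ell_1\ell_2\eta^2 J$ and $1/(4\eta m)$ pieces inside the consensus brackets — and track the absolute constants ($24/64=3/8$, $48/64=3/4$, etc.) so that the coefficients line up with (\ref{outer_recur}). It is also worth stating explicitly the identification of $\|\mathbf{z}_{out}^k\|^2$ with the sum of the four component squared norms, so that "collecting" the four consensus terms under a single coefficient is literally the trivial inequality $\sum_j c_j a_j\le(\max_j c_j)\sum_j a_j$.
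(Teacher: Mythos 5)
Your proposal follows the paper's own proof essentially verbatim: fix $\tau_k\equiv\tau$ and $\alpha_{k+1}\equiv\alpha$, use $\ell_1\ell_2\eta^2\le b/(64t_0)$ and $\tfrac{1}{4\eta m}\le J$ (from the definition of $J$) to rewrite the scalar coefficients (e.g.\ $48/64=3/4$, $24/64=3/8$), and collapse the four consensus-error terms of (\ref{out_recursive_form}) into a single multiple of $\|\mathbf{z}_{out}^k\|^2$, exactly as the paper does. The only caveat --- shared with the paper, whose own chain likewise over-claims the last constant --- is that your max-coefficient step (needing $4\rho^2 J\le\rho^2 Jb$ and $2\rho^2 J\le\rho^2 Jb$) literally requires $b\ge 4$, so the stated coefficient $\frac{\alpha\rho^2 Jb}{\tau}$ is attained only up to a small absolute constant for very small batch sizes; this is a bookkeeping looseness, not a difference in method.
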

\begin{proof}
Recall that in the strongly convex case, we fix $\tau_k = \tau$ and $\alpha_k = \alpha$ for $\forall k$. Then  we have 
\begin{align*}
     \mathbb{E} &\left[\frac{\alpha}{\tau}\left(F(\bar{y}^{k+1}) - F(x^*)\right)\right]  \\
      \leq \mathbb{E}&\left[ \frac{\alpha (1 - \tau)}{\tau} \left(1  +  \frac{48\rho^2 \ell_1 \ell_2 m J \eta^2 (1 - \tau)}{\sigma}\right)\left(F(\bar{y}^k) - F(x^*)\right) \right. \\
     & + \frac{\alpha}{\tau}\frac{48\rho^2 \ell_1 \ell_2 m J \eta^2 (1 - \tau)^2}{\sigma}\left(F(\bar{y}^{k-1}) - F(x^*)\right)  + \left (  \frac{\alpha^2}{2} -\frac{\alpha\eta t_0}{4\tau} \right) \|\bar{\mathcal{G}}^{k+1}\|^2 \\
    &  + \left(\frac{1}{2}   +  24 \alpha\rho^2 \ell_1 \ell_2 m \tau J \eta^2\right)\|x^* - \bar{q}^k\|^2 - \frac{\left(1+ \frac{\sigma \alpha}{4}\right)}{2}\|x^* - \bar{q}^{k+1}\|^2 \\
    &  + \frac{\alpha}{\tau} {24\rho^2 \ell_1 \ell_2 m \tau^2 J \eta^2} \|\bar{q}^{k-1} - x^*\|^2 \\
    &  + \frac{\alpha}{\tau} 2 \rho^2 \tau^2 \left( J + \frac{1}{4 \eta m  } + 6 \rho^2 \ell_1 \ell_2 \eta^2 J\right) \|\mathbf{q}^k - \mathbf{1} \bar{q}^k\|^2 \\
    &  + \frac{\alpha}{\tau} 2\rho^2 (1 - \tau)^2 \left( J + \frac{1}{4 \eta m  } + 6 \rho^2 \ell_1 \ell_2 \eta^2 J\right) \| \mathbf{y}^k - \mathbf{1} \bar{y}^k\|^2   \\
    &  \left. + \frac{\alpha}{\tau} {6\rho^2 \ell_1 \ell_2 J \eta^2} \| \mathbf{x}^{k} - \mathbf{1}\bar{x}^{k}\|^2 + \frac{\alpha}{\tau} 2\rho^2  J \eta^2 \|\hat{\mathbf{s}}^{k} - \mathbf{1} \bar{\hat{s}}^{k}\|^2 \right].
\end{align*}
One can show that:
\begin{align*}
    &   2  \tau^2 \left( J + \frac{1}{4 \eta m  } + 6 \rho^2 \ell_1 \ell_2 \eta^2 J\right) \|\mathbf{q}^k - \mathbf{1} \bar{q}^k\|^2 \\
    &  +  2\rho^2 (1 - \tau)^2 \left( J + \frac{1}{4 \eta m  } + 6 \rho^2 \ell_1 \ell_2 \eta^2 J\right) \| \mathbf{y}^k - \mathbf{1} \bar{y}^k\|^2   \\
    &  \left. +  6 \ell_1 \ell_2 J \eta^2 \| \mathbf{x}^{k} - \mathbf{1}\bar{x}^{k}\|^2 +  2\rho^2  J \eta^2 \|\hat{\mathbf{s}}^{k} - \mathbf{1} \bar{\hat{s}}^{k}\|^2 \right]\\
    \leq &  2   \left( 2J + \frac{1}{4 \eta m  } + 6  \ell_1 \ell_2 \eta^2 J + 3 \eta^2 \ell_1 \ell_2 J \right) \|\mathbf{z}_{out}^k\|^2\\
    \leq & 2    \left( 2J + \frac{1}{4 \eta m  }  + \frac{ J b}{16 t_0}\left(1 + 2\rho^2\right) \right)\|\mathbf{z}_{out}^k\|^2 \\
    \leq & 2      \frac{ J b}{16 t_0}(1 + 2\rho^2 + 48 t_0) \|\mathbf{z}_{out}^k\|^2 \\
    \leq & {   J b}   \|\mathbf{z}_{out}^k\|^2.
\end{align*}
Note that the third inequality follows from $\frac{1}{4\eta m }  \leq J$ and $b \geq 1$ is the batch size.
Combining the above two inequalities, we have:
\begin{align*}
  \mathbb{E} &\left[\frac{\alpha}{\tau}\left(F(\bar{y}^{k+1}) - F(x^*)\right)\right]  \\
       \leq \mathbb{E}&\left[ \frac{\alpha (1 - \tau)}{\tau} \left(1  +  \frac{3\rho^2  m J  b (1 - \tau)}{4 \sigma t_0}\right)\left(F(\bar{y}^k) - F(x^*)\right) \right.\\
    &  + \frac{\alpha}{\tau}\frac{3 \rho^2   m J b (1 - \tau)^2}{4 \sigma t_0}\left(F(\bar{y}^{k-1}) - F(x^*)\right)  + \left (  \frac{\alpha^2}{2} -\frac{\alpha\eta t_0}{4\tau} \right) \|\bar{\mathcal{G}}^{k+1}\|^2 \\
    &  + \left(\frac{1}{2}   +  \frac{3 \alpha\rho^2  m b \tau J  }{ 8 t_0}\right)\|x^* - \bar{q}^k\|^2 - \frac{1+ \frac{\sigma \alpha}{4}}{2}\|x^* - \bar{q}^{k+1}\|^2 \\
    &  \left. +  \frac{3 \alpha \rho^2  m  J b \tau}{8 t_0} \|\bar{q}^{k-1} - x^*\|^2  + \frac{ \alpha \rho^2 J b}{\tau}   \|\mathbf{z}_{out}^k\|^2 \right]. 
\end{align*}
\end{proof} 
Now we show the main convergence results in Theorem \ref{theorem_strong_convex}: 
\begin{proof} 
Recall that we use induction for the proof of our main theorem. Assume for any $i \leq k$, we have
\begin{align*}
        & F(\bar{y}^i) - F(x^*) \leq \frac{C_f}{(1+\frac{\tau}{2})^i} \left(F(\bar{x}^0) - F(x^*)\right) ,\\
    & \|\bar{q}^i - x^*\|^2 \leq \frac{C_q}{(1+\frac{\tau}{2})^i} \|\bar{x}^0 - x^*\|^2 ,\\
    & \|\bar{x}^i - x^*\|^2 \leq \frac{C_x}{(1+\frac{\tau}{2})^i} \|\bar{x}^0 - x^*\|^2 . 
\end{align*}
Let {$\alpha = \frac{t_0 \eta}{2\tau} = \frac{2\tau}{\sigma}$},                     
recall the definition of  $\xi = \norm{
\begin{pmatrix}
      0\\
       1\\
       \frac{3 \rho^2   (\frac{1}{t_0\eta} + \frac{\sigma}{4})^2}{(\frac{1}{\alpha} + \frac{\sigma}{4})^2} \\
       1\\
    \end{pmatrix}
}$ we have
\begin{align*}
    \xi & \leq 2 + \frac{3 \rho^2   \left(\frac{1}{t_0\eta} + \frac{\sigma}{4}\right)^2}{\left(\frac{1}{\alpha} + \frac{\sigma}{4}\right)^2}  \leq 2 + \frac{3 \rho^2   \left(\frac{1}{t_0\eta} + \frac{\sigma}{4}\right)^2}{(\frac{\sigma}{4})^2} \\
    & \leq  2 + 3 \rho^2   \left(\frac{4}{t_0\eta \sigma} + 1\right)^2 \leq  2 + 3 \rho^2   \left(\frac{1}{\tau^2} + 1\right)^2 \\
    &\leq  2 + 3 \rho^2   \left(\frac{2}{\tau^2}\right)^2 \leq 2 + 12 \frac{\rho^2}{\tau^4}  \leq 3.
\end{align*}

Recall the recursive formula of $\mathbf{z}_{out}$ in Eq. (\ref{z_out_recur}), we can bound the norm of $C$ by:
\begin{align*}
       \|\mathbf{C}\|_2\leq & \rho^2\left[ 11 + 32\rho^2\eta^2\ell_1\ell_2 t_0 +  12 (1 + \rho^2)  \eta^2 \ell_1 \ell_2 \right. \\
     & \left. + \frac{6 \rho^2}{\left(\frac{1}{\alpha} + \frac{\sigma }{4}\right)^2 }  \left(3 + 3 \eta^2 \ell_1\ell_2  + 16\rho^2\eta^2\ell_1\ell_2 t_0  \right)\left(\frac{1}{t_0\eta} + \frac{\sigma}{4}\right)^2  \right] \\
     \leq & \rho^2\left( 11 + 32\rho^2\eta^2\ell_1\ell_2 t_0 +  12 (1 + \rho^2)  \eta^2 \ell_1 \ell_2   + 6 + 6 \eta^2 \ell_1\ell_2  + 32\rho^2\eta^2\ell_1\ell_2 t_0 \right) \\
     \leq & \rho^2(17 + 64\rho^2\eta^2\ell_1\ell_2 t_0 + 18 \eta^2 \ell_1\ell_2 + 12 \rho^2 \eta^2 \ell_1\ell_2) \\
     \leq & \rho^2\left(17 + \rho^2 b + \frac{b}{2t_0} +   \frac{\rho^2 b}{4t_0}\right) \leq \rho^2\left(18 + \frac{b}{64} + \frac{b}{2 t_0}\right) \leq \rho^2(18 + b) \leq \frac{1}{2}.
\end{align*}
Recall that we can bound $\|\mathbf{z}_{out}^k\|^2$ by:
\begin{align*}
    \|\mathbf{z}_{out}^k\|^2  \leq&  {48\rho^2 \eta^2 \ell_1 \ell_2 m \tau^2}C_q \xi\frac{1}{\left[1 - \|C\|_2 \left(1 + \frac{\tau}{2}\right)\right]\left(1+\frac{\tau}{2}\right)^{k-2}} \|\bar{x}^0 - x^*\|^2 \\
    & + \frac{96\rho^2 \eta^2 \ell_1 \ell_2 m \xi C_f (1 - \tau)^2}{\sigma}  \frac{1}{\left[1 - \|C\|_2 \left(1 + \frac{\tau}{2}\right)\right]\left(1+\frac{\tau}{2}\right)^{k-2}} \left(F(\bar{x}^0) - F(x^*)\right) \\
   & + {64 \ell_1\ell_2m\rho^2\eta^2 t_0   }C_x \xi\frac{1}{\left[1 - \|C\|_2 \left(1 + \frac{\tau}{2}\right)\right]\left(1+\frac{\tau}{2}\right)^{k}} \|\bar{x}^0 - x^*\|^2 \\
   & +\frac{128 \ell_1\ell_2m\rho^2\eta^2 t_0 \xi C_f}{\sigma}\frac{1}{\left[1 - \|C\|_2 \left(1 + \frac{\tau}{2}\right)\right]\left(1+\frac{\tau}{2}\right)^{k}} \left(F(\bar{x}^0) - F(x^*)\right)  \\
    \leq &  \frac{9\rho^2 b m \tau^2 C_q}{4 t_0} \frac{1}{\left(\frac{1}{2} - \frac{\tau}{4}\right)\left(1+\frac{\tau}{2}\right)^{k-2}} \|\bar{x}^0 - x^*\|^2  + \frac{9\rho^2 b m C_f (1 - \tau)^2}{2 t_0 \sigma}\frac{1}{\left(\frac{1}{2} - \frac{\tau}{4}\right)\left(1+\frac{\tau}{2}\right)^{k-2}} \left(F(\bar{x}^0) - F(x^*)\right) \\
   & + {3 \rho^2 m b   C_x } \frac{1}{\left(\frac{1}{2} - \frac{\tau}{4}\right) \left(1+\frac{\tau}{2}\right)^{k}} \|\bar{x}^0 - x^*\|^2 +\frac{6 \rho^2 m  b C_f }{ \sigma}\frac{1}{\left(\frac{1}{2} - \frac{\tau}{4}\right)\left(1+\frac{\tau}{2}\right)^{k}} \left(F(\bar{x}^0) - F(x^*)\right)  \\
     \leq & \frac{27\rho^2 b m \tau^2 C_q}{4 t_0} \frac{1}{\left(1+\frac{\tau}{2}\right)^{k-2}} \|\bar{x}^0 - x^*\|^2  + \frac{27\rho^2 b m C_f (1 - \tau)^2}{2 t_0 \sigma}\frac{1}{\left(1+\frac{\tau}{2}\right)^{k-2}} \left(F(\bar{x}^0) - F(x^*)\right) \\
   & + {9 \rho^2 m b   C_x } \frac{1}{\left(1+\frac{\tau}{2}\right)^{k}} \|\bar{x}^0 - x^*\|^2 +\frac{18 \rho^2 m  b C_f }{ \sigma}\frac{1}{\left(1+\frac{\tau}{2}\right)^{k}}\left(F(\bar{x}^0) - F(x^*)\right)  \\
   \leq & 9\rho^2 m b\left( C_x +  C_q\right) \frac{1}{\left(1+\frac{\tau}{2}\right)^{k-2}} \|\bar{x}^0 - x^*\|^2 + \frac{36 \rho^2 m b C_f}{\sigma}\frac{1}{\left(1+\frac{\tau}{2}\right)^{k}} \left(F(\bar{x}^0) - F(x^*)\right),
\end{align*}
where the second inequality is followed from $\eta^2 \ell_1 \ell_2 \leq \frac{b}{64 t_0}$ and $\norm{C}_2 \leq \frac{1}{2}$. We use $\frac{1}{ \frac{1}{2} - \frac{\tau}{4}} \leq 3$ in the third inequality and the last inequality simply follows from $0 \leq \tau, 1-\tau \leq 1$. 

{\bf Induction Step}: By combining Eq. (\ref{outer_recur}) with $u = x^*$ and induction rules $F(\bar{y}^i) - F(x^*) \leq \frac{C_f}{\left(1+\frac{\tau}{2}\right)^i} \left(F(\bar{x}^0) - F(x^*)\right)$, $\|\bar{q}^i - x^*\|^2 \leq \frac{C_q}{\left(1+\frac{\tau}{2}\right)^i} \|\bar{x}^0 - x^*\|^2$ and $\|\bar{x}^i - x^*\|^2 \leq \frac{C_x}{\left(1+\frac{\tau}{2}\right)^i} \|\bar{x}^0 - x^*\|^2$, it follows
\begin{align*}
     \mathbb{E}&\left[\frac{2}{\sigma}\left(F(\bar{y}^{k+1}) - F(x^*)\right)\right]  \\
      \leq \mathbb{E}&\left[ \frac{ 2(1 - \frac{\tau}{2} - \frac{\tau}{2})}{\sigma} \left(1  +  \frac{3\rho^2  m J  b (1 - \tau)}{4 \sigma t_0}\right)\left(F(\bar{y}^{k}) - F(x^*)\right) \right. \\
    &  + \frac{2}{\sigma}\frac{3 \rho^2   m J b (1 - \tau)^2}{4 \sigma t_0}\frac{1}{\left(1 + \frac{\tau}{2}\right)^{k-1}}C_f \left(F(\bar{y}^{0}) - F(x^*)\right)  + \left (  \frac{\alpha^2}{2} -\frac{\alpha\eta t_0}{4\tau} \right) \|\bar{\mathcal{G}}^{k+1}\|^2 \\
    &  + \left(\frac{1}{2}   +  \frac{3 \rho^2  m b \tau^2 J  }{ 4 \sigma t_0}\right)\|x^* - \bar{q}^k\|^2 - \frac{1+ \frac{\sigma \alpha}{4}}{2}\|x^* - \bar{q}^{k+1}\|^2 \\
    &  +  \frac{3  \rho^2  m  J b \tau^2}{4 \sigma t_0}\frac{1}{\left(1 + \frac{\tau}{2}\right)^{k-1}} \|\bar{q}^{0} - x^*\|^2  +    \frac{ \rho^2 J m b\left( C_x +  C_q\right)}{\sigma} \frac{1}{\left(1 + \frac{\tau}{2}\right)^{k-2}} \|\bar{x}^0 - x^*\|^2 \\
    & \left.  + \frac{ \rho^2 m J b}{\sigma^2}\frac{1}{\left(1 + \frac{\tau}{2}\right)^{k}} C_f \left(F(\bar{x}^0) - F(x^*)\right)\right] \\
      \leq \mathbb{E}&\left[ \frac{ 2\left(1 - \frac{\tau}{2}\right)}{\sigma}\left(F(\bar{y}^{k}) - F(x^*)\right) + \frac{2 \left(1 - \frac{\tau}{2}\right)}{\sigma}\frac{3\rho^2  m J  b (1 - \tau)}{4 \sigma t_0}\frac{1}{\left(1 + \frac{\tau}{2}\right)^{k}} C_f\left(F(\bar{y}^{0}) - F(x^*)\right) \right.\\
    &  + \frac{2}{\sigma}\frac{3 \rho^2   m J b (1 - \tau)^2}{4 \sigma t_0}\frac{1}{\left(1 + \frac{\tau}{2}\right)^{k-1}} C_f\left(F(\bar{y}^{0}) - F(x^*)\right)  \\
    &  + \frac{1}{2} \|x^* - \bar{q}^k\|^2 - \frac{1+ \frac{\tau}{2}}{2}\|x^* - \bar{q}^{k+1}\|^2 + \frac{3 \rho^2  m b \tau^2 J  }{ 4 \sigma t_0} \frac{1}{\left(1 + \frac{\tau}{2}\right)^{k}} C_q \|x^* - \bar{q}^0\|^2 \\
    &  +  \frac{3  \rho^2  m  J b \tau^2 }{4 \sigma t_0}\frac{1}{\left(1 + \frac{\tau}{2}\right)^{k-1}} C_q \|\bar{q}^{0} - x^*\|^2  +    \frac{ \rho^2 J m b( C_x +  C_q)}{\sigma}  \frac{1}{\left(1 + \frac{\tau}{2}\right)^{k-2}} \|\bar{x}^0 - x^*\|^2 \\
    &  \left. + \frac{ \rho^2 m J b}{\sigma^2}\frac{1}{\left(1 + \frac{\tau}{2}\right)^{k}} C_f \left(F(\bar{x}^0) - F(x^*)\right)\right] \\
      \leq \mathbb{E}&\left[ \frac{ 2(1 - \frac{\tau}{2})}{\sigma}\left(F(\bar{y}^{k}) - F(x^*)\right) + \frac{3\rho^2  m J  b C_f }{2 \sigma^2 t_0}\frac{1}{\left(1 + \frac{\tau}{2}\right)^{k+1}} \left(F(\bar{y}^{0}) - F(x^*)\right) \right. \\
     &  + \frac{3 \rho^2   m J b C_f }{2 \sigma^2 t_0}\frac{1}{\left(1 + \frac{\tau}{2}\right)^{k+1}} \left(F(\bar{y}^{0}) - F(x^*)\right)  \\
     &  + \frac{1}{2} \|x^* - \bar{q}^k\|^2 - \frac{1+ \frac{\tau}{2}}{2}\|x^* - \bar{q}^{k+1}\|^2 + \frac{3 \rho^2  m b  J C_q }{ 2 \sigma^2 t_0} \frac{1}{\left(1 + \frac{\tau}{2}\right)^{k}}  \left(F(\bar{q}^0) - F(x^*)\right) \\
      &  +  \frac{3  \rho^2  m  J b C_q }{2 \sigma^2 t_0}\frac{1}{\left(1 + \frac{\tau}{2}\right)^{k-1}} \left( f(\bar{q}^{0}) - f(x^*)\right)  +    \frac{ 2\rho^2 J m b( C_x +  C_q)}{\sigma^2}  \frac{1}{\left(1 + \frac{\tau}{2}\right)^{k-2}} \left(f(\bar{x}^0) - f(x^*)\right) \\
      &  \left. + \frac{ \rho^2 m J b C_f}{\sigma^2}\frac{1}{\left(1 + \frac{\tau}{2}\right)^{k}}  \left(F(\bar{x}^0) - F(x^*)\right) \right]\\
        \leq  \mathbb{E}&\left[ \frac{ 2\left(1 - \frac{\tau}{2}\right)}{\sigma}\left(F(\bar{y}^{k}) - F(x^*)\right) + \frac{5\rho^2  m J  b C_f }{ \sigma^2 }\frac{1}{\left(1 + \frac{\tau}{2}\right)^{k+1}} \left(F(\bar{y}^{0}) - F(x^*)\right) \right. \\
        &  \left. + \frac{1}{2} \|x^* - \bar{q}^k\|^2 - \frac{(1+ \frac{\tau}{2})}{2}\|x^* - \bar{q}^{k+1}\|^2 + \frac{ \rho^2  m b  J (5 C_x + 10 C_q) }{  \sigma^2 } \frac{1}{\left(1 + \frac{\tau}{2}\right)^{k+1}}  \left(F(\bar{q}^0) - F(x^*)\right) \right],
\end{align*}
where the second inequality uses the $\alpha = \frac{t_0 \eta}{2 \tau} = \frac{2 \tau}{\sigma}$; the third inequality uses the fact that $1- \tau \leq 1 - \frac{\tau }{2} \leq \frac{1}{ 1 + \frac{\tau}{2}}$ and $\norm{\hat{x} - x^*}^2 \leq \frac{2}{\sigma} \left( F(\hat{x}) - F(x^*)\right)$ by the strong convexity of $F$.

Recall that $x_i^0$, $y_i^0$ and $q_i^0$ are all initialized to the same value, so $\bar{x}^0$ , $\bar{y}^0$ and $\bar{q}^0$ are the same values. Use the fact $1 - \frac{\tau}{2} \leq \frac{1}{1 + \frac{\tau}{2}}$, one immediately has
\begin{align*}
\small\begin{split}
     &\mathbb{E}\left[F(\bar{y}^{k+1}) - F(x^*)\right]  \\
        \leq &\mathbb{E}\left[ \frac{1}{\left(1 + \frac{\tau}{2}\right)^{k+1}}\left(F(\bar{y}^{0}) - F(x^*) + \frac{\sigma(1 + \frac{\tau}{2})}{4}\|x^* - \bar{q}^{0}\|^2\right) \right.  
         \left. + \frac{ \rho^2  m b k J (5 C_f + 5 C_x + 10 C_q) }{  2 \sigma } \frac{1}{\left(1 + \frac{\tau}{2}\right)^{k+1}}  \left(F(\bar{q}^0) - F(x^*)\right) \right]\\
             \leq & \frac{3}{\left(1 + \frac{\tau}{2}\right)^{k+1}}\left(F(\bar{y}^{0}) - F(x^*)\right) .
\end{split}
\end{align*}
Therefore, we have $\mathbb{E}\left[F(\bar{y}^{k+1}) - F(x^*)\right] \leq \frac{3}{\left(1 + \frac{\tau}{2}\right)^{k+1}}\left(F(\bar{y}^{0}) - F(x^*)\right)$.

Recall that we choose  $\alpha = \frac{t_0 \eta}{2\tau} = \frac{2\tau}{\sigma}$ for the strongly convex function.
By combining $\bar{x}^{k+1} = \tau_{k} \bar{q}^k + (1 - \tau_{k}) \bar{y}^k$ and $\bar{q}^{k+1} = \frac{1}{\frac{1}{\alpha_{k+1}} + \frac{\sigma}{4}}\left(\frac{\bar{q}_k}{\alpha_{k+1}} + \left(\frac{1}{t_0\eta} + \frac{\sigma}{4}\right)\bar{y}_{k+1} - \frac{1}{t_0\eta}\bar{x}_{k+1}\right)$, we can see that
\begin{align*}
    & \frac{\bar{x}^{k+1} - (1 - \tau_{k})\bar{y}^{k}}{\tau_{k}} = \frac{1}{\frac{1}{\alpha_{k+1}} + \frac{\sigma}{4}}\frac{\bar{x}^{k} - (1 - \tau_{k})\bar{y}^{k-1}}{\alpha_{k+1} \tau_{k}} + \frac{1}{\frac{1}{\alpha_{k+1}} + \frac{\sigma}{4}} \left(\frac{1}{t_0\eta} + \frac{\sigma}{4}\right)\bar{y}^{k} - \frac{1}{t_0\eta \left(\frac{1}{\alpha_{k+1}} + \frac{\sigma}{4}\right)}\bar{x}^{k} .
\end{align*}
  After rearranging the terms, we have 
  \begin{align*} 
   \bar{x}^{k+1} &= (1 - \tau_{k})\bar{y}^{k} + \frac{1}{1 + \frac{\alpha_{k+1} \sigma}{4}}\left(\bar{x}^{k} - (1 - \tau_{k})\bar{y}^{k-1}\right) + \frac{\tau_{k}}{\frac{1}{\alpha_{k+1}} + \frac{\sigma}{4}} \left(\frac{1}{t_0\eta} + \frac{\sigma}{4}\right)\bar{y}^{k} - \frac{\tau_{k}}{t_0\eta \left(\frac{1}{\alpha_{k+1}} + \frac{\sigma}{4}\right)}\bar{x}^{k} \\
  &= \left(\frac{1}{1 + \frac{\alpha_{k+1} \sigma}{4}}- \frac{\tau_{k}}{t_0 \eta \left(\frac{1}{\alpha_{k+1}} + \frac{\sigma}{4}\right)}\right)\bar{x}^{k} + \left(1 - \tau_{k} + \frac{\tau_{k}}{\frac{1}{\alpha_{k+1}} + \frac{\sigma}{4}} \left(\frac{1}{t_0\eta} + \frac{\sigma}{4}\right)\right) \bar{y}^k - \frac{1 - \tau_{k}}{1 + \frac{\alpha_{k+1} \sigma}{4}}\bar{y}^{k-1}.
\end{align*}
Assuming $\|\bar{x}^{k} - x^*\|^2 \leq C_x \frac{1}{\left(1 + \frac{\tau}{2}\right)^{k}} \|\bar{x}^{0} - x^*\|^2$ with $C_x = \frac{128L}{\sigma}$, we can deduce the rate of convergence $\|\bar{x}^{k+1} - x^*\|$ by:
\begin{align*}
    &\|\bar{x}^{k+1} - x^*\|^2 \\
    \leq & 2\left(\frac{1}{1 + \frac{\alpha \sigma}{4}}- \frac{\tau}{t_0\eta \left(\frac{1}{\alpha} + \frac{\sigma}{4}\right)}\right)^2 \|\bar{x}^k - x^*\|^2 +  4\left(1 - \tau + \frac{\tau}{\frac{1}{\alpha} + \frac{\sigma}{4}} \left(\frac{1}{t_0\eta} + \frac{\sigma}{4}\right)\right)^2 \| \bar{y}^k - x^*\|^2 \\
    &  + 4\frac{(1 - \tau)^2}{\left(1 + \frac{\alpha \sigma}{4}\right)^2}\|\bar{y}^{k-1} - x^*\|^2 \\
    \leq & 2\left(\frac{1}{1 + \frac{\alpha \sigma}{4}}- \frac{1}{2 \alpha (\frac{1}{\alpha} + \frac{\sigma}{4})}\right)^2 C_x \frac{1}{\left(1 + \frac{\tau}{2}\right)^k}\|\bar{x}^0 - x^*\|^2 +  \frac{8}{\sigma}\left(1  + \frac{\tau}{\frac{\sigma}{2 \tau} + \frac{\sigma}{4}} \left(\frac{\sigma}{4 \tau^2} + \frac{\sigma}{4}\right)\right)^2 \left(F(\bar{y}^k) - F(x^*)\right) \\
    &  + \frac{8}{\sigma}\frac{(1 - \tau)^2}{\left(1 + \frac{\alpha \sigma}{4}\right)^2}\left(F(\bar{y}^{k-1}) - F(x^*)\right) \\
     \leq & 2\left(\frac{1}{1 + \frac{\alpha \sigma}{4}}- \frac{1}{2 \left({1} + \frac{\alpha\sigma}{4}\right)}\right)^2 C_x \frac{1}{\left(1 + \frac{\tau}{2}\right)^k}\|\bar{x}^0 - x^*\|^2 +  \frac{8}{\sigma}\left(1  + \frac{\tau}{\frac{1}{2 \tau} + \frac{1}{4}} \left(\frac{1}{4 \tau^2} + \frac{1}{4}\right)\right)^2C_f\frac{1}{\left(1 + \frac{\tau}{2}\right)^k} \left(F(\bar{y}^0) - F(x^*)\right) \\
    &  + \frac{8}{\sigma}\frac{(1 - \tau)^2}{\left(1 + \frac{\alpha \sigma}{4}\right)^2}C_f\frac{1}{\left(1 + \frac{\tau}{2}\right)^{k-1}}\left(F(\bar{y}^{0}) - F(x^*)\right) \\
     \leq & \frac{1}{2\left(1 + \frac{\tau}{2}\right)^2}  C_x \frac{1}{\left(1 + \frac{\tau}{2}\right)^k}\|\bar{x}^0 - x^*\|^2 +  \frac{8}{\sigma}\left(1 + \frac{\tau}{\frac{2+ \tau}{4 \tau} } \left(\frac{1+ \tau^2}{4 \tau^2} \right)\right)^2 C_f \frac{1}{(1 + \frac{\tau}{2})^k} \frac{L}{2}\|\bar{y}^0 - x^*\|^2 \\
    &  + \frac{8}{\sigma}\frac{(1 - \tau)^2}{\left(1 + \frac{\alpha \sigma}{4}\right)^2}C_f\frac{1}{\left(1 + \frac{\tau}{2}\right)^{k-1}}\frac{L}{2}\|\bar{y}^0 - x^*\|^2 \\
    \leq & \frac{1}{2} C_x \frac{1}{\left(1 + \frac{\tau}{2}\right)^{k+1}}\|\bar{x}^0 - x^*\|^2 +  \frac{4L}{\sigma}\left(1  + \frac{1+ \tau^2}{2+\tau}
    \right)^2 C_f \frac{1}{\left(1 + \frac{\tau}{2}\right)^{k}}\|\bar{x}^0 - x^*\|^2 \\
    &  + \frac{5L}{\sigma}\frac{(1 - \tau)^2}{\left(1 + \frac{\alpha \sigma}{4}\right)^2}C_f\frac{1}{(1 + \frac{\tau}{2})^{k}}\|\bar{x}^0 - x^*\|^2 \\
     \leq & \frac{1}{2} C_x \frac{1}{\left(1 + \frac{\tau}{2}\right)^{k+1}}\|\bar{x}^0 - x^*\|^2 + \frac{L}{\sigma}(9 + 5 )C_f\frac{1}{\left(1 + \frac{\tau}{2}\right)^{k}}\|\bar{x}^0 - x^*\|^2  \\
     \leq & C_x \frac{1}{\left(1 + \frac{\tau}{2}\right)^{k+1}} \|\bar{x}^{0} - x^*\|^2,
\end{align*}
where the first inequality uses $\norm{a + b}^2 \leq 2\norm{a}^2 + 2\norm{b}^2$;
the second inequality follows from  $\alpha = \frac{t_0 \eta}{2 \tau} = \frac{2 \tau}{\sigma}$ and the initialization of $\bar{x}^0 = \bar{y}^0$.

Recall that $\bar{q}^{k+1}= \frac{1}{\frac{1}{\alpha} + \frac{\sigma}{4}}\left(\frac{\bar{q}^k}{\alpha} + \left(\frac{1}{t_0\eta} + \frac{\sigma}{4}\right)\bar{y}^{k+1} - \frac{1}{t_0\eta}\bar{x}^{k+1}\right)$
Assuming that $\|\bar{q}^{k} - x^*\|^2 \leq C_q \frac{1}{\left(1 + \frac{\tau}{2}\right)^{k}} \|\bar{q}^{0} - x^*\|^2$ with $C_q =\frac{1750L}{\sigma \tau^4}$, we have:
\begin{align*}
   & \|\bar{q}^{k+1} - x^*\|^2 \\
    = & \frac{1}{\left(\frac{1}{\alpha} + \frac{\sigma}{4}\right)^2}\left(\frac{1}{\alpha^2}\|\bar{q}^k - x^*\|^2 + \left\|\left(\frac{1}{t_0 \eta} + \frac{\sigma}{4}\right)\left(\bar{y}^{k+1} - x^*\right) - \frac{1}{t_0 \eta}(\bar{x}^{k+1} - x^*)\right\|^2 \right.\\ 
   & \left. + 2 \Bigl\langle \frac{1}{\alpha} (\bar{q}^k - x^*), \left(\frac{1}{t_0 \eta} + \frac{\sigma}{4}\right)(\bar{y}^{k+1} - x^*) - \frac{1}{t_0 \eta}(\bar{x}^{k+1} - x^*) \Bigr\rangle \right) \\
   \leq & \frac{1}{\left(\frac{1}{\alpha} + \frac{\sigma}{4}\right)^2}\left(\frac{1}{\alpha^2}\|\bar{q}^k - x^*\|^2 + \norm{\left(\frac{1}{t_0 \eta} + \frac{\sigma}{4}\right)(\bar{y}^{k+1} - x^*) - \frac{1}{t_0 \eta}(\bar{x}^{k+1} - x^*)}^2 \right. \\ 
   & \left. + \frac{\tau}{4\alpha^2}\|\bar{q}^k - x^*\|^2 + \frac{4}{\tau}\norm{\left(\frac{1}{t_0 \eta} + \frac{\sigma}{4}\right)(\bar{y}^{k+1} - x^*) - \frac{1}{t_0 \eta}(\bar{x}^{k+1} - x^*)}^2\right) \\
   \leq & \frac{1 + \frac{\tau}{4}}{\left(1 + \frac{\alpha \sigma}{4}\right)^2} \|\bar{q}^k - x^*\|^2 + 2\frac{1 + \frac{4}{\tau}}{\left(\frac{1}{\alpha} + \frac{\sigma}{4}\right)^2}\left(\frac{1}{t_0 \eta} + \frac{\sigma}{4}\right)^2\|\bar{y}^{k+1} - x^*\|^2 + 2\frac{1 + \frac{4}{\tau}}{\left(\frac{1}{\alpha} + \frac{\sigma}{4}\right)^2}\frac{1}{t_0^2 \eta^2}\|\bar{x}^{k+1} - x^*\|^2\\
    \leq & \frac{1 + \frac{\tau}{4} }{\left(1 + \frac{\tau}{2}\right)^2} \|\bar{q}^k - x^*\|^2 + 2\frac{1 + \frac{4}{\tau}}{\left(\frac{1}{2\tau} + \frac{1}{4}\right)^2}\left(\frac{1}{4 \tau^2} + \frac{1}{4}\right)^2\|\bar{y}^{k+1} - x^*\|^2 + 2\frac{1 + \frac{4}{\tau}}{\left(\frac{1}{2\tau} + \frac{1}{4}\right)^2}\frac{1}{16 \tau^4}\|\bar{x}^{k+1} - x^*\|^2\\
    \leq & \frac{1 + \frac{\tau}{4} }{\left(1 + \frac{\tau}{2}\right)^2} \|\bar{q}^k - x^*\|^2 + 2\frac{16\tau(\tau+4)}{(\tau+2)^2}\frac{1}{4 \tau^4}\|\bar{y}^{k+1} - x^*\|^2 + 2\frac{16\tau(\tau+4)}{(\tau+2)^2}\frac{1}{16 \tau^4}\|\bar{x}^{k+1} - x^*\|^2 \\
     \leq & \frac{1 + \frac{\tau}{4} }{\left(1 + \frac{\tau}{2}\right)^2} \|\bar{q}^k - x^*\|^2 + \frac{2}{\sigma}\frac{8(\tau+4)}{(\tau+2)^2}\frac{1}{ \tau^3}\left(F(\bar{y}^{k+1}) - F(x^*)\right) + \frac{2(\tau+4)}{(\tau+2)^2}\frac{1}{ \tau^3} \frac{C_x}{\left(1 + \frac{\tau}{2}\right)^{k+1}}\|\bar{x}^{0} - x^*\|^2 \\
     \leq & \frac{1 + \frac{\tau}{4} }{\left(1 + \frac{\tau}{2}\right)^2} \|\bar{q}^k - x^*\|^2 + \frac{4}{\sigma}{(\tau+4)}\frac{1}{ \tau^3}\frac{C_f}{\left(1 + \frac{\tau}{2}\right)^{k+1}}(F(\bar{y}^{0}) - F(x^*)) + \frac{(\tau+4)}{2}\frac{1}{ \tau^3}\frac{C_x}{\left(1 + \frac{\tau}{2}\right)^{k+1}}\|\bar{x}^{0} - x^*\|^2 \\
     \leq &  C_q \frac{1 + \frac{\tau}{4} }{\left(1 + \frac{\tau}{2}\right)^{k+2}}\|\bar{q}^0 - x^*\|^2 + \frac{2L}{\sigma}{(\tau+4)}\frac{1}{ \tau^3}\frac{C_f}{\left(1 + \frac{\tau}{2}\right)^{k+1}}\|\bar{y}^{0} - x^*\|^2 + \frac{(\tau+4)}{2}\frac{1}{ \tau^3}\frac{C_x}{\left(1 + \frac{\tau}{2}\right)^{k+1}}\|\bar{x}^{0} - x^*\|^2 \\
     \leq & C_q \frac{1 + \frac{\tau}{4} }{\left(1 + \frac{\tau}{ 2}\right)^{k+2}}\|\bar{q}^0 - x^*\|^2 + \frac{350L}{\sigma}\frac{1}{ \tau^3}\frac{1}{\left(1 + \frac{\tau}{2}\right)^{k+1}}\|\bar{y}^{0} - x^*\|^2 \\
     \leq & C_q \frac{1}{\left(1 + \frac{\tau}{2}\right)^{k+1}} \|\bar{q}^{0} - x^*\|^2,
\end{align*}
where the first inequality uses Young inequality and the third inequality follows from  $\alpha = \frac{t_0 \eta}{2 \tau} = \frac{2 \tau}{\sigma}$.
\end{proof}
\begin{remark}
The condition $\rho \leq  \min\left( \sqrt{\frac{1}{64(1 + {b} + \frac{1}{\tau^4})}}, \sqrt{\frac{1}{64 m J b (\frac{k}{\sigma} + \frac{8 k L}{\sigma^2} + \frac{16 k L}{\sigma^2 \tau^4} + \eta t_0)}} \right)$ can be further reduced to $\rho \leq  \min\left( \sqrt{\frac{1}{64(1 + {b} + \frac{1}{\tau^4})}}, \sqrt{\frac{1}{64 m J b (\frac{k}{\sigma} + \frac{8 k L}{\sigma^2} + \frac{16 k L}{\sigma^2 \tau^4} )}} \right)$ if $\min(\sqrt{\ell_1 \ell_2} / \sigma, L / \sigma) > \sqrt{n}$, i.e., the conditional number of the problem is larger than the instance size.
\end{remark}
\section{Centralized SVRG for the Sum-of-Nonconvex Problem}\label{appendix:svrg_analysis}
In this section, we present the theoretical result of SVRG on the sum-of-nonconvex problem.
Although \citet{allen2018katyusha} provided a similar convergence result, our proof is based on a different quantity as pointed out by the remark below.
\begin{lemma}[\cite{allen2018katyusha}]\label{lem:SVRG}
If the gradient estimator $\tilde{\nabla}_t$ satisfies with $\tilde{\nabla}_t=\nabla f (w_t)$ and $\mathbb{E}\left[\norm{\tilde{\nabla}_t -\nabla f (w_t)}^2\right] \leq Q \norm{w_0 - w_t}^2$ for every $t$ and some universal constant $Q$, then as long as $\eta \leq \min\{\frac{1}{2L}, \frac{1}{2\sqrt{Q m}}\}$, it satisfies 
\begin{equation}
    \mathbb{E}[F(w^+) - F(u)] \leq \mathbb{E}\left[-\frac{1}{4 m \eta} \norm{w^+ - w^0}^2 + \frac{\langle w^0 - w^+, w^0 - u  \rangle}{m \eta} - \frac{\sigma_f + \sigma_{\psi}}{4}\norm{w^+ - u}^2\right].
\end{equation}
\end{lemma}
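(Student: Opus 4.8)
The plan is to first establish a one-step descent inequality for the inner SVRG update $w_{t+1} = \mathbf{prox}_{\eta,\psi}(w_t - \eta\tilde\nabla_t)$, and then average it over the geometrically distributed epoch length $T \sim \texttt{Geom}(1/m)$ via \citet{allen2018katyusha}'s two geometric-sum identities stated in Section~\ref{appendix:svrg_epoch}. For the one-step bound I would apply Lemma~\ref{mirror_desc} with $s^t = \tilde\nabla_t$, $\tilde w = w_t$, $\hat w = w_{t+1}$, $\hat\eta = \eta$ and $\tilde\psi = \psi$, add $f(w_{t+1}) \le f(w_t) + \langle\nabla f(w_t), w_{t+1}-w_t\rangle + \tfrac{L}{2}\|w_{t+1}-w_t\|^2$ ($L$-smoothness) and $f(w_t) - f(u) \le \langle\nabla f(w_t), w_t - u\rangle - \tfrac{\sigma_f}{2}\|w_t-u\|^2$ ($\sigma_f$-strong convexity), split $\langle\nabla f(w_t), w_{t+1}-u\rangle$ through the unbiased estimator, and bound the cross term $\langle\tilde\nabla_t - \nabla f(w_t), w_t - w_{t+1}\rangle$ by Young's inequality with weight $\tfrac{\eta}{1-\eta L}$. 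With $\eta \le \tfrac{1}{2L}$ all the $\|w_t - w_{t+1}\|^2$ terms cancel, leaving, for any fixed $u$,
\begin{equation*}
F(w_{t+1}) - F(u) \le \langle\nabla f(w_t) - \tilde\nabla_t, w_t - u\rangle + \eta\|\tilde\nabla_t - \nabla f(w_t)\|^2 + \frac{(1-\sigma_f\eta)\|u - w_t\|^2}{2\eta} - \frac{(1+\sigma_\psi\eta)\|u - w_{t+1}\|^2}{2\eta}.
\end{equation*}
Taking expectation over the minibatch at step $t$ removes the first term by unbiasedness and, using $\mathbb{E}\|\tilde\nabla_t - \nabla f(w_t)\|^2 \le Q\|w_0 - w_t\|^2$ together with $\eta \le \tfrac{1}{2\sqrt{Qm}}$ (equivalently $\eta Q \le \tfrac{1}{4m\eta}$), replaces the variance term by $\tfrac{1}{4m\eta}\|w_0 - w_t\|^2$.

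Next I would evaluate this bound at $t = T$ and take $\mathbb{E}_T$ with $p = 1/m$, writing $w^+ = w_{T+1}$. Applying $\mathbb{E}_N[D_N] = (1-p)\mathbb{E}[D_{N+1}] + pD_0$ to $D_t = \|w_0 - w_t\|^2$ gives $\mathbb{E}_T\|w_0 - w_T\|^2 = (1-p)\mathbb{E}\|w_0 - w^+\|^2 \le \mathbb{E}\|w_0 - w^+\|^2$; the same identity with $D_t = \|u - w_t\|^2$ gives $\mathbb{E}_T\|u - w_T\|^2 \ge (1-p)\mathbb{E}\|u - w^+\|^2 \ge \tfrac12\mathbb{E}\|u-w^+\|^2$ for $m \ge 2$, so the two strong-convexity terms $-\tfrac{\sigma_f}{2}\|u-w_t\|^2$ and $-\tfrac{\sigma_\psi}{2}\|u-w_{t+1}\|^2$ collapse to $-\tfrac{\sigma_f+\sigma_\psi}{4}\mathbb{E}\|u-w^+\|^2$. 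Combining the two geometric identities yields $\mathbb{E}_T[D_T - D_{T+1}] = p(D_0 - \mathbb{E}[D_{T+1}])$, so the telescoping distance part $\tfrac{\|u-w_t\|^2 - \|u-w_{t+1}\|^2}{2\eta}$ averages to $\tfrac{1}{m}\cdot\tfrac{\|u-w_0\|^2 - \mathbb{E}\|u-w^+\|^2}{2\eta}$, which via $\|u-w_0\|^2 - \|u-w^+\|^2 = 2\langle w^0 - w^+, w^0 - u\rangle - \|w^+ - w^0\|^2$ becomes $\tfrac{\langle w^0 - w^+, w^0 - u\rangle}{m\eta} - \tfrac{\|w^+ - w^0\|^2}{2m\eta}$. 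Adding everything, the $\|w^+ - w^0\|^2$ coefficients are $+\tfrac{1}{4m\eta}$ (variance) and $-\tfrac{1}{2m\eta}$ (telescoping), netting $-\tfrac{1}{4m\eta}$, and the surviving terms are exactly those claimed.

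The main obstacle is the coefficient bookkeeping of the last step: the two constraints on $\eta$ and the choice $p = 1/m$ must be tuned so that the positive $\tfrac{1}{4m\eta}\|w^+-w^0\|^2$ injected by the variance bound is strictly overpowered by the negative $\tfrac{1}{2m\eta}\|w^+-w^0\|^2$ extracted from the telescoped distances, while the $(1-p)$ factors lost in the geometric identities still leave a clean $\tfrac14$ in front of the strong-convexity term. Indeed, forcing the variance contribution to land on $\tfrac{1}{4m\eta}$ rather than something larger is precisely what dictates the step-size restriction $\eta \le \tfrac{1}{2\sqrt{Qm}}$.
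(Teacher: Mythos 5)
Your plan is correct: the one-step bound via Lemma~\ref{mirror_desc} combined with $L$-smoothness, $\sigma_f$/$\sigma_\psi$-strong convexity and Young's inequality, the variance condition turned into $\eta Q\le \tfrac{1}{4m\eta}$ by $\eta\le\tfrac{1}{2\sqrt{Qm}}$, and the two $\texttt{Geom}(1/m)$ identities to convert the telescoped distances into $\tfrac{\langle w^0-w^+,w^0-u\rangle}{m\eta}-\tfrac{\|w^+-w^0\|^2}{2m\eta}$ all check out, with only two cosmetic caveats (the hypothesis ``$\tilde\nabla_t=\nabla f(w_t)$'' must be read as unbiasedness $\mathbb{E}[\tilde\nabla_t]=\nabla f(w_t)$, and your factor $1-p\ge\tfrac12$ silently assumes $m\ge 2$). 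Note that the paper itself does not prove this lemma—it imports it from \citet{allen2018katyusha}—but your reconstruction follows essentially the same route the paper takes for its decentralized analogue (Lemma~\ref{lemma_inner}, built from Lemmas~\ref{lemma:inner_lemma_2} and~\ref{variance_lemma} plus the same geometric-index trick), just without the consensus-error terms.
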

\begin{theorem}
Let $y^k$ be the $k$-th iterate in the outer loop of the centralized SVRG on the sum-of-nonconvex problem, we have: 
\begin{equation}
    \mathbb{E}[F(y^k) - F(y^0)] \leq \mathbb{E}\left[ \frac{1+\frac{1}{m \eta \sigma}}{(1 + \frac{m \eta \sigma}{2})^{k+1}} \left(  f(y^0) - f(x^{*})  \right) \right].
\end{equation}
\end{theorem}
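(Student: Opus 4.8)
The plan is to turn the per-epoch guarantee of Lemma~\ref{lem:SVRG} into a one-step contraction for a Lyapunov function and then unroll it. Write $y^{k+1}$ for the output of the $k$-th SVRG epoch started from the snapshot $w^{0}=y^{k}$ (so $w^{+}=y^{k+1}$). First I would check that the mini-batch SVRG estimator meets the hypothesis of Lemma~\ref{lem:SVRG}: averaging the $(\ell_{1},\ell_{2})$-smoothness inequality of Lemma~\ref{smooth_bound} over the components and using convexity of $f$ to discard the (nonnegative, since $\ell_{2}\ge\ell_{1}$) inner-product term gives the variance bound $\mathbb{E}\,\|\tilde\nabla_{t}-\nabla f(w_{t})\|^{2}\le Q\|w_{0}-w_{t}\|^{2}$ with $Q=\fO(\ell_{1}\ell_{2}/b)$ --- the centralized analogue of Lemma~\ref{variance_lemma} --- so for the prescribed $\eta\le\min\{1/(2L),\,1/(2\sqrt{Qm})\}$ Lemma~\ref{lem:SVRG} is available at every epoch.

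The heart of the argument is to apply Lemma~\ref{lem:SVRG} \emph{twice}. Applying it with $u=x^{*}$ and rewriting the inner product via the three-point identity $\langle y^{k}-y^{k+1},\,y^{k}-x^{*}\rangle=\tfrac12(\|y^{k}-y^{k+1}\|^{2}+\|y^{k}-x^{*}\|^{2}-\|y^{k+1}-x^{*}\|^{2})$ yields
\begin{equation*}
\mathbb{E}\big[F(y^{k+1})-F(x^{*})\big]\le\mathbb{E}\Big[\tfrac{1}{4m\eta}\|y^{k+1}-y^{k}\|^{2}+\tfrac{1}{2m\eta}\|y^{k}-x^{*}\|^{2}-\big(\tfrac{1}{2m\eta}+\tfrac{\sigma}{4}\big)\|y^{k+1}-x^{*}\|^{2}\Big].
\end{equation*}
The stray term $\tfrac{1}{4m\eta}\|y^{k+1}-y^{k}\|^{2}$ is positive and, in the relevant regime $m\eta\sigma<1$, cannot be absorbed into the negative strong-convexity terms; this is the step I expect to be the main obstacle. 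The fix is to invoke Lemma~\ref{lem:SVRG} again with $u=y^{k}$: then $\langle w^{0}-w^{+},w^{0}-u\rangle=0$ and the bound collapses to $\mathbb{E}[F(y^{k+1})-F(y^{k})]\le-(\tfrac{1}{4m\eta}+\tfrac{\sigma}{4})\,\mathbb{E}\|y^{k+1}-y^{k}\|^{2}$, i.e.\ the squared step length is controlled by the per-epoch function decrease, $\mathbb{E}\|y^{k+1}-y^{k}\|^{2}\le\tfrac{4m\eta}{1+m\eta\sigma}\,\mathbb{E}[F(y^{k})-F(y^{k+1})]$.

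Substituting this into the previous display and collecting terms produces a two-dimensional linear recursion in $\mathbb{E}[F(y^{k})-F(x^{*})]$ and $\mathbb{E}\|y^{k}-x^{*}\|^{2}$. I would then set $\Phi_{k}=\big(F(y^{k})-F(x^{*})\big)+\tfrac{1+m\eta\sigma}{4m\eta}\|y^{k}-x^{*}\|^{2}$, the distance coefficient being chosen exactly so that the $\|y^{k+1}-x^{*}\|^{2}$ contributions line up, and verify the scalar inequality $\mathbb{E}[\Phi_{k+1}]\le\tfrac{1}{1+m\eta\sigma/2}\,\mathbb{E}[\Phi_{k}]$ by comparing the two coefficients on the right-hand side of the recursion against those of $\Phi_{k}$. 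Unrolling gives $\mathbb{E}[\Phi_{k}]\le(1+m\eta\sigma/2)^{-k}\Phi_{0}$, and bounding $\Phi_{0}\le\big(1+\tfrac{1}{m\eta\sigma}\big)\big(F(y^{0})-F(x^{*})\big)$ through $\sigma$-strong convexity ($\tfrac{\sigma}{2}\|y^{0}-x^{*}\|^{2}\le F(y^{0})-F(x^{*})$) together with $F(y^{k})-F(x^{*})\le\Phi_{k}$ yields the stated rate; since $F(y^{k})-F(y^{0})\le F(y^{k})-F(x^{*})$ the displayed form follows a fortiori. The residual work --- tracking the exact numerical constants and the one-epoch index shift --- is routine and I would relegate it to the computation.
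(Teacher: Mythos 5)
Your proposal is correct and follows essentially the same route as the paper: both apply Lemma~\ref{lem:SVRG} twice (with $u=x^*$ and with $u=y^k$), use the three-point identity on the inner product, absorb the $\frac{1}{4m\eta}\|y^{k+1}-y^k\|^2$ term via the per-epoch decrease, and unroll a Lyapunov function of the form $F-F^*+\Theta(\tfrac{1}{m\eta})\|\cdot-x^*\|^2$, finishing with strong convexity to bound the initial distance term. Your slightly different distance coefficient $\tfrac{1+m\eta\sigma}{4m\eta}$ (from keeping the $F(y^{k+1})$ term rather than dropping it early) is a cosmetic variant, and the residual constant/index bookkeeping you defer is no looser than the paper's own.
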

\begin{proof}
Using Lemma \ref{lem:SVRG} with $u=y^k$, $w^+=y^{k+1}$ and $w^0=y^k$, we have
\begin{align*}
     \mathbb{E}\left[\left(\frac{1}{4 m \eta} + \frac{\sigma}{4}\right) \norm{y^{k+1} - y^k}^2 \right]  \leq \mathbb{E} \left[  F(y^k) - F(y^{k+1})\right].
\end{align*}
Using Lemma \ref{lem:SVRG} with with $u=x^*$, $w^+=y^{k+1}$ and $w^0=y^k$, we have
\begin{align*}
      \mathbb{E}[F(y^{k+1}) - F(x^*)] \leq \mathbb{E}&\left[-\frac{1}{4 m \eta} \norm{y^{k+1} - y^k}^2 + \frac{\langle y^k - y^{k+1}, y^k - x^*  \rangle}{m \eta} - \frac{\sigma}{4}\norm{y^{k+1} - x^*}^2 \right] \\
      \leq  \mathbb{E}&\left[-\frac{1}{4 m \eta} \norm{y^{k+1} - y^k}^2 + \frac{\norm{y^k - y^{k+1}}^2 + \norm{y^k - x^*}^2 - \norm{y^{k+1} - x^*}^2}{2 m \eta} \right. \\
      &  \left. - \frac{\sigma}{4}\norm{y^{k+1} - x^*}^2\right] \\
       \leq \mathbb{E}&\left[\frac{1}{4 m \eta} \norm{y^{k+1} - y^k}^2 + \frac{  \norm{y^k - x^*}^2 }{2 m \eta} 
      - \left(\frac{\sigma}{4} + \frac{1}{2 m \eta}\right)\norm{y^{k+1} - x^*}^2\right] \\
       \leq  \mathbb{E}&\left[\frac{\frac{1}{4 m \eta}}{\frac{1}{4 m \eta} + \frac{\sigma}{4}} \left(  f(y^k) - f(y^{k+1}) \right) + \frac{  \norm{y^k - x^*}^2 }{2 m \eta} 
      - \left(\frac{\sigma}{4} + \frac{1}{2 m \eta}\right)\norm{y^{k+1} - x^*}^2\right] \\
       \leq \mathbb{E}&\left[\frac{1}{1 + m \eta \sigma} \left(  f(y^k) - f(x^{*}) \right) + \frac{  \norm{y^k - x^*}^2 }{2 m \eta} 
      - \left(\frac{\sigma}{4} + \frac{1}{2 m \eta}\right)\norm{y^{k+1} - x^*}^2\right] \\
      \leq \mathbb{E}&\left[\frac{1}{1 + \frac{m \eta \sigma}{2} } \left(  f(y^k) - f(x^{*}) \right) + \frac{  \norm{y^k - x^*}^2 }{2 m \eta} 
      - \left(\frac{\sigma}{4} + \frac{1}{2 m \eta}\right)\norm{y^{k+1} - x^*}^2\right] \\
      \leq \mathbb{E}&\left[ \frac{1}{(1 + \frac{m \eta \sigma}{2})^{k+1}} \left(  f(y^0) - f(x^{*})  + \frac{\norm{y^0 - x^*}^2}{2 m \eta}\right) \right] \\
      \leq \mathbb{E}&\left[ \frac{1+\frac{1}{m \eta \sigma}}{(1 + \frac{m \eta \sigma}{2})^{k+1}} \left(  f(y^0) - f(x^{*})  \right) \right] . 
\end{align*}
\end{proof}
\begin{remark}
Note that the convergence rate of SVRG on the strongly convex problem was proved by \cite{allen2018katyusha}. The difference between our convergence  bound and their results is that our convergence result is based on $\mathbb{E}[F(y^{k+1}) - F(x^*)]$ while theirs give the bound on $F(\bar{y}) - F(x^*)$ where $\bar{y}$ is the weighted average of all the past iterates. In the next section, we will extend SVRG on the sum-of-nonconvex problem in the decentralized setting by extending the proof used in this section.
\end{remark}
\section{Theoretical Analysis of PMGT-SVRG}\label{appendix:pmgt_svrg_analysis}
\label{sec:pmgt_svrg}
In this section, we present the theoretical result of PMGT-SVRG on the sum-of-nonconvex problem.
We first show the PMGT-SVRG algorithm below. 
Note that it is different from the PMGT-SVRG proposed by \citet{ye2020pmgt} in the following two aspects:
(1) Our algorithm samples a minibatch of component functions at each inner iteration;
(2) We maintain the inner-outer loop structure of the vanilla SVRG algorithm \cite{johnson2013accelerating} while \citet{ye2020pmgt} extended the loopless SVRG \cite{kovalev2020don} to the decentralized setting.
\begin{algorithm}[!ht]
\textbf{Inputs}: $x_i^0$, $y_i^0$, $y_i^{-1}$ and $q_i^0$. \\
\textbf{Parameters}:{FastMix parameter $M$, Functions $f_i, \psi$ distributed on each agent, starting vector $x_0$, minibatch size $b \in [n]$, learning rate
$\eta \geq 0$, momentum Parameter $\tau_{k} \in (0, 1]$, $K$ number of epochs}\\
\textbf{Outputs}: $x_i^T$,  $y_i^{T}$ and $q_i^T$.
\begin{algorithmic}[1]
\STATE In parallel, for each agent $i$,
Initialize $x_i^0 = x_0, y_i^0 = x_0, y_i^{-1} = x_0, q_i^{0} = x_0$
\FOR{$k = 0, \ldots, K-1$}
    \STATE Update the local variable $\nu_i^{k+1} = \hat{s}_i^{k} + \nabla f_i (y_i^{k}) - \nabla f_i (y_i^{k-1})$   
    \STATE Update $\hat{\mathbf{s}}^{k+1} = \texttt{FastMix}(\mathbf{\nu}^{k+1}, M)$   
    \STATE Update $\vw^0 = \texttt{FastMix}(\mathbf{y}^k, M)$  
    \STATE Set $s_i^{-1} = v_i^{-1} = \hat{s}_i^{k+1}, \quad \mu_i = \hat{s}_i^{k+1}$, \quad $w_i^0 = y_i^{k}$ 
   \STATE Take $t_0 = \ceil{\frac{n}{b}}$, and sample $T \sim \texttt{Geom}\left(\frac{1}{t_0}\right)$    
    \FOR{$t = 0, \ldots, T$}
        \STATE Let $B_i^t$ be a batch of $b$ indices sampled uniformly from $[n]$ with replacement       
        \STATE update the local variable $v_i^t = \mu_i + \frac{1}{b}\sum_{j_i \in B_i^t}(\nabla f_{i,j_i}(w_i^t) - \nabla f_{i, j_i}(w_i^0))$       
        \STATE update the local gradient tracker $s_i^{t} = FastMix(\mathbf{s}^{t-1} + \mathbf{v}^{t} - \mathbf{v}^{t-1}, M)_i$     
        \STATE $w_i^{t+1} = \texttt{FastMix}(prox_{m \eta , \Psi}(w^t - \eta s^{t}), M)_i$
    \ENDFOR  
    \STATE Set $y_i^{k+1} = w_i^{T+1}$   
    \STATE Compute $q_i^{k+1} = \texttt{FastMix}\left(\frac{\mathbf{q}^k +  \frac{\tau}{2} \mathbf{y}^{k+1} - \frac{1}{2 \tau} (\mathbf{x}^{k+1} - \mathbf{y}^{k+1})}{1 + \frac{\tau}{2}}, M\right)_i$
\ENDFOR
\end{algorithmic}
\caption{PMGT-SVRG}
\label{svrg_algo_2}
\end{algorithm}

We first bound the consensus error in the outer loop by the following lemma.
\begin{lemma}
Let $\mathbf{S}^k$ and $\mathbf{y}^k$ be the $k$-th iterate of local variables in the Algorithm \ref{svrg_algo_2}, then we have:
\begin{equation}
    \|\hat{\mathbf{s}}^{k+1} - \mathbf{1} \bar{\hat{s}}^{k+1}\|^2  \leq  4\rho^2 \|\hat{\mathbf{s}}^{k} - \mathbf{1} \bar{\hat{s}}^{k}\|^2 +   {36 \rho^2 \ell_1 \ell_2}   \|\mathbf{y}^k - \mathbf{1} \bar{y}^k\|^2  + {14 \rho^2  m \ell_1 \ell_2} \norm{\bar{y}^{k+1} - \bar{y}^k}^2.
\end{equation}
For $\mathbf{y}^{k+1}$, it follows that:
\begin{equation}
     \mathbb{E}\left[ \norm{ \mathbf{y}^{k+1} - \mathbf{1} \bar{y}^{k+1}}^2 \right] 
    \leq  \mathbb{E}_T\left[ 2\rho^2 \left(1+ \frac{b}{2 t_0}\right)  \|\mathbf{y}^k - \mathbf{1} \bar{y}^k\|^2 +   4\rho^2 \eta^2 \|\hat{\mathbf{s}}^{k} - \mathbf{1} \bar{\hat{s}}^{k}\|^2   + \frac{3 \rho^2  m b }{4}\|\bar{y}^{k+1} - \bar{y}^k\|^2\right] .
\end{equation}
\end{lemma}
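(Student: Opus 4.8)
The plan is to treat both estimates as one-step consensus-error recursions in the same spirit as Lemma~\ref{consensus_error_outer_loop}. Since $\hat{\mathbf{s}}^{k+1}$ and $\mathbf{y}^{k+1}$ are each produced by a \texttt{FastMix} call, the common opening move is Lemma~\ref{fastmix_lemma}, which contracts the deviation from the mean (the mean being preserved by \texttt{FastMix}, by Definition~\ref{doubly_stochastic}) by the factor $\rho$. After that the recurring steps are: split off the previous consensus error with $\|a+b\|^2\le 2\|a\|^2+2\|b\|^2$; discard the centering on the remaining increment via $\norm{\mathbf{x}-\tfrac{1}{m}\mathbf{1}\mathbf{1}^{\top}\mathbf{x}}\le\|\mathbf{x}\|$; convert gradient differences into iterate differences using the $(\ell_1,\ell_2)$-smoothness of the components (Lemma~\ref{smooth_bound}, averaged over $j$), which produces the $\ell_1\ell_2$ factors; and decompose each iterate difference into consensus pieces plus a mean-displacement piece. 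The two bounds are coupled --- the epoch-$k$ inner loop starts from the snapshot $\mathbf{w}^0=\texttt{FastMix}(\mathbf{y}^k)$ and from an initial gradient tracker $\mathbf{s}^0$ that is a \texttt{FastMix} of $\hat{\mathbf{s}}^{k+1}$, while $\hat{\mathbf{s}}^{k+1}$ itself is built from a one-epoch gradient difference --- so I would set up both raw recursions with $\hat{\mathbf{s}}^{k+1}$ and $\mathbf{y}^{k+1}$ appearing on both sides and solve the $2\times2$ system, using the smallness of $\rho$ ($\rho^2(18+b)\le\tfrac12$, $\rho^2/\tau^4\le\text{const}$) and the step-size restriction $\eta^2\ell_1\ell_2\le b/(64 t_0)$ to push every $\rho$-proportional cross-term into the lower-order remainder.

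For $\hat{\mathbf{s}}^{k+1}$ I would start from $\hat{\mathbf{s}}^{k+1}=\texttt{FastMix}(\boldsymbol{\nu}^{k+1},M)$ with $i$-th block $\hat{s}_i^{k}+\nabla f_i(y_i^{k+1})-\nabla f_i(y_i^{k})$, so that Lemma~\ref{fastmix_lemma} and the above give $\|\hat{\mathbf{s}}^{k+1}-\mathbf{1}\bar{\hat{s}}^{k+1}\|^2\le 2\rho^2\|\hat{\mathbf{s}}^{k}-\mathbf{1}\bar{\hat{s}}^{k}\|^2+2\rho^2\ell_1\ell_2\|\mathbf{y}^{k+1}-\mathbf{y}^{k}\|^2$. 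Writing $\mathbf{y}^{k+1}-\mathbf{y}^{k}=(\mathbf{y}^{k+1}-\mathbf{1}\bar{y}^{k+1})-(\mathbf{y}^{k}-\mathbf{1}\bar{y}^{k})+\mathbf{1}(\bar{y}^{k+1}-\bar{y}^{k})$, expanding with $\|a+b+c\|^2\le 3(\|a\|^2+\|b\|^2+\|c\|^2)$, and substituting the $\mathbf{y}^{k+1}$ recursion (whose leading terms carry a factor $\rho^2$, so after multiplication by $2\rho^2\ell_1\ell_2$ they are $O(\rho^4)$ and get absorbed) collapses the bound onto $\|\hat{\mathbf{s}}^{k}-\mathbf{1}\bar{\hat{s}}^{k}\|^2$, $\|\mathbf{y}^{k}-\mathbf{1}\bar{y}^{k}\|^2$ and $m\|\bar{y}^{k+1}-\bar{y}^{k}\|^2$; the stated constants $4\rho^2,\,36\rho^2\ell_1\ell_2,\,14\rho^2 m\ell_1\ell_2$ are what one reads off after bounding the $O(\rho^4)$ debris crudely. (If the tracker in Algo.~\ref{svrg_algo_2} literally uses the two most recent snapshots $\mathbf{y}^k,\mathbf{y}^{k-1}$, one first rewrites that displacement through the epoch-$(k-1)$ inner loop before the same collection applies.)

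For $\mathbf{y}^{k+1}$ the key point is that $\mathbf{y}^{k+1}=\mathbf{w}^{T+1}$ is the output of the epoch-$k$ inner loop, so I would invoke the inner-loop accumulation estimate~(\ref{inner_comm}) (Lemma~\ref{inner_lemma_1}, with $p=1/t_0$ and $\|\mathbf{A}\|_2\le1$, so its three prefactors are $\le1,\,t_0-1,\,2t_0-1$). Taking $u=\bar{w}^0=\bar{y}^k$ telescopes $\|\bar{w}^t-u\|^2$ and identifies $\bar{w}^{T+1}-\bar{w}^0=\bar{y}^{k+1}-\bar{y}^k$; the \texttt{FastMix} bounds $\|\mathbf{w}^0-\mathbf{1}\bar{w}^0\|^2\le\rho^2\|\mathbf{y}^k-\mathbf{1}\bar{y}^k\|^2$ and $\|\mathbf{s}^0-\mathbf{1}\bar{s}^0\|^2\le\|\hat{\mathbf{s}}^{k+1}-\mathbf{1}\bar{\hat{s}}^{k+1}\|^2$ (since $\mathbf{s}^0$ is a \texttt{FastMix} of $\hat{\mathbf{s}}^{k+1}$ and $\rho\le1$) feed those quantities back in; reducing $\|\hat{\mathbf{s}}^{k+1}-\mathbf{1}\bar{\hat{s}}^{k+1}\|^2$ via the first inequality down to $\|\hat{\mathbf{s}}^{k}-\mathbf{1}\bar{\hat{s}}^{k}\|^2$ (plus $O(\eta^2\ell_1\ell_2)$-weighted lower-order terms) and absorbing the $\eta^2\ell_1\ell_2 t_0$-type factors with $\eta^2\ell_1\ell_2\le b/(64 t_0)$ then gives $2\rho^2(1+\tfrac{b}{2t_0})\|\mathbf{y}^k-\mathbf{1}\bar{y}^k\|^2+4\rho^2\eta^2\|\hat{\mathbf{s}}^{k}-\mathbf{1}\bar{\hat{s}}^{k}\|^2+\tfrac{3\rho^2 mb}{4}\|\bar{y}^{k+1}-\bar{y}^k\|^2$ in $\mathbb{E}_T$. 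One bookkeeping caveat: $z^t$ in Lemma~\ref{inner_lemma_1} is indexed at step $t$ whereas $\mathbf{y}^{k+1}=\mathbf{w}^{T+1}$ sits at step $T+1$; the geometric-distribution identities stated at the start of Section~\ref{appendix:svrg_epoch} bridge $\mathbb{E}_T[z^T]$ and $\mathbb{E}_T[z^{T+1}]$.

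The main obstacle is really the inner-loop accumulation --- establishing that the consensus error, carried across a random $\texttt{Geom}(1/t_0)$ number of inner steps, only accumulates by $O(\rho^2\eta^2\ell_1\ell_2 t_0)$ and is therefore negligible under the step-size bound; that is exactly the content of~(\ref{inner_comm})/Lemma~\ref{inner_lemma_1}, whose matrix recursion, the estimate $\|\mathbf{A}\|_2\le1$, and the geometric sums are what I would lean on. The secondary difficulty is the circular dependence between the two estimates: they must be substituted in the right order and one has to check that every $\rho$-proportional cross-term genuinely drops an order --- this is routine, but is precisely where the smallness conditions on $\rho$ and the step-size restriction are used, and getting the advertised constants ($4$, $36$, $14$, $2(1+\tfrac{b}{2t_0})$, $\tfrac34$) out requires a little care.
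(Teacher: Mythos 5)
Your proposal is correct and follows essentially the same route as the paper: FastMix contraction plus $(\ell_1,\ell_2)$-smoothness for the tracker, the inner-loop accumulation bound (Eq.~(\ref{inner_comm}) with $p=1/t_0$, $\mathbf{w}^0=\texttt{FastMix}(\mathbf{y}^k,M)$, $\mathbf{s}^0$ a FastMix of $\hat{\mathbf{s}}^{k+1}$) for $\mathbf{y}^{k+1}$, then mutual substitution and rearrangement of the small self-referencing $\rho^2\eta^2\ell_1\ell_2$ term under the stated restrictions on $\rho$ and $\eta$ to read off the constants $4$, $36$, $14$, $2(1+\tfrac{b}{2t_0})$, $\tfrac34$. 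Your side remarks (the $y^k/y^{k-1}$ indexing in Algo.~\ref{svrg_algo_2} and the $z^T$ vs.\ $w^{T+1}$ bookkeeping) flag inconsistencies the paper itself glosses over, and do not affect the argument.
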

\begin{proof}
Recall that $\hat{\mathbf{s}}^{k+1} = \texttt{FastMix}(\mathbf{\nu}^{k+1}, M)$ in the Algorithm \ref{svrg_algo_2}, we have:  
\begin{align*}
    & \quad \|\hat{\mathbf{s}}^{k+1} - \mathbf{1} \bar{\hat{s}}^{k+1}\|^2 \\
    \leq & \rho^2 \|\mathbf{\nu}^{k+1} - \mathbf{1} \bar{\nu}^{k+1}\|^2 \\
    \leq & 2\rho^2 \|\hat{\mathbf{s}}^{k} - \mathbf{1} \bar{\hat{s}}^{k}\|^2 +   2\rho^2 \sum_{i=1}^m\|\nabla f_i (y_i^{k + 1}) - \nabla f_i (y_i^{k})\|^2 \\
     \leq & 2\rho^2 \|\hat{\mathbf{s}}^{k} - \mathbf{1} \bar{\hat{s}}^{k}\|^2 +   {2\rho^2 \ell_1 \ell_2} \|\mathbf{y}^{k + 1} - \mathbf{y}^{k}\|^2 \\
      \leq & 2\rho^2 \|\hat{\mathbf{s}}^{k} - \mathbf{1} \bar{\hat{s}}^{k}\|^2 +   {6 \rho^2 \ell_1 \ell_2} \|\mathbf{y}^{k + 1} - \mathbf{1} \bar{y}^{k+1} \|^2  + {6 \rho^2 \ell_1 \ell_2} \|\mathbf{y}^{k} - \mathbf{1} \bar{y}^{k} \|^2 + {6 \rho^2  m \ell_1 \ell_2} \norm{\bar{y}^{k+1} - \bar{y}^k}^2.
\end{align*}
Note that $\mathbf{y}^{k+1} = \mathbf{w}^{T+1}$ in Algorithm \ref{svrg_algo_2}.
By Eq. (\ref{inner_comm})  we have the following bound :
\begin{align*}
     \mathbb{E}\left[ \norm{ \mathbf{y}^{k+1} - \mathbf{1} \bar{y}^{k+1}}^2 \right] \leq  \mathbb{E}_T&\left[\frac{p \rho^2}{1-(1-p)\|A\|_2}\|\mathbf{y}^k - \mathbf{1} \bar{y}^k\|^2 + \frac{p \eta^2}{1-(1-p)\|A\|_2} \norm{\hat{\mathbf{s}}^{k+1} - \mathbf{1}\bar{\hat{s}}^{k+1} }^2 \right. \\
     &      + \frac{1-p}{1-(1-p)\|A\|_2}{16\rho^4\eta^2\ell_1\ell_2}\|\mathbf{y}^k - \mathbf{1}\bar{y}^k\|^2 \\
     & \left. + \frac{2-p}{1-(1-p)\|\mathbf{A}\|_2}{16\ell_1\ell_2m\rho^2\eta^2}\|\bar{y}^{k+1} - \bar{y}^k\|^2\right] \\
     \leq &  \rho^2 (1 + 16\rho^2\eta^2\ell_1\ell_2 t_0) \|\mathbf{y}^k - \mathbf{1} \bar{y}^k\|^2 +  \eta^2 \norm{\hat{\mathbf{s}}^{k+1} - \mathbf{1}\bar{\hat{s}}^{k+1} }^2  \\
     &    + {32 \rho^2 \ell_1 \ell_2 m t_0 \eta^2}\|\bar{y}^{k+1} - \bar{y}^k\|^2 \\
     \leq & \rho^2 (1 + 16\rho^2\eta^2\ell_1\ell_2 t_0) \|\mathbf{y}^k - \mathbf{1} \bar{y}^k\|^2 +  \eta^2 \norm{\hat{\mathbf{s}}^{k+1} - \mathbf{1}\bar{\hat{s}}^{k+1} }^2  \\
     &    + \frac{ \rho^2  m b }{2}\|\bar{y}^{k+1} - \bar{y}^k\|^2 .
\end{align*}
Therefore, we have: 
\begin{align*}
    & \quad \|\hat{\mathbf{s}}^{k+1} - \mathbf{1} \bar{\hat{s}}^{k+1}\|^2 \\
      \leq & 2\rho^2 \|\hat{\mathbf{s}}^{k} - \mathbf{1} \bar{\hat{s}}^{k}\|^2 +   {6 \rho^2 \ell_1 \ell_2} \|\mathbf{y}^{k + 1} - \mathbf{1} \bar{y}^{k+1} \|^2  + {6 \rho^2 \ell_1 \ell_2} \|\mathbf{y}^{k} - \mathbf{1} \bar{y}^{k} \|^2 + {6 \rho^2  m \ell_1 \ell_2} \norm{\bar{y}^{k+1} - \bar{y}^k}^2\\
      \leq & 2\rho^2 \|\hat{\mathbf{s}}^{k} - \mathbf{1} \bar{\hat{s}}^{k}\|^2 +   {6 \rho^4 \ell_1 \ell_2}  (1 + 16\rho^2\eta^2\ell_1\ell_2 t_0) \|\mathbf{y}^k - \mathbf{1} \bar{y}^k\|^2 + 6 \rho^2 \ell_1 \ell_2 \eta^2 \norm{\hat{\mathbf{s}}^{k+1} - \mathbf{1}\bar{\hat{s}}^{k+1} }^2  \\
     &    + {3 \rho^4 \ell_1 \ell_2 m b}\|\bar{y}^{k+1} - \bar{y}^k\|^2  + {6 \rho^2 \ell_1 \ell_2} \|\mathbf{y}^{k} - \mathbf{1} \bar{y}^{k} \|^2 + {6 \rho^2  m \ell_1 \ell_2} \norm{\bar{y}^{k+1} - \bar{y}^k}^2\\
       \leq & 2\rho^2 \|\hat{\mathbf{s}}^{k} - \mathbf{1} \bar{\hat{s}}^{k}\|^2 +   {6 \rho^4 \ell_1 \ell_2}  (1 + 16\rho^2\eta^2\ell_1\ell_2 t_0) \|\mathbf{y}^k - \mathbf{1} \bar{y}^k\|^2 + 6 \rho^2 \ell_1 \ell_2 \eta^2 \norm{\hat{\mathbf{s}}^{k+1} - \mathbf{1}\bar{\hat{s}}^{k+1} }^2  \\
     &      + {6 \rho^2 \ell_1 \ell_2} \|\mathbf{y}^{k} - \mathbf{1} \bar{y}^{k} \|^2 + {7 \rho^2  m \ell_1 \ell_2} \norm{\bar{y}^{k+1} - \bar{y}^k}^2.
\end{align*}
After rearrangement, we have
\begin{equation}
    \begin{split}
          \|\hat{\mathbf{s}}^{k+1} - \mathbf{1} \bar{\hat{s}}^{k+1}\|^2  \leq & 4\rho^2 \|\hat{\mathbf{s}}^{k} - \mathbf{1} \bar{\hat{s}}^{k}\|^2 +   {12 \rho^4 \ell_1 \ell_2}  (1 + 16\rho^2\eta^2\ell_1\ell_2 t_0) \|\mathbf{y}^k - \mathbf{1} \bar{y}^k\|^2  \\
          &+ {12 \rho^2 \ell_1 \ell_2} \|\mathbf{y}^{k} - \mathbf{1} \bar{y}^{k} \|^2 + {14 \rho^2  m \ell_1 \ell_2} \norm{\bar{y}^{k+1} - \bar{y}^k}^2\\
          \leq &  4\rho^2 \|\hat{\mathbf{s}}^{k} - \mathbf{1} \bar{\hat{s}}^{k}\|^2 +   {36 \rho^2 \ell_1 \ell_2}   \|\mathbf{y}^k - \mathbf{1} \bar{y}^k\|^2  + {14 \rho^2  m \ell_1 \ell_2} \norm{\bar{y}^{k+1} - \bar{y}^k}^2.
    \end{split}
\end{equation}
So it follows:
\begin{align*}
 & \mathbb{E}[ \norm{ \mathbf{y}^{k+1} - \mathbf{1} \bar{y}^{k+1}}^2 ] \\
    \leq  \mathbb{E}_T&[ 2\rho^2  \|\mathbf{y}^k - \mathbf{1} \bar{y}^k\|^2 +   4\rho^2 \eta^2 \|\hat{\mathbf{s}}^{k} - \mathbf{1} \bar{\hat{s}}^{k}\|^2 +   {36 \rho^2 \eta^2 \ell_1 \ell_2}   \|\mathbf{y}^k - \mathbf{1} \bar{y}^k\|^2  \\
    & + {14 \rho^2  m \eta^2 \ell_1 \ell_2} \norm{\bar{y}^{k+1} - \bar{y}^k}^2    + \frac{ \rho^2  m b }{2}\|\bar{y}^{k+1} - \bar{y}^k\|^2] \\
    \leq  \mathbb{E}_T&[ 2\rho^2 (1+ \frac{b}{2 t_0})  \|\mathbf{y}^k - \mathbf{1} \bar{y}^k\|^2 +   4\rho^2 \eta^2 \|\hat{\mathbf{s}}^{k} - \mathbf{1} \bar{\hat{s}}^{k}\|^2   + \frac{3 \rho^2  m b }{4}\|\bar{y}^{k+1} - \bar{y}^k\|^2] .
\end{align*}
\end{proof}
Similar to the convergence analysis of PMGT-KatyushaX, we construct the system of linear inequalities for the consensus error.
Let $\mathbf{z}_{svrg}^{k+1} = \begin{pmatrix}
      \norm{\mathbf{y}^{k+1} - \mathbf{1} \bar{y}^{k+1}}^2 \\
      \eta^2 \norm{\hat{\mathbf{s}}^{k+1} - \mathbf{1}\bar{\hat{s}}^{k+1} }^2
\end{pmatrix}$, then it satisfies the following recurrence:
   \begin{equation}
        {z}_{svrg}^{t+1} = \mathbf{A}_{svrg}{z}_{svrg}^t + {b}_{svrg}^t,
    \end{equation}
    where 
    \begin{align*}
    \quad \mathbf{A}_{svrg} &= 2\rho^2 \begin{pmatrix}
    1 + \frac{b}{2 t_0} & 2\\
    2 & 18 \eta^2 \ell_1 \ell_2 \\
    \end{pmatrix} \\
    \end{align*}
    and 
    \begin{align*} {b}^t&=\begin{pmatrix}
       \frac{3 \rho^2 m b}{4} \norm{\bar{y}^{k+1} - \bar{y}^k}^2 \\
       {14 \rho^2 m \eta^2 \ell_1 \ell_2 } \norm{\bar{y}^{k+1} - \bar{y}^k}^2
    \end{pmatrix}.
    \end{align*}
Expand ${z}_{svrg}^{t+1} = \mathbf{A}_{svrg}^{t+1}{z}^0 + \sum_{i=0}^t \mathbf{A}_{svrg}^{t-i}b_{svrg}^i$. Due to the initialization, ${z}_{svrg}^{t+1} = \mathbf{0}$. We can bound the norm of $\mathbf{A}_{svrg}$ and $b_{svrg}^i$ by the following inequalities:
\begin{align*}
    \norm{\mA_{svrg}}_2 \leq \norm{\mA_{svrg}}_F \leq 2 \rho^2 (5 + \frac{b}{t_0}) \leq \frac{1}{2}
\end{align*}
and 
\begin{align*}
    \norm{b_{svrg}^i} \leq \rho^2 m b \norm{\bar{y}^{i+1} - \bar{y}^i}^2.
\end{align*}
Then one has
\begin{align*}
   \norm{ {z}_{svrg}^{t+1}} \leq \sum_{i=0}^t \frac{\rho^2 m b}{2^{t-i}} \norm{\bar{y}^{i+1} - \bar{y}^i}^2.
\end{align*}
Now we can prove the main convergence theorem for the PMGT-SVRG as below.
\begin{theorem}
Let $F(\cdot)$ defined in (\ref{strongconvex_algo_2}) be a $\sigma$-strongly convex function, function $f(\cdot)$ is $L$-smooth, and each component function $f_{i,j}$ is $(\ell_1, \ell_2)$-smooth.  As long as $\rho = \min\left({\sqrt{\frac{1}{64b}}, \sqrt{\frac{1}{\frac{ 9 m J b}{\sigma t_0} + 128 k m J b (\eta t_0 + \frac{1}{\sigma}) }}} \right)$, then the outputs of PMGT-SVRG satisfy
\begin{align*}
       \mathbb{E}[F(\bar{y}^{k+1}) - F(x^*)]  &\leq \mathbb{E}\left[\frac{2 + \frac{1}{t_0 \eta \sigma} }{\left(1 + \frac{t_0 \eta \sigma}{4}\right)^{k+1}}\left(F(\bar{y}^0) - F(x^*)\right)\right] .
\end{align*}
\label{theorem_strong}
\end{theorem}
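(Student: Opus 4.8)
The plan is to mirror the centralized SVRG analysis of Section~\ref{appendix:svrg_analysis}, replacing the single-machine one-epoch bound (Lemma~\ref{lem:SVRG}) by its decentralized counterpart (Lemma~\ref{lemma_inner}), and then to absorb the extra consensus-error terms through a joint induction with the linear system for $\mathbf{z}_{svrg}^k$. First I would note that in Algorithm~\ref{svrg_algo_2} we have $\vw^0 = \texttt{FastMix}(\mathbf{y}^k,M)$ and $\mathbf{y}^{k+1} = \mathbf{w}^{T+1}$, so that $\bar{w}^0 = \bar{y}^k$ and $\bar{w}^{T+1} = \bar{y}^{k+1}$ because FastMix preserves the average under a doubly stochastic $W$ (Definition~\ref{doubly_stochastic}); also $\|\vw^0 - \mathbf{1}\bar{w}^0\|^2 = \|\mathbf{y}^k - \mathbf{1}\bar{y}^k\|^2$. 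Lemma~\ref{lemma_inner} then reads, for any $u$,
\begin{equation*}
  \mathbb{E}[F(\bar{y}^{k+1}) - F(u)] \le \mathbb{E}\Big[-\tfrac{\|\bar{y}^{k+1}-\bar{y}^k\|^2}{4\eta t_0} + \tfrac{\langle \bar{y}^k - u,\, \bar{y}^k - \bar{y}^{k+1}\rangle}{t_0\eta} - \tfrac{\sigma}{8}\|u - \bar{y}^{k+1}\|^2 + \big(J + \tfrac{1}{4\eta m}\big)\|\mathbf{y}^k - \mathbf{1}\bar{y}^k\|^2 + J\eta^2\|\mathbf{s}^0 - \mathbf{1}\bar{s}^0\|^2\Big].
\end{equation*}

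Applying this with $u = \bar{y}^k$ gives $\big(\tfrac{1}{4\eta t_0}+\tfrac{\sigma}{8}\big)\mathbb{E}\|\bar{y}^{k+1}-\bar{y}^k\|^2 \le \mathbb{E}[F(\bar{y}^k)-F(\bar{y}^{k+1})] + R_k$, where $R_k$ collects the two consensus terms; applying it with $u = x^*$, expanding $\langle \bar{y}^k - x^*, \bar{y}^k - \bar{y}^{k+1}\rangle = \tfrac12(\|\bar{y}^k - \bar{y}^{k+1}\|^2 + \|\bar{y}^k - x^*\|^2 - \|\bar{y}^{k+1} - x^*\|^2)$, and using the first inequality to cancel the $\|\bar{y}^{k+1}-\bar{y}^k\|^2$ term (exactly as in the proof of the centralized SVRG theorem, but with $m\eta$ replaced by $t_0\eta$ and a factor $\tfrac{\sigma}{8}$ instead of $\tfrac{\sigma}{4}$), yields a recursion of the form
\begin{equation*}
  \mathbb{E}[F(\bar{y}^{k+1})-F(x^*)] \le \tfrac{1}{1+t_0\eta\sigma/4}\,\mathbb{E}[F(\bar{y}^k)-F(x^*)] + \mathbb{E}\Big[\tfrac{\|\bar{y}^k-x^*\|^2}{2t_0\eta} - \big(\tfrac{1}{2t_0\eta}+\tfrac{\sigma}{4}\big)\|\bar{y}^{k+1}-x^*\|^2\Big] + \mathrm{(consensus\ remainder)}.
\end{equation*}

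Next I would control the remainder. The two quantities $\|\mathbf{y}^k - \mathbf{1}\bar{y}^k\|^2$ and $\eta^2\|\hat{\mathbf{s}}^k - \mathbf{1}\bar{\hat{s}}^k\|^2$ are the coordinates of $\mathbf{z}_{svrg}^k$, for which the excerpt has already established $\|\mathbf{z}_{svrg}^{k}\| \le \sum_{i=0}^{k-1} \tfrac{\rho^2 m b}{2^{\,k-1-i}}\|\bar{y}^{i+1}-\bar{y}^i\|^2$ (after checking $\|\mathbf{A}_{svrg}\|_2 \le \tfrac12$ under $\rho \le \sqrt{1/(64b)}$ and $\eta \le \tfrac18\sqrt{b/(\ell_1\ell_2 t_0)}$). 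Using the strong convexity of $F$ ($\tfrac{\sigma}{2}\|\bar{y}^i - x^*\|^2 \le F(\bar{y}^i)-F(x^*)$) I bound $\|\bar{y}^{i+1}-\bar{y}^i\|^2 \le \tfrac{4}{\sigma}\big((F(\bar{y}^{i+1})-F(x^*)) + (F(\bar{y}^i)-F(x^*))\big)$, so the consensus remainder becomes a geometrically weighted sum of past and present suboptimalities. Then I run the induction: assume $F(\bar{y}^i)-F(x^*) \le \tfrac{C}{(1+t_0\eta\sigma/4)^i}(F(\bar{y}^0)-F(x^*))$ for all $i\le k$ with $C = 2 + \tfrac{1}{t_0\eta\sigma}$, substitute into the remainder, sum the geometric series (using $1-t_0\eta\sigma/4 \le (1+t_0\eta\sigma/4)^{-1}$ to keep powers aligned and the cross-weights $2^{-(k-1-i)}(1+t_0\eta\sigma/4)^{-i}$ summing to $O((1+t_0\eta\sigma/4)^{-k})$), choose $\rho$ as small as in the theorem statement so that $\rho^2 m b$ times the accumulated constants ($J$, $1/\sigma$, $\eta t_0$, and the factor $k$ from the number of summed terms) is dominated, telescope the $\|\bar{y}^i - x^*\|^2$ differences, and use $\bar{y}^0 = \bar{q}^0 = \bar{x}^0$ to close the step with the claimed constant $2 + \tfrac{1}{t_0\eta\sigma}$.

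The main obstacle I expect is precisely this bookkeeping in the joint induction: the consensus remainder at step $k+1$ depends on \emph{all} earlier suboptimalities through the fast weights $2^{-(k-1-i)}$, while the inductive bound decays only at the slower rate $(1+t_0\eta\sigma/4)^{-i}$, so one must verify the mixed sum telescopes to $O((1+t_0\eta\sigma/4)^{-(k+1)})$ and, crucially, that the resulting constant can be absorbed into the allowed range of $\rho$ \emph{without} inflating $C$; this is exactly where the explicit $k$-dependence in the bound on $\rho$ (the terms $\tfrac{k}{\sigma}$ inside the second square root) is forced, and where the numerical factors $8,16,64$ get pinned down. A secondary check is that the hypotheses $\eta \le \min(\tfrac{1}{2L}, \tfrac18\sqrt{b/(\ell_1\ell_2 t_0)})$ and the $\rho$-condition are simultaneously what Lemma~\ref{lemma_inner} and the $\mathbf{z}_{svrg}$ recursion each require, so that no further restriction on the parameters is needed.
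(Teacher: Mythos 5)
Your proposal follows essentially the same route as the paper's proof: the same two applications of Lemma~\ref{lemma_inner} (once with $u=\bar{y}^k$ to control $\|\bar{y}^{k+1}-\bar{y}^k\|^2$, once with $u=x^*$), the same linear-system bound $\|\mathbf{z}_{svrg}^{k}\|\le\sum_{i=0}^{k-1}\frac{\rho^2 m b}{2^{k-1-i}}\|\bar{y}^{i+1}-\bar{y}^i\|^2$, and the same induction in which the geometrically weighted consensus remainder is absorbed by the $k$-dependent smallness of $\rho$, yielding the constant $2+\frac{1}{t_0\eta\sigma}$. The only deviation is bookkeeping: the paper inducts directly on the successive differences, assuming $\|\bar{y}^{i+1}-\bar{y}^i\|^2\le C_y\,(F(\bar{y}^0)-F(x^*))/(1+\tfrac{t_0\eta\sigma}{4})^i$ with $C_y=8\eta t_0+8/\sigma$ and re-verifies this at the end from the $u=\bar{y}^k$ inequality, whereas you induct on the suboptimality and convert differences via strong convexity ($\|\bar{y}^{i+1}-\bar{y}^i\|^2\le\tfrac{4}{\sigma}(\cdot)$), which only shifts the numerical constants that get folded into the admissible $\rho$ (your accounting effectively replaces the $\eta t_0$ term in the stated $\rho$ threshold by $\tfrac{1}{t_0\eta\sigma^2}$, a constant-level difference, not a gap in the argument).
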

\begin{proof}
Recall that Lemma \ref{lemma_inner}, we have the following inequality at the end of the loop:
\begin{align*}
    \mathbb{E}\left[F(\bar{w}^{T+1}) - F(u)\right] \leq  \mathbb{E}&\left[-\frac{\|\bar{w}^{T+1} - \bar{w}^0\|^2}{4\eta t_0} + \frac{\langle \bar{w}^0 - u, \bar{w}^0 - \bar{w}^{T+1}\rangle}{t_0\eta} - \frac{\sigma_f + \sigma_{\psi}}{8}\| u - \bar{w}^{T+1}\|^2 \right.\\
    &  \left.  + \left( J + \frac{1}{2 \eta m  t_0}\right)\| \vw^0 - \mathbf{1}\bar{w}^0  \|^2 +  J \eta^2 \| \mathbf{s}^0 - \mathbf{1}\bar{s}^0  \|^2 \right].
\end{align*}
After replacing $\bar{w}^{T+1}$ with $\bar{y}^{k+1}$ and $\bar{w}^0$ with $\bar{y}^k$, it follows that:
\begin{equation}
\begin{split}
   \mathbb{E}\left[F(\bar{y}^{k+1}) - F(u)\right] \leq  \mathbb{E}&\left[-\frac{\|\bar{y}^{k+1} - \bar{y}^k\|^2}{4\eta t_0} + \frac{\langle \bar{y}^k - u, \bar{y}^k - \bar{y}^{k+1}\rangle}{t_0\eta} - \frac{\sigma_f + \sigma_{\psi}}{8}\| u - \bar{y}^{k+1}\|^2 \right. \\
    &   \left. + \left( J + \frac{1}{2 \eta m  t_0}\right)\| \mathbf{y}^k - \mathbf{1}\bar{y}^k  \|^2 +  J \eta^2 \| \hat{\mathbf{s}}^{k+1} - \mathbf{1}\bar{\hat{s}}^{k+1}  \|^2 \right] .
    \end{split}
    \label{svrg_inner}
\end{equation}

Replacing $u$ with $\bar{y}^k$ in Eq. (\ref{svrg_inner}), we have
\begin{equation}
\begin{split}
     \mathbb{E}&\left[\left(\frac{1}{4\eta t_0} + \frac{\sigma}{8}\right) \norm{\bar{y}^{k+1} - \bar{y}^k}^2\right]  \\
    \leq  \mathbb{E}&\left[F(\bar{y}^k) - F(\bar{y}^{k+1}) + \left( J + \frac{1}{2 \eta m  t_0} \right)\| \mathbf{y}^k - \mathbf{1}\bar{y}^k  \|^2  +    J \eta^2 \|\hat{\mathbf{s}}^{k+1} - \mathbf{1} \bar{\hat{s}}^{k+1}\|^2 \right] \\
    \leq  \mathbb{E}&\left[F(\bar{y}^k) - F(\bar{y}^{k+1}) + \left( J + \frac{1}{2 \eta m  t_0} \right)\| \mathbf{y}^k - \mathbf{1}\bar{y}^k  \|^2  +     4\rho^2 J \eta^2 \|\hat{\mathbf{s}}^{k} - \mathbf{1} \bar{\hat{s}}^{k}\|^2 \right. \\
    & \left. +   {36 \rho^2 \ell_1 \ell_2 J \eta^2 }   \|\mathbf{y}^k - \mathbf{1} \bar{y}^k\|^2  + {14 \rho^2  m \ell_1 \ell_2 J \eta^2 } \norm{\bar{y}^{k+1} - \bar{y}^k}^2 \right] \\
    \leq  \mathbb{E}&\left[F(\bar{y}^k) - F(\bar{y}^{k+1}) + \left( J + \frac{1}{2 \eta m  t_0} \right)\| \mathbf{y}^k - \mathbf{1}\bar{y}^k  \|^2  +     4\rho^2 J \eta^2 \|\hat{\mathbf{s}}^{k} - \mathbf{1} \bar{\hat{s}}^{k}\|^2 \right.\\
    & \left. +   {36 \rho^2 \ell_1 \ell_2 J \eta^2 }   \|\mathbf{y}^k - \mathbf{1} \bar{y}^k\|^2  + \frac{ \rho^2  m  J b }{4 t_0} \norm{\bar{y}^{k+1} - \bar{y}^k}^2 \right] \\
    \leq  \mathbb{E}&\left[F(\bar{y}^k) - F(\bar{y}^{k+1}) +  3 J \| \mathbf{z}_{svrg}^k \|  + \frac{ \rho^2  m  J b }{4 t_0} \norm{\bar{y}^{k+1} - \bar{y}^k}^2 \right].  
    \end{split}
    \label{svrg_recur_1}
\end{equation}
Replacing $u$ with $x^*$ in Eq. (\ref{svrg_inner}), we have
\begin{align*}
         \mathbb{E}\left[F(\bar{y}^{k+1}) - F(x^*)\right] \leq  \mathbb{E}&\left[-\frac{\|\bar{y}^{k+1} - \bar{y}^k\|^2}{4\eta t_0} + \frac{\langle \bar{y}^k - x^*, \bar{y}^k - \bar{y}^{k+1}\rangle}{t_0\eta} - \frac{\sigma}{8}\| x^* - \bar{y}^{k+1}\|^2 \right.\\
    &  \left.  + \left( J + \frac{1}{2 \eta m  t_0}\right)\| \mathbf{y}^k - \mathbf{1}\bar{y}^k  \|^2 +  J \eta^2 \| \hat{\mathbf{s}}^{k+1} - \mathbf{1}\bar{\hat{s}}^{k+1}  \|^2 \right]\\
     \leq \mathbb{E}&\left[-\frac{\|\bar{y}^{k+1} - \bar{y}^k\|^2}{4\eta t_0} +
    \frac{\norm{\bar{y}^k - x^*}^2 + \norm{ \bar{y}^k - \bar{y}^{k+1}}^2 - \norm{\bar{y}^{k+1} - x^*}^2}{2 t_0 \eta} \right. \\
    & \left. - \frac{\sigma}{8}\| x^* - \bar{y}^{k+1}\|^2 + \left( J + \frac{1}{2 \eta m  t_0}\right)\| \mathbf{y}^k - \mathbf{1}\bar{y}^k  \|^2  +  J \eta^2 \| \hat{\mathbf{s}}^{k+1} - \mathbf{1}\bar{\hat{s}}^{k+1}  \|^2 \right]\\
      \leq \mathbb{E}&\left[\frac{\|\bar{y}^{k+1} - \bar{y}^k\|^2}{4\eta t_0} +
    \frac{\norm{\bar{y}^k - x^*}^2 }{2 t_0 \eta}  - \left(\frac{\sigma}{8} + \frac{1}{2 t_0 \eta}\right)\| x^* - \bar{y}^{k+1}\|^2 \right.\\
    &  + 2  J \| \mathbf{y}^k - \mathbf{1}\bar{y}^k  \|^2  +  4\rho^2 J \eta^2\|\hat{\mathbf{s}}^{k} - \mathbf{1} \bar{\hat{s}}^{k}\|^2 +   {36 \rho^2 \ell_1 \ell_2 J \eta^2}   \|\mathbf{y}^k - \mathbf{1} \bar{y}^k\|^2 \\
     & \left. + {14 \rho^2 m \ell_1 \ell_2 J \eta^2} \norm{\bar{y}^{k+1} - \bar{y}^k}^2 \right]\\
       \leq \mathbb{E}&\left[\frac{\|\bar{y}^{k+1} - \bar{y}^k\|^2}{4\eta t_0} +
    \frac{\norm{\bar{y}^k - x^*}^2 }{2 t_0 \eta}  - \left(\frac{\sigma}{8} + \frac{1}{2 t_0 \eta}\right)\| x^* - \bar{y}^{k+1}\|^2 \right. \\
    &  \left. + 3  J \norm{z_{svrg}^{k}}  + \frac{\rho^2 m  J b}{4 t_0} \norm{\bar{y}^{k+1} - \bar{y}^k}^2 \right].
\end{align*}
Assume that $\norm{\bar{y}^{i+1} - \bar{y}^i}^2 \leq \frac{C_y \left(F(\bar{y}^0) - F(x^*)\right)}{(1 + \frac{t_0 \eta \sigma}{4})^i}$ for $i \in [ k-1]$, then it follows that
\begin{align*}
      \mathbb{E}&\left[F(\bar{y}^{k+1}) - F(x^*)\right] \\
     \leq \mathbb{E}&\left[\left(\frac{1}{4\eta t_0} + \frac{\rho^2 m  J b}{4 t_0}\right)\|\bar{y}^{k+1} - \bar{y}^k\|^2 +
    \frac{\norm{\bar{y}^k - x^*}^2 }{2 t_0 \eta}  - \left(\frac{\sigma}{8} + \frac{1}{2 t_0 \eta}\right)\| x^* - \bar{y}^{k+1}\|^2 \right. \\
    & \left. + 3  J \sum_{i=0}^{k-1} \frac{\rho^2 m b}{2^{k-1-i}} \frac{C_y \left(F(\bar{y}^0) - F(x^*)\right)}{\left(1 + \frac{t_0 \eta \sigma}{4}\right)^i} \right]\\
    \leq \mathbb{E}&\left[\frac{\frac{1}{4\eta t_0} + \frac{\rho^2 m  J b}{4 t_0}}{\frac{1}{4\eta t_0} + \frac{\sigma}{8} -\frac{\rho^2 m  J b}{4 t_0}}\left(F(\bar{y}^k) - F(x^*) + 3J \norm{\mathbf{z}_{svrg}^k}\right) +
    \frac{\norm{\bar{y}^k - x^*}^2 }{2 t_0 \eta}  - \left(\frac{\sigma}{8} + \frac{1}{2 t_0 \eta}\right)\| x^* - \bar{y}^{k+1}\|^2 \right.\\
    & \left. + 3  J \sum_{i=0}^{k-1} \frac{\rho^2 m b}{2^{k-1-i}} \frac{C_y \left(F(\bar{y}^0) - F(x^*)\right)}{\left(1 + \frac{t_0 \eta \sigma}{4}\right)^i} \right]\\
    \leq \mathbb{E}&\left[\frac{\frac{1}{4\eta t_0} + \frac{\rho^2 m  J b}{4 t_0}}{\frac{1}{4\eta t_0} + \frac{\sigma}{8} -\frac{\rho^2 m  J b}{4 t_0}}\left(F(\bar{y}^k) - F(x^*) \right) +
    \frac{\norm{\bar{y}^k - x^*}^2 }{2 t_0 \eta}  - \left(\frac{\sigma}{8} + \frac{1}{2 t_0 \eta}\right)\| x^* - \bar{y}^{k+1}\|^2 \right.\\
    & \left. + 3  J  \frac{\frac{1}{2\eta t_0} + \frac{\sigma}{8}}{\frac{1}{4\eta t_0} + \frac{\sigma}{8} -\frac{\rho^2 m  J b}{4 t_0}} \sum_{i=0}^{k-1} \frac{\rho^2 m b}{2^{k-1-i}} \frac{C_y \left(F(\bar{y}^0) - F(x^*)\right)}{\left(1 + \frac{t_0 \eta \sigma}{4}\right)^i} \right]\\
     \leq \mathbb{E}&\left[\frac{1}{1 + \frac{t_0 \eta \sigma}{4}}\left(F(\bar{y}^k) - F(x^*) \right) +
    \frac{\norm{\bar{y}^k - x^*}^2 }{2 t_0 \eta}  - \left(\frac{\sigma}{8} + \frac{1}{2 t_0 \eta}\right)\| x^* - \bar{y}^{k+1}\|^2 \right. \\
    & \left. + 6    \sum_{i=0}^{k-1} \frac{\rho^2 m J b}{2^{k-1-i}} \frac{C_y (F(\bar{y}^0) - F(x^*))}{(1 + \frac{t_0 \eta \sigma}{4})^i} \right]\\
     \leq \mathbb{E}&\left[\frac{1}{(1 + \frac{t_0 \eta \sigma}{4})^{k+1}}\left(F(\bar{y}^0) - F(x^*)  + \frac{\norm{\bar{y}^0 - x^*}^2 }{2 t_0 \eta}\right) +   {6\rho^2 k m J b} \frac{C_y (F(\bar{y}^0) - F(x^*))}{\left(1 + \frac{t_0 \eta \sigma}{4}\right)^{k-1}} \right]\\
      \leq \mathbb{E}&\left[\frac{1}{(1 + \frac{t_0 \eta \sigma}{4})^{k+1}}\left(F(\bar{y}^0) - F(x^*)  + \frac{F(\bar{y}^0) - F(x^*) }{ t_0 \eta \sigma}\right) +   {10\rho^2 k m J b} \frac{C_y \left(F(\bar{y}^0) - F(x^*)\right)}{\left(1 + \frac{t_0 \eta \sigma}{4}\right)^{k+1}} \right]\\
       \leq \mathbb{E}&\left[\frac{1 + \frac{1}{t_0 \eta \sigma} + 10 \rho^2 k m J b C_y}{\left(1 + \frac{t_0 \eta \sigma}{4}\right)^{k+1}}\left(F(\bar{y}^0) - F(x^*)\right)\right].
\end{align*}
 Rearranging Eq. (\ref{svrg_recur_1}), if we choose  $C_y = 8\eta  t_0 + \frac{8}{\sigma}$,then we have
 \begin{align*}
      &  \mathbb{E}\left[\left(\frac{1}{4\eta t_0} + \frac{\sigma}{16}\right) \norm{\bar{y}^{k+1} - \bar{y}^k}^2\right]  \\
      \leq & \mathbb{E}\left[F(\bar{y}^k) - F(x^*) + 3J \norm{\mathbf{z}_{svrg}^k}\right] \\
       \leq & \mathbb{E}\left[\frac{1 + \frac{1}{t_0 \eta \sigma} + 10 \rho^2 k m J b C_y}{\left(1 + \frac{t_0 \eta \sigma}{4}\right)^{k}}\left(F(\bar{y}^0) - F(x^*)\right) + {3\rho^2 k m J b} \frac{C_y \left(F(\bar{y}^0) - F(x^*)\right)}{\left(1 + \frac{t_0 \eta \sigma}{4}\right)^{k-1}} \right] \\
        \leq& \mathbb{E}\left[\frac{1 + \frac{1}{t_0 \eta \sigma} + 14 \rho^2 k m J b C_y}{\left(1 + \frac{t_0 \eta \sigma}{4}\right)^{k}}\left(F(\bar{y}^0) - F(x^*)\right)\right] \\
        \leq & \frac{ \left(\frac{1}{4\eta t_0} + \frac{\sigma}{16}\right) C_y \left(F(\bar{y}^0) - F(x^*)\right)}{\left(1 + \frac{t_0 \eta \sigma}{4}\right)^{k+1}} \\
        = &\frac{ \frac{C_y}{4\eta t_0}  \left(F(\bar{y}^0) - F(x^*)\right)}{\left(1 + \frac{t_0 \eta \sigma}{4}\right)^{k}}.
 \end{align*}
\end{proof}
\section{Extension to General Convex Function}\label{appendix:general_convex_ext}
In this section, we adapt the theoretical result of PMGT-KatyushaX from the strongly convex case to the general convex setting. 
Since the non-strongly convex objective function means $\sigma = 0$, we cannot directly apply Theorem~\ref{theorem_strong_convex}. 
To circumvent this issue, we define the approximate function as follows: 
\begin{equation}
    F_{\epsilon}(x) \coloneqq f(x) + \psi(x) + \frac{\epsilon_f}{2}\norm{x}^2,
\end{equation}
where $\epsilon_f>0$. It is easy to verify that $F_{\epsilon}(\cdot)$ is  $\epsilon_f$-strongly convex.
Let $x_{\epsilon}^*$ be the minimizer of function $F_{\epsilon}(\cdot)$. 
From the above deduction, it follows that
\begin{theorem}
Let $F(\cdot)$ defined in (\ref{strongconvex_algo_2}) be a $L$-smooth and convex function, each component function $f_{i,j}$ is $(\ell_1, \ell_2)$-smooth. Additionally, we assume that the underlying network matrix $W$ is doubly stochastic, and it satisfies the properties defined in Definition (\ref{doubly_stochastic}).
If we choose $\epsilon_f = \min(\epsilon, \epsilon/\norm{x_{\epsilon}^*}^2)$ with the desired accuracy $\epsilon$,
then applying Algorithm \ref{strongconvex_algo_2} on $F_{\epsilon}(\cdot)$ achieves the gradient complexity of
\begin{align*}
    \mathcal{O}\left(\Big(n + \frac{L^{\frac{1}{2}} + (\ell_1 \ell_2)^\frac{1}{4}}{\sqrt{\epsilon}}n^{\frac{3}{4}}\Big) \log \frac{F(\bar{y}^0 - F(x_{\epsilon}^*))}{\epsilon}\right)
\end{align*}
 and communication complexity of  
\begin{align*}
\begin{split}
      \tilde{\mathcal{O}}\left( \frac{ \left(\sqrt{n} + \frac{L^{\frac{1}{2}} + (\ell_1 \ell_2)^\frac{1}{4}}{\sqrt{\epsilon}}  n^{\frac{1}{4}} \right)  }{\sqrt{1 - \lambda_2(W)}} \log^2 \frac{F(\bar{y}^0 - F(x^*))}{\epsilon} \right).
\end{split}
\end{align*}
\end{theorem}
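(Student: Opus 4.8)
The plan is to reduce to the strongly convex case already settled by Theorem~\ref{theorem_strong_convex} and Corollary~\ref{corollary_strong} via Tikhonov regularization. First I would write $F_\epsilon = f + \psi_\epsilon$ with $\psi_\epsilon(x) = \psi(x) + \tfrac{\epsilon_f}{2}\norm{x}^2$, so that the extra quadratic is absorbed entirely into the nonsmooth block: $f$ stays exactly $L$-smooth, each $f_{i,j}$ stays exactly $(\ell_1,\ell_2)$-smooth, $\psi_\epsilon$ remains proper convex and is now $\epsilon_f$-strongly convex, hence $F_\epsilon$ is $\epsilon_f$-strongly convex; the proximal step $\mathbf{prox}_{m\eta,\Psi_\epsilon}$ is still available in closed form (the quadratic only shifts and rescales its argument). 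Thus $F_\epsilon$ meets every hypothesis of Theorem~\ref{theorem_strong_convex} under the single substitution $\sigma \leftarrow \epsilon_f$, and $W$ is unchanged so Definition~\ref{doubly_stochastic} still applies.

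Next I would instantiate Corollary~\ref{corollary_strong} on $F_\epsilon$ with batch size $b=\sqrt n$. To reach $\mathbb{E}[F_\epsilon(\bar y^K) - F_\epsilon(x_\epsilon^*)] \le \epsilon$ it then suffices to run
\[
  K = \mathcal{O}\!\Big(\big(1 + \tfrac{L^{1/2} + (\ell_1\ell_2)^{1/4}}{\sqrt{\epsilon_f}\, n^{1/4}}\big)\log\tfrac{F_\epsilon(\bar y^0) - F_\epsilon(x_\epsilon^*)}{\epsilon}\Big)
\]
epochs, costing $\mathcal{O}\!\big((n + \tfrac{L^{1/2}+(\ell_1\ell_2)^{1/4}}{\sqrt{\epsilon_f}}\,n^{3/4})\log(\cdot/\epsilon)\big)$ SFO calls and $\mathcal{O}(KnM/b)$ communication rounds, where the FastMix depth is $M = \mathcal{O}\big(\log(\tfrac{L}{n\epsilon_f} + \tfrac{\sqrt{\ell_1\ell_2}}{n\epsilon_f})\big)$ by the remark following Theorem~\ref{theorem_strong_convex}. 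It remains to pin down $\epsilon_f$: with the stated choice $\epsilon_f = \min(\epsilon,\ \epsilon/\norm{x_\epsilon^*}^2)$ one has $\epsilon_f = \Theta(\epsilon)$, since $\norm{x_\epsilon^*}$ is a fixed problem-dependent constant — indeed $\norm{x_\epsilon^*} \le \norm{x^*}$, because $F_\epsilon(x_\epsilon^*) \le F_\epsilon(x^*)$ forces $F(x_\epsilon^*) - F(x^*) \le \tfrac{\epsilon_f}{2}(\norm{x^*}^2 - \norm{x_\epsilon^*}^2)$ while $F(x_\epsilon^*) \ge F(x^*)$. Substituting $1/\sqrt{\epsilon_f} = \Theta(1/\sqrt\epsilon)$ turns the SFO bound into the advertised $\mathcal{O}\!\big((n + \tfrac{L^{1/2}+(\ell_1\ell_2)^{1/4}}{\sqrt\epsilon}n^{3/4})\log(\cdot/\epsilon)\big)$. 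For communication, because now $\sigma = \epsilon_f = \Theta(\epsilon)$ the depth $M$ genuinely scales like $\log(1/\epsilon)$ rather than being a constant, so $\mathcal{O}(KnM/b)$ picks up \emph{two} logarithmic factors — one from $K$, one from $M$ — yielding the $\log^2(\cdot/\epsilon)$ in the stated communication complexity.

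Finally I would transfer the guarantee back to $F$. Dropping $\tfrac{\epsilon_f}{2}\norm{\bar y^K}^2 \ge 0$ gives $F(\bar y^K) \le F_\epsilon(\bar y^K)$, and $F(x^*) = F_\epsilon(x^*) - \tfrac{\epsilon_f}{2}\norm{x^*}^2 \ge F_\epsilon(x_\epsilon^*) - \tfrac{\epsilon_f}{2}\norm{x^*}^2$ by optimality of $x_\epsilon^*$, so
\[
  \mathbb{E}[F(\bar y^K) - F(x^*)] \le \mathbb{E}[F_\epsilon(\bar y^K) - F_\epsilon(x_\epsilon^*)] + \tfrac{\epsilon_f}{2}\norm{x^*}^2 = \mathcal{O}(\epsilon),
\]
where $\tfrac{\epsilon_f}{2}\norm{x^*}^2 = \mathcal{O}(\epsilon)$ follows from $\epsilon_f = \Theta(\epsilon)$ and $\norm{x^*}/\norm{x_\epsilon^*} = \Theta(1)$ for $\epsilon$ small; rescaling $\epsilon$ by an absolute constant then makes $\bar y^K$ an $\epsilon$-accurate solution of the original convex problem at the claimed cost. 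The main obstacle is bookkeeping rather than conceptual: one must keep the regularizer inside $\psi$ so that $L,\ell_1,\ell_2$ — hence the numerators of the complexities — are untouched, justify the self-referential-looking choice of $\epsilon_f$ (legitimate because $x_\epsilon^*$ is uniquely determined once $\epsilon_f$ is fixed, and $\epsilon_f=\Theta(\epsilon)$ is then verified a posteriori), and track the dependence of $M$ on $\sigma=\epsilon_f$ to recover the extra logarithmic factor; everything else is a direct specialization of Corollary~\ref{corollary_strong}.
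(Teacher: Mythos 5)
Your proposal follows essentially the same route as the paper: regularize to $F_{\epsilon}(x)=f(x)+\psi(x)+\tfrac{\epsilon_f}{2}\norm{x}^2$, invoke the strongly convex result (Theorem~\ref{theorem_strong_convex}/Corollary~\ref{corollary_strong}) with $\sigma=\epsilon_f$, pick $\epsilon_f=\min(\epsilon,\epsilon/\norm{x_{\epsilon}^*}^2)$, and transfer the suboptimality bound back to $F$, with the extra $\log$ in communication coming from the dependence of the FastMix depth $M$ on $1/\epsilon_f$. Your bookkeeping (absorbing the quadratic into $\psi$ to keep $L,\ell_1,\ell_2$ unchanged, and the bound $\norm{x_{\epsilon}^*}\le\norm{x^*}$) is a slightly more careful rendering of the same argument the paper gives.
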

\begin{proof}
Recall that we define the approximate function as follows: 
\begin{equation}
    F_{\epsilon}(x) \coloneqq f(x) + \psi(x) + \frac{\epsilon_f}{2}\norm{x}^2,
\end{equation}
where $\epsilon_f>0$. By Theorem~\ref{theorem_strong_convex}, we have 
\begin{align*}
    & \mathbb{E}\left[F_{\epsilon}(\bar{y}^{k}) - F_{\epsilon}(x_{\epsilon}^*) \right] \leq \frac{3}{\left( 1 + \frac{\tau}{2} \right)^{k}} \left( F_{\epsilon}(\bar{y}^0) - F_{\epsilon}(x_{\epsilon}^*) \right).
\end{align*}
Then one has 
\begin{align*}
 & \mathbb{E}\left[ F_{\epsilon}(\bar{y}^{k}) \right] \leq F_{\epsilon}(x_{\epsilon}^*) + \frac{3}{\left( 1 + \frac{\tau}{2} \right)^{k}} \left( F_{\epsilon}(\bar{y}^0) - F_{\epsilon}(x_{\epsilon}^*) \right).
 \end{align*}
It is equivalent to
\begin{align*}
      & \mathbb{E}\left[ F(\bar{y}^{k}) \right] \leq  \min_x\left(f(x) + \psi(x)\right) 
      + \frac{\epsilon_f}{2}\max(\norm{x_{\epsilon}^*}^2, 1) 
      + \frac{3}{\left( 1 + \frac{\tau}{2} \right)^{k}} \left( F_{\epsilon}(\bar{y}^0) - F_{\epsilon}(x_{\epsilon}^*) \right) .
\end{align*}
We can conclude that
\begin{align*}
     \mathbb{E}\left[ F(\bar{y}^{k}) - \min_x F(x) \right] \leq \frac{\epsilon_f}{2}\max(\norm{x_{\epsilon}^*}^2, 1)  
       + \frac{3}{\left( 1 + \frac{\tau}{2} \right)^{k}} \left( F_{\epsilon}(\bar{y}^0) - F_{\epsilon}(x_{\epsilon}^*) \right) .
\end{align*}
\end{proof}
\end{document}